\definecolor{violet}{RGB}{150,0,200}
\definecolor{orange}{RGB}{230,125,0}
\tikzstyle{block} = [draw, rectangle, minimum height=3em, minimum width=3em]
\tikzstyle{gain} = [draw, isosceles triangle, minimum height=3em]
\tikzstyle{sum} = [draw, circle, node distance=1cm]
\tikzstyle{input} = [coordinate]
\tikzstyle{output} = [coordinate]
\tikzstyle{pinstyle} = [pin edge={to-,thin,black}]
\tikzstyle{module} = [draw, rectangle, minimum height=3em, minimum width=3em]
\tikzstyle{channel} = [draw, ellipse, minimum height=3em, minimum width=3em]
\tikzstyle{decision} = [diamond, draw, fill=blue!20,
\tikzstyle{decision1} = [diamond, draw, fill=blue!20,
\tikzstyle{block} = [rectangle, draw, fill=blue!20,
\tikzstyle{blockwhite} = [rectangle, draw, fill=white!20,
\tikzstyle{block2} = [rectangle, draw, fill=blue!20,
\tikzstyle{block3} = [rectangle, draw, fill=red!20,
\tikzstyle{blockc1} = [rectangle, draw, fill=blue!20,
\tikzstyle{blockc2} = [rectangle, draw, fill=blue!20,
\tikzstyle{blockl2} = [rectangle, draw, fill=blue!20,
\tikzstyle{blockl4} = [rectangle, draw, fill=blue!20,
\tikzstyle{blockc3} = [rectangle, draw, fill=red!20,
\tikzstyle{line} = [draw, very thick, color=black!50, -latex']
\tikzstyle{cloud} = [draw, ellipse,fill=red!20, node distance=2.5cm, minimum height=2em]
\renewcommand*\env@matrix[1][\arraystretch]{%
  \edef\arraystretch{#1}%
  \hskip -\arraycolsep
  \let\@ifnextchar\new@ifnextchar
  \array{*\c@MaxMatrixCols c}}
\renewcommand{\nomgroup}[1]{%
\ifthenelse{\equal{#1}{A}}{\item[\textbf{\\Abbreviations}]}{%
\ifthenelse{\equal{#1}{R}}{\item[\textbf{\\Symbols (Roman)}]}{%
\ifthenelse{\equal{#1}{G}}{\item[\textbf{\\Symbols (Greek)}]}{
\ifthenelse{\equal{#1}{D}}{\item[\textbf{\\Subscripts}]}{
\ifthenelse{\equal{#1}{E}}{\item[\textbf{\\Superscripts}]}{}}}}}}
\newlength\colA
\newlength\colB
\newlength\colC
\newlength\colD
\newlength\colE
\theoremstyle{remark}
\newtheorem{thm}{Theorem}[section]
\newtheorem{cor}[thm]{Corollary}
\newtheorem{comment}[thm]{Comment}
\newtheorem{remark}[thm]{Remark}
\newtheorem{example}[thm]{Example}
\newtheorem{lem}[thm]{Lemma}
\newtheorem{theorem}[thm]{Theorem}
\newtheorem{prop}[thm]{Proposition}
\newtheorem{defn}[thm]{Definition}
\theoremstyle{definition}
\newcommand\dunderline[3][-1pt]{{%
		\sbox0{#3}%
		\ooalign{\copy0\cr\rule[\dimexpr#1-#2\relax]{\wd0}{#2}}}}
\newenvironment{mdframedwithfoot}
{   
	\savenotes
	\begin{mdframed}[nobreak=true]
		\stepcounter{footnote}
		
	}
	{
	\end{mdframed}
	\spewnotes
}
\newcommand\pig[1]{\scalerel*[5.5pt]{\Big#1}{%
		\ensurestackMath{\addstackgap[1.5pt]{\big#1}}}}
\newcommand\largeparbreak{\par\bigskip}
\newcommand*\tageq{\refstepcounter{equation}\tag{\theequation}}
\newcommand{\nc}{{n_c}\xspace}
\newcommand{\nb}{{n_b}\xspace}
\newcommand{\nx}{{n_x}\xspace}
\newcommand{\inv}{^{-1}\xspace}
\newcommand{\blambda}{\boldsymbol{\lambda}\xspace}
\newcommand{\bxi}{{\boldsymbol{\xi}}\xspace}
\newcommand{\bs}{{\textbf{s}}\xspace}
\renewcommand{\t}{^{\textsf{T}}\xspace}
\newcommand{\hby}{\hat{\textbf{y}}\xspace}
\newcommand{\hbz}{\hat{\textbf{z}}\xspace}
\newcommand{\away}[1]{}
\newcommand{\R}{{\mathbb{R}}\xspace}
\newcommand{\Q}{{\mathbb{Q}}\xspace}
\newcommand{\N}{\mathbb{N}\xspace}
\newcommand{\cV}{\mathcal{V}\xspace}
\newcommand{\cY}{\mathcal{Y}\xspace}
\newcommand{\cI}{\mathcal{I}\xspace}
\newcommand{\cW}{\mathcal{W}\xspace}
\newcommand{\cA}{\mathcal{A}\xspace}
\newcommand{\cZ}{\mathcal{Z}\xspace}
\newcommand{\cB}{\mathcal{B}\xspace}
\newcommand{\cL}{\mathcal{L}\xspace}
\newcommand{\cX}{\mathcal{X}\xspace}
\newcommand{\cP}{\mathcal{P}\xspace}
\newcommand{\tcP}{\widetilde{\mathcal{P}}\xspace}
\newcommand{\cQ}{\mathcal{Q}\xspace}
\newcommand{\cJ}{\mathcal{J}\xspace}
\newcommand{\cU}{\mathcal{U}\xspace}
\newcommand{\cC}{\mathcal{C}\xspace}
\newcommand{\cT}{\mathcal{T}\xspace}
\newcommand{\cO}{\mathcal{O}\xspace}
\newcommand{\bA}{\textbf{A}\xspace}
\newcommand{\bn}{\textbf{n}\xspace}
\newcommand{\bJ}{\textbf{J}\xspace}
\newcommand{\bB}{\textbf{B}\xspace}
\newcommand{\bG}{\textbf{G}\xspace}
\newcommand{\tbH}{\tilde{\textbf{H}}\xspace}
\newcommand{\bZ}{\textbf{Z}\xspace}
\newcommand{\bD}{\textbf{D}\xspace}
\newcommand{\bS}{\textbf{S}\xspace}
\newcommand{\bF}{\textbf{F}\xspace}
\newcommand{\bx}{{\textbf{x}}\xspace}
\newcommand{\by}{{\textbf{y}}\xspace}
\newcommand{\bb}{{\textbf{b}}\xspace}
\newcommand{\br}{{\textbf{r}}\xspace}
\newcommand{\bc}{{\textbf{c}}\xspace}
\newcommand{\bg}{{\textbf{g}}\xspace}
\newcommand{\bq}{{\textbf{\textit{q}}}\xspace}
\newcommand{\tbc}{\tilde{\textbf{c}}\xspace}
\newcommand{\hbx}{\hat{\textbf{x}}\xspace}
\newcommand{\bz}{\textbf{z}\xspace}
\newcommand{\be}{\textbf{1}\xspace}
\newcommand{\bei}[1]{{\textbf{e}}\xspace}
\newcommand{\bw}{\textbf{w}\xspace}
\newcommand{\bM}{\textbf{M}\xspace}
\newcommand{\bN}{\textbf{N}\xspace}
\newcommand{\bC}{\textbf{C}\xspace}
\newcommand{\bW}{\textbf{W}\xspace}
\newcommand{\bH}{\textbf{H}\xspace}
\newcommand{\bI}{\textbf{I}\xspace}
\newcommand{\bO}{\textbf{0}\xspace}
\newcommand{\tbx}{\tilde{\textbf{x}}\xspace}
\newcommand{\bu}{\textbf{u}\xspace}
\newcommand{\bv}{\textbf{v}\xspace}
\newcommand{\opdiag}{\text{diag}\xspace}
\newcommand{\tol}{{\textsf{tol}}\xspace}
\newcommand{\commentout}[1]{}
\newcommand{\dhyh}{\dot{\hat{y}}_h\xspace}
\newcommand{\hyh}{\hat{y}_h\xspace}
\newcommand{\huh}{\hat{u}_h\xspace}
\newcommand{\bbeta}{\boldsymbol{\beta}\xspace}
\newcommand{\tbbeta}{\tilde{\boldsymbol{\beta}}\xspace}
\newcommand{\bj}{\boldsymbol{j}\xspace}
\newcommand{\phiL}{\phi_{\texttt{L}}\xspace}
\newcommand{\phiR}{\phi_{\texttt{R}}\xspace}
\newcommand{\psiL}{\psi_{\texttt{L}}\xspace}
\newcommand{\psiR}{\psi_{\texttt{R}}\xspace}
\newcommand{\bbL}{\bb_{\texttt{L}}\xspace}
\newcommand{\bbR}{\bb_{\texttt{R}}\xspace}
\newcommand{\bzL}{\bz_{\texttt{L}}\xspace}
\newcommand{\bzR}{\bz_{\texttt{R}}\xspace}
\newcommand{\hbzL}{\hbz_{\texttt{L}}\xspace}
\newcommand{\hbzR}{\hbz_{\texttt{R}}\xspace}
\newcommand{\gL}{g_{\texttt{L}}\xspace}
\newcommand{\gR}{g_{\texttt{R}}\xspace}
\newcommand{\yL}{y_{\texttt{L}}\xspace}
\newcommand{\yR}{y_{\texttt{R}}\xspace}
\newcommand{\uL}{u_{\texttt{L}}\xspace}
\newcommand{\uR}{u_{\texttt{R}}\xspace}
\renewcommand{\bf}{\textbf{f}\xspace}
\newcommand{\Iref}{I_\text{ref}\xspace}
\newcommand{\Cquad}{C_\ell\xspace}
\newcommand{\Cbox}{C_\text{box}\xspace}
\newcommand{\Cobj}{C_\text{obj}\xspace}
\newcommand{\Clam}{C_\lambda\xspace}
\newcommand{\Tcl}[1]{\cT^{\text{CGL}}_{#1}\xspace}
\newcommand{\Tclref}[1]{\cT^{\text{CGL}}_{\text{ref},#1}\xspace}
\newcommand{\AssumpI}{\hyperref[assumption:A1]{(A.1)}\xspace}
\newcommand{\AssumpII}{\hyperref[assumption:A2]{(A.2)}\xspace}
\newcommand{\AssumpIII}{\hyperref[assumption:A3]{(A.3)}\xspace}
\newcommand{\tmpyO}{\textcolor{blue}{\texttt{y}_{\texttt{0}}}\xspace}
\newcommand{\tmpyT}{\textcolor{blue}{\texttt{y}_{\texttt{T}}}\xspace}
\newcommand{\tmpyt}{\textcolor{blue}{\texttt{y}_{\texttt{t}}}\xspace}
\newcommand{\tmput}{\textcolor{blue}{\texttt{u}_{\texttt{t}}}\xspace}
\newcommand{\tforall}{\widetilde{\forall}\xspace}
\newcommand{\abbTab}{Table}
\newcommand{\abbtab}{Table}
\newcommand{\abbalg}{Algorithm}
\newcommand{\abbthm}{Theorem}
\newcommand{\pval}{\omega\xspace} 	
\newcommand{\pmval}{\rho\xspace} 	
\newcommand{\wquad}{\alpha\xspace} 	
\newcommand{\bdual}{\blambda} 						
\newcommand{\hbA}{\hat{\boldsymbol{A}}\xspace} 		
\newcommand{\eqnKKT}{(KKT)\xspace}
\newcommand{\eqnKKTa}{(KKT1)\xspace}
\newcommand{\eqnKKTb}{(KKT2)\xspace}
\newcommand{\tbM}{\tilde{\textbf{M}}\xspace}
\newcommand{\tbJ}{\tilde{\textbf{J}}\xspace}
\newcommand{\tbg}{\tilde{\textbf{g}}\xspace}
\newcommand{\infexpr}{{\operatornamewithlimits{min}_{x_h \in \cX_{h,p}}\|x^\star_{\omega,\tau}-x_h\|_\cX}\xspace}
\begin{document}
\bibliographystyle{abbrvnat}

\frontmatter
\maketitle

\begin{onehalfspacing}
	
\chapter{Declaration of Originality}
\label{ch:originality}

This work is -- except for introduction and conclusion -- a concatenation of other texts that the PhD student has written. The thesis consists of the following articles. They are listed in the order of appearance in the thesis:
\begin{enumerate}
	\item Martin P. Neuenhofen and Yuanbo Nie and Eric C. Kerrigan. \emph{Direct Quadrature Penalty Methods: Everywhere between Direct Integral Penalty and Direct Collocation Methods}. Planned for publication.
	\item Martin P. Neuenhofen and Eric C. Kerrigan. \emph{A direct method for solving integral penalty transcription of optimal control problems.} Proceedings of the IEEE Conference on Decision and Control 2020.
	\item Martin P. Neuenhofen and Eric C. Kerrigan. \emph{Dynamic Optimization with Convergence Guarantees.} arXiv:1810.04059, 2018.
	\item Martin P. Neuenhofen and Eric C. Kerrigan. \emph{An integral penalty-barrier direct transcription method for optimal control.} Proceedings of the IEEE Conference on Decision and Control 2020.
\end{enumerate}

The majority of the thesis is based on the first item. The second and third item make Chapters~\ref{chap:MALM}--\ref{chap:PBF_SICON}. The fourth item is peer-reviewed published work that summarizes the method presented in the third item.

Yuanbo Nie has contributed to the numerical experiments in Section~\ref{sec:NumExp}: We formulated the expressions of these problems so that they fit our format. He helped with generating interpolations from the interpolation data for the computed reference solutions in ICLOCS-II~\cite{iclocs2} that we used for validation. He also helped with the abstract and we had many valuable discussions on the design of figures and the line of presentation. This does not include the actual implementation of the experiments, of the solver, and the generation of data and figures.

In all items, Eric Kerrigan contributed to the theoretical analysis in the form of verification and modification of proofs, advice on the line of presentation, suitable formulations, and suggestions of references.

The independent contributions of the PhD student in all above listed items are the development and computational analysis, including the proof of convergence and of convergence rates, for all presented methods; further, the creation of figures and tables, design of examples, structure and organization of presentation, and most of the writing.

\largeparbreak

The following statement is provided by Imperial College London and confirmed by the student:

The work presented hereafter is based on research carried out by the author at the Imperial College London and it is all the author’s own work under supervisors' supervision, except where otherwise acknowledged. Reuse of author’s own published works during the PhD degree program are also acknowledged according to publishers' guidelines.

In reference to IEEE copyrighted material which is used with permission in this thesis, the IEEE does not endorse any of Imperial College London’s products or services. Internal or personal use of this material is permitted. If interested in reprinting/republishing IEEE copyrighted material for advertising or promotional purposes or for creating new collective works for resale or redistribution, please go to \url{http://www.ieee.org/
	publications_standards/publications/rights/rights_link.html} to learn how to obtain a License from RightsLink.

\vspace{10mm}

\textit{Martin Neuenhofen}

London, March 2022

\chapter{Copyright Declaration}
\label{ch:copyright}

Students at Imperial College London must publish their PhD thesis under one of four possible licenses. The following states the license of this thesis and is a unique text defined by the selected license:

The copyright of this thesis rests with the author. Unless otherwise indicated, its contents are licensed under a Creative Commons Attribution-Non Commercial-No Derivatives 4.0 International Licence (CC BY-NC-ND). Link: \url{https://creativecommons.org/licenses/by-nc-nd/4.0/}

Under this licence, you may copy and redistribute the material in any medium or format on the condition that; you credit the author, do not use it for commercial purposes and do not distribute modified versions of the work.

When reusing or sharing this work, ensure you make the licence terms clear to others by naming the licence and linking to the licence text.

Please seek permission from the copyright holder for uses of this work that are not included in this licence or permitted under UK Copyright Law.

The below way of citation is suggested for this work:
\vspace{5mm}
\newline
\noindent
Martin Peter Neuenhofen. \emph{Quadratic Integral Penalty Methods for Numerical Trajectory Optimization}. PhD Thesis. Imperial College London. 2022.

\chapter{Abstract}
\label{ch: Abstract}

This thesis presents new mathematical algorithms for the numerical solution of a mathematical problem class called \emph{dynamic optimization problems}. These are mathematical optimization problems, i.e., problems in which numbers are sought that minimize an expression subject to obeying equality and inequality constraints. Dynamic optimization problems are distinct from non-dynamic problems in that the sought numbers may vary over one independent variable. This independent variable can be thought of as, e.g., time.

This thesis presents three methods, with emphasis on algorithms, convergence analysis, and computational demonstrations. The first method is a direct transcription method that is based on an integral quadratic penalty term. The purpose of this method is to avoid numerical artifacts such as ringing or erroneous/spurious solutions that may arise in direct collocation methods. The second method is a modified augmented Lagrangian method that leverages ideas from augmented Lagrangian methods for the solution of optimization problems with large quadratic penalty terms, such as they arise from the prior direct transcription method. Lastly, we present a direct transcription method with integral quadratic penalties and integral logarithmic barriers. All methods are motivated with applications and examples, analyzed with complete proofs for their convergence, and practically verified with numerical experiments.

\chapter{Acknowledgement}
\label{ch: Acknowledgement}

First and foremost, I would like to thank my supervisor, Prof.~Eric Kerrigan, for the continuous support throughout this research program. The first time I got introduced to him was during my employment at Mercedes AMG Petronas Formula One. During my work there I came to notice that the methods we used to solve optimization problems did often fail to converge. I thus developed numerical methods and convergence analyses to fix these issues. When I searched for a PhD supervisor, I sent my drafts to Eric. After careful reading, he offered me supervision of my PhD thesis. He is a morally, fair, and very professional person. He never broke any agreements. He believed in my work from the start and gave me all possible freedom to do research on all kinds of methods related to optimal control. He was always interested and at most of the time had very good advice. I very much enjoyed working with him.

Special thanks go to Yuanbo Nie, who has introduced me to all the meta-subjects of PhD studies at Imperial. There are a lot of frustrating and unnecessary procedures. Yuanbo helped me a lot with the research, with the motivation, the work, the procedures, the LaTeX annoyances, the review frustrations, deep discussions, and also with practical help in uncountable instances. His wife and he offered me to stay at their apartment for three weeks and showed me the city when I came to visit London for the second time.

I would like to thank Chen Greif for everything he offered to me and for his huge financial support. I was introduced to Chen as a reviewer of a manuscript on next-generation Krylov methods. He was very interested and contributed eventually, and offered me a scholarship at UBC. As head of department, Chen was very busy at the time when I joined his group. Nonetheless, he was always very nice, supportive, went every extra mile that he could, and with Mstab we published a great piece of research.

Last but definitely not least, I thank my family. We build things together, share advice, go places, celebrate life, and enjoy every day.

Thanks to all of you!

\vspace{10mm}

\textit{Martin Neuenhofen}

M\"onchengladbach, March 2022

\chapter{Motivation of this Thesis and Overview of Presented Methods}
As mentioned in the abstract, we present three numerical methods. All methods are either to directly transcribe dynamic optimization problems or to solve finite-dimensional nonlinear optimization problems that arise from the transcription.

Today, the state-of-the-art method for direct transcription is direct collocation. The motivation for the development and analysis of other methods stems from the fact that there are problems for which direct collocation fails to converge. Challenges are problems with singular arcs, with high-index differential-algebraic constraints, and problems with consistently over-determined constraints. This thesis is concerned with the development of numerical methods that can solve these problems in a black-box fashion reliably and efficiently to high accuracy.

In the following we give an abstract for each method. In Section~\ref{sec:Scope} we explain how the thesis is structured and where each method is described in detail.

\paragraph{Method 1: Direct Quadrature Penalty Method}
We present a quadrature penalty method as a competitive alternative to the popular method of direct collocation for solving optimal control problems with ordinary differential equations and differential-algebraic equations numerically.

There is consensus that direct collocation methods do not converge for some problems of practical interest. However, there is lack of practical means to assess the reasons behind these failures. 

In this work, we aim to provide full transparency and accessibility to the numerical reasons behind the convergence of a generalization of direct collocation methods: quadrature penalty methods. A wealth of illustrated case studies, numerical experiments, and convergence theory demonstrates the practical benefits of quadrature penalty methods in terms of efficiency and robustness when compared to direct collocation.

To make the quadratic penalty method readily available for a wide audience, including readers who are new to optimal control, we include a full background to the origins, motivations, developments, and reasons behind the design of today’s optimal control methods. To improve accessibility, we refrained from sophisticated mathematics whenever it was possible to explain the same concept in a familiar but elaborate way. Familiarity with the following methods is sufficient: i) explicit Euler, ii) gradient-descent for local optimization, iii) Newton method for systems of equations, and iv) Gaussian quadrature.

\paragraph{Method 2: Modified Augmented Lagrangian Method}
We present a numerical method for the minimization of constrained optimization problems where the objective is augmented with large quadratic penalties of inconsistent equality constraints. Such objectives arise from quadratic integral penalty methods for the direct transcription of optimal control problems.

The Augmented Lagrangian Method (ALM) has a number of advantages over the Quadratic Penalty Method (QPM). However, if the equality constraints are inconsistent, then ALM might not converge to a point that minimizes the bias of the objective and penalty term. Therefore,  we present a modification of ALM that fits our purpose.

We prove convergence of the modified method and bound its local convergence rate by that of the unmodified method. Numerical experiments demonstrate that the modified ALM can minimize certain quadratic penalty-augmented functions faster than QPM, whereas the unmodified ALM converges to a minimizer of a significantly different problem.

\paragraph{Method 3: Penalty-Barrier Method with Quadratic Penalties and Logarithmic Barriers}
We present a novel direct transcription method to solve optimization problems subject to nonlinear differential and inequality constraints.

We prove convergence of our numerical method under reasonably mild assumptions: boundedness and Lipschitz-continuity of the problem-defining functions.
We do not require uniqueness, differentiability or constraint qualifications to hold and we avoid the use of Lagrange multipliers. Our approach differs fundamentally from well-known methods based on collocation; we follow a penalty-barrier approach, where we compute integral quadratic penalties on the equality path constraints and point constraints, and integral log-barriers on the inequality path constraints.

The resulting penalty-barrier functional can be minimized numerically using finite elements and penalty-barrier interior-point nonlinear programming solvers. Order of convergence results are derived, even if components of the solution are discontinuous.

\end{onehalfspacing}

\toc


\cleardoublepage
\markboth{Nomenclature}{Nomenclature}
\printnomenclature[1in]

\mainmatter

\begin{onehalfspacing}

\part{Introduction to Optimal Control}
\label{part:Intro}

\chapter{Introduction to Optimal Control and Direct Transcription}\label{sec:Intro}
In this first chapter, the concept of optimal control is introduced with three example applications. The first example considers the acceleration of an electric car. We will demonstrate how this optimal control problem can be solved by combining two familiar numerical methods. After laying out the general concepts, we describe in broad terms the contributions of this thesis. The subsequent Section~\ref{sec:Formalities} will introduce formal mathematical definitions.

\section{From Optimization and Integration to Optimal Control}\label{sec:Intro:OCP}
In the following two subsections, we recall two methods that are fundamental to applied mathematics. They will be combined later.

\subsection{Explicit Euler Method} 
The explicit Euler method is attributed to Euler in 1768 \cite{Hairer1}. This method solves initial value problems (IVPs) numerically. These take on the form: Given are initial values $y_\text{ini}$, a final time $T$, and a flux-function $f$. Find a function~$y$ of $t$ that solves
\begin{align}
y(0)=y_\text{ini}\,,\qquad
\dot{y}(t)=f\big(\,y(t),t\,\big)\quad \tforall t \in [0,T]\,.\label{eqn:IVP}
\end{align}
\begin{figure}
	\centering
	\includegraphics[width=0.85\linewidth]{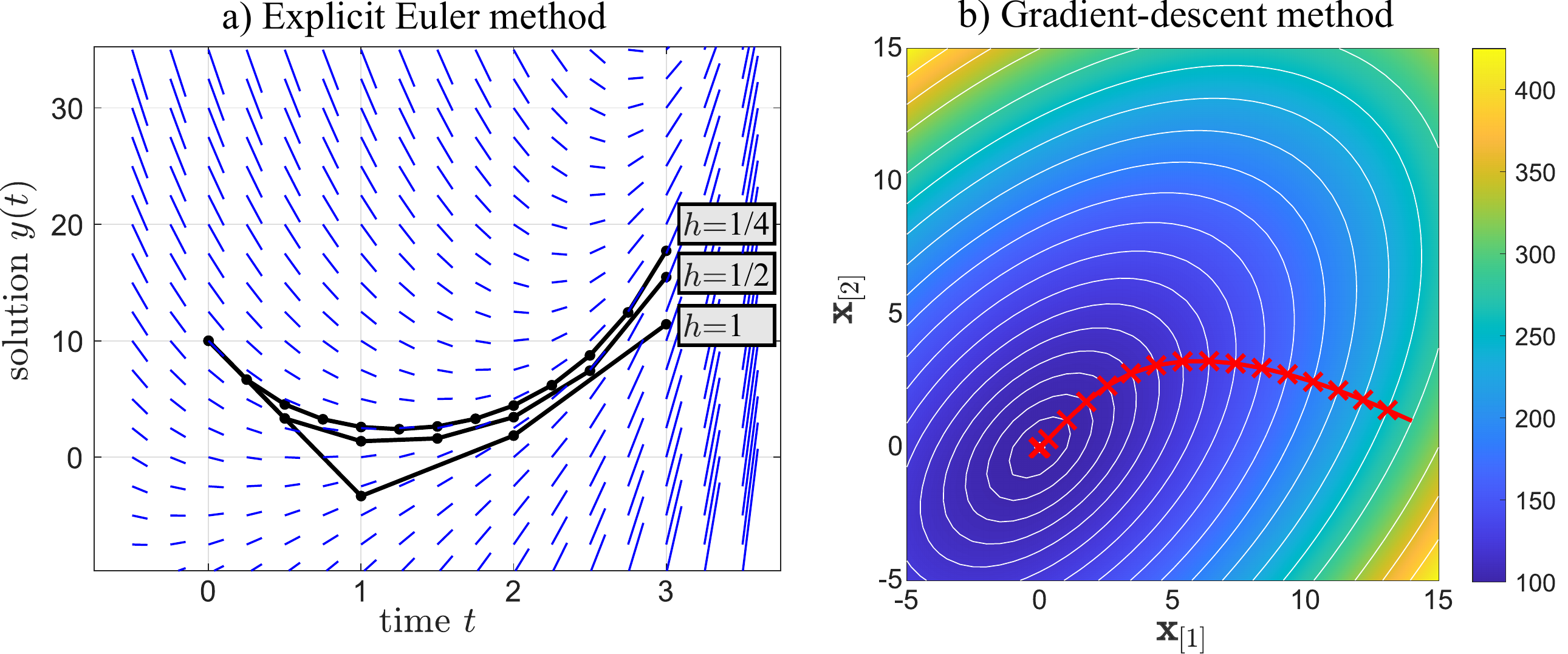}
	\caption{Examples of the explicit Euler method and the gradient-descent method. a) The Explicit Euler method constructs piecewise linear approximations \cite[p.~41]{Hairer1} that follow the slope of the flux-function. Blue dashes indicate the slope value of the flux-function $f$. b) The gradient-descent method proceeds repetitively along the gradient direction. A color map and white contours indicate function values of the objective $\bf$.}
	\label{fig:exampleeuleroptim}
\end{figure}

\begin{remark}
	We use the symbol $\forall$ to abbreviate ``for every'' and $\tforall$ to abbreviate ``for \emph{almost} every''. Other notations like ``almost all (a.a.)'' and ``almost every (a.e.)'' are commonly used for the same purpose in the context of differential equations in optimal control problems \cite{Maurer,MR4046772,Hager2000}.
	
	For example, the indicator function
	\begin{align*}
	\Xi_\Q(t) := \begin{cases}
	1 &\text{when } t \in \Q\\
	0 &\text{otherwise}
	\end{cases}
	\end{align*}
	is zero $\tforall t \in \R$. Notice hence that
	\begin{align}
	\int_\R \Xi_\Q(t)\,\mathrm{d}t=0\,. \label{eqn:LebesgueIntegral}
	\end{align}
	The reason for using $\tforall$ in~\eqref{eqn:IVP} is to accommodate cases where $\dot{y}$ may be undefined at a countable infinite number of points (e.g., due to edges in $y$).
\end{remark}

Figure~\ref{fig:exampleeuleroptim}~(a) shows an example of the explicit Euler method for the initial value problem 
\begin{align}
y_\text{ini}=10\,,\qquad T=3\,,\qquad f(t,y)=4.7129\cdot \sinh(\sqrt 2 \cdot t)	- 3.3349 \cdot \cosh(\sqrt 2 \cdot t) - y	\label{eqn:exampleIvp}
\end{align}
for various step-sizes $h \in \R_{>0}$.

\subsection{Gradient-Descent Method}
The gradient-descent method is a numerical method attributed to Cauchy in 1847 \cite{cauchy1847}. It solves optimization problems of the form: Given is an objective-function $\bf$. Find a local solution $\bx^\star$ to
\begin{align}
\operatornamewithlimits{min}_{\bx \in \R^n} \quad \bf(\bx)\,. \label{eqn:Min}
\end{align}

\begin{remark}
	We chose $f$ and $\bf$, which are unrelated, to keep standard notation of each sub-discipline and not opt for esoteric symbols. We write $\bx_{[i]}$ for the $i^{\text{th}}$ component of a vector $\bx$ henceforth.
\end{remark}

Figure~\ref{fig:exampleeuleroptim}~(b) shows an example of the gradient-descent method for minimizing
\begin{align}
\bf(\bx) = 0.5 \cdot \big( 200 + \bx_{[1]}^2 + \bx_{[2]}^2 + (\bx_{[2]}-\bx_{[1]})^2 \big) \label{eqn:examplefMin}
\end{align}
from the guess $\bx_\text{guess}=(14,\ 1) \in \R^2$.
A color-map visualizes values of $\bf$ at coordinates $(\bx_{[1]},\bx_{[2]})$. White lines illustrate contour lines of $\bf$. The method converges to the minimizer $\bx^\star = (0,\,0)$. We denote minimizers to optimization problems with a star superscript henceforth.

\subsection{Integration + Optimization = Optimal Control}
{Optimal Control Problems} are problems that combine the tasks of integrating functions and optimizing objectives. However, the optimization is not about finding optimal values of numbers. Instead, optimal shapes of functions must be found. Section~\ref{sec:exampleOCP} gives an example of what is meant by this.

Neither the explicit Euler method nor the gradient-descent method alone can optimize functions: The explicit Euler method can only \emph{integrate} functions, while the gradient-descent method can only optimize \emph{numbers}. 

Figure~\ref{fig:integrationoptimizationcombine} depicts the tasks of integration and optimization in a two-dimensional diagram. Traversal along the diagonal arrow requires one to \emph{integrate and optimize} in a combined manner. For all practical purposes, optimal control problems are solved numerically with \emph{direct transcription}. Direct transcription is a numerical method, just like explicit Euler method and gradient-descent method.

In the following subsections, we present and discuss an example of an optimal control problem: We formulate mathematically the application of an accelerating car as an optimal control problem. We then show how it can be solved with a numerical method. In-between, we give a brief subsection on terminology, notation, and symbols.

\begin{figure}
	\centering
	\includegraphics[width=0.65\linewidth]{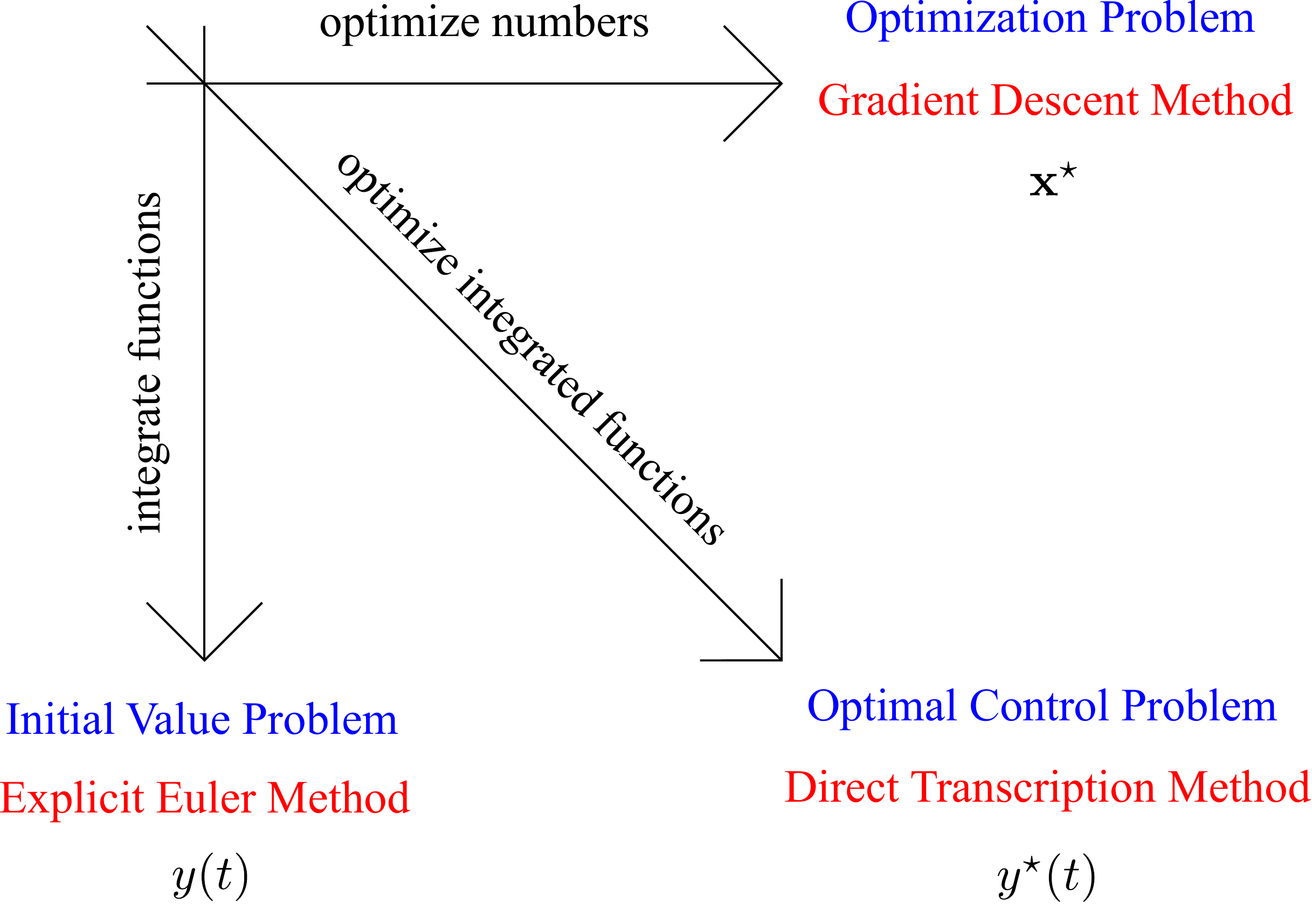}
	\caption{Organigram of tasks involved in three problems types. Problem types are labeled in blue. Tasks are written in black. Examples of direct numerical methods are given in red. Optimal control problems combine the tasks of integration and optimization.}
	\label{fig:integrationoptimizationcombine}
\end{figure}

\subsection{First Example of an Optimal Control Problem: Accelerating Car}\label{sec:exampleOCP}

\begin{figure}
	\centering
	\includegraphics[width=0.6\linewidth]{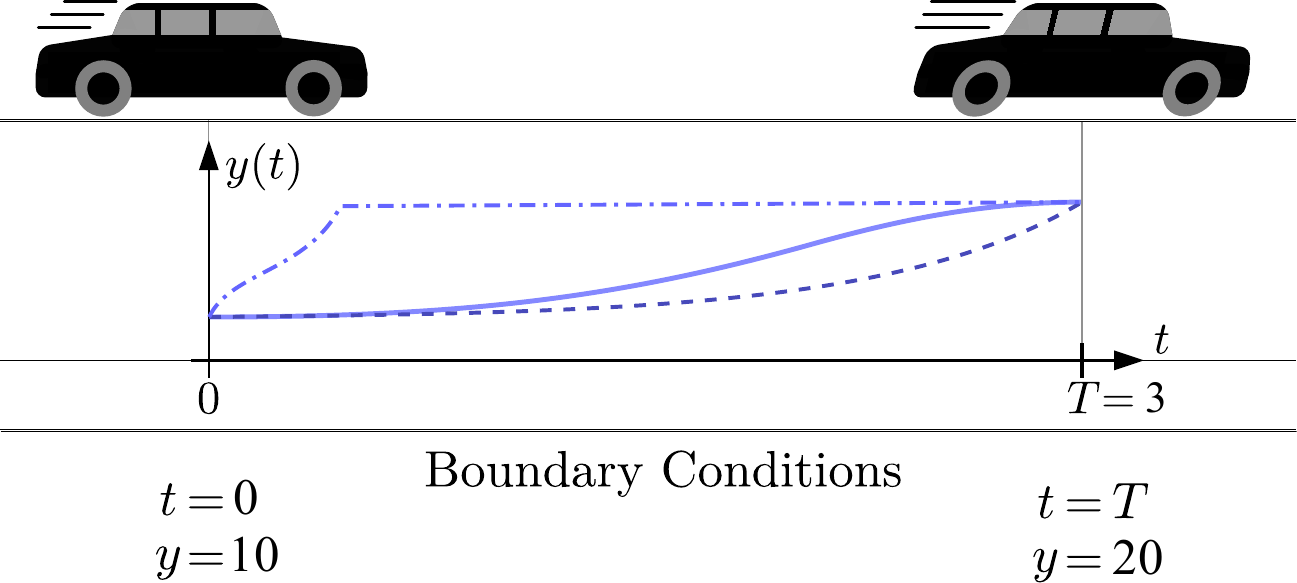}
	\caption{Example of an optimal control problem: An electric vehicle should increase its velocity from an initial value to a final value within a prescribed time-interval $[0,T]$.}
	\label{fig:speedchangeexample}
\end{figure}

Figure~\ref{fig:speedchangeexample} illustrates an example of an optimal control problem. Suppose $y(t)$ is the velocity of a vehicle at time $t$. The car's acceleration at time $t$ is modeled to depend on the throttle input $u(t)$ and the present velocity $y(t)$, e.g. due to friction:
\begin{align}
\dot{y}(t)=u(t)-y(t) \quad \tforall\ t \in [0,T]\,. \label{eqn:exampleODE} 
\end{align}

We wish to increase the car's velocity from an initial value $y(0)=10$~m/s to a final value $y(T)=20$~m/s within a time limit of $T=3$~s. We wish to accelerate the vehicle such that the total wear and tear on energy storage, motors, and bearings is minimized. We model the wear as
\begin{align*}
\int_0^T y^2(t) + u^2(t)\,\mathrm{d}t\,. \tageq\label{eqn:exampleIntegral}
\end{align*}
There are many possible ways to transition from the initial velocity to the final velocity. Figure~\ref{fig:speedchangeexample} illustrates three possible different ways in blue. Which way of transition leads to the least amount of wear and tear?

This question can be stated mathematically as
\begin{equation}
\label{eqn:exampleOCP}
\begin{aligned}
&\quad\operatornamewithlimits{min}_{y,u}	&\int_0^3 &y^2(t) + u^2(t)\,\mathrm{d}t\\
&\quad\text{subject to}\quad\quad  				& y(0)&= 10\,,\\
& 						 						& y(T)&= 20\,,\\
& 									& \dot{y}(t)&=u(t)-y(t) \quad \tforall t \in [0,3]
\end{aligned}
\end{equation}
In this problem, we search functions for $y$ and $u$ such that $y$ passes through the initial and final values and satisfies the differential equation. The desired solution should minimize the wear from model~\eqref{eqn:exampleIntegral}.

\subsection{Second Example: Maximizing Net Worth}
We introduce a model from finance. This model is phrased into a mathematical problem. The problem is solved and gives an insight into finance.

\subsubsection{Finance Model}
Figure~\ref{fig:automaticmining} presents a flow-chart from finance. Possession like solar panels or bitcoin farms produce revenue $r$ proportional to their value $y_{[1]}$. This revenue is split into two purposes: A fraction $u \in [0,1]$ is re-invested to buy more possession, whereas the rest is debited into savings $y_{[2]}$. Net worth is the account balance of savings plus the resale price of the whole possession. At an initial time $t=0$, we start with little possession and no savings: $y_{[1]}(0)=1$, $y_{[2]}(0)=0$. We seek to maximize net worth at the final time $t=10$.

\begin{figure}
	\centering
	\includegraphics[width=0.7\linewidth]{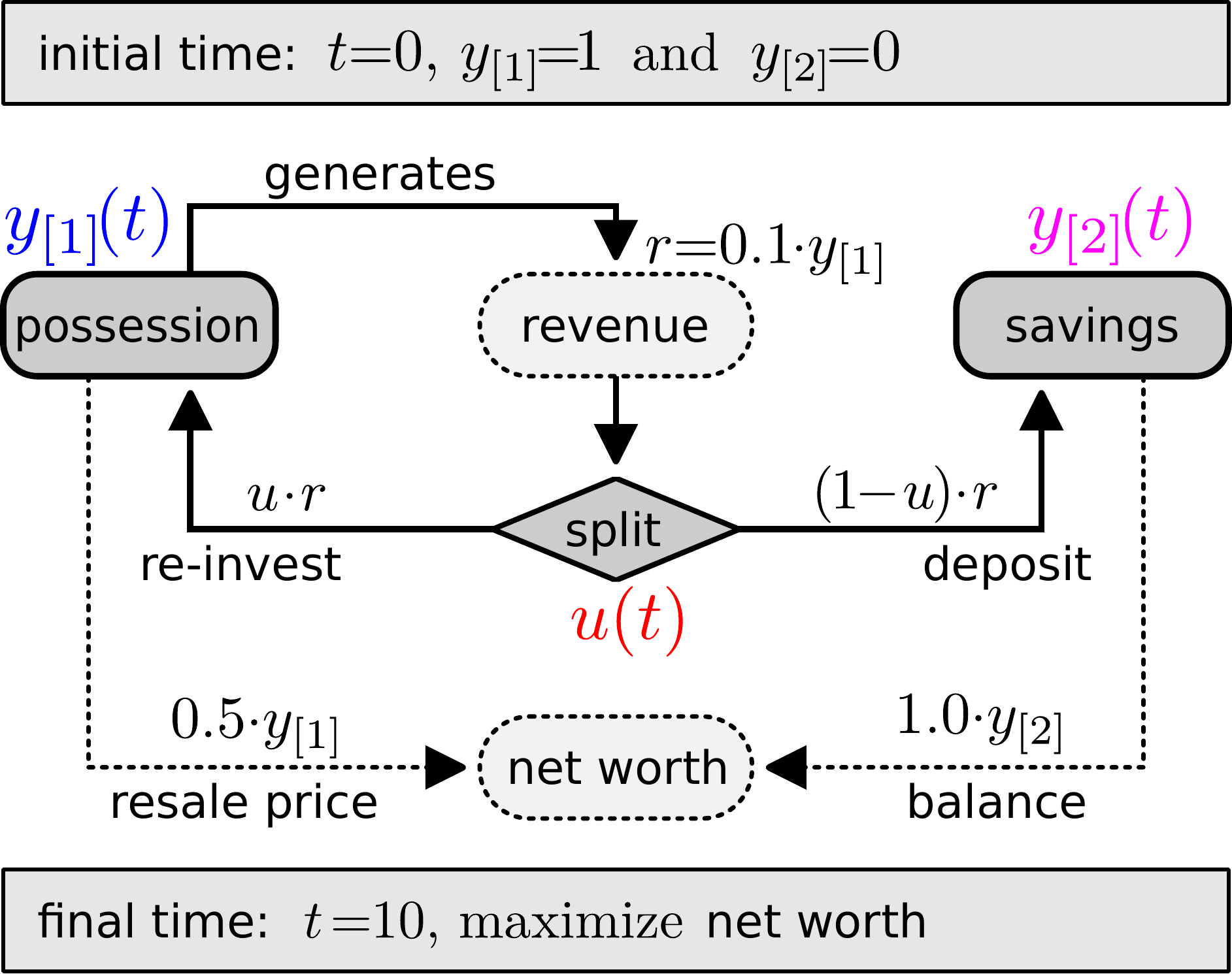}
	\caption{Flow cart of a re-investment model in finance. Revenue is split into re-investment and deposit.}
	\label{fig:automaticmining}
\end{figure}

\subsubsection{Mathematical Formulation}
How should we design the split $u(t)$ of revenue at each time $t$ in order to really become as rich as possible at $t=10$? Of course, initially we probably want to re-invest a lot whereas in the end it would not pay off anymore. But how do we arrive at an accurate answer?

The optimal profile of $u(t)$ can be determined accurately by solving the following optimization problem:
\begin{equation}
\label{eqn:Mining}
\left\lbrace
\begin{aligned}
&\operatornamewithlimits{max}_{y,u} & 0.5\cdot y_{[1]}&(10) + y_{[2]}(10) &\\
& \text{subject to} & y_{[1]}(0)&=1\,, &&\\
&  					& y_{[2]}(0)&=0\,, &&\\
& 					& \dot{y_{[1]}}(t)&=0.1\cdot y_{[1]}(t) \cdot u(t)       		 		&\forall\ &t \in [0,10]\,,\\
& 					& \dot{y_{[2]}}(t)&=0.1\cdot y_{[1]}(t) \cdot \big(1-u(t)\big)\quad 	&\forall\ &t \in [0,10]\,,\\
& 					& 0 \leq u(t) &\leq 1 									 		&\forall\ &t \in [0,10]\,.
\end{aligned}
\right\rbrace
\end{equation}
Likewise, this can be posed as a minimization problem by minimizing the negative net worth.

\subsubsection{Computational Solution}
The solution to problem \eqref{eqn:Mining} are functions $y_{[1]},y_{[2]},u$ of time on the time-interval $[0,10]$. Figure~\ref{fig:miningcontrol} presents the optimal solution: Final net worth is maximized when the revenue is fully re-invested into possession until $t=5$ and fully deposited into savings after $t=5$. The optimal net worth is attained as $\approx 1.6$\,.

\begin{figure}
	\centering
	\includegraphics[width=1\linewidth]{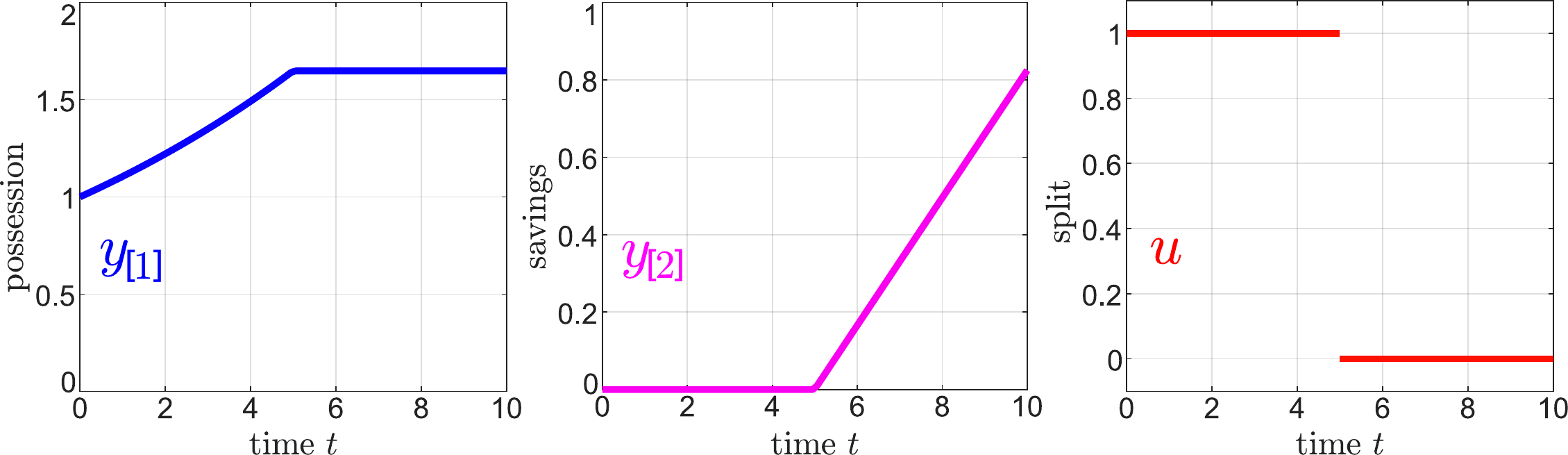}
	\caption{Optimal solution to problem~\eqref{eqn:Mining}.}
	\label{fig:miningcontrol}
\end{figure}

\subsection{Third Example: Economic Operation of a Commute Train}\label{sec:CommuteTrainExample}
Suppose the following dynamic model for a commute train:
\begin{align*}
\dot{x}(t)&=v(t)\,,&
\dot{v}(t)&=a(t)\,,&
\dot{c}(t)&= 1 \cdot v(t) + 0.01\cdot v^2(t) + 100 \cdot \big(\max\lbrace\,0\,,\,a(t)\,\rbrace\big)^2\,.
\end{align*}
In these differential equations, $x(t)$ is the position, $v(t)$ the velocity, $a(t)$ the acceleration, and $c(t)$ the operational cost of the commute train. All physical units are SI units. Cost is in US Dollars. Cost is generated whenever the train moves and/or accelerates. For the purpose of this example, deceleration generates no cost, hence the max-expression in the last equation.

\begin{figure}
	\centering
	\includegraphics[width=0.8\linewidth]{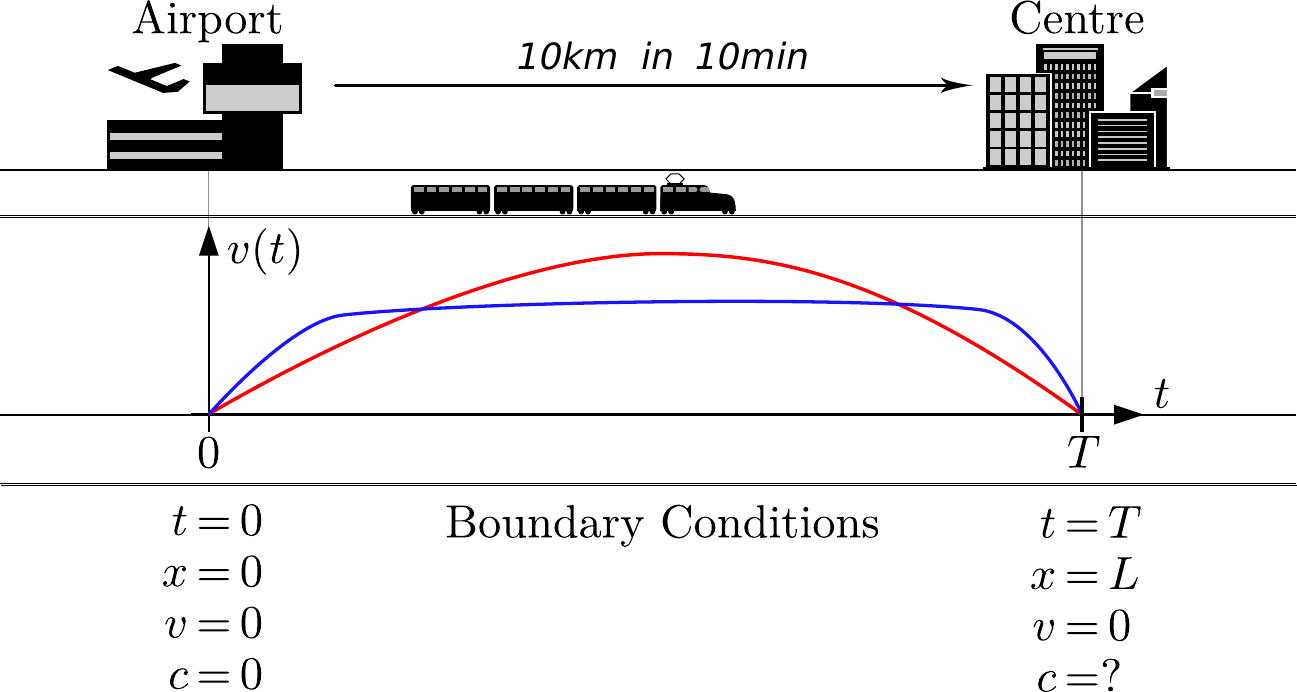}
	\caption{Commute train between Airport and Centre. The train covers a distance of $L=10^4$ metres in an interval of $T=600$ seconds.}
	\label{fig:commutetrain}
\end{figure}

Figure~\ref{fig:commutetrain} depicts the commute route: The train starts from Airport at time zero. At later time $T$, the train is supposed to stop at Centre, which is at distance $L$. For the example, we use $T=600$ and $L=10^5$. As indicated in Figure~\ref{fig:commutetrain}, the conditions
\begin{align*}
x(0)&=0,& v(0)&=0, & c(0)&=0, & x(T)&=L, & v(T)&=0\,.
\end{align*}
are called \emph{boundary conditions}. These just describe that the train stands still at both platforms, and that the train travels a distance of $L$ in time $T$.

There are also \emph{bound constraints}: 
\begin{align*}
v(t)\leq 20\,,\qquad -0.25\leq a(t)\leq 0.2\,.
\end{align*}
For the purpose of this example, these bounds arise from security policies and power limitations of the train.

We wish to find an optimal solution for $x(t),\ v(t),\ a(t),\ c(t)$ such that the final cost $c(T)$ is minimized. As depicted in Figure~\ref{fig:commutetrain} in red and blue, multiple solutions for $v(t)$ are possible that all yield the same traveled distance (area under the graph of $v$) and satisfy the boundary conditions.

The optimal solution is depicted in Figure~\ref{fig:commutetrainsolution}: The optimal acceleration profile starts at the maximum value of $0.2$ and then steadily decays to zero. At about $t\approx 480$, the train will abruptly decelerate with maximum strength. Neither optimization alone nor integration alone could compute this solution.

\begin{figure}
	\centering
	\includegraphics[width=0.7\linewidth]{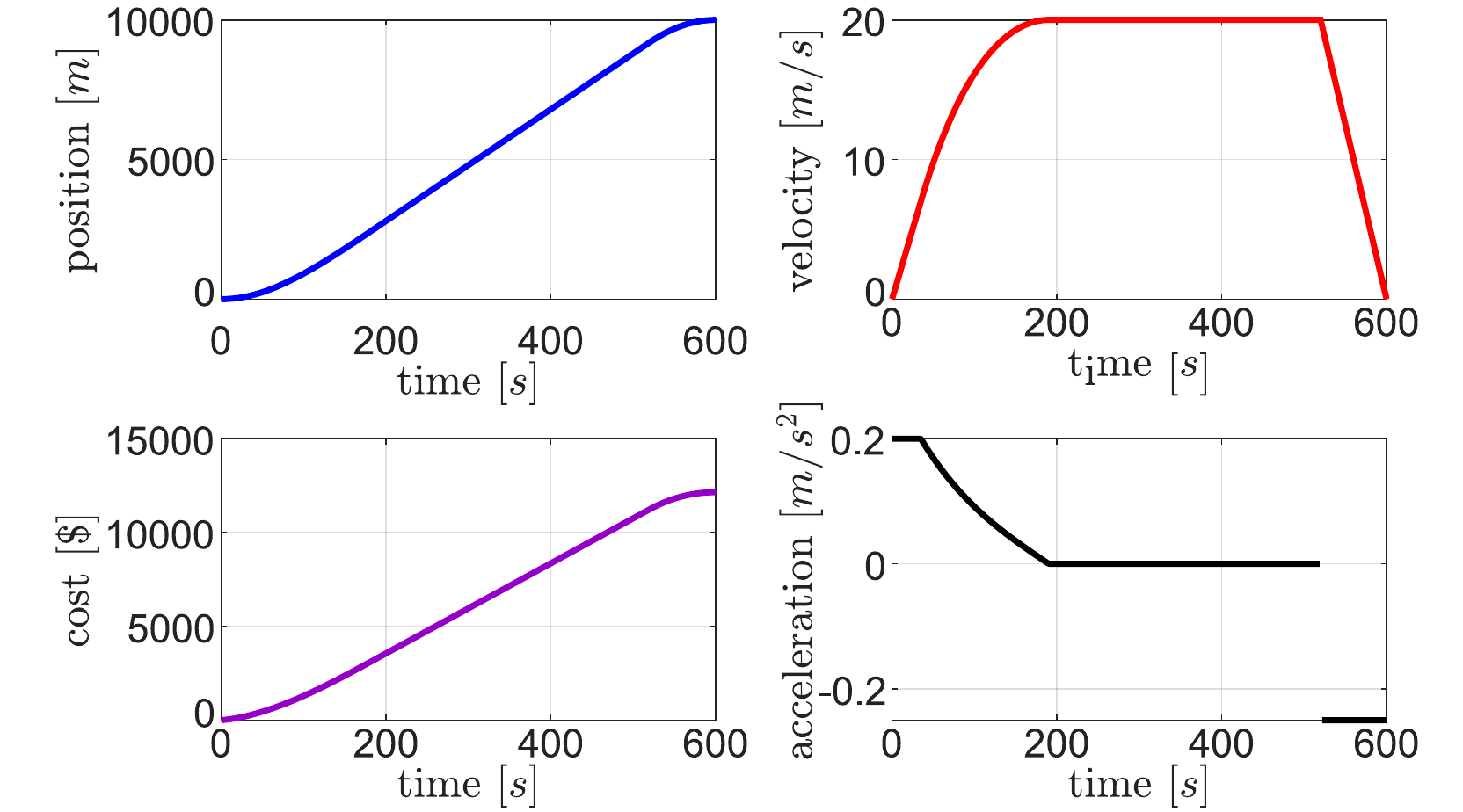}
	\caption{Solution to the Commute Train Example from Section~\ref{sec:CommuteTrainExample}.}
	\label{fig:commutetrainsolution}
\end{figure}

\subsection{Terminology of Optimization}
In the next subsection, we show how problem~\eqref{eqn:exampleOCP} can be solved numerically. Before doing so, it is advantageous to agree on certain terms.

\paragraph{Classification}
Figure~\ref{fig:optimizationterminology}~(a) shows an incomplete classification of optimization problems. These can be distinguished into finite and infinite optimization problems. Figure~\ref{fig:exampleeuleroptim}~(b) shows an example of a finite-dimensional optimization problem, because the solution is sought in the two-, i.e., finite-dimensional space $\R^2$. In contrast, optimal control problems belong into the class of infinite-dimensional optimization problems because continuous functions $y,u$ of $t$ cannot be represented as vectors in $\R^n$ without loss of information.

Within the class of finite-dimensional optimization, problems can be distinguished into different categories: linear programming (LP), quadratic programming (QP), and nonlinear programming (NLP) \cite{Nocedal}. This categorization is done because dedicated numerical solution software is available for problems of each category. NLP is the most general. Thus, numerical solution algorithms for NLP are also capable of solving LP and QP. Section~\ref{sec:NLP} discusses NLP and numerical solution algorithms for NLP instances.

\paragraph{Sets}
In contrast to format~\eqref{eqn:Min}, most optimization problems feature constraints. Problem~\eqref{eqn:exampleOCP} is an example of a constrained optimization problem. Figure~\ref{fig:optimizationterminology}~(b) provides names of related sets: $\cX$ denotes the set of candidates. In problem~\eqref{eqn:Min}, all candidates must live in $\R^2$, hence $\cX=\R^2$. In optimal control, the candidate spaces are so-called \emph{Sobolev spaces} \cite{RossSobolev,Maurer,MR4046772,Hager2000}, introduced later. When the optimization problem features constraints, we use the notation $\cB$ for the set of all candidates that are feasible w.r.t. satisfying the constraints. Among all feasible candidates, candidates that locally minimize the objective are called minimizers and are denoted with superscript $\star$.

\begin{figure}
	\centering
	\includegraphics[width=0.7\linewidth]{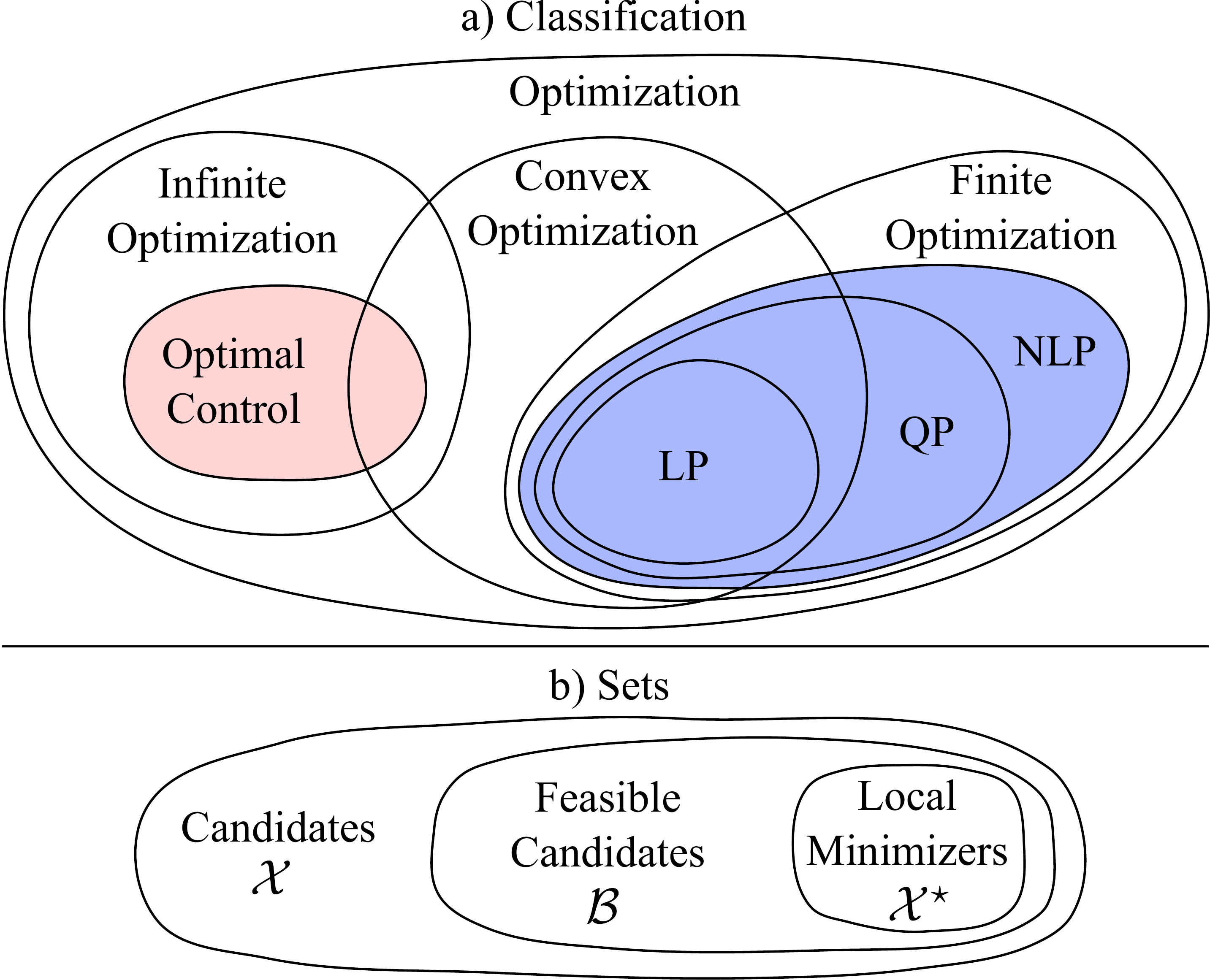}
	\caption{(a) Classification of types of optimization problems. (b) Sets of candidates/minimizers in optimization.}
	\label{fig:optimizationterminology}
\end{figure}

\subsection{Example of a Direct Transcription Method}\label{sec:example:OCPsolver}
In rare cases, optimal control problems can be solved with pen and paper. This is in fact the case for problem~\eqref{eqn:exampleOCP}. The exact minimizer is $y^\star(t)=10\cdot \cosh(\sqrt 2 \cdot t) - 9.4292... \cdot \sinh( \sqrt{2} \cdot t)$, and $u^\star(t)=\dot{y}^\star(t)+y^\star(t)$. This minimizer is shown in the left of Figure~\ref{fig:speedchangeproblem}. The figure also shows a numerical minimizer $y_h^\star,u_h^\star$ in the right.
Engineers and other applied mathematicians are very interested in optimal control solutions because they provide critical insight into the efficient operation of assets. In the given example, the optimal control solution provides the counter-intuitive insight that it is most wear-economic to first decelerate before accelerating at a higher rate.

In practice, most optimal control problems can only be solved numerically. This section presents how \eqref{eqn:exampleOCP} can be solved numerically for $y_h^\star,u_h^\star$. The presentation uses a numerical method called \emph{direct transcription}. In the example, the particular method used within the direct transcription is the explicit Euler method. 

Direct methods are beneficial when numerical solutions must be found without detailed prior knowledge on what the solution might look like (in terms of, e.g., its shape or any properties). This thesis' main focus is on direct methods. Section~\ref{sec:literatureReview:Overview} will provide a literature review, where we also present and compare alternative methods for the numerical solution of optimal control problems.

\begin{figure}[!htbp]
	\centering
	\includegraphics[width=1\linewidth]{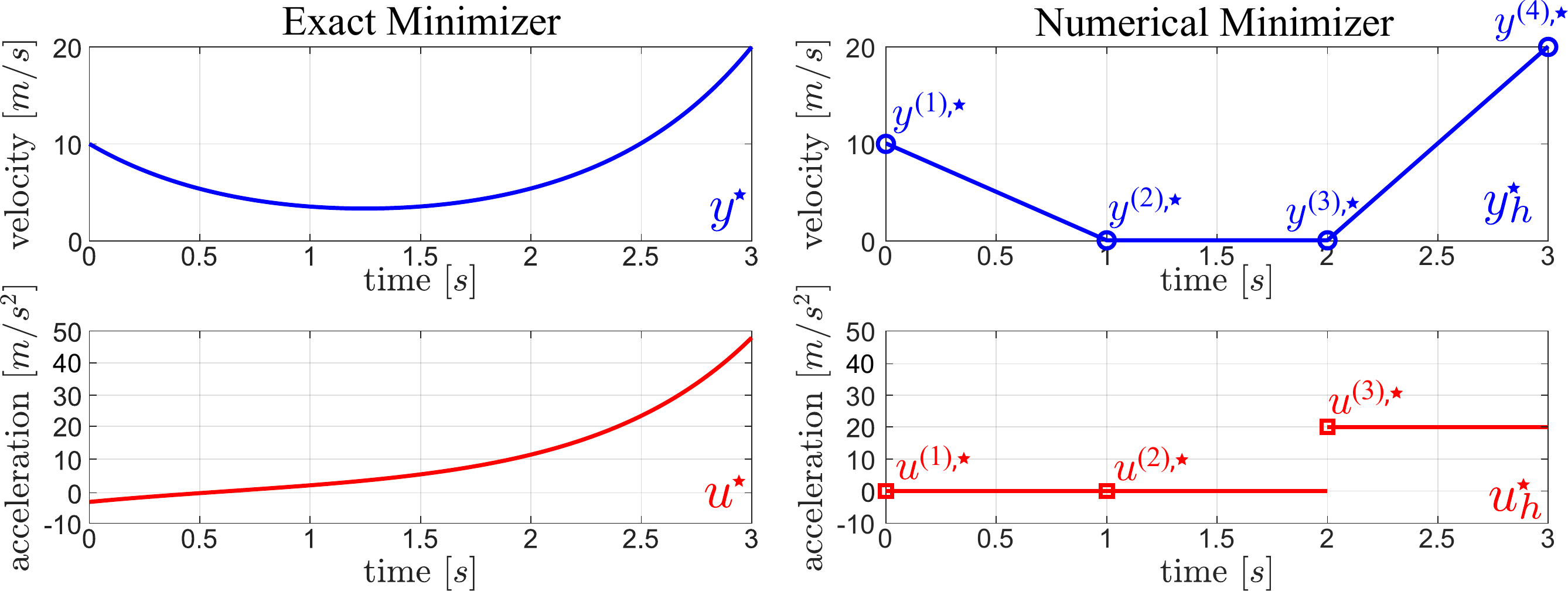}
	\caption{Left: Exact solution to problem~\eqref{eqn:exampleOCP}. Right: Numerical solution to problem~\eqref{eqn:exampleOCP} by means of direct transcription via the explicit Euler method.}
	\label{fig:speedchangeproblem}
\end{figure}

\paragraph{Overview}
Figure~\ref{fig:optimizationterminology} shows the classes of optimal control in red and of NLP in blue. The key idea in direct transcription is to approximate the optimal control problem \eqref{eqn:exampleOCP} with an NLP. The NLP in turn can be solved with methods like gradient-descent. The gradient-descent solution in turn can be used to construct $y^\star_h, u^\star_h$. The following example guides the reader through this process. For the purpose of this example, we use the explicit Euler method. More sophisticated methods are used in practice. These are introduced later in Section~\ref{sec:DirectCollocation}.

\paragraph{Approximating the Optimal Control Problem with the Explicit Euler Method}
The explicit Euler method uses a step-size $h=T/N$ for some $N\in\N$ to approximate differentials. Here, we do this for $N=3$. The differentials in \eqref{eqn:exampleODE} are then approximated with the finite difference approximations
\begin{align*}
\frac{y((i+1) \cdot h) - y(i \cdot h)}{h} \approx \dot{y}(i \cdot h)\qquad \text{ for }i=0,1,\dots,N-1\,.
\end{align*}
We use nodal approximations $y^{(i)} \approx y((i-1) \cdot h)$ and $u^{(i)} \approx u((i-1) \cdot h)$. Inserting the finite difference approximations and nodal approximations, the differential equation~\eqref{eqn:exampleODE} is replaced with
\begin{align*}
\frac{y^{(i+1)} - y^{(i)}}{h} = u^{(i)} - y^{(i)} \qquad \text{ for }i=1,2,\dots,N\,.
\end{align*}

\paragraph{Transcription into NLP}
Using the above explicit Euler approximation and left Riemann sums for the integral~\eqref{eqn:exampleIntegral}, the problem~\eqref{eqn:exampleOCP} reduces into:
\begin{equation}
\label{eqn:ExampleEuler:NLP}
\left\lbrace
\begin{aligned}
&\operatornamewithlimits{min}_{y^{(1)},y^{(2)},y^{(3)},y^{(4)},u^{(1)},u^{(2)},u^{(3)} \in \R}	& h \cdot &\left(\, \sum_{i=1}^{N}\big(y^{(i)}\big)^2 + \big(u^{(i)}\big)^2\, \right) & &\\
& \text{s.t.}  						& y^{(1)}&= 10\,,\\
& 									& y^{(4)}&= 20\,,\\
& 									& \frac{y^{(2)}-y^{(1)}}{h}&=u^{(1)}-y^{(1)}\,,\\
& 									& \frac{y^{(3)}-y^{(2)}}{h}&=u^{(2)}-y^{(2)}\,,\\
& 									& \frac{y^{(4)}-y^{(3)}}{h}&=u^{(3)}-y^{(3)}\,.
\end{aligned}
\right\rbrace
\end{equation}
This is an NLP (actually also a QP) with a solution $\bx=(y^{(1)},y^{(2)},y^{(3)},y^{(4)},u^{(1)},u^{(2)},u^{(3)}) \in \R^7$. A local minimizer $\bx^\star$ can be computed with numerical optimization algorithms. These algorithms are presented in Section~\ref{sec:NLP}. We denote the values in $\bx^\star$ with $y^{(1),\star},y^{(2),\star},y^{(3),\star},y^{(4),\star},u^{(1),\star},u^{(2),\star},u^{(3),\star}$.

The numerical minimizer $y_h^\star,u^\star_h$ of the optimal control problem \eqref{eqn:exampleOCP} in the right of Figure~\ref{fig:speedchangeproblem} can be constructed from the values  $y^{(1),\star},y^{(2),\star},y^{(3),\star},y^{(4),\star},u^{(1),\star},u^{(2),\star},u^{(3),\star}$ of the numerical minimizer of~\eqref{eqn:ExampleEuler:NLP}. The locations of these values are indicated with little circles and squares.

\paragraph{Solving the Optimization Problem with the Gradient-Descent Method}
For the purpose of this demonstration, we want to solve the NLP~\eqref{eqn:ExampleEuler:NLP} with accessible methods. By eliminating the variables $y^{(1)},u^{(1)},u^{(2)},y^{(4)},u^{(3)}$ as follows,
\begin{align*}
y^{(1)}&=10\,, & u^{(1)}&=y^{(1)} + \frac{y^{(2)}-y^{(1)}}{h} = 10 + \frac{y^{(2)}-10}{1} = y^{(2)}\,,\ u^{(2)} = y^{(2)} + \frac{y^{(3)}-y^{(2)}}{h} = y^{(3)}\,,\\ 
y^{(4)}&=20\,, & u^{(3)}&= y^{(3)} + \frac{y^{(4)}-y^{(3)}}{h} = y^{(4)} = 20\,,
\end{align*}
we obtain an unconstrained minimization problem in the remaining two variables $y^{(2)},y^{(3)}$. Arranging these into a vector $\bx = (y^{(2)},y^{(3)})$, the objective function becomes \eqref{eqn:examplefMin}. Figure~\ref{fig:exampleeuleroptim}~(b) shows the minimization procedure of \eqref{eqn:examplefMin} with the gradient-descent method. Once we know $y^{(2),\star},y^{(3),\star}$, we can compute all the other values and construct the solution in the right of Figure~\ref{fig:speedchangeproblem}. The interpolation of the values $y^{(1)},\dots,y^{(4)}$ and $u^{(1)},\dots,u^{(3)}$ is according to \cite{BettsChap2}.

\paragraph{Accuracy of the Numerical Minimizer}
Judging from Figure~\ref{fig:speedchangeproblem}, the numerical solution $y_h^\star,u_h^\star$ is rather inaccurate. This is so because in this example $h=1$ is relatively large. We hope that for smaller values of $h$ the numerical solution $y^\star_h,u^\star_h$ converges to the exact minimizer $y^\star,u^\star$. The following observation motivates this.

The exact control solution to problem~\ref{eqn:exampleOCP} is $u^\star(t)=4.7129... \cdot \sinh(\sqrt 2 \cdot t)- 3.3349... \cdot \cosh(\sqrt 2 \cdot t)$, hence $y_h^\star$ should solve the initial value problem~\eqref{eqn:exampleIvp} with $y_\text{ini}=10$. The explicit Euler solutions in  Figure~\ref{fig:exampleeuleroptim}~(a) become more accurate as $h$ decreases. Likewise, the intention behind direct transcription is that the numerical minimizer $y^\star_h,u^\star_h$ converges to the exact minimizer $y^\star,u^\star$ as $h \rightarrow 0$.

\section{Scope of this Thesis}\label{sec:Scope}
Figure~\ref{fig:frameworkdirecttranscriptionv2} summarizes the previous section: Optimal control problems have exact minimizers that are often impractical to compute analytically. To find a solution numerically, we use a direct transcription method. This gives us an NLP, whose solution we can practically compute. The NLP's solution gives us a numerical minimizer, which hopefully resembles the exact minimizer accurately. Different transcription methods result in different accuracies, and some methods do sometimes fail. Researchers and practitioners care about guarantees that can be given on the accuracy of a numerical solution for a given direct transcription method. Hence, this thesis is ultimately about the proposal of one particular direct transcription method that guarantees good accuracy under a large number of circumstances.
\begin{figure}[tb]
	\centering
	\includegraphics[width=0.85\linewidth]{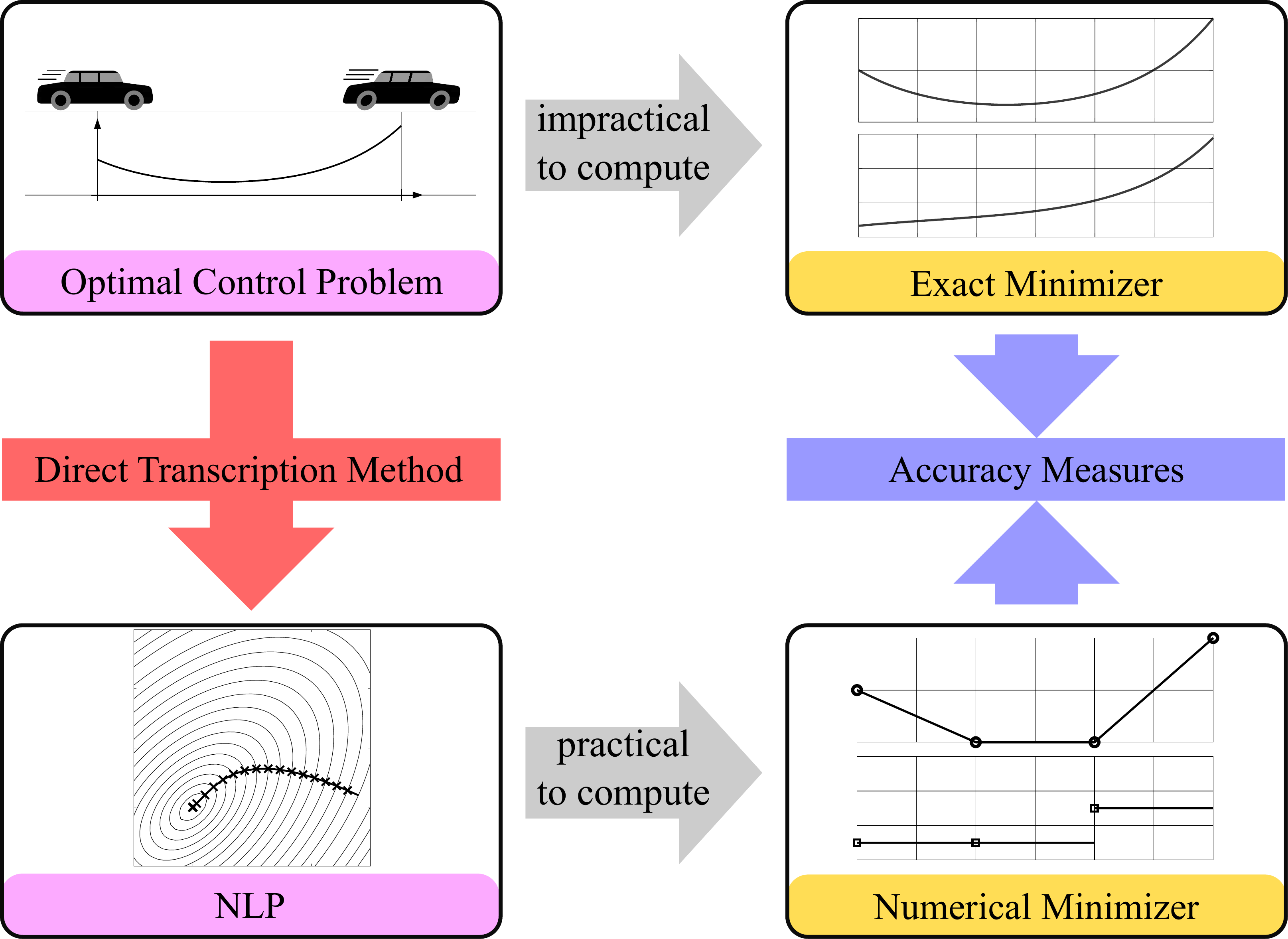}
	\caption{Flow chart of numerically solving optimal control problems via direct transcription: The direct transcription approximates the optimal control problem with an NLP. The solution of the NLP is used as an approximate solution of the optimal control problem. The accuracy of a numerical minimizer in comparison to an exact minimizer can be quantified using various measures. We introduce measures in Section~\ref{sec:ConvMeasures}.}
	\label{fig:frameworkdirecttranscriptionv2}
\end{figure}

The direct transcription method that we propose is a quadrature penalty method, whereas the current state of the art in the literature are collocation methods. This is why the scope of this thesis is to great extent on these two classes of direct transcription methods: collocation, detailed in Section~\ref{sec:DirectCollocation}; and quadrature penalties, detailed in Section~\ref{sec:DQIPM}.

Quadrature penalty methods and collocation methods use different principles to accomplish the same tasks. These are:
\begin{enumerate}[(a)]
	\item \emph{Approximating} the time-dependent functions $y,\,u$. As depicted in Figure~\ref{fig:speedchangeproblem}, the explicit Euler method uses piecewise linear functions for $y$ and piecewise constant functions for $u$.
	\item \emph{Relaxing} the differential equations that were actually supposed to be satisfied $\tforall\ t \in [0,T]$. As apparent from the NLP~\eqref{eqn:ExampleEuler:NLP} and Figure~\ref{fig:exampleeuleroptim}~(a), the explicit Euler solution $y^\star_h,\,u^\star_h$ is relaxed in the sense that it satisfies the differential equation only at the three points $t=0$, $t=1$, and $t=2$. The relaxation is necessary because the solution to \eqref{eqn:exampleODE} is not piecewise linear.
\end{enumerate}

Figure~\ref{fig:methodclasses} illustrates the tasks of (a)~approximation and (b)~relaxation. The relaxation uses (i)~a number of points in which the differential equations are solved to (ii)~a certain accuracy. We will come back to this diagram in Section~\ref{sec:future:QuadPen}.

\subsection{Direct Collocation Methods}
Most direct transcription methods are generalizations of the explicit Euler method: They first (a)~approximate $y,u$ with piecewise polynomial functions instead of piecewise linear functions. They then (b)~relax the differential equations to be satisfied at only a (i)~finite number of points $t$ in which they are solved (ii)~exactly.

\begin{figure}
	\centering
	\includegraphics[width=1\linewidth]{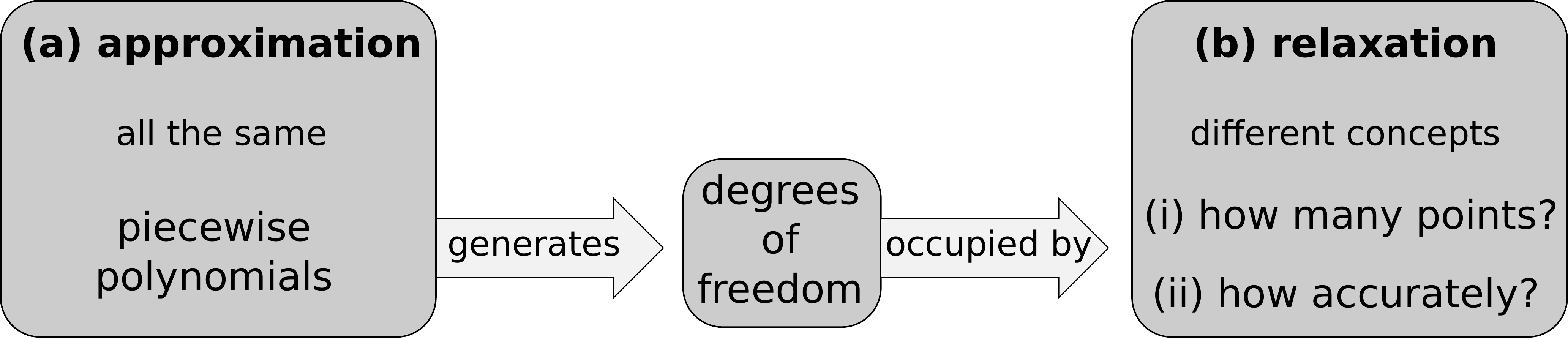}
	\caption{Tasks of approximation and relaxation. The approximation typically uses piecewise polynomials. These generate degrees of freedom. These degrees of freedom are occupied by the relaxed differential equations. Direct collocation methods vary in (i)~the number of points and (ii)~the accuracy in these points.}
	\label{fig:methodclasses}
\end{figure}

The principle of solving differential equations exactly at a finite number of points is called \emph{collocation}. This principle is used in the explicit \cite{Maurer} and implicit Euler methods \cite{MR4046772}, in Runge-Kutta methods \cite{Hager2000}, in multi-step methods \cite{MR500418}, in single \cite{MR1990061} and multiple shooting \cite{BOCK1984} methods, and in a class of methods that is called collocation methods \cite{BettsChap2}. An overview of various numerical methods for optimal control is given in Section~\ref{sec:literatureReview}.

Today, \emph{direct collocation methods} are considered as the state of the art for the numerical solution of optimal control problems. They will be presented in detail in Section~\ref{sec:DirectCollocation}.

As we discussed, the intention behind direct transcription methods is that the numerical minimizer $y^\star_h,u^\star_h$ converges to the exact minimizer $y^\star,u^\star$ as $h \rightarrow 0$. Unfortunately, there are various practical kinds of optimal control problems for which the principle of collocation does not converge, including:
\begin{itemize}
	\item nonlinear optimal control problems;
	\item consistently over-determined optimal control problems;
	\item singular-arc optimal control problems.
\end{itemize}
The lack of convergence in collocation methods for these classes of problems is analyzed in Section~\ref{sec:Misconceptions} and in Section~\ref{sec:NumExp:Round2}.

\subsection{Quadrature Penalty Methods}\label{sec:future:QuadPen}
Coming back to Figure~\ref{fig:methodclasses}, all direct transcription methods (a)~approximate $y,u$ with piecewise polynomial functions, just like collocation methods. However, there are different classes of direct transcription methods with respect to the (b)~relaxation of the differential equations. These different classes can be located in a two-dimensional graph that answers the following two questions from Figure~\ref{fig:methodclasses}: (i)~At how many points should we satisfy the differential equations? (ii)~To which accuracy should we satisfy the differential equations at these points? The answers to these questions are opposed: Any approximation gives us a limited number of degrees of freedom. However, if we wish to satisfy the differential equations at more points then we cannot satisfy them as accurately at each of these points. Diagrams of meeting conflicting goals are called \emph{Pareto fronts}.

Figure~\ref{fig:paretocurveqpm} presents this Pareto front for the above two questions: The choice of an (a)~approximation determines a number of \emph{degrees of freedom}. For example, the explicit Euler method with $N=100$ has $100$ degrees of freedom (DOF). As the red curve shows, we have to decide how to invest these DOF. We can either satisfy the differential equations at (i)~only $100$ points but with (ii)~exact accuracy\ ---\,this is what collocation methods do. Alternatively, we can satisfy the differential equations at (i)~many more points but to (ii)~only moderate accuracy\ ---\,this is what the so-called \emph{quadrature penalty methods} do.

Quadrature penalty methods live between the extremes of collocation methods and \emph{integral penalty methods}. Integral penalty methods are theoretical methods that satisfy the differential equations at (i)~all points but to (ii)~only low accuracy. Collocation methods and integral penalty methods are extreme in the sense that they live at the ends of the Pareto front: Collocation methods solve equations \emph{exactly}, whereas integral penalty methods solve equations \emph{everywhere}.

Collocation methods may fail to converge, and integral penalty methods may be impractical (e.g., when the exact solution to the integral cannot be found). The promise with quadrature penalty methods is that they meet both ends: they do converge and they are practical. When choosing many quadrature points, the quadrature penalty method approximates an integral penalty method. When choosing only a few quadrature points, the quadrature penalty method resembles a collocation method. This theoretically interesting because we can now understand the two extreme classes of methods as one class and unify their mathematical analysis. The practical interest for these methods is in their efficiency, as documented in Section~\ref{sec:NumExp:Round1}, and in their reliability, as illustrated in Section~\ref{sec:NumExp:Round2}. Convergence of high order (hence numerical efficiency) and under mild assumptions (hence numerical reliability) for this method is proven in Part~\ref{part:convproof}.

\begin{figure}[!htbp]
	\centering
	\includegraphics[width=0.6\linewidth]{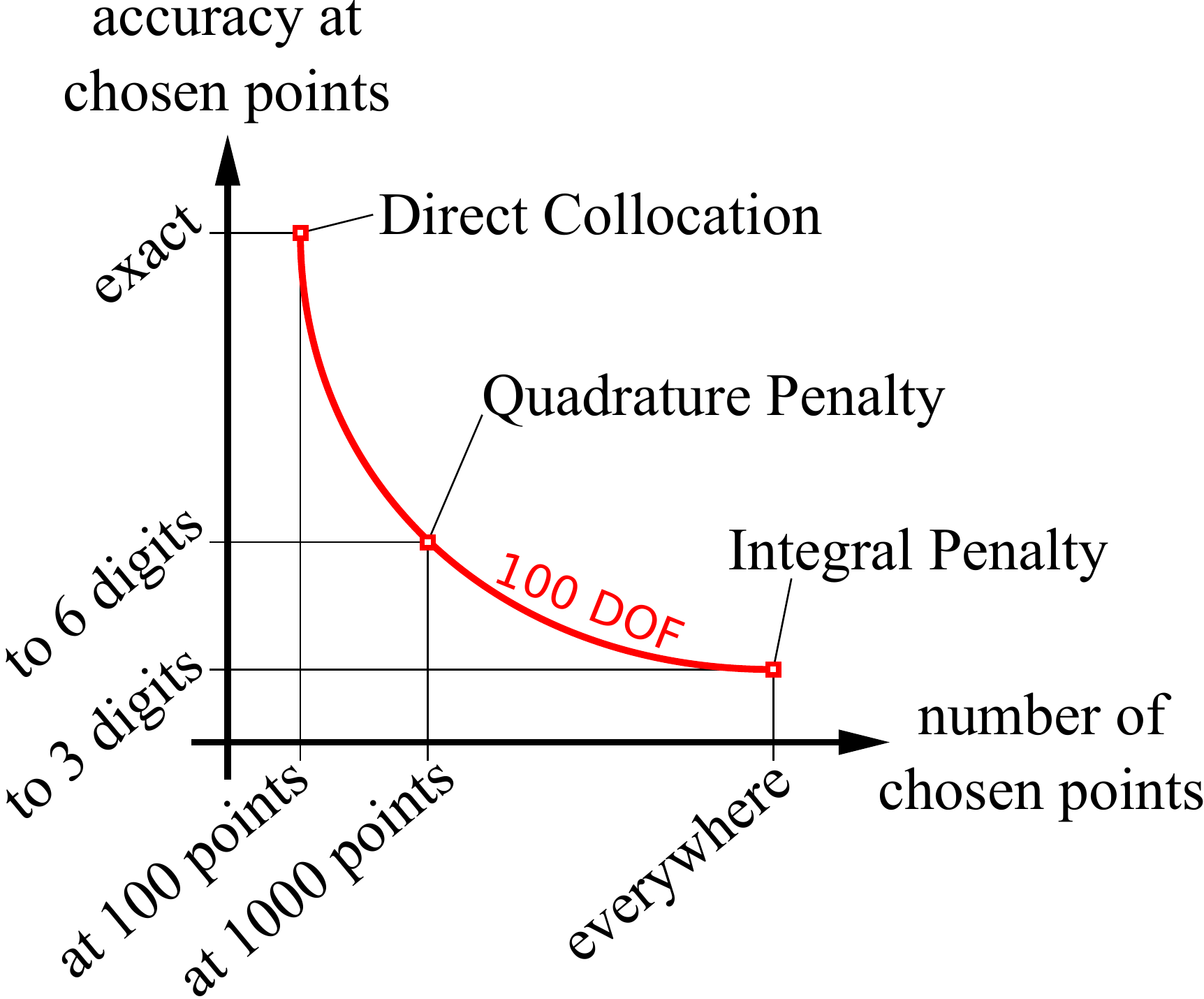}
	\caption{Pareto Front: For a given amount of degrees of freedom (DOF), it is possible to satisfy the equations with $f_1,f_2$ exactly at a finite number of points. Instead, when giving up on exactness, one can solve the equations approximately at a larger number of points. Quadrature penalty methods may be advantageous over direct collocation methods in the sense that they yield convergence of the integral residual under milder assumptions.}
	\label{fig:paretocurveqpm}
\end{figure}

\subsection{Contributions}
There are three main contributions in this thesis.

\subsubsection{\underline{Quadrature Penalty Direct Transcription Method}}
\paragraph{{Accessible Presentation}}
Our primary contribution is an accessible presentation of a reliable direct transcription method for the solution of a broad class of optimal control problems. This class are the \emph{direct quadrature penalty methods}. As depicted in Figure~\ref{fig:paretocurveqpm}, these methods generalize collocation methods and integral penalty methods. Part~\ref{part:QPM} presents these methods, illustrates their construction, and provides examples. We also discuss practical benefits in comparison to direct collocation methods. A wealth of numerical experiments demonstrate the practicality, performance, and robustness of quadrature penalty methods.

\paragraph{{Convergence Analysis}}
Our second contribution is a complete convergence analysis for quadrature penalty methods. This analysis is given in Part~\ref{part:convproof} and is based on only three mild assumptions, given in Section~\ref{sec:assumptions}. These are the mildest assumptions to date for a convergence proof of direct transcription methods, as confirmed through our extensive literature review.

\paragraph{{Numerical Demonstration}}
Our third contribution is the detailed illustration and discussion of optimal control examples where popular direct transcription methods struggle. These examples are given in Section~\ref{sec:Misconceptions} and Section~\ref{sec:NumExp:Round2}. These examples help motivate quadrature penalty methods and understand how they gain benefits in reliability. Prior to the presentation of quadrature penalty methods, we provide in Part~\ref{part:collocation} the necessary background on optimal control problems, on literature results, the context of different method classes and their relationships, in an accessible way.

\subsubsection{\underline{Modified Augmented Lagrangian}}
The direct quadratic penalty transcriptions, that this thesis is focused on, result in NLP that have large quadratic penalty terms. In order to minimize these special NLP reliably in practice, a modification of the augmented Lagrangian method is proposed in Chapter~\ref{chap:MALM} that works by solving a sequence of moderate penalty problems. Each of the moderate penalty problems can be solved reliably with general-purpose numerical optimization software. In contrast, solving one optimization problem with a large penalty term may cause general-purpose software to fail.

\subsubsection{\underline{Integral Penalty-Barrier Direct Transcription Method}}
Large sparse NLPs are typically solved with interior-point methods (IPMs). More accurate transcription result in larger NLPs. It is often observed that larger NLPs require more computation cycles (i.e., iterations of the IPM) to converge. The penalty-barrier method seeks to avoid this effect: The penalty functional of the quadratic penalty direct transcription is replaced with a penalty-\emph{barrier} functional. Minimizing said functional directly with a theoretical Newton/gradient-descent-type algorithm would result in a theoretical number $N$ of IPM iterations. Practical software on a transcription of finite dimensions may hopefully converge to this finite number $N$ of iterations as the dimension of the NLP grows. Again, as for the quadratic penalty method, the penalty-barrier method is presented in Chapter~\ref{chap:PBF_SICON} with a full proof of convergence and order of convergence that work under still relatively mild assumptions.

\chapter{The Numerical Optimal Control Problem}\label{sec:Formalities}
This chapter provides a mathematically precise format for optimal control problems and introduces the necessary function spaces. Afterwards, in alignment with Figure~\ref{fig:frameworkdirecttranscriptionv2}, we discuss suitable measures for the assessment of the accuracy of a numerical solution.

\section{Format of Optimal Control Problems}
The electric vehicle problem~\eqref{eqn:exampleOCP} was just one very particular example of an optimal control problem. This section describes a more general format for optimal control problems. We first explain the solution format and then the problem format.

\subsection{Solution Format}\label{sec:SolutionFormat}
The solution of an optimal control problem consists of two functions $y,u$. They are called \emph{states} and \emph{controls}, respectively. They must be of the following format:
\begin{align*}
y& \,: [0,T] \rightarrow \R^{n_y}\,,\ t \mapsto y(t)\,;\qquad & u& \,: [0,T] \rightarrow \R^{n_u}\,,\ t \mapsto u(t)\,.
\end{align*}
The integers $n_y,n_u\in \N$ are the number of states and number of controls, respectively. In the electric vehicle problem~\eqref{eqn:exampleOCP}, we had only $n_y=1$ state and $n_u=1$ control. This gives an informal description of the candidate space $\cX$ so that $(y,u) \in \cX$. A formal definition is in terms of Sobolev spaces and is given in Section~\ref{sec:SoboDef}.

\paragraph{Properties of States and Controls}
In an optimal control problem, the state trajectories may have edges (in particular: an countable infinite number of discontinuities in the first derivative, as formalized in Section~\ref{sec:CandidateSpaces} below via the Lebesgue measure), but they must be continuous everywhere. This is so because differential equations like \eqref{eqn:exampleODE} demand that states evolve continuously over time. In contrast, controls may have discontinuities. This is to enable sudden control actions. An example of a discontinuous control is given in the lower right of Figure~\ref{fig:speedchangeproblem}.

\subsection{Problem Format}\label{sec:ProbStat}
Many optimal control problems can be posed in Bolza form with fixed initial and final time \cite{BettsChap2}. In favour of less notation and ease of analysis, this thesis considers control problems in the following form:
\begin{equation}
\label{eqn:OCP}
\left\lbrace
\begin{aligned}
&\operatornamewithlimits{min}_{(y,u) \in \cX}  			& M \big(\,y(0),y(T)\,\big)&			& & \\[10pt]
& \text{subject to} & b\big(\,y(0),y(T)\,\big)  		&= \bO\,,\\[10pt]
& 					& f_1\big(\,y(t),u(t),t\,\big)  	&=\dot{y}(t)   		&\tforall\ &t \in [0,T]\,,\\
& 					& f_2\big(\,y(t),u(t),t\,\big) 		&=\bO 				&\tforall\ &t \in [0,T]\,,\\[10pt]
& 					& \yL(t) \leq y(t)  				&\leq \yR(t)\quad 	&\forall\ &t \in [0,T]\,,\\
& 					& \uL(t) \leq u(t)  				&\leq \uR(t)\quad 	&\forall\ &t \in [0,T]\,.
\end{aligned}
\right\rbrace\quad \begin{matrix}
\vspace{0mm}\\[-8pt]
(\ref{eqn:OCP}\text{:M})\\[16pt]
(\ref{eqn:OCP}\text{:b})\\[14pt]
(\ref{eqn:OCP}\text{:f1})\\[6pt]
(\ref{eqn:OCP}\text{:f2})\\[14pt]
(\ref{eqn:OCP}\text{:y})\\[6pt]
(\ref{eqn:OCP}\text{:u})
\end{matrix}
\end{equation}

Problem format~\eqref{eqn:OCP} consists of four row-blocks, separated by vertical margins. In the first block, (\ref{eqn:OCP}:M) states the objective $M$. In the second block, (\ref{eqn:OCP}:b) holds the boundary conditions $b$. The third row-block consists of differential equations (\ref{eqn:OCP}:f1) and algebraic equations (\ref{eqn:OCP}:f2). The last block expresses left and right bound constraints on the states (\ref{eqn:OCP}:y) and on the controls (\ref{eqn:OCP}:u).

\begin{remark}
	The Bolza form has an additional so-called \emph{Lagrange term}, i.e., an integral term in the objective. We saw such a term in example~\eqref{eqn:exampleOCP}, where the objective was not only on $y(0),y(T)$ but on an integral over $y,u$. Such a term is missing in (\ref{eqn:OCP}:M) for simplicity. A Lagrange term may be augmented to $M$ via the use of numerical quadrature (cf. first line in~\eqref{eqn:ExampleEuler:NLP}) or via the techniques described in \cite{BettsChap2}. Algebraic inequality constraints can be embedded into (\ref{eqn:OCP}:f2) and (\ref{eqn:OCP}:u) via use of so-called \emph{slack controls} \cite{BettsChap2}.
\end{remark}
\begin{remark}
	The differential and algebraic constraints may be violated at some points due to possible edges in $y$ and jumps in $u$. This is formally taken care of by the $\tforall$ notation. In contrast, for the bound constraints we can opt for the different notation $\forall\ t \in [0,T]$. This helps avoiding ambiguity with the error measure $\gamma$ in Section~\ref{sec:ConvMeasures}.
\end{remark}
\begin{remark}
	There exist numerical algorithms and analyses for optimal control problems of more general formats. Some formats include partial differential equations, unknown parameters, and random variables in the boundary conditions or differential and algebraic constraints. The format of the problem can have dramatic impacts on the efficiency and thus suitable choice of a numerical algorithm. Therefor, any problem statement that cannot be expressed in terms of \eqref{eqn:OCP} is beyond the scope of this thesis.
\end{remark}

\paragraph{Parameters and Functions in the Problem Format}
In problem format~\eqref{eqn:OCP}, the numbers $T \in \R_{>0}$, $n_y,n_u,n_c,n_b \in \N$ are given parameters and
\begin{align*}
M & : \R^{n_y} \times \R^{n_y} \rightarrow \R\,;\quad &
b & : \R^{n_y} \times \R^{n_y} \rightarrow \R^{n_b}\,;\\
f_1 & : \R^{n_y} \times \R^{n_u} \times [0,T] \rightarrow \R^{n_y}\,;\quad &
f_2 & : \R^{n_y} \times \R^{n_u} \times [0,T] \rightarrow \R^{n_c}\,;\\
\yL & : [0,T] \rightarrow \R^{n_y}\,;\quad &
\yR & : [0,T] \rightarrow \R^{n_y}\,;\\
\uL & : [0,T] \rightarrow \R^{n_u}\,;\quad &
\uR & : [0,T] \rightarrow \R^{n_u}
\end{align*}
are given functions that must satisfy $\yL(t)\leq\yR(t)$ and $\uL(t)\leq\uR(t)$ $\forall t \in [0,T]$, where $\leq$ is meant for each vector component.

In~\eqref{eqn:OCP}, the differential and algebraic equations have been split into two functions $f_1,f_2$ because this helps theoretical analysis. In other cases it is more handy instead to merge $f_1,f_2$ into the following notation:
\begin{align}
f\big(\dot{y}(t),y(t),u(t),t\big):=\begin{bmatrix}
f_1\big(y(t),u(t),t\big){}{}-{}\dot{y}(t)\\
f_2\big(y(t),u(t),t\big){}\phantom{{}-{}\dot{y}(t)}
\end{bmatrix}\,.\label{eqn:def:f}
\end{align}
Hence, \mbox{(\ref{eqn:OCP}:f1)--(\ref{eqn:OCP}:f2)} can be written equivalently as
\begin{align}
f\big(\dot{y}(t),y(t),u(t),t\big)=\bO \qquad \tforall\ t \in [0,T]\,. \tag{\ref{eqn:OCP}:f}\label{eqn:OCP:dae}
\end{align}

\subsection{Existence and Uniqueness of Solutions}
Typical literature in numerical mathematics is structured in three steps: First, there is a section to present the problem statement. Second, there is a section verifying the ``well-posedness'' of the stated problem. Finally, a numerical method is presented and its convergence is analyzed. Well-posedness means that the solution to a problem changes mildly when the problem-defining functions and data are perturbed.

Basically, if a problem is not well-posed then it cannot be treated numerically in a meaningful way. This is because the numerical method, as well as the floating-point arithmetic, by itself induce perturbations that could radically change the solution unless the problem is well-posed. Nonetheless, the subject of well-posedness is generally a part of the convergence analysis: Namely, if a numerical method converges under particular assumptions then these assumptions must imply the well-posedness of the problem.

For the problem \eqref{eqn:OCP}, which is the central problem statement of this thesis, it is clear that a solution might not exist. For instance, consider
\begin{align*}
\min y(0)\quad \text{subject to }\ y(0)=0\,,\quad y(1)=1\,,\quad \dot{y}(t)=0\,.
\end{align*}
Especially the theoretical realm of optimal control literature therefor considers rather restrictive problem statements, in order to enable the existence of clear conditions under which solutions exist and are well-posed. However, this has the disadvantage that practical optimal control problems are tedious to fit into the format of these literature's problem statements.

As mentioned before, the analysis on well-posedness is implied in the analysis on convergence. For the quadrature penalty method proposed in this thesis, we obtain convergence to a local solution $y^\star,u^\star$ when:
\begin{itemize}
	\item problem \eqref{eqn:OCP} has a global infimum;
	\item $y^\star,u^\star$ is feasible (needless to say);
	\item $M,f_1,f_2,b$ are point-wise local H\"older continuous in a neighborhood of $y^\star,u^\star$; and
	\item $\yL,\yR,\uL,\uR$ are bounded.
\end{itemize}
This does not mean that under these conditions $y^\star,u^\star$ is well-posed, i.e. would not change dramatically under small perturbations of $M,f_1,f_2,b$. Rather, it means that the convergence measures (discussed below) of the numerical solution will not be affected significantly under small perturbations of $M,f_1,f_2,b$ --unless the four above conditions are not all met.

Essentially, in this thesis we wanted to avoid the discussion of well-posedness altogether because we consider it unhelpful for any problem in general (not limited to optimal control): The real world dictates problems in a natural problem statement. Numerics must make sense of these problem statements regardless; not by restricting them but by devising suitable metrics that are well-posed under all possible practical scenarios. The four bulleted conditions above are not a restriction but merely a characterization of what a practical scenario is. Therefor, no compromises had to be made in order to arrive at a problem statement that meets the two goals of generality and numerical practicality. The only peculiarity is now that we do not use the difference between exact and numerical solution as convergence metric because indeed this would usually result in ill-posedness. Instead, we are going to measure the optimality gap and a residual measure of the constraint violations.

\section{Function Spaces in Optimal Control}\label{sec:SoboDef}
In the terminology of Figure~\ref{fig:optimizationterminology}, the description of properties of states and controls from Section~\ref{sec:SolutionFormat} above is an informal characterization of the candidate space $\cX$. Formal definitions of candidate spaces in the literature use either H\"older spaces or the so-called Sobolev spaces. Sobolev spaces are based on Lebesgue spaces. For accessibility and because definitions of Lebesgue and Sobolev norms vary in the literature, this section reviews H\"older spaces, Lebesgue spaces, and Sobolev spaces. At the end we give a formal definition of the candidate space $\cX$ that we use in \eqref{eqn:OCP}.

\subsection{H\"older Spaces}
H\"older spaces, Lebesgue spaces, and Sobolev spaces are \emph{function spaces}; i.e., spaces that contain functions. We care in particular about functions of the form
\begin{align*}
z\,:\ [0,T] \rightarrow \R^{n_z}\,,
\end{align*}
for some dimension $n_z \in \N$. The notation $z \in \cC^k$ may be familiar to express that $z$ be $k \in \N_0$ times \emph{continuously differentiable}.

A special form of continuity is \emph{H\"older continuity}. We say $z$ is $\lambda$-H\"older continuous of constant $\lambda \in (0,1]$ on the interval $I \subset \R$ if there is a constant $L \in \R_{>0}$ such that
\begin{align*}
\|z(\hat{t})-z(\check{t})\|_2 \leq L \cdot |\hat{t}-\check{t}|^\lambda \qquad \forall\ \hat{t},\check{t} \in I\,.
\end{align*}
We use the typical notation $\cC^{k,\lambda}(I)$ for the space of functions $z$ whose first $k \in \N_0$ derivatives are $\lambda$-H\"older continuous on $I$. The special case $\cC^{0,1}(I)$ is the space of functions $z$ that are \emph{Lipschitz continuous}, i.e. when $\lambda=1$. We use the short-hand $\cC^{k,\lambda}$ for $\cC^{k,\lambda}([0,T])$. In general, larger values of $\lambda$ mean smoother functions. Hence, H\"older continuity is a milder condition than Lipschitz continuity.

\subsection{Lebesgue Spaces}
H\"older spaces are characterized by values of the function $z$ at individual points $t \in [0,T]$. Lebesgue spaces and Sobolev spaces are fundamentally different. They only depend on integral-norms over $z$. We introduce \emph{Lebesgue spaces} $L^d$ for $d \in \N \cup \lbrace \infty \rbrace$ as the spaces of all functions $z$ that are bounded in the norm
\begin{align*}
\|z\|_{L^d} &:= \sqrt[\leftroot{-2}\uproot{2}d]{\int_0^T \|z(t)\|_d^d\,\mathrm{d}t\,}\,, \label{eqn:LpNorm}
\end{align*}
where $\|\bv\|_d:=\sqrt[d]{\sum_{i=1}^{n} |v_{[i]}|^d\,}$ for vectors $\bv=(v_{[1]},\dots,v_{[n]}) \in \R^n$.
In the limit $d=\infty$, the norms are
\begin{align*}
\|z\|_{L^\infty} &:= \operatornamewithlimits{ess\,sup}_{t \in [0,T]} \|z(t)\|_\infty\,,\\
\|\bv\|_{\infty} &:= \operatornamewithlimits{max}_{i \in \lbrace 1,\dots,n\rbrace } |v_{[i]}|\,,\\
\end{align*}
Like the supremum, the essential supremum always exists \cite[p.~172]{essSup}. The essential supremum induces the notion of \emph{essential boundedness}. For instance, the indicator function $\Xi_\Q$ is essentially bounded by zero. The space $L^2$ is a Hilbert space with scalar product
\begin{align*}
\langle w,z \rangle := \int_0^T w(t) \cdot z(t) \,\mathrm{d}t\,.
\end{align*}

\subsection{Sobolev Spaces}
We introduce \emph{Sobolev spaces} $W^{k,d}$ as the spaces of all $k$ times weakly differentiable functions $z$ that are bounded in the norm
\begin{align}
\|z\|_{W^{k,d}} := \sum_{j=0}^k \left\| \frac{\mathrm{d}^jz}{\mathrm{d}t^j} \right\|_{L^d}\,.
\end{align}
Therein, $\frac{\mathrm{d}^jz}{\mathrm{d}t^j}$ denotes the $j^{\text{th}}$ \emph{weak derivative} \cite{Brezis} of $z$.\footnote{For readers unfamiliar with a weak derivative, for simplicity, we can pretend that it is the conventional derivative and still follow the conceptual ideas. No emphasis on weak versus strong derivative is needed for the remainder of this thesis.} We denote with $\dot{z}$ the first weak derivative of $z$. Hence, Sobolev spaces extend the idea of Lebesgue spaces to derivatives. 

\subsection{Candidate Space}\label{sec:CandidateSpaces}
\paragraph{In the Literature}
Some literature use $\cX = \cC^{1} \times \cC^0$ \cite{MR3983447,Maurer}, i.e., $y$ continuously differentiable and $u$ continuous. Other choices are $\cX = W^{1,\infty} \times L^\infty$ \cite{Hager2000,SchwartzPolak:1996} or $\cX = W^{2,\infty} \times W^{1,\infty}$ \cite{MR4046772,MR1770350} or $\cX = W^{2,\infty} \times \cC^0$ \cite{gongEtAl:2006,gongEtAl:2008}. All of these choices imply $\|\dot{y}\|_{L^\infty}<\infty$, i.e., require $y$ to be Lipschitz-continuous. A less restrictive choice is $\cX = W^{1,2} \times L^2$ in \cite{DeJulio70,MR0271512}, only requiring $\dot{y} \in L^2$.

\paragraph{Our Definition}
The choice $\cX = W^{1,2} \times L^2$ implies $y \in L^\infty$ \cite[Thm.~8.8]{Brezis}, however $u \in L^2$ may not be essentially bounded. This complicates the assumptions on the growth of $f_1,f_2$, because $u$ may be unbounded at some points $t \in [0,T]$. To avoid this issue, we make use of the following space:
\begin{align*}
\cX := W^{1,2} \times L^\infty\,,
\end{align*}
meaning that $y$ and $u$ are essentially bounded. We can hence formalize or space for $\cX$ equivalently in the following more insightful form:
\begin{align}
\cX = \Big\lbrace\ (y,u)\ \Big\vert\ y \in L^\infty,\ u \in L^\infty,\ \dot{y} \in L^2\ \Big\rbrace\,. \label{eqn:defX}
\end{align}
In preparation for the convergence analysis, we also define the norm
\begin{align}
\|(y,u)\|_\cX := \|\dot{y}\|_{L^2} + \|(y,u)\|_{L^\infty}\,, \label{eqn:normX}
\end{align}
where we mean the $L^\infty$-norms for elements in $\cX$ by stacking $y,u$ into a vector:
\begin{align}
\|(y,u)\|_{L^\infty} &= \operatornamewithlimits{ess\,sup}_{t \in [0,T]} \left\|\begin{bmatrix}
y(t)\\
u(t)
\end{bmatrix}\right\|_\infty\label{eqn:normX:inf}\,.
\end{align}

\section{Accuracy Measures}\label{sec:ConvMeasures}
Figure~\ref{fig:frameworkdirecttranscriptionv2} presents the flow chart of the conceptual idea behind direct transcription: By approximating the optimal control problem with an NLP, we obtain an approximate numerical minimizer $y_h^\star,u_h^\star$ to an exact minimizer $(y^\star,u^\star) \in \cX^\star$. Accuracy measures can be used to quantify how good the numerical minimizer is in comparison to the exact one.

\paragraph{Connection between Optimality Conditions and Error Measures}
A conventional analysis of optimal control would pose \eqref{eqn:OCP} in a format that is well-posed under some assumptions. The analysis would then establish a set of equations that are necessarily satisfied by every local solution of the optimal control problem. These equations are called Pontryagin equations (cf. Section~\ref{sec:literatureReview}). A numerical method is then devised to solve these equations approximately. The accuracy of this approximation is quantified and a bound of the deviation between numerical and exact solution is derived.

This thesis takes an entirely different approach. We do not use optimality conditions for the optimal control problem. Hence, we also do not care how the solution accuracy to such optimality conditions relates to the solution accuracy with respect to the optimal control problem. Furthermore, we do not use an error as a metric but instead we use a gap. This is explained and motivated in the following.

\subsection{Motivation of Gap Measures}
For optimization problems in general, we can use either of two metrics when measuring the accuracy of a numerical minimizer:
\begin{enumerate}
	\item Error: the distance between the numerical and the exact minimizer.
	\item Gap: the difference between the objective values of the numerical and the exact minimizer.
\end{enumerate}
In optimization problems, and thus also in optimal control problems, the error does not necessarily converge. Hence, the gap may be the only suitable measure. To see why this is so, we provide some illustrations.

\paragraph{Illustration of Minimizers}
Minimizers can be strict, non-strict, or unique \cite[p.~13~\&~Lem~4.7~ii]{Nocedal}. Figure~\ref{fig:minimizertypes} illustrates general challenges with minimizers of optimization problems: (a) Usually, problems can have several different exact minimizers. These can have different properties. (b) For instance, minimizers are called non-strict when the objective has a plateau. In the depicted example, all the points on the blue interval are local minimizers. (c) The existence of a unique minimizer can only be asserted under special circumstances, such as strict convexity. For instance, a \emph{sufficient} condition for convexity in optimal control is called coercivity \cite{Maurer,Hager2000,RaoHager2018}.

Most often, numerical algorithms can only compute local minimizers. As illustrated in (d), the numerical minimizer will converge to an exact local minimizer when that local minimizer is strict; thus the error converges. In contrast, in the case (e) of a non-strict minimizer, only the gap converges.

\paragraph{Consequences for Measurement}
Because the error may not converge, we opted for the term \emph{accuracy measures} instead of convergence measures. We will only consider the one-sided gap metric in (f): For a given exact minimizer $\bx^\star$ under consideration, we measure the value of $\delta \in \R_{\geq 0}$ such that the objective of the numerical minimizer is bounded by $\bf(\bx^\star)+\delta$. The possible locations of all sufficiently optimal numerical minimizers with respect to $\bx^\star$ and $\delta$ are depicted in violet. Nothing prevents the numerical minimizer to converge to an even smaller local minimum than $\bf(\bx^\star)$, which is beneficial.

\begin{figure}
	\centering
	\includegraphics[width=1.0\linewidth]{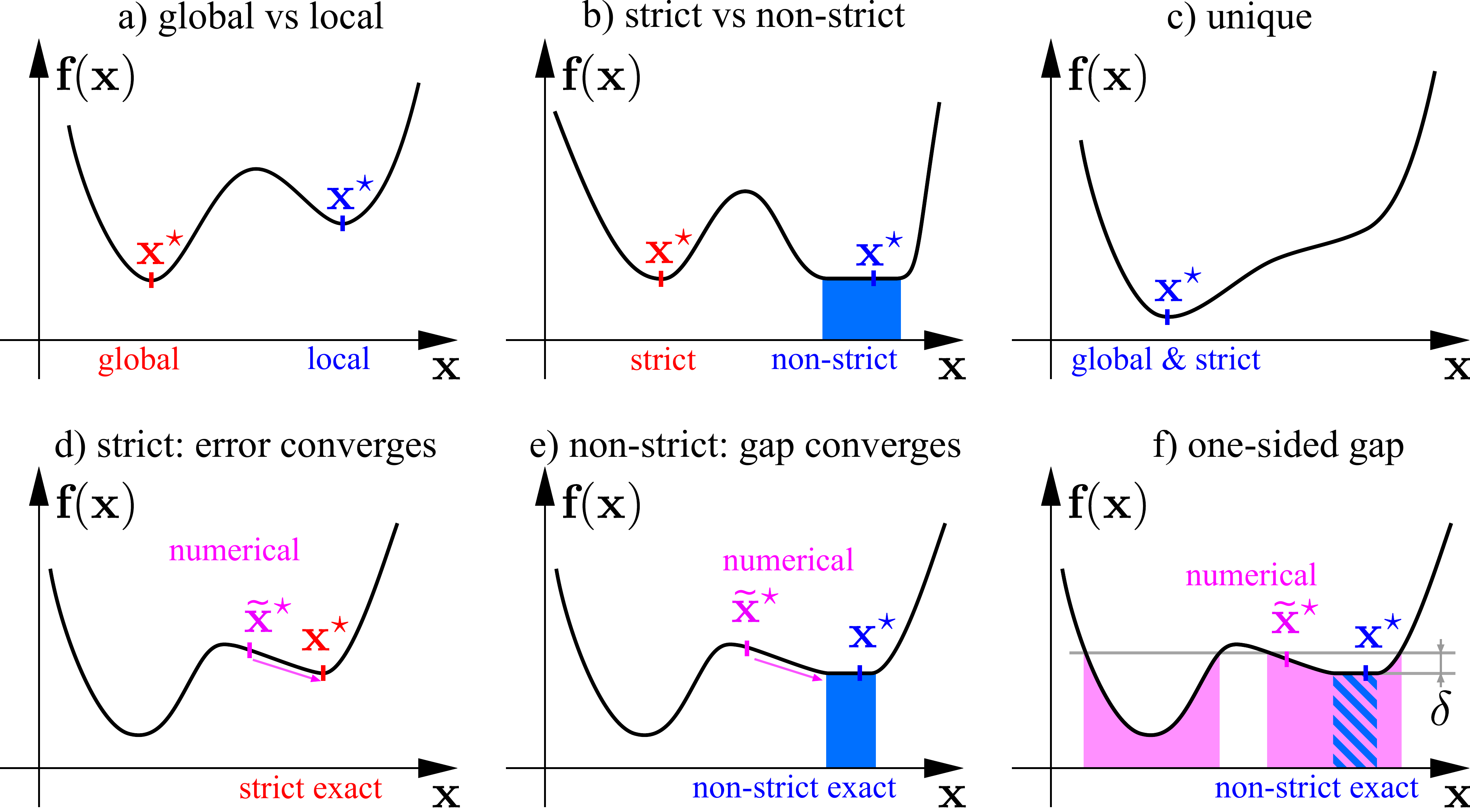}
	\caption{ (a)--(c): Types of exact minimizers. (d)--(f): Numerical minimizers vs exact minimizers.}
	\label{fig:minimizertypes}
\end{figure}

\subsection{Accuracy of Optimality}
An exact minimizer $y^\star,u^\star$ of problem~\eqref{eqn:OCP} is at least a local minimizer of the objective $M$. Hence, $y^\star_h,u^\star_h$ should at least be an approximate local minimizer. Hence, in accordance to Figure~\ref{fig:minimizertypes}~(f), the relation
\begin{align}
M\big(y^\star_h(0),y^\star_h(T)\big) \leq M\big(y^\star(0),y^\star(T)\big) + \delta \label{eqn:meas:delta}
\end{align}
should hold for a small \emph{optimality gap} $\delta \in \R_{\geq 0}$.

\subsection{Accuracy of Feasibility to Equality Constraints}
Consider the measure $r$, that takes two functions $y,u$ and maps them onto a non-negative value:
\begin{align}
r(y,u):=\sqrt{ \int_0^T \left\|f\big(\dot{y}(t),y(t),u(t),t\big)\right\|_2^2\,\mathrm{d}t + \left\|b\big(y(0),y(T)\big) \right\|_2^2 }\,, \label{eqn:IntRes}
\end{align}
where $\|\cdot\|_2$ means the Euclidean norm of a vector in $\R^n$. Thus, $r$ quantifies: i) how well $y,u$ satisfy all the boundary constraints with $b$ at $t=0$ and $t=T$; ii) how well $y,u$ satisfy all the differential equations with $f_1$ and algebraic equations with $f_2$ for almost every $t \in [0,T]$, in the sense of $\tforall$. This is in analogy to how \eqref{eqn:LebesgueIntegral} is equivalent to $\Xi_\Q(t)=0\ \tforall\,t\in\R$.

A solution $y^\star,u^\star$ of problem~\eqref{eqn:OCP} satisfies the constraints exactly because it is feasible. The numerical solution should hence be at least approximately feasible. Hence, the relation
\begin{align}
r(y^\star_h,u^\star_h) \leq \rho \label{eqn:meas:rho}
\end{align}
should hold for a small \emph{equality residual} $\rho \in \R_{\geq 0}$.

\subsection{Accuracy of Feasibility to Inequality Constraints}
Of special importance to safety-critical applications, such as collision avoidance, is the property of numerical solutions to satisfy the left bounds $\yL,\uL$ and right bounds $\yR,\uR$ to high accuracy \cite{strictSatisfaction}. Hence, the relation
\begin{subequations}
	\label{eqn:boxFeas} \label{eqn:meas:gamma}%
	\begin{align}
	\yL(t)-\gamma \leq &y_h(t) \leq \yR(t)+\gamma \quad &\forall t&\in[0,T]\\
	\uL(t)-\gamma \leq &u_h(t) \leq \uR(t)+\gamma \quad &\forall t&\in[0,T]
	\end{align}
\end{subequations}
should hold for a small \emph{inequality residual} $\gamma \in \R_{\geq 0}$.

\subsection{Final Remarks}
In Part~\ref{part:QPM} we present a direct transcription method for which we can prove that all three above measures converge to zero under mild assumptions. Part~\ref{part:convproof} provides the convergence proof.

A proof under mild assumptions is only possible because of the way how we formulate the convergence measures: We use the gap instead of the error. In addition to the gap, we use residual measures to assess the feasibility of a numerical minimizer.

Regarding these residuals, $\rho$ is an integral measure, whereas $\gamma$ bounds violations in any point. This is a compromise for the numerics: We can, with very mild assumptions, assert that \eqref{eqn:boxFeas} is satisfiable numerically. In contrast, a property like $\left\|f\big(\dot{y}(t),y(t),u(t),t\big)\right\|_2\leq\rho$ $\tforall\, t \in [0,T]$ would necessitate stronger assumptions on the smoothness of $f$ and of $y^\star,u^\star$.

\part{Status Quo of Numerical Optimal Control} 
\label{part:collocation}

\chapter{Literature Review}
\label{sec:literatureReview}

Now that we have defined the problem statement and solution measures, it is natural to ask for the availability of numerical methods that are proven to converge for problem~\eqref{eqn:OCP} under mild assumptions. A tabular literature overview of available convergence results is given at the end of this section. This will motivate our proof in Part~\ref{part:convproof}, which works under favorable assumptions.

We first provide a broad {overview of concepts} and a {historical survey} of different developments. This is to answer two important questions: What is the state of research in the development of numerical algorithms for optimal control? Why do we only focus on direct transcription methods?

\section{Overview of Concepts}
\label{sec:literatureReview:Overview}

Optimal control problems have several structural {properties}. Each property suggests a certain solution {principle}. Principles can be combined. To better explain these properties and principles, for the scope of this overview, we consider the following model problem:
\begin{align}
J(t_0,t_E;y_0,y_E) :=& \operatornamewithlimits{min}_{y}\, \int_{t_0}^{t_E} g\big(\dot{y}(\tau),y(\tau)\big)\,\mathrm{d}\tau\ \text{ s.t. }y(t_0)=y_0\,,\ y(t_E)=y_E \label{eqn:modelOCP} %
\end{align}
In this model problem, we can suppose the control $u = \dot{y}$.

As far as known today, optimal control problems inherit four structural properties. These are depicted in Figure~\ref{fig:principlesvisual}. We discuss them in order.
\begin{figure}
	\centering
	\includegraphics[width=0.85\linewidth]{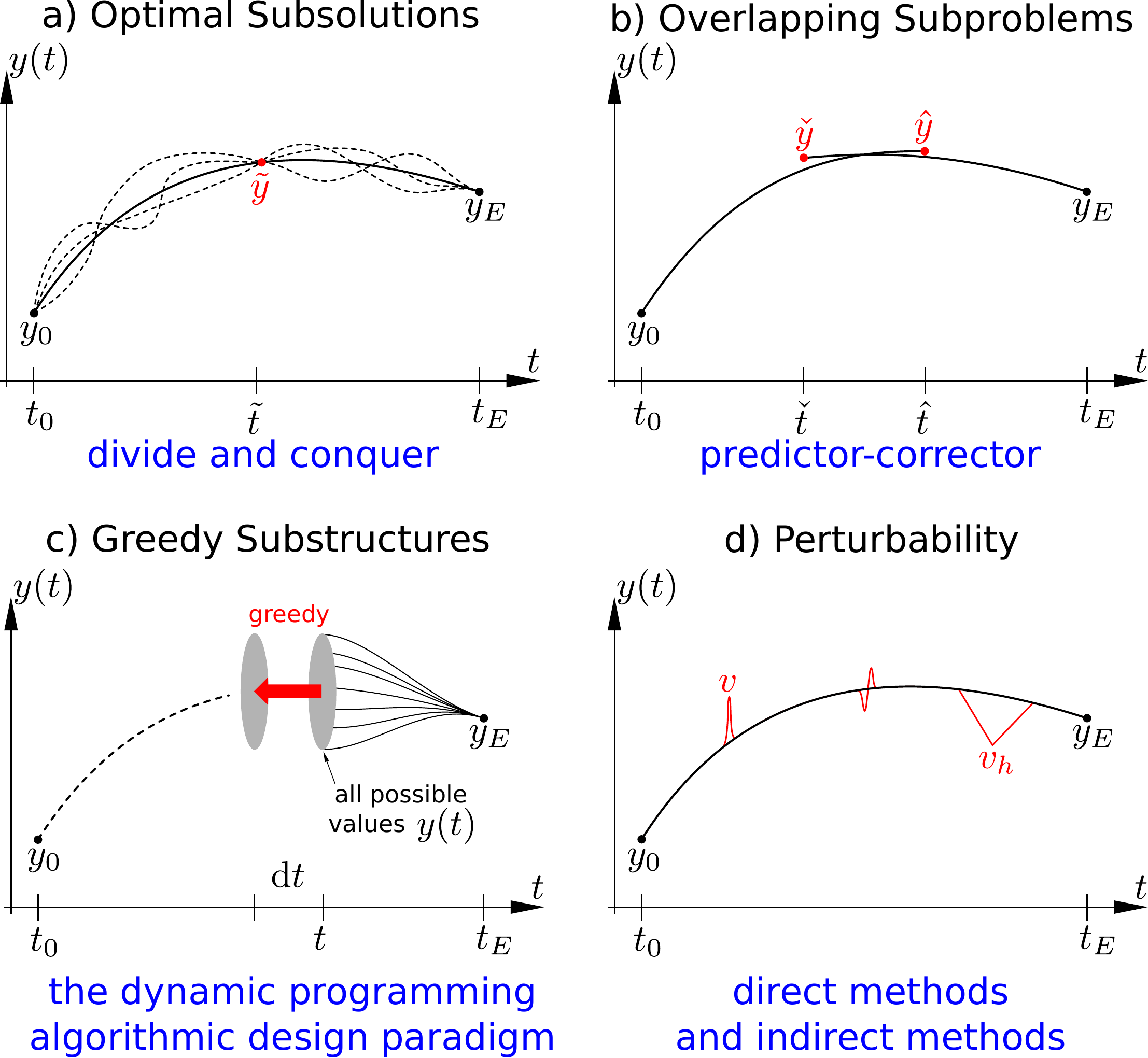}
	\caption{Four structural properties in optimal control problems (black) and solution principles (blue) that emerge from these properties.}
	\label{fig:principlesvisual}
\end{figure}

\subsection{Optimal Subsolutions}
Illustrated in Figure~\ref{fig:principlesvisual}~(a), it holds that
\begin{align*}
J(t_0,t_E;y_0,y_E) = \operatornamewithlimits{min}_{\tilde{y}} J(t_0,\tilde{t};y_0,\tilde{y}) +J(\tilde{t},t_E;\tilde{y},y_E)
\end{align*}
because every minimizer $y^\star$ of $J(t_0,t_E;y_0,y_E)$ also minimizes the integrals over the sub-intervals $[t_0,\tilde{t}]$ and $[\tilde{t},t_E]$. Figure~\ref{fig:principlesvisual}~(a) illustrates several non-optimal arcs in dashed besides the optimal arc in solid. The optimal arc over $[t_0,t_E]$ consists of the optimal arcs over $[t_0,\tilde{t}]$ and $[\tilde{t},t_E]$. Divide-and-conquer strategies use this property by dividing one optimal control problem into several smaller ones \cite{MR3514781}.

\begin{example}
	Direct transcription with the explicit Euler method divides $[0,T]$ into $N$ sub-intervals. $y$ is modelled piecewise linear on each sub-interval. The values at the red junction points are found by constrained optimization.
\end{example}

\subsection{Overlapping Subproblems}
As shown in Figure~\ref{fig:principlesvisual}~(b), solutions of optimal control problems overlap. Figure~\ref{fig:principlesvisual}~(b) plots the solutions of $J(t_0,\hat{t};y_0,\hat{y})$ and $J(\check{t},t_E;\check{y},y_E)$. If we move both $\check{y},\hat{y}$ down just a little bit then their arcs lie on top of each other and match the solution of $J(t_0,t_E;y_0,y_E)$. Such strategy is used for example in the Schwarz predictor-corrector algorithm \cite{DBLP_Schwarz}.

\subsection{Greedy Substructures}
Figure~\ref{fig:principlesvisual}~(c) illustrates a greedy principle. Optimal substructures over an infinitesimal time-interval are straight lines. Hence, they can be computed greedily by forcing $J(t-\mathrm{d}t,t;y(t-\mathrm{d}t),y(t))$ minimal. This provides a means for algorithms of the dynamic programming design paradigm: These propagate all optimal arcs $y(t)$ backwards in time \cite{MR61289}. Afterwards, they select that particular arc that satisfies $y(t_0)=y_0$ (dashed in the figure).

\begin{example}
	Bellman's value function \mbox{$V(t,x):=J(t,t_E;x,y_E)$} satisfies the hyperbolic end-value problem
	\begin{subequations}
		\label{eqn:BellmanEqn}
		\begin{align}
		V(t_E,x)&=V_E(x) \,,\\
		\partial_t V(t,x)&=-\operatornamewithlimits{min}_{u(t,x)}\Big\lbrace\, \partial_x V(t,x)\t \cdot u(t,x) + g\big(u(t,x),x\big) \,\Big\rbrace\quad\forall x \in \R^{n_y}\,,\ t \in [t_0,t_E].
		\end{align}
	\end{subequations}
	The end-conditions $V_E$ are a function of $g$ and $y_E$. In case there are no end-conditions $y_E$, there holds that $V_E(x)=0$. This partial differential equation is known as the Hamilton-Jacobi-Bellman equation \cite{MR61289,Conway2012}. Upon solution of $V(t,x)$, $y^\star$ can be found as the solution of the initial value problem $y(t_0)=y_0$, $\dot{y}(t)=u(\,t,y(t)\,)$.
\end{example}

\subsection{Perturbability}
Figure~\ref{fig:principlesvisual}~(d) shows an optimal solution in black and perturbations of it in red.
When a solution $y$ of $t$ is already optimal then the objective cannot be decreased any further via any of these perturbations. The red perturbation $v$ of $t$ shows an infinitesimal needle perturbation, whereas $v_h$ of $t$ shows a finite perturbation.

\vspace{2mm}\noindent
\underline{Direct Methods}\\
Perturbations of finite length like $v_h$ result in so-called \emph{direct methods}. These are methods that compose $y,u$ via splines such that an objective is minimized. Direct transcription with explicit Euler is an example of a direct method because the method constructs $y$ from piecewise linear functions such that $J$ is minimized.

\vspace{2mm}\noindent
\underline{Indirect Methods}\\
By means of advanced calculus and limits of infinitely short needle perturbations, it is possible to derive the fact that exact minimizers of \eqref{eqn:modelOCP} must satisfy the following Euler-Lagrange conditions:
\begin{align*}
\frac{\partial g}{\partial y} - \frac{\mathrm{d}}{\mathrm{d}t}\left(\frac{\partial g}{\partial \dot{y}}\right)=0\,,\ y(t_0)=y_0\,,\ y(t_E)=y_E\,.
\end{align*}
This is a so-called \emph{boundary value problem} (BVP), in analogy to an initial value problem (IVP), because a differential equation must be solved subject to an initial and a final value. A generalization of the Euler-Lagrange conditions are the Pontryagin conditions \cite{MR0084444}. These conditions also result in BVP. Methods that work via solving either of these boundary value problems are called \emph{indirect methods}. Numerical methods for the solution of boundary value problems are introduced in Section~\ref{sec:ELODE}.

\vspace{2mm}
Methods of perturbations are distinguished into direct methods and indirect methods because they reduce the optimal control problem into either of two different formats: direct methods result in NLP, whereas indirect methods result in boundary value problems.

\subsection{Practicality}
\label{sec:literatureReview:Overview:Practicality}

The greedy substructures face many technical issues, e.g., when $u(t,x)$ cannot be determined uniquely or $V$ admits no classical solution \cite{MR4238012}. Also, the back-propagation aspect and the spatial domain $\R^{n_y}$ for $x$ in $V(t,x)$ render dynamic programming algorithmic principles impractical due to memory-limitations when $n_y$ is large.

Predictor-corrector algorithms constitute a broad and blurry class. It is often not clear in which way to select a predictor-corrector algorithm in order to solve a particular optimal control problem.

Divide-and-conquer methods have been used widely in multiple direct/indirect shooting methods. These compute the subsolutions with direct/indirect methods and determine optimal values $\tilde{y}$ in a direct/indirect fashion. Mesh-refinement algorithms can also be considered as divide-and-conquer algorithms.

We now discuss methods based on perturbability. Indirect methods in this realm are unpopular for several reasons: Firstly, the boundary value problem resulting from the optimality conditions is often difficult to solve numerically \cite{rao_asurvey,MR500418}. Numerical methods may diverge unless accurate initial guesses for the solution of the boundary value problem are given \cite{Bryson1975}. Secondly, for complicated optimal control problems it is difficult or impossible to determine the optimality conditions to begin with, hence there are problems which cannot be solved by indirect methods \cite{Boehme2017}. Even when possible, state-constrained problems require estimates of the intervals at which inequality constraints are active. Workarounds for state constraints, such as by saturation functions, have been proposed in~\cite{WANG20174070}. Thirdly, for singular-arc problems, optimality conditions of higher order need to be used in order to determine a solution \cite[Sec.~1.4]{LAMNABHILAGARRIGUE1987173}. Finally, the optimality conditions can have non-unique solutions.

In contrast, direct methods in this realm are very popular nowadays because they provide a direct means to discretizing the optimal control problem into an NLP. The NLP in turn can be solved with readily available solvers \cite{Nocedal,BettsChap2,KellyMatthew,ForsgrenGillSIREV}. However, this is the situation only since recently, as we depict in the following historical brief.

\section{Historical Brief}
\label{sec:literatureReview:History}

Direct transcription methods are both a very recent and a very old class of methods. To understand how this is possible, we have to look into the historical developments of numerical methods for optimal control problems. Table~\ref{fig:historydiagram} visualizes a timeline of different directions of development for numerical methods in optimal control. Revisiting these developments helps in assessing the maturity of each area and in identifying potential starting points for future research. We stress that the figure is not exhaustive and the depicted landmark contributions have certainly been inspired by fore-going research.

As the figure shows, computers existed only since 1930. However, today's purpose of numerical methods and computers is that numerical methods solve mathematical problems on computers. This makes it fruitlessly debatable how early researchers would have intended their methods to be used on computers. For example, one could argue from Section~\ref{sec:Intro:OCP} that direct collocation was invented by 1847: in Section~\ref{sec:Intro:OCP} we construct a direct collocation method from the explicit Euler and the gradient-descent method. Both methods existed by 1847. However, the method of direct collocation is formally attributed to Hargraves et al. in 1987 \cite{HARGRAVES87}. We appreciate that attribution of contribution is an undecidable problem. Some methods, e.g., collocation and quadrature, are even so old that they cannot be clearly attributed to anyone.

\begin{figure}
	\centering
	\includegraphics[width=0.9\linewidth]{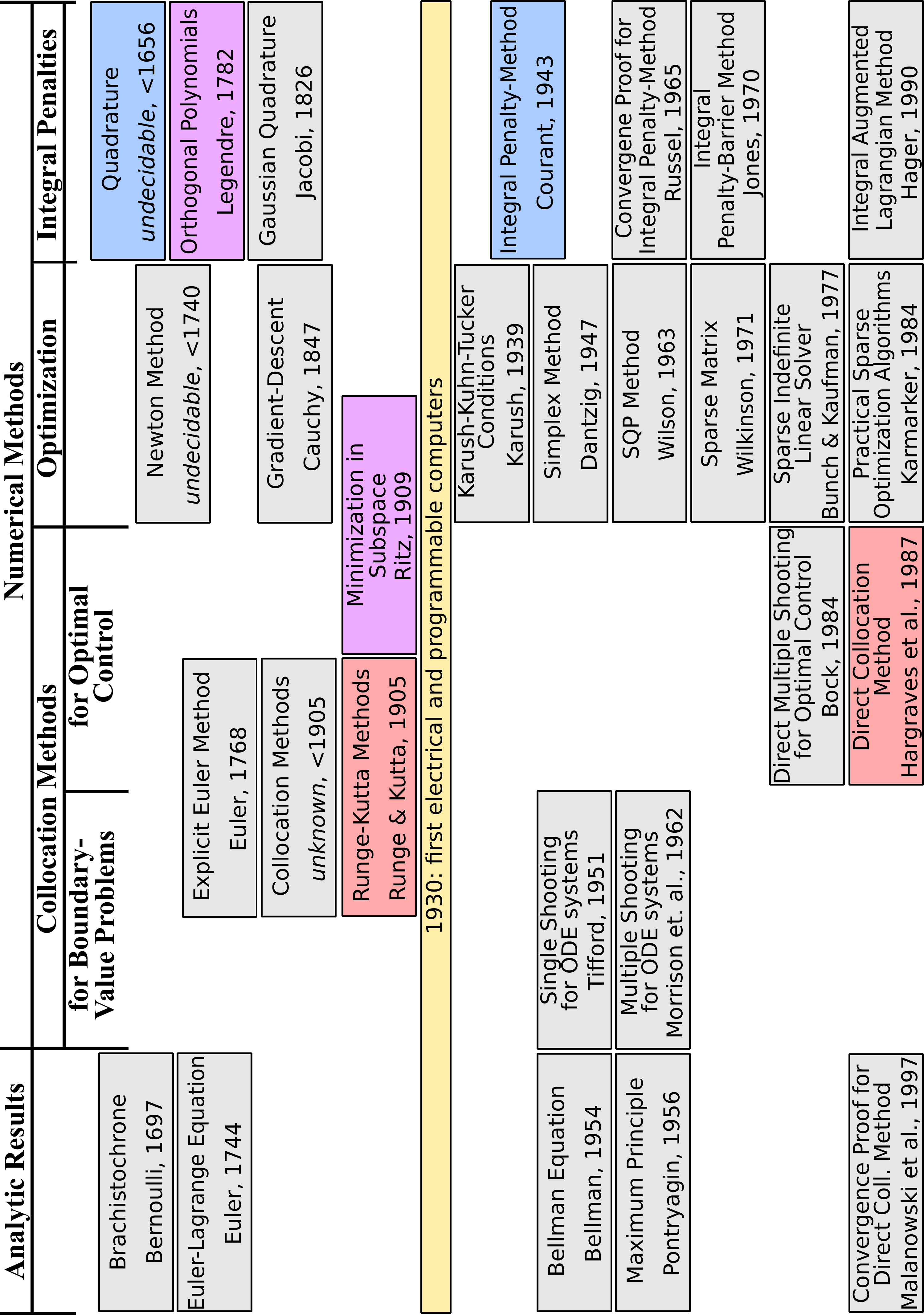}
	\caption{Milestones in the history of mathematical research in optimal control. Items are sorted vertically by time and in columns by research direction.}
	\label{fig:historydiagram}
\end{figure}

The figure distinguishes several research directions: Analytic results mean equations and concepts that give analytic characterizations of solutions. Numerical methods provide formulas to generate approximations. Basic methods such as Newton's method, quadrature, Euler's method and gradient-descent form the origin of all modern numerical methods.

\subsection{Euler-Lagrange Equations and Boundary Value Problems}\label{sec:ELODE}
The birth of optimal control is sometimes attributed to Johann Bernoulli for his discussion of the brachistochrone problem in 1697 \cite{Sussmann97}. In 1744, Euler identified the Euler-Lagrange differential equation as an analytic formula for how problems such as Bernoulli's brachistochrone could be solved in a generic way \cite{MR0056522}. The solution of optimal control problems via Euler-Lagrange equations results in boundary value problems.

Unlike initial value problems like \eqref{eqn:IVP}, boundary value problems replace some conditions on the initial values $y(0)$ with conditions on the end values $y(T)$. Methods for initial value problems are due to Euler in 1768 and have been generalized by Runge and Kutta in 1905. The first numerical method for boundary value problems however was the \emph{single shooting method}. It was implemented on ENIAC, the first electrical and programmable computer, to calculate ballistic curves for the US military \cite{mccartney1999eniac,reed1952firing}. Shooting methods work by iteratively guessing suitable initial values $y(0)$ such that the solution of the initial value problem meets all conditions on the end values $y(T)$. The iterative scheme works via Newton's method. This scheme was first described in the literature by Tifford only in 1951 \cite{tifford1951solution}.

As Figure~\ref{fig:singlevsmultipleshooting} shows, there are multiple other ways than single shooting in terms of how Newton's method can be combined with Runge-Kutta methods: (a) recalls the explicit Euler method as one example for a Runge-Kutta method. Euler's method can be used to compute the blue nodes of $y(t)$ from a given red initial value $y(0)$. (b) The single shooting method determines the initial value via Newton's method such that a particular end condition is met. (c) Multiple shooting divides the interval into multiple (the figure shows two) sub-intervals. The initial values of each sub-interval are computed via Newton's method such that the end-condition and finite-difference equations are satisfied. (d) Collocation methods include and solve all nodal values via Newton's method.

\begin{figure}
	\centering
	\includegraphics[width=0.8\linewidth]{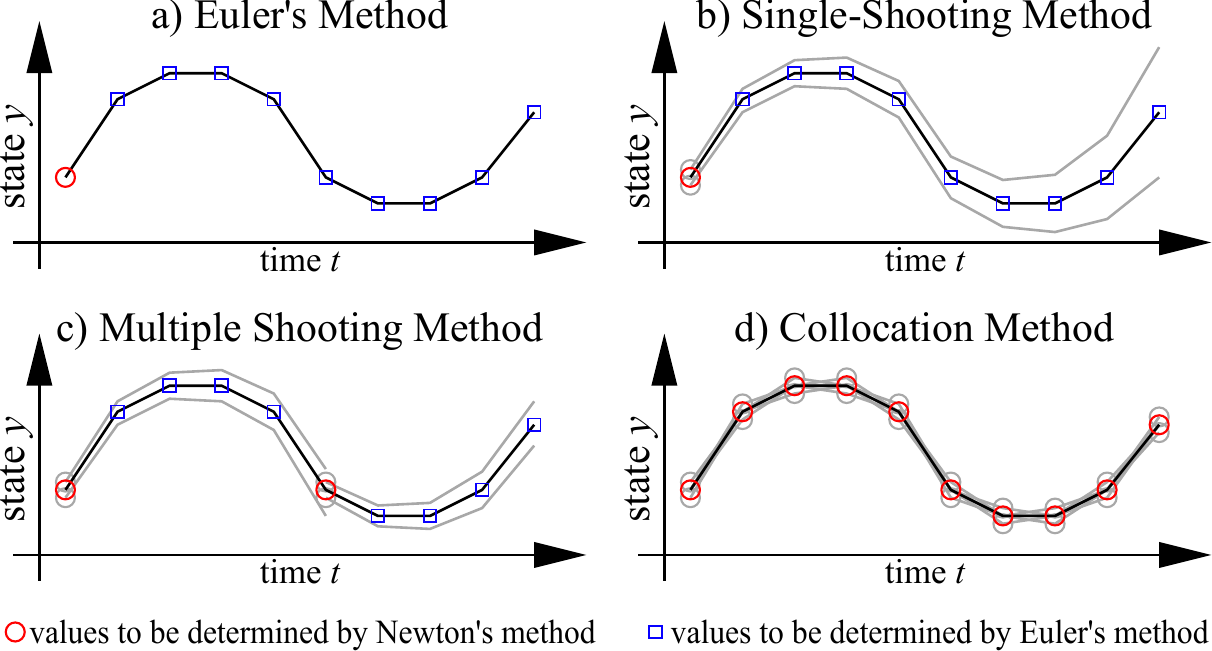}
	\caption{Three classes of methods for the solution of boundary value problems: Single Shooting, Multiple Shooting, and Collocation.}
	\label{fig:singlevsmultipleshooting}
\end{figure}

The solutions of all three methods are algebraically equivalent, because they solve the exact same set of equations. However, methods with smaller propagation of perturbations of the red nodes (visualized in the figure in grey) result in better numerical conditioning and thus superior robustness for the Newton iteration. In addition, multiple shooting yields a smaller Newton system than collocation. Yet, multiple shooting may break when on any shooting interval the differential equation has a non-unique solution. This is so because any perturbation may result in switching between two different ODE solutions. Collocation avoids this issue altogether because the Newton system does not use ODE solutions but ODE residuals. Hence, collocation is preferred in practice.

Limitations of the Euler-Lagrange equations arise when the optimal control problem has inequality constraints. In this case, Pontryagin's maximum principle (1956) provides a more general system of necessary conditions than the Euler-Lagrange equation \cite{MR0084444}. In contrast to necessary conditions, the Bellman equation (1954) provides a constructional principle for globally optimal solutions to all local minimizers of optimal control problems \cite{MR61289}. The limitations to applying these analytic results for practical computations have been discussed in Section~\ref{sec:literatureReview:Overview:Practicality}.

\subsection{Direct Multiple Shooting and Direct Collocation}
We saw in Figure~\ref{fig:speedchangeproblem} how the Euler method approximates $y$ with a piecewise linear function $y_h$. However, doing so in the context of an optimization problem was first proposed by Ritz: To minimize, e.g., the functional
\begin{align*}
\operatornamewithlimits{min}_{y \in \cX} \ F(y)=\int_0^1 \Big(\dot{y}(t)^2 + y(t)^2 - y(t)\Big)\,\mathrm{d}t\,,
\end{align*}
Ritz proposed approximating $y$ with a polynomial of fixed degree $n-1$; thus, resulting in a finite-dimensional optimization problem in $\R^n$ for the $n$ coefficients of the polynomial. Implementations of Ritz methods typically use orthogonal polynomials due to numerical stability reasons; cf.~(P1)--(P2) in~\cite{RaoHager2018}. Section~\ref{sec:GalerkinReview} presents the the Ritz method on the above example in detail for $n=4$.

Since Ritz, optimal control problems can be reduced directly into NLP. This is called direct transcription. Thus, optimization algorithms for NLP could be utilized for the numerical solution of optimal control problems. However, these algorithms had not been invented yet. Because of that, the NLP from Ritz method was solved via indirect methods, introduced by Galerkin. These are reviewed in Section~\ref{sec:GalerkinReview}. Modern direct solution methods for NLP are based on the equations derived by Karush in 1939 \cite{MR2936770}.

NLP can have equality and inequality constraints. The first widely used algorithm for inequality constrained optimization was Dantzig's simplex method dated to 1947\footnote{There is no publication of Dantzig in this year. He invented it earlier and it was undisclosed in that year.} \cite{nash2000dantzig}. This algorithm was only for LP, not NLP. The first method for NLP was the sequential quadratic programming (SQP) method by Wilson in 1963 \cite{wilson1963simplicial}. However, at that time it would not have been possible to solve NLP that stem from the direct transcription of optimal control problems of relevant sizes. This is because these NLP are typically of large dimension, whereas computers at that time were slow and efficient data structures for large matrices did not yet exist.

It was only in 1971 that Wilkinson introduced the notion of a sparse matrix data structure. Collocation methods, multiple shooting methods, and Galerkin methods result in equation systems with large matrices that comprise mostly of zeros. A sparse matrix data structure avoids multiplications with and storage of zeros in these large matrices; thus making computations with the aforementioned methods possible at all. Algorithms with sparse matrix data structures emerged for solving equation systems and optimization problems of larger dimension; such as the Bunch-Kaufman factorization in 1977 \cite{MR428694} and a first practical interior-point algorithm by Karmarkar in 1984 \cite{MR779900}.

The time at which optimization algorithms and practical optimization software became available explains why direct multiple shooting and direct collocation methods for solving optimal control problems entered the literature only by the 1980s. Before that point, it would have been difficult to propose such discretizations because there was no algorithm available to solve the resulting NLP. Due to similar reasons, as discussed along Figure~\ref{fig:singlevsmultipleshooting}, relating to conditioning and regularity, direct collocation is preferred over direct multiple shooting.

The constructional simplicity and practical success of direct collocation methods motivated research in convergence analysis. The first convergence proof for a direct collocation method is due to Malanowski et al. in 1997 \cite{Maurer}. They prove convergence of direct collocation with the explicit Euler method for optimal control problems where the dynamics are described as in \eqref{eqn:OCP}, but with prescribed values for $y(0)$. This proof is very sophisticated as apparent from its assumptions. Earlier convergence results are available but consider significantly less general formats. More detailed convergence results are reviewed in Section~\ref{sec:literatureReview:table}.

\subsection{Integral Penalty Methods and Quadrature Penalty Methods}
Integral penalty methods were introduced by Courant in 1943 within a Rayleigh-Ritz method for partial differential equations \cite{Courant43}. This method uses quadratic penalization to enforce homogeneous Dirichlet boundary conditions. In the same manner that Galerkin generalized the necessary conditions of the Ritz method, as will be detailed in Section~\ref{sec:GalerkinReview}, Nitsche (1971) \cite{MR341903} generalized the necessary conditions of the quadratic integral penalty method.

Today, \emph{Nitsche methods} are well-known and widely used for the solution \cite{MR351118} and optimal control \cite{WangYangXie_Nitsche,troltzsch2010optimal} of partial differential equations (PDE). For optimal control of nonlinear ordinary ODE and DAE however, collocation-type methods have been preferred over penalty-methods \cite{betts1998survey,rao_asurvey}. This may be related to the fact that Runge-Kutta methods are usually preferred over finite element/volume/difference methods when solving ODE.

In \cite{Russell65}, existence and convergence of solutions to two integral penalty functions of generic form were analyzed, but without a discretization scheme. Quadratic penalty functions of a less generic form, suiting optimal control problems with explicit initial conditions, were studied in~\cite{Balakrishnan68}. The analysis focuses on the maximum principles that arise from the penalty function and their connection (under suitable assumptions on smoothness and uniqueness) to Pontryagin's maximum principle, laying groundwork for an indirect solution to the penalty function. A numerical method is not proposed. In~\cite{Jones70}, the analysis is extended to inequality path constraints with a fractional barrier function. Using suitable smoothness and boundedness assumptions on the problem-defining functions, it is shown that the unconstrained minimizer converges from the interior to the original solution. As in~\cite{Balakrishnan68}, the analysis uses first-order necessary conditions. A numerical scheme on how to minimize the penalty functional is not presented. Limitations are in the smoothness assumptions.

In \cite{DeJulio70} the penalty function of~\cite{Balakrishnan68} is used for problems with explicit initial conditions, linear dynamics and no path constraints. Under a local uniqueness assumption, convergence is proven for a direct discretization with piecewise constant functions for $y_h,u_h$. The approach is extended in~\cite{Hager90} to augmented Lagrangian methods with piecewise linear elements for the states. The analysis is mainly for linear-quadratic optimal control, which is used for approximately solving the inner iterations. The error in the outer iteration (augmented Lagrangian updates) contracts if the initial guess is sufficiently accurate~\cite[Lem.~3]{Hager90}.

A convergence analysis for the use of integral penalties for equality constraints and integral geometric barriers for inequality constraints is given in \cite{Jones70}. This analysis shows that the constraint residuals and optimality gap of the penalty-barrier problem converge with respect to the original problem. The paper sketches how this analysis could be useful within a practical numerical scheme. The work in \cite{Neuenhofen2020AnIP} provides a practical numerical method that uses quadratic penalties and logarithmic barriers instead of geometric barriers. Logarithmic barriers can be solved more efficiently in finite-dimensional optimization \cite{ForsgrenGillSIREV}.

\subsection{Final Remarks}
Coming back to Table~\ref{fig:historydiagram}, research in numerical methods for optimal control accelerated with the advent and development of modern computers (beige). Today, it is possible to solve large sparse finite-dimensional optimization problems on affordable consumer computers.

Due to these developments, the most widely used class of numerical methods for solving optimal control problems today is direct transcription. The concepts of direct transcription methods are rooted in the Ritz method and orthogonal polynomials, highlighted in purple in Table~\ref{fig:historydiagram}. Among direct transcription methods, there are two classes of methods: Collocation methods are based on Runge-Kutta methods (red). Penalty methods are based on quadrature (blue).

\section{Relation between Integral Penalty and Galerkin Methods}\label{sec:GalerkinReview}
Integrals arise in integral penalty methods and in Galerkin methods. Both classes of methods are rarely used for the optimal control of ODE. Furthermore, Galerkin methods are a large class of non-trivial methods that may be unfamiliar to a wide readership in optimal control of ODE. Due to these reasons, this section reviews the conceptual ideas behind Galerkin methods and penalty methods in the context of optimal control problems. We use numerical examples to highlight differences and trade-offs between different methods.

A discussion of finite element methods, variational calculus, (bi-) linear forms, certain terminology and geometric interpretation (e.g., Galerkin-orthogonality) is avoided when possible because their introduction does not enhance accessibility. Interested readers are referred to \cite{Brezis,troltzsch2010optimal,MR3097958}.

\paragraph{Organization}
Table~\ref{fig:galerkindiagramrot} gives an organigram of three classes of methods for the solution of three classes of problems. The methods are: Ritz, Ritz-Galerkin, and weighted residual. The problems are: Unconstrained minimization, boundary value problems, and constrained minimization. Different methods may differ only in details, hence why the organigram works with three example problems that we crafted such that they are all mathematically equivalent. However, each numerical method yields a different numerical solution. In the following we explain each problem and method in the figure.

\begin{table}
	\centering
	\includegraphics[width=0.85\linewidth]{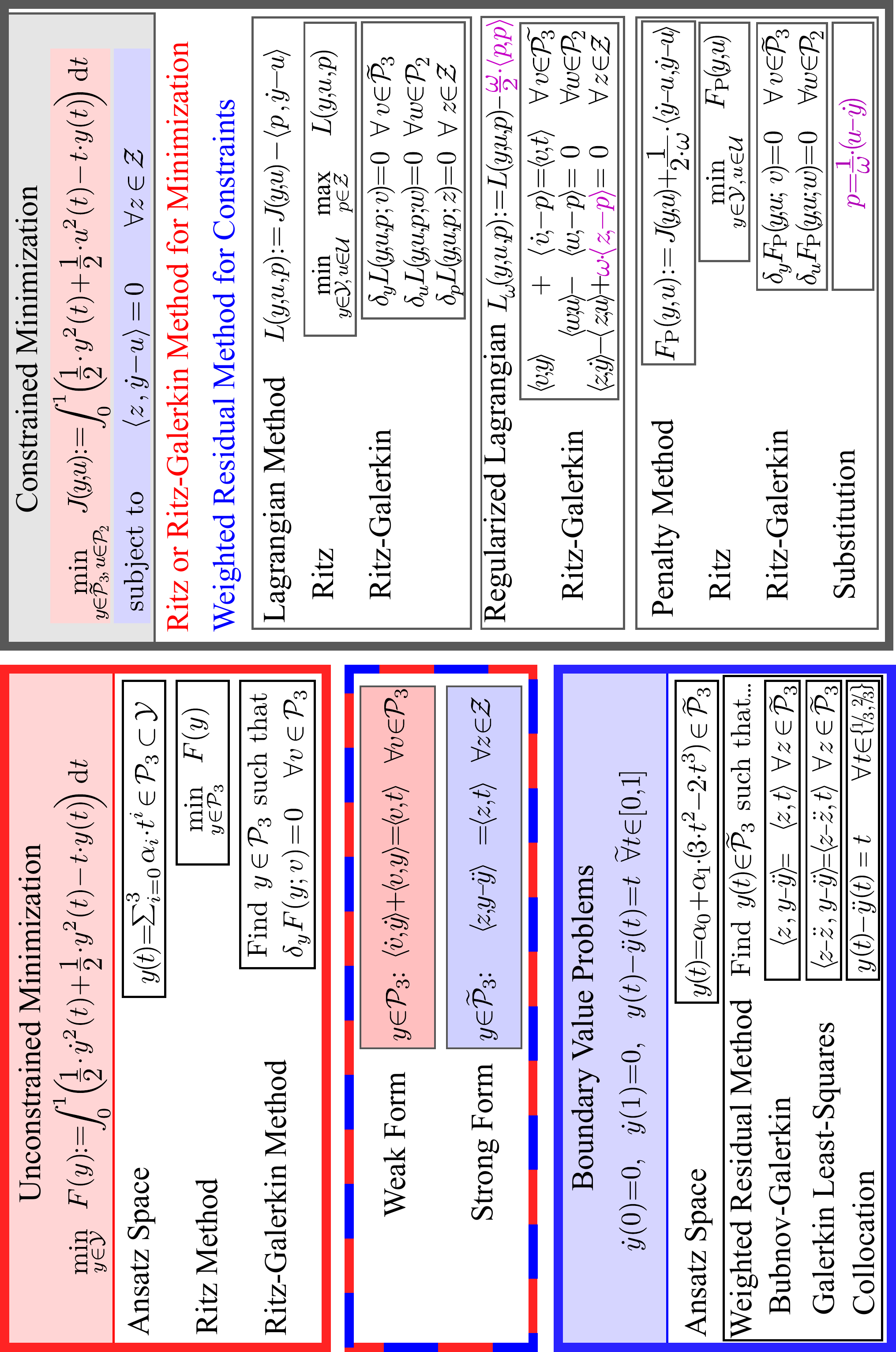}
	\caption{Methods of Ritz, Galerkin, and weighted residuals for the solution of minimization and boundary value problems.}
	\label{fig:galerkindiagramrot}
\end{table}

\paragraph{Equivalence of Problems}
We show that the problems in Table~\ref{fig:galerkindiagramrot} are equivalent. A suitable candidate space $\cY$ for the depicted unconstrained minimization problem is $W^{1,2}$.

If we solve problem \eqref{eqn:Min} in a subspace $\cV \subset \R^n$ then the first-order necessary condition of optimality is:
\begin{align*}
\bx \in \cV\,:\quad \bv\t \cdot \nabla_{\bx} \bf(\bx)=0\ \ \forall\, \bv \in \cV\,.
\end{align*}
In the above, $\bv\t \cdot \nabla_{\bx} \bf(\bx)$ is the directional derivative of $\bf$ in the direction $\bv$.
When $\cV=\R^n$ then this is equivalent to $\nabla_{\bx} \bf(\bx)=\bO$.

Analogously, the necessary condition of optimality of
\begin{align*}
\operatornamewithlimits{min}_{y \in \cV}\ F(y)
\end{align*}
for any vector space $\cV \subseteq \cY$ is:
\begin{align}
y \in \cV\,:\quad \delta_{y} F(y;v)=0\ \ \forall\, v \in \cV\,, \label{eqn:Galerkin:indirect}
\end{align}
where
\begin{align}
\delta_y F(y;v) = \int_0^1 \Big( \dot{v}(t) \cdot \dot{y}(t) + v(t) \cdot y(t) - v(t) \cdot t \Big)\,\mathrm{d}t \label{eqn:Galerkin:derivativeF}
\end{align}
is the directional derivative.

The analytic solution of the unconstrained optimization problem and the boundary value problem is
\begin{align*}
y^\star(t) = t + \frac{\exp(1-t) - \exp(-t)}{\exp(1)+1}\,.
\end{align*}
Substituting $u=\dot{y}$, we see that the unconstrained and the constrained optimization problem in Table~\ref{fig:galerkindiagramrot} are related. In the following, we discuss numerical methods for solving each of the three depicted problem classes in the figure.

\subsection{Unconstrained Minimization}

\paragraph{Ritz Method}
In order to solve unconstrained optimization problems with readily available methods, Ritz (1909) proposed to replace the infinite-dimensional candidate space $\cY$ with a finite-dimensional \emph{ansatz space}\footnote{We use the terms \emph{candidate space} and \emph{ansatz space} to avoid the use of the term \emph{search space}, which could be confused with either of the two. A candidate space is a property of the problem statement. An ansatz space is a property of a numerical method.}. For the purpose of this example, we choose the space of cubic polynomials $\cP_3$. This is a discretization of dimension $N=4$. This gives us the ansatz
\begin{align*}
y(t) &= \alpha_0 + \alpha_1 \cdot t + \alpha_2 \cdot t^2 + \alpha_3 \cdot t^3\,,\qquad \bx:=(\alpha_0,\dots,\alpha_3) \in \R^N\,, \tageq\label{eqn:Galerkin:x}\\
\dot{y}(t) &= \alpha_1 + 2 \cdot \alpha_2 \cdot t + 3 \cdot \alpha_3 \cdot t^2\,,
\end{align*}
which we can insert into the integral expression of $F(y)$:
\begin{align*}
F(y)&=\int_0^1 \Big( \frac{1}{2} \cdot \big(\alpha_1 + 2 \cdot\alpha_2 \cdot t + 3 \cdot \alpha_3 \cdot t^2\big)^2 + \frac{1}{2} \cdot \big( \alpha_0 + \alpha_1 \cdot t + \alpha_2 \cdot t^2 +\alpha_3 \cdot t^3\big)^2\\
&\qquad - t \cdot \big(\alpha_0 +\alpha_1 \cdot t +\alpha_2 \cdot t^2 +\alpha_3 \cdot t^3\big) \Big)\,\mathrm{d}t\\[10pt]
&=\frac{\alpha_0^2}{2} + \frac{\alpha_0 \cdot\alpha_1}{2} + \frac{\alpha_0 \cdot\alpha_2}{3} + \frac{\alpha_0 \cdot\alpha_3}{4} - \frac{\alpha_0}{2} + \frac{2 \cdot\alpha_1^2}{3} + \frac{5 \cdot\alpha_1 \cdot\alpha_3}{4}\\
&\qquad + \frac{6 \cdot\alpha_1 \cdot\alpha_3}{5} - \frac{\alpha_1}{3} + \frac{23 \cdot\alpha_2^2}{20} + \frac{5 \cdot\alpha_2 \cdot\alpha_3}{3} - \frac{\alpha_2}{4}=:\bf(\bx)
\end{align*}
The \emph{Ritz method} is the minimization of $F$ in the ansatz space; hence an NLP of the form \eqref{eqn:examplefMin}.
We can compute a minimizer for $\bf(\bx)$ with the gradient-descent method, resulting in~\mbox{$\bx \approx (0.4621,\,0.0006,\,0.2255,\,-0.1503)$}. Thus, the solution of the Ritz method is
\begin{align*}
y(t) \approx 0.4621 + 0.0006 \cdot t + 0.2255 \cdot t^2 - 0.1503 \cdot t^3\,.
\end{align*}
This is a very accurate numerical solution. Figure~\ref{fig:galerkinvariants} shows the error between the Ritz solution and $y^\star$ in blue. We see that the error is smaller than $10^{-4}$ everywhere.
\begin{figure}[tb]
	\centering
	\includegraphics[width=1\linewidth]{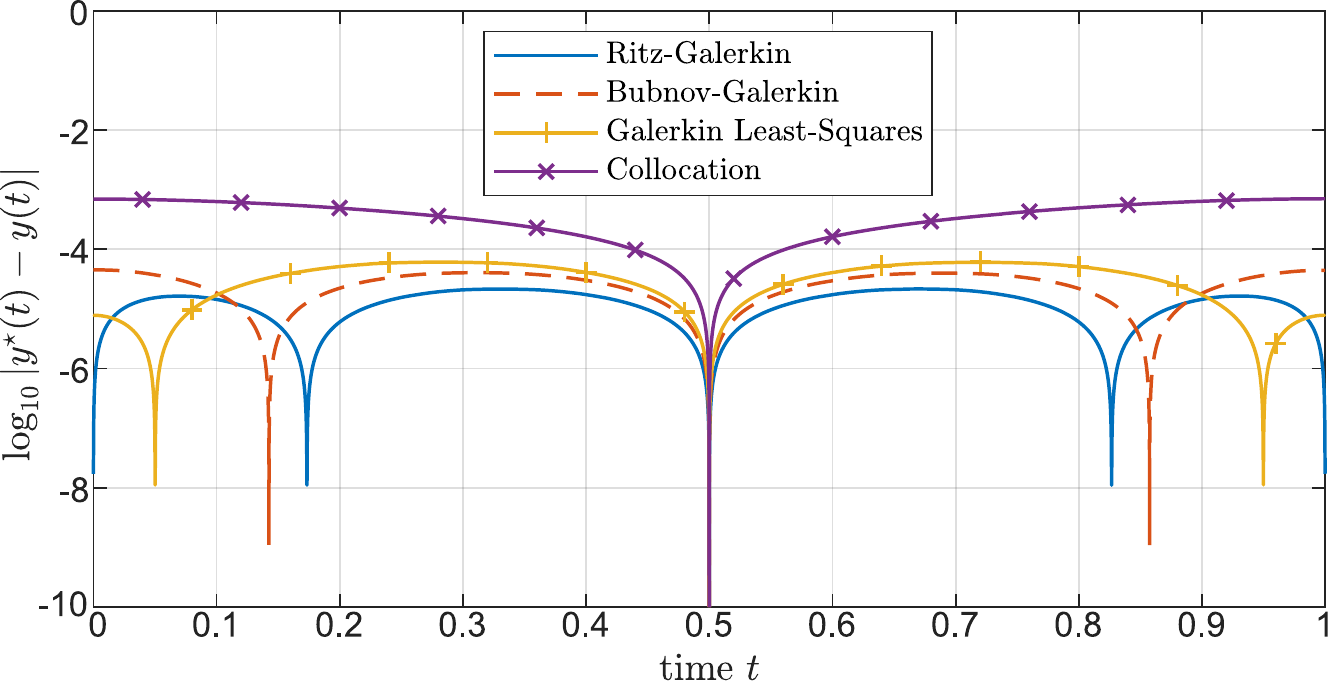}
	\caption{Absolute error of different Galerkin-type methods and a collocation method for a polynomial of degree $3$ for the boundary value problem in Table~\ref{fig:galerkindiagramrot}. Each numerical method generates a different approximate solution.}
	\label{fig:galerkinvariants}
\end{figure}

\paragraph{Ritz-Galerkin Method}
The Ritz method minimizes $F$ directly in the ansatz space. Recalling the necessary optimality condition \eqref{eqn:Galerkin:indirect}, solutions of the Ritz method must satisfy
\begin{align}
y \in \cP_3\,:\quad \delta_y F(y;v) = 0\ \ \forall\,v \in \cP_3\,. \label{eqn:Galerkin:RitzGalerkin}
\end{align}
The solution of equations of this form was first proposed by Galerkin in 1915 \cite{galerkin1915rods}. For the aforementioned reasons, this yields the same solution as the Ritz method; hence why this method is called the \emph{Ritz-Galerkin method}. Inserting \eqref{eqn:Galerkin:derivativeF} and $a,b$ as stated in Table~\ref{fig:galerkindiagramrot} under \emph{Weak Form}, we may write~\eqref{eqn:Galerkin:RitzGalerkin} as
\begin{align}
y \in \cP_3\,:\quad a(v,y) = b(v) \ \ \forall\,v \in \cP_3\,. \label{eqn:Galerkin:Eqn}
\end{align}
We explain later why it is called the \emph{Weak Form}. Using the representations~\eqref{eqn:Galerkin:x} and
\begin{align*}
v(t) = \sum_{j=0}^3 \beta_j \cdot t^j \in \cP_3\,,\quad \bbeta = (\beta_0,\dots,\beta_3) \in \R^4\,,
\end{align*}
the condition~\eqref{eqn:Galerkin:Eqn} can be represented as
\begin{align}
\bbeta\t \cdot \underbrace{\begin{bmatrix}
	1  	& 1/2 & 1/3 & 1/4\\
	1/2 & 4/3 & 5/4 & 6/5\\
	1/3 & 5/4 &23/15& 5/3\\
	1/4 & 6/5 & 5/3 &68/35
	\end{bmatrix} \cdot \bx}_{T(\bx)} = \bbeta\t \cdot \begin{bmatrix}
1/2\\
1/3\\
1/4\\
1/5
\end{bmatrix}\qquad \forall \bbeta \in \R^N\,,\label{eqn:Galerkin:LinSys}
\end{align}
which can be solved as linear system for $\bx$. The entries in the matrix and right-hand side vector are $\langle t^j , t^i \rangle+\langle \frac{\mathrm{d}}{\mathrm{d}t}(t^j),\frac{\mathrm{d}}{\mathrm{d}t}(t^i)\rangle$ and $\langle t^j, t\rangle$, $i,j=0,\dots,3$. When $\delta_y F(y;v)$ is nonlinear in $y$ then $T(\bx)$ is nonlinear in $\bx$ and one must resort to, e.g., Newton iterations.

\subsection{Boundary Value Problems}
The above formulation is called \emph{weak form} because it only assumes that $\dot{y} \in L^2$. In contrast, if we make the \emph{strong assumption} that $\ddot{y}$ is well-defined $\tforall t \in [0,1]$ then the directional derivative $\delta_y F(y;v)$ from~\eqref{eqn:Galerkin:derivativeF} can be re-expressed by virtue of partial integration:
\begin{align*}
\delta_y F(y;z) &= z(1) \cdot \dot{y}(1) - z(0) \cdot \dot{y}(0) + \int_0^1\Big( z(t)\cdot y(t) - z(t) \cdot \ddot{y}(t) - z(t)\cdot t \Big)\,\mathrm{d}t\\
&= -z(0) \cdot \dot{y}(0) + z(1) \cdot \dot{y}(1) + \langle z , y-\ddot{y}-t\rangle
\end{align*}
Apart from the partial integration, in the directional derivative we also replaced the symbol of a direction $v$ with a so-called \emph{test function} $z$ in a suitable \emph{test space} $\cZ$. If we want to satisfy $\delta_y F(y;z)=0$ independent of any values for $z(0)$, $z(1)$, and $z(t)$, then we must find $y$ that satisfies the boundary value problem in Table~\ref{fig:galerkindiagramrot}.

\paragraph{Weighted Residual Method}
We can satisfy
\begin{align*}
-z(0) \cdot \dot{y}(0) + z(1) \cdot \dot{y}(1) + \langle z , y-\ddot{y}-t\rangle=0
\end{align*}
by choosing the new approach
\begin{align*}
y(t) = \alpha_0 + \alpha_1 \cdot (3 \cdot t^2 - 2 \cdot t^3) \in \tcP_3\,,\qquad \bx = (\alpha_0,\alpha_1) \in \R^N
\end{align*}
for $N=2$. Thus, $\dot{y}(0)=\dot{y}(1)=0$ satisfies the boundary conditions naturally. We can then solve the boundary value problem numerically as depicted in Table~\ref{fig:galerkindiagramrot} under \emph{Strong Form}.

There are three common weighted residual methods. The \emph{Bubnov-Galerkin method} uses $\cZ=\tcP_3$, i.e. the same test space as ansatz space. The \emph{Galerkin Least-Squares method} uses a space $\cZ$ that depends on the differential equation at hand: The method forms the least-squares functional
\begin{align*}
F_{\text{LS}}(y):= \frac{1}{2} \cdot \int_0^1 \Big(y(t)-\ddot{y}(t)-t\Big)^2 \,\mathrm{d}t\,.
\end{align*}
The method then applies the Ritz-Galerkin method to the minimization of that functional. This results in the following equation:
\begin{align*}
\delta_y F_{\text{LS}}(y;z)= \langle z-\ddot{z},y-\ddot{y}\rangle - \langle z-\ddot{z},t\rangle=0\quad \forall z \in \tcP_3\,.
\end{align*}
Thus, the Galerkin Least-Squares method can be written in the same way as the Bubnov-Galerkin method in Table~\ref{fig:galerkindiagramrot} by selecting $\cZ = \lbrace z - \ddot{z} \ \vert\ z \in \tcP_3 \rbrace$. The Galerkin Least-Squares method always yields a symmetric linear system. In contrast, \emph{Petrov-Galerkin methods} are methods where the system matrix is non-symmetric and $\cZ\neq \tcP_3$. Lastly, there are collocation methods. These read: Find $y \in \tcP_3$, such that
\begin{align*}
y(t)-\ddot{y}(t) = t \quad \forall t \in \cT\,,
\end{align*}
where $\cT \subset [0,1]$ is set of $N$ points; i.e., two points in our example.

\paragraph{Numerical Comparison}
We now compare the methods of Ritz-Galerkin, Bubnov-Galerkin, Galerkin Least-Squares, and Collocation with $\cT=\lbrace 1/3,\, 2/3 \rbrace$ numerically. Figure~\ref{fig:galerkinvariants} plots the error of each method. Due to symmetry and overlap, the error vanishes at $t=0.5$ and at some other method-specific points. The figure confirms that indeed each method yields a different solution.

\subsection{Constrained Minimization}
Optimal control problems use \emph{mixed formulations} \cite{MR3097958}. These are problems that combine a Ritz or Ritz-Galerkin method for the ansatz space with a weighted residual method for the constraints. Table~\ref{fig:galerkindiagramrot} color-indicates both portions of the problem. In the figure, $\cZ$ is a place-holder for an arbitrary weighted residual method. There are at least three ways for numerically solving the constrained minimization problem. 

\paragraph{Lagrange Method}
Using a Lagrange multiplier $p \in \cZ$, we can apply a Ritz method or Ritz-Galerkin method to find the stationary point of the Lagrangian functional. The directional derivatives are:
\begin{align*}
\delta_y L(y,u,p;v) &= \int_0^1 \Big(v(t) \cdot y(t) - v(t) \cdot t - \dot{v}(t) \cdot p(t)\Big)\,\mathrm{d}t\\
\delta_u L(y,u,p;w) &= \int_0^1 \Big(w(t) \cdot u(t) + w(t) \cdot p(t) \Big)\,\mathrm{d}t\\
\delta_p L(y,u,p;z) &= \int_0^1 \Big(z(t) \cdot \dot{y}(t) - z(t) \cdot u(t)\Big)\,\mathrm{d}t
\end{align*}
Setting them to zero for a given test space $\cZ$ yields a symmetric linear system, just like \eqref{eqn:Galerkin:LinSys}. We introduce these systems in the next paragraph. However, the matrix of this linear system is often singular unless $\cZ$ is chosen in a special way such that the \emph{Ladyzhenskaya–Babuska–Brezzi condition} is satisfied \cite{MR3097958}. Finding a suitable $\cZ$ requires sophisticated functional analysis for each constrained minimization problem at hand.

\paragraph{Regularized Lagrange Method}
It thus makes sense to regularize the Lagrangian via the purple term with a small regularization parameter $\omega \in \R_{>0}$. In the symmetric linear system this will yield a negative definite block in the lower right, shown in purple in Table~\ref{fig:galerkindiagramrot} under \emph{Regularized Lagrangian, Ritz-Galerkin}. This may help regularizing the linear system. On the flip-side, the magnitude of $\omega$ alters the numerical solution of $y,u$. Also, the Lagrangian method necessitates a basis representation for $\cZ$; cf. $\bbeta$ in \eqref{eqn:Galerkin:LinSys}. We will show different system matrices below in the numerical comparison. Equation systems of the above form have been proposed by Nitsche in 1971 \cite{MR341903}.

\paragraph{Quadratic Penalty Method}
Because of the hassle with a suitable basis for $\cZ$ and regularization, it seems attractive to use an alternative method altogether. The \emph{quadratic penalty method} (Courant, 1943) is such an alternative. The method works by forming
\begin{align*}
\operatornamewithlimits{min}_{y \in \cY,u \in \cU}\quad F_{\text{Penalty}}(y,u):=J(y,u) + \frac{1}{2 \cdot \omega} \cdot \int_0^1 \Big(\dot{y}(t)-u(t)\Big)^2\,\mathrm{d}t
\end{align*}
and solving the unconstrained minimization problem with a direct method (e.g., conjugate gradients). In the above, $\omega \in \R_{>0}$ is a small penalty parameter.

Since the Ritz and the Ritz-Galerkin method both generate the same solution, we can now use the latter to derive an equation system. The directional derivatives are:
\begin{align*}
\delta_y F_{\text{Penalty}}(y,u;v) &= \int_0^1 \Big(v(t) \cdot y(t) - v(t) \cdot t + \frac{1}{\omega} \cdot \dot{v} \cdot \big(\dot{y}(t)-u(t)\big) \Big)\,\mathrm{d}t \\
\delta_u F_{\text{Penalty}}(y,u;w) &= \int_0^1 \Big(w(t) \cdot u(t) - \frac{1}{\omega} \cdot w(t) \cdot \big(\dot{y}(t)-u(t)\big) \Big)\,\mathrm{d}t
\end{align*}
Hence, the Ritz-Galerkin method reads: Find $y \in \cY, u \in \cU$, such that
\begin{align}
\begin{matrix}
\vspace{2mm}\langle v,y \rangle &+&\frac{1}{\omega} \cdot \langle \dot{v},\dot{y}-u\rangle &=& \langle v, t \rangle  &\forall\ v\in \cY\,,\\
\langle w,u \rangle &-&\frac{1}{\omega} \cdot \langle      w ,\dot{y}-u\rangle &=& 0                     &\forall\ w\in \cU\,.
\end{matrix}\label{eqn:Galerkin:Nitsche}
\end{align}
If $\frac{1}{\omega} \cdot (u-\dot{y}) \in \cZ$ then we can substitute $p=\frac{1}{\omega} \cdot (u-\dot{y})$ in the Ritz-Galerkin method for the regularized Lagrangian. Thus, under this special choice of $\cZ$, the quadratic penalty method and the regularized Lagrangian method are equivalent.

\subsection{Comparison: Galerkin Least-Squares Method vs Quadratic Penalty Method}
The quadratic penalty method may be confused with the Galerkin Least-Squares method. However, both methods are unrelated: The Galerkin Least-Squares method is a non-trivial choice of $\cZ$, whereas the quadratic penalty method is a method where $\cZ$ is absent.

In the given example, because we suppose that the constraint uniquely determines $y$ from given values of $u$, the Galerkin Least-Squares method reads
\begin{align*}
\langle \dot{v} , \dot{y}-u\rangle = 0\quad \forall v \in \tcP_3\,,
\end{align*}
thus $\cZ=\cZ_{\text{LS}} = \lbrace \dot{v} \,\vert \ v \in \tcP_3 \rangle$.

We numerically compare the linear systems of the aforementioned methods to clarify that they yield different numerical solutions. The linear systems are for
\begin{align*}
y(t) &= \alpha_{0,y} + \alpha_{1,y} \cdot (3 \cdot t^2 - 2 \cdot t^3) & &\in \tcP_3\,,\\
u(t) &= \alpha_{0,u} + \alpha_{1,u} \cdot t^1 + \alpha_{2,u} \cdot t^2 & & \in \cP_2\,,\\
p(t) &= \alpha_{0,p} \cdot (6 \cdot t - 6 \cdot t^2) & & \in \cZ_{\text{LS}}\,.
\end{align*}
\begin{itemize}
	\item The Ritz-Galerkin system for the \textcolor{violet}{regularized} Lagrangian method with $\cZ_{LS}$ is
	\begin{align*}
	\left[\begin{array}{ccccc|cc}
	1   &  1/2  & 0   & 0   & 0      &  0   \\
	1/2 & 13/35 & 0   & 0   & 0      & 5/4  \\
	0 &  0    & 1   & 1/2 & 1/3    & -1   \\
	0 &  0    & 1/2 & 1/3 & 1/4    & -1/2 \\
	0	&  0    & 1/3 & 1/4 & 1/5    & -3/10\\
	\hline
	0 &  5/4  & -1  & -1/2& -3/10  &  \textcolor{violet}{-\omega \cdot 1250}\\
	\end{array}\right] \cdot \left[\begin{array}{c}
	\alpha_{0,y}\\
	\alpha_{1,y}\\
	\alpha_{0,u}\\
	\alpha_{1,u}\\
	\alpha_{2,u}\\
	\hline
	-\alpha_{0,p}
	\end{array}\right] = \left[\begin{array}{c}
	1/2\\
	7/20\\
	0\\
	0\\
	0\\
	\hline
	0\\
	\end{array}\right]\,.
	\end{align*}
	\item The Ritz-Galerkin system for the quadratic penalty method is
	\begin{align*}
	\left(\left[\begin{array}{ccccc}
	1   &  1/2  & 0   & 0   & 0      \\
	1/2 & 13/35 & 0   & 0   & 0      \\
	0   &  0    & 1   & 1/2 & 1/3    \\
	0   &  0    & 1/2 & 1/3 & 1/4    \\
	0	&  0    & 1/3 & 1/4 & 1/5    
	\end{array}\right] + \frac{1}{\omega} \cdot \bB \right) \cdot \left[\begin{array}{c}
	\alpha_{0,y}\\
	\alpha_{1,y}\\
	\alpha_{0,u}\\
	\alpha_{1,u}\\
	\alpha_{2,u}
	\end{array}\right] = \left[\begin{array}{c}
	1/2\\
	7/20\\
	0\\
	0\\
	0
	\end{array}\right]\,,
	\end{align*}
\end{itemize}
where
\begin{align*}
\bB := \left[\begin{array}{ccccc}
0 & 0     & 0   &  0   & 0 \\
0 & 5/4   & -1  & -1/2 & -3/10 \\
0 & -1    &  1  &  1/2 & 1/3 \\
0 & -1/2  & 1/2 &  1/3 & 1/4 \\
0 & -3/10 & 1/3 &  1/4 & 1/5
\end{array}\right] \neq \left[\begin{array}{ccccc|cc}
0   \\
5/4  \\
-1   \\
-1/2 \\
-3/10
\end{array}\right] \cdot \left[\begin{array}{ccccc|cc}
0   \\
5/4  \\
-1   \\
-1/2 \\
-3/10
\end{array}\right]\t\,.
\end{align*}
The matrix $\bB$ has rank $4$, hence any regularized Lagrangian method with $\dim(\cZ)<4$ cannot be equivalent to the quadratic penalty method. Even then, it is non-trivial to find a space $\cZ$ such that the reduced linear system matches the one with $\bB$.

\subsection{Comparison: Galerkin vs Collocation}
Galerkin methods are more complicated to implement and more computationally expensive than collocation methods, due to the quadrature. However, Galerkin methods may yield more accurate solutions than a collocation method of similar discretization size $N$. Figure~\ref{fig:galerkinvariants} shows an example where a Galerkin method finds a more accurate solution in the same ansatz space $\tcP_3$. However, using a Galerkin method is not a guarantor for superior accuracy.

We now demonstrate via the following numerical experiment the intricacy of choosing the right basis: The convection-diffusion boundary value problem
\begin{align*}
y(0)=0,\quad y(1)=0,\qquad -\nu \cdot \ddot{y}(t)+\dot{y}(t)=1\ \ \tforall\,t \in [0,1]
\end{align*}
has the exact solution $y^\star(t)=t + \frac{1-\exp(t/\nu)}{\exp(1/\nu)-1}$ but will be solved numerically via four methods for the parameters $\nu=1$ and $\nu=0.001$ in the ansatz space
\begin{align*}
\cV := \Bigg\lbrace y \, \Bigg\vert\ y(t)=\sum_{i=1}^{8} \alpha_i \cdot \sin(\pi\cdot i \cdot t)\ \Bigg\rbrace\,.
\end{align*}
This choice permits easy construction and differentiation of $y$ and satisfaction of the boundary conditions. The weak form is
\begin{align*}
\nu \cdot \langle \dot{v},\dot{y}\rangle + \langle z,\dot{y}\rangle = \langle v,1\rangle\quad \forall v \in \cV\,.
\end{align*}
The strong form is
\begin{align*}
-\nu \cdot \langle {z},\ddot{y}\rangle + \langle z,\dot{y}\rangle = \langle z,1\rangle\quad \forall z \in \cZ\,,
\end{align*}
which in this instance yields the same solution as the weak form when $\cZ=\cV$ (Galerkin). Collocation uses the strong form with $\cT=\lbrace 1/9,\,2/9,\dots,8/9 \rbrace$. The Galerkin Least-Squares method solves
\begin{align*}
\langle -\nu \cdot \ddot{z}+\dot{z},-\nu \cdot \ddot{y} + \dot{y} \rangle = \langle  -\nu \cdot \ddot{z}-\dot{z},1 \rangle\quad \forall z \in \cV\,.
\end{align*}

Figure~\ref{fig:cdrexperiment} shows the solutions and residuals. In the case $\nu=1$ all solutions look accurate, even though the residuals are uniformly large. In contrast, for $\nu=0.001$ all methods yield equally useless inaccurate solutions. This is because the exact solution has a very sharp edge that cannot be resolved in the ansatz space. One exception is the streamline-upwind Petrov-Galerkin method (SUPG), which we included as an example for a Petrov-Galerkin method. The SUPG method takes the special choice
\begin{align*}
\langle z+\omega \cdot \dot{z},-\nu \cdot \ddot{y}+\dot{y}\rangle = \langle z+\omega \cdot \dot{z},1\rangle\quad \forall z \in \cV\,,
\end{align*}
which in the physical context of this particular differential equation can be interpreted as a regularized upwind discretization with regularization parameter $\omega=10^{-1}$.

\begin{figure}
	\centering
	\includegraphics[width=1\linewidth]{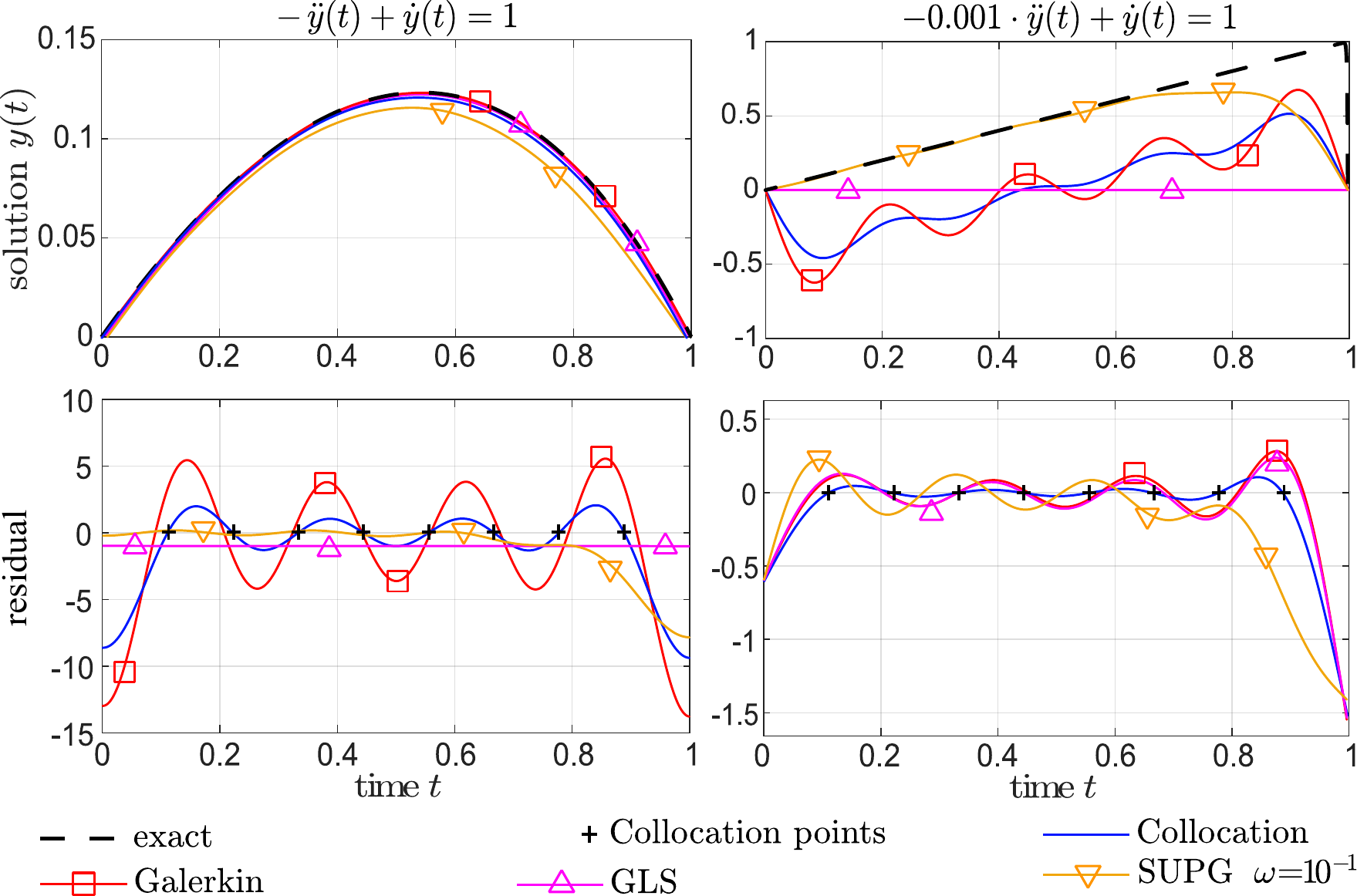}
	\caption{Comparison of different numerical solutions to a convection-diffusion equation for different values of the diffusion parameter.}
	\label{fig:cdrexperiment}
\end{figure}

Concluding from the numerical example, the right combination of ansatz space $\cV$ and test space $\cZ$ is critical for each individual boundary value problem. For instance, the SUPG method is only good for the presented example of convection-diffusion equations. For black-box nonlinear optimal control problems it seems impossible to find an ideal method every time from scratch. Hence, opting for collocation as the cheapest and simplest method seems reasonable. On the other hand, optimal control problems with a-priori known linear partial differential constraints often use a tailored Galerkin method \cite{troltzsch2010optimal}.

\section{Available Convergence Proofs for Direct Transcription Methods}
\label{sec:literatureReview:table}

This section presents an overview of known convergence results for various classes of direct transcription methods in the literature. Table~\ref{tab:litconv} gives an overview of various methods: explicit and implicit Euler discretization, linear multi-step methods, Runge-Kutta methods, pseudo-spectral methods, penalty methods, penalty-barrier methods, and two other classes of methods that are not discussed in this survey: pseudo-spectral and augmented Lagrangian methods. They are not discussed because they are conceptually similar to collocation and penalty methods. The last row lists the convergence result of Part~\ref{part:convproof} in this thesis, hence there is no reference.

\subsection{Simplifications for the Sake of Comparability}
The table only states a limited number of attributes in a non-parametric form. For instance, a particular parametric order of convergence results in parametric assumptions that make the table difficult to read. Thus, the table states the minimum assumptions that are required by each respective convergence proof. Except for the last row, the assumptions listed in Table~\ref{tab:litconv} are not necessarily complete. For instance, \cite{Maurer} uses an additional assumption on the surjectivity of the linearized equality constraints and on the existence of a solution to a particular Riccati boundary value problem. Finally, some papers define the candidate space implicitly via boundedness of certain barrier-functionals \cite{Jones70}. Therefore, the table can only give a broad idea of the typical assumptions used when proving convergence of a certain type of method for a certain problem format.

As the table shows, different convergence analyses for different methods vary in the problem format that they treat and in the assumptions that they make. Usually, the candidate space is a Sobolev space and the functions $M,f_1,f_2,b$ from the problem statement are assumed to live in H\"older spaces. Only some of the literature results are for optimal control problems in the general format~\eqref{eqn:OCP}. Most convergence results are established in the measure of error, rather than optimality or feasibility.

\subsection{Assumptions for Convergence of Error}
As discussed along Figure~\ref{fig:minimizertypes} in Section~\ref{sec:ConvMeasures}, the analysis of convergence in terms of an error necessitates local uniqueness of the exact minimizer $y^\star,u^\star$. This is often established via a \emph{coercivity} assumption, which is a sufficient condition for a strict minimizer \cite{Maurer,Hager2000,RaoHager2018}.

Another typical assumption is the \emph{homogeneous rank assumption}, which is a relevant assumption for the uniqueness of the dual solution. Further assumptions are on the boundedness of either the exact minimizer $y^\star,u^\star$, the numerical minimizer $y^\star_h,u^\star_h$, on $f_1$, or on the objective function from below $-M$ or from above $M$.

\subsection{Striking a Good Balance}
Table~\ref{tab:litconv} highlights one cell of the most preferable attribute per column in yellow. For example, methods and convergence proofs for optimal control problems of general problem format are preferred over those that can only solve initial-value problems.

In contrast, when it comes to candidate spaces and assumptions, there are trade-offs. For instance, the insignificantly more general space $W^{1,2} \times L^2$ on the one hand necessitates significantly stronger assumptions on the smoothness of $f_1$ on the other hand, because potential singularities in $u$ must remain measurable in $f_1$.

With regards to boundedness, the absence of any explicit boundedness assumption seems to be replaced via a hidden boundedness assertion implied by the regularity of the optimality system, by virtue of assumptions on coercivity and homogeneous rank. From a computational engineering stance, the verification of a lower bound on $M$ is more practical than the verification of local coercivity.

\renewcommand{\arraystretch}{1.2}
\begin{table}[]
	\centering
	\resizebox{\columnwidth}{!}{%
		\begin{adjustbox}{angle=90}
			\begin{tabular}{
					!{\vrule width 3pt}
					>{\centering\arraybackslash}p{0.8cm}| 	
					>{\centering\arraybackslash}p{0.6cm}
					!{\vrule width 3pt}
					>{\centering\arraybackslash}p{2.0cm}|
					>{\centering\arraybackslash}p{2.4cm}
					!{\vrule width 3pt}
					>{\centering\arraybackslash}p{3.4cm}|
					>{\centering\arraybackslash}p{2.4cm}
					!{\vrule width 2pt}
					>{\centering\arraybackslash}p{1cm}|
					>{\centering\arraybackslash}p{1cm}
					!{\vrule width 3pt}
					>{\centering\arraybackslash}p{1cm}|
					>{\centering\arraybackslash}p{1cm}
					!{\vrule width 3pt}
				}
				\noalign{\hrule height 3pt}
				\multicolumn{2}{!{\vrule width 3pt}c!{\vrule width 3pt}}{\textbf{method}}     	& \multicolumn{2}{c!{\vrule width 3pt}}{\textbf{problem format}}             																				& \multicolumn{4}{c!{\vrule width 3pt}}{\textbf{assumptions}}                                                                        																											& \multicolumn{2}{c!{\vrule width 3pt}}{\textbf{convergence}}   				\\ \hline
				\textbf{\rotatebox{90}{type}}		& \textbf{\rotatebox{90}{reference}}		& \textbf{\rotatebox{90}{dynamics}} 									& \textbf{\rotatebox{90}{\parbox{2.0cm}{candidate\\ space $\mathcal{X}$}}}			& \textbf{\rotatebox{90}{\parbox{2.0cm}{smooth-\\ ness}}} 		& \textbf{\rotatebox{90}{\parbox{2.0cm}{bounded-\\ ness}}} 		& \textbf{\rotatebox{90}{coercivity}} 	& \textbf{\rotatebox{90}{\parbox{2.0cm}{homoge-\\ neousity}}} 	& \textbf{\rotatebox{90}{type}} 	& \textbf{\rotatebox{90}{order}}			\\ \hline \hline\noalign{\hrule height 1.5pt}
				EE            						& \cite{MR0267439}  						& IVP                   												&  implicit																			& $M,f \in \cC^0$             									& $y^\star,u^\star      $											&                     					&                   												& error     						& low         								\\ \hline
				EE            						& \cite{Maurer}  							& CIVP              													& $\cC^{1} \times \cC^0       				$										& $f \in\cC^{1}$               								& \cellcolor{yellow!25}												& \checkmark       						& \checkmark        												& error     						& low         								\\ \hline
				EE            						& \cite{MR1779739} 							& CIVP 																	& $W^{2,\infty} \times W^{1,\infty} 		$										& $M,f_1 \in\cC^{2,1};\ f_2 \in\cC^{3,1}$ 						& 																	& \checkmark 							& \checkmark        												& error     						& low         								\\ \hline
				EE            						& \cite{MR3119154} 							& general               												& $W^{1,\infty} \times L^\infty  			$										& $M,f,b \in\cC^{1,1}$           								& $y_h^\star,u_h^\star  $											&                     					&                   												& error     						& low         								\\ \hline
				IE            						& \cite{MR4046772} 							& CIVP             														& $W^{2,\infty} \times W^{1,\infty} 		$										& $M,f_1 \in\cC^{2,1};\ f_2 \in\cC^{3,1}$ 						& $                     $											& \checkmark          					& \checkmark        												& error     						& low         								\\ \hline \hline\noalign{\hrule height 1.5pt}
				LM            						& \cite{MR500418} 							& IVP                   												&  implicit									 										& $M,f_1 \in \cC^1$												& $y^\star,u^\star      $											&                     					&                   												& error     						& high        								\\ \hline
				RK            						& \cite{MR1770350} 							& IVP                   												& $W^{2,\infty} \times W^{1,\infty} 		$										& $M \in \cC^{0,1};\ f_1 \in\cC^{2,1}$    						& $u^\star              $											& \checkmark          					&                   												& error     						& high        								\\ \hline
				RK            						& \cite{Hager2000} 							& IVP                   												& $W^{1,\infty} \times L^\infty  			$										& $M,f \in\cC^{1,1}$             								&  																	& \checkmark          					& \checkmark        												& error     						& high        								\\ \hline
				RK            						& \cite{SchwartzPolak:1996} 				& IVP                   												& $W^{1,\infty} \times L^\infty  			$										& $M \in \cC^{0,1};\ f \in\cC^{1,1}$     						& $y^\star,u^\star      $											& \checkmark          					&                   												& error     						& high        								\\ \hline \hline\noalign{\hrule height 1.5pt}
				PS            						& \cite{RaoHager2018} 						& IVP                   												& $\cC^{1} \times \cC^0       				$										& $M,f_1 \in\cC^{1,1}$            								& $y^\star,u^\star      $											& \checkmark          					& \checkmark        												& error     						& high        								\\ \hline
				PS            						& \cite{gongEtAl:2006} 						& custom																& $W^{2,\infty} \times \cC^0    			$										& $M, f, b \in \cC^{0,1}$         								& $y_h^\star,u_h^\star  $											&                     					&                   												& error     						& high        								\\ \hline
				PS            						& \cite{gongEtAl:2008} 						& general               												& $W^{2,\infty} \times \cC^0    			$										& $M,f,b \in\cC^{1,1}$           								& $y_h^\star,u_h^\star  $											&                     					&                   												& error     						& high        								\\ \hline \hline\noalign{\hrule height 1.5pt}
				DC            						& \cite{reddien1979collocation} 			& IVP                   												& $W^{1,2} \times L^2       				$										& $M, f_1 \in\cC^{2}$            								&                      												& \checkmark          					& \checkmark        												& error     						& high        								\\ \hline
				DC            						& \cite{kameswaran_biegler_2008} 			& IVP                   												&  implicit									 										& $M,f_1 \in \cC^1$	   											& $y_h^\star,u_h^\star  $											&                     					&                   												& error     						& high        								\\ \hline
				DC            						& \cite{MR3983447} 							& IVP                   												& $\cC^{1} \times \cC^0       				$										& $M, f_1 \in\cC^{2,1}$           								& $y^\star,u^\star      $											& \checkmark          					& \checkmark        												& error     						& high        								\\ \hline \hline\noalign{\hrule height 1.5pt}
				AL            						& \cite{Hager90} 							& IVP                   												& $W^{1,\infty} \times L^\infty  			$										& $M, f_1 \in\cC^{2}$            								& $y_h^\star,u_h^\star  $											& \checkmark          					&                   												& error     						& low         								\\ \hline
				PM            						& \cite{Russell65} 							& IVP                   												&  implicit        																	& $M, f_1 \in\cC^{2}$            								& $M,y^\star            $											&                     					&                   												& G\&R      						& none        								\\ \hline
				PM            						& \cite{DeJulio70} 							& LTI IVP               												& $W^{1,2} \times L^2      					$										& $M \in \cC^0$                									& $-M,y^\star,u^\star   $											&                     					&                   												& G\&R      						& low         								\\ \hline
				PM            						& \cite{Balakrishnan68} 					& IVP                   												&  implicit  								 										& $M, f_1 \in\cC^{1}$            								& $-M, f_1, y^\star     $											&                     					&                   												& G\&R      						& none        								\\ \hline
				PM            						& \cite{MR0271512} 							& PDE         															& $W^{1,2} \times L^2  						$										& $M, f_1 \in\cC^{1}$            								& $-M                   $											& \checkmark          					& \checkmark        												& error     						& high        								\\ \hline
				PBM            						& \cite{Jones70} 							& CIVP          														&  implicit      					 												& \cellcolor{yellow!25}$M, f \in \cC^0$							& $f_1,y^\star,u^\star  $											&                     					&                   												& G\&R      						& none        								\\ \hline
				PBM            						& \cite{Neuenhofen2020AnIP} 				& general               												& \cellcolor{yellow!25}$W^{1,2} \times L^2      					$										& $M, f, b \in \cC^{0,1}$          								& $-M,y^\star,u^\star   $											&                     					&                   												& G\&R      						& high        								\\ \hline \hline\noalign{\hrule height 3pt}
				\multicolumn{10}{!{\vrule width 3pt}c!{\vrule width 3pt}}{\phantom{$\frac{\frac{a}{b}}{c}$}\textbf{The Method of This Paper}\phantom{$\frac{\frac{a}{b}}{c}$}}																																																																																																				\\ \hline \hline
				PM            						&  											& \cellcolor{yellow!25}general											& $W^{1,2} \times L^\infty    				$					& $M, f, b \in \cC^{0,\lambda}$									& $-M                   $											& \cellcolor{yellow!25}					& \cellcolor{yellow!25}												& G\&R      						& \cellcolor{yellow!25}high    				\\ \hline
			\end{tabular}
		\end{adjustbox}
	}
	\caption{List of literature results for the convergence analyses of various direct transcription methods. Abbreviations: ``EE''=Explicit Euler; ``IE''=Implicit Euler; ``LM''=Linear Multi-step; ``PS''=pseudo-spectral; ``DC''=Direct Collocation; ``AL''=Augmented Lagrangian; ``PM''=Penalty Method; ``CIVP''=constrained IVP; ``LTI''=linear time-invariant; ``G\&R''= optimality gap and feasibility residuals. Convergence orders $\geq 2$ are considered high. Non-numerical methods do not possess convergence orders. Attributes of each method are given in columns. Yellow highlights the most desirable characteristic for each attribute.}\label{tab:litconv}
\end{table}
\renewcommand{\arraystretch}{1.0}

\chapter{Direct Transcription via Collocation Methods}\label{sec:DirectCollocation}
Direct collocation methods are considered as the state of the art for numerically solving optimal control problems.
Direct collocation methods are direct transcription methods that generalize the idea of the explicit Euler method.

Because there exist many variants, heuristics, and deviations in technical details, this section presents direct collocation methods according to their default definition \cite[Def.~7.6]{Hairer1}. Our presentation is illustrated with one particular direct collocation method\footnote{Remark for experts: The illustrated method is Legendre-Gauss-Radau collocation. This causes no harm because this particular method is most widely used in practice and will also be used in our numerical experiments.}.

\section{Construction}
As discussed in Section~\ref{sec:Scope} and conveyed in Figure~\ref{fig:methodclasses}, all direct transcription methods must achieve two tasks: (a)~approximation of the states and controls, and (b)~relaxation of the constraints. The following two subsections explain how collocation methods construct the approximation and the relaxation, respectively.

\begin{figure}
	\centering
	\includegraphics[width=1\linewidth]{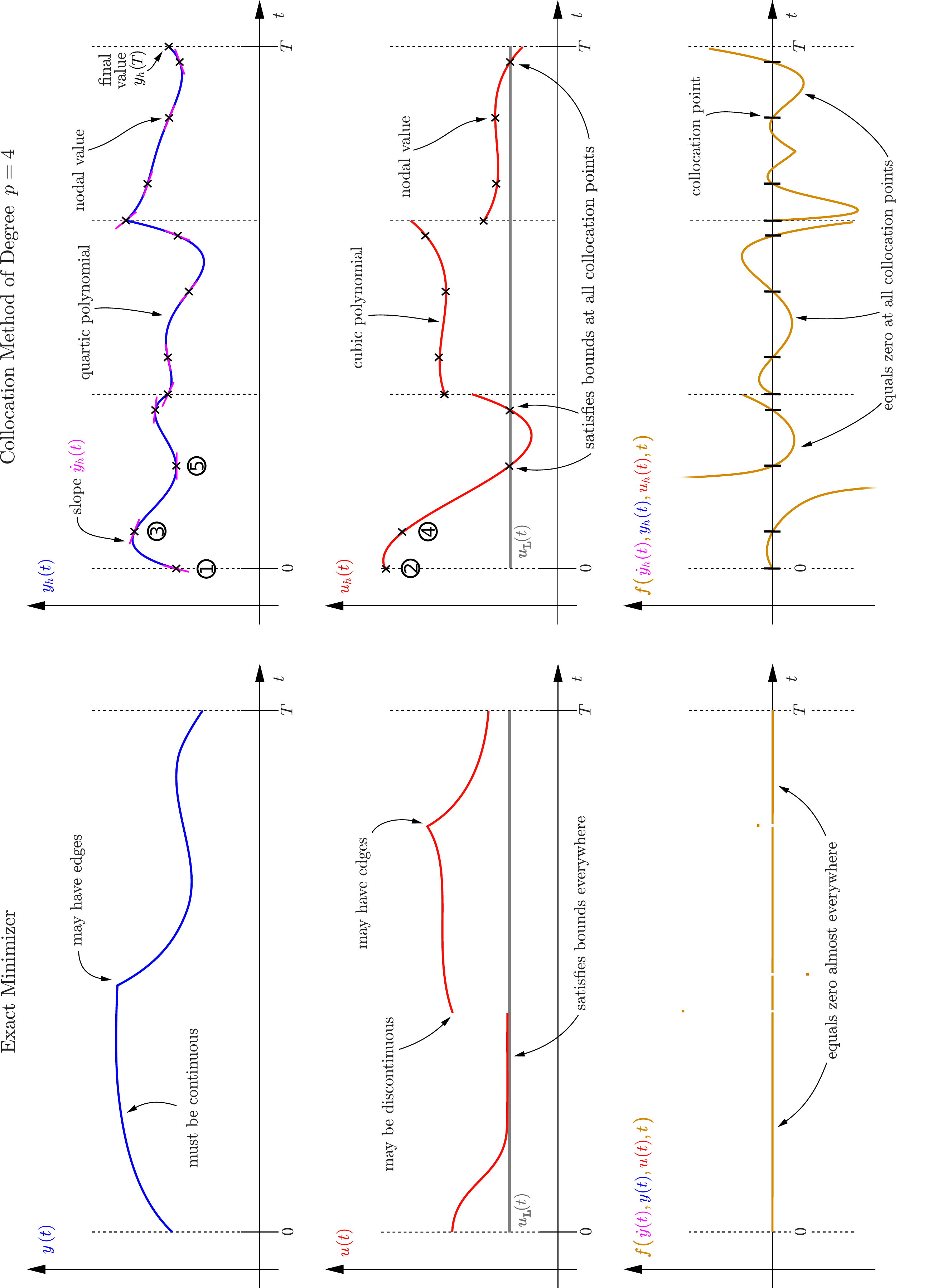}
	\caption{The construction of direct collocation methods. Collocation points and nodal values are indicated with markers and labels. The shapes of numerical functions are compared with functions of the exact minimizer.}
	\label{fig:collocationconstructv2rot}
\end{figure}

\subsection{Approximation of States and Controls}
As depicted in Figure~\ref{fig:speedchangeproblem} and in the optimization problem~\eqref{eqn:ExampleEuler:NLP}, direct transcription via the explicit Euler method uses nodal values $y^{(1)},u^{(1)},y^{(2)},u^{(2)},\dots$ for the states and controls. These nodal values were placed at the following fixed points of time:
\begin{align*}
y^{(1)}&\approx y(0 \cdot h)\,,& y^{(2)}&\approx y(1 \cdot h)\,,& y^{(3)}&\approx y(2 \cdot h)\,,\dots\,,\\
u^{(1)}&\approx u(0 \cdot h)\,,& u^{(2)}&\approx u(1 \cdot h)\,,& u^{(3)}&\approx u(2 \cdot h)\,,\dots\,.
\end{align*}

Collocation methods generalize this concept of the explicit Euler method. They pack the nodal values into groups of parametric size $p \in \N$, which is called the \emph{polynomial degree} of the collocation method, for reasons given in the next paragraph. Figure~\ref{fig:collocationconstructv2rot} shows an example of a collocation method for $p=4$. The separation into groups is illustrated with dashed vertical lines. The nodal values are indicated with crosses. We use the number $N\in\N$ to denote the number of groups. In the figure, $N=3$.

Collocation methods approximate the states $y$ and controls $u$ with piecewise polynomials. This is done by uniquely interpolating the nodal values of each group. Figure~\ref{fig:collocationconstructv2rot} illustrates this: For $y_h$, we interpolate $p+1$ points per group uniquely into a polynomial of degree $p$. For the depicted example, this generates a quartic polynomial. In contrast, for $u_h$, we interpolate only $p$ points per group, thus obtaining a polynomial of only degree $p-1$.

As depicted in Figure~\ref{fig:collocationconstructv2rot}, this particular way of interpolation results in a continuous function for $y_h$ and in a discontinuous function for $u_h$. In particular, the functions $y_h$ and $u_h$ are piecewise polynomials of degree $p$ and $p-1$, respectively. Both $y_h,u_h$ may have edges. This way of approximation makes sense because exact minimizers may have edges in $y,u$ and discontinuities in $u$, cf. Section~\ref{sec:SolutionFormat}.

\subsection{Relaxation of Constraints}
As depicted in Figure~\ref{fig:speedchangeproblem} and in~\eqref{eqn:ExampleEuler:NLP}, direct transcription via the explicit Euler method satisfies the differential equation only at the following fixed points:
$$ t=0\,,\quad t=h\,,\quad t=2\cdot h\,,\dots\quad t=(N-1) \cdot h\,.$$

Collocation methods take over this concept of the explicit Euler method by satisfying all constraints only at a finite number of fixed points for $t$. This concept is so fundamental to collocation methods that these fixed points for $t$ are called \emph{collocation points}. This means, collocation methods relax \eqref{eqn:OCP:dae} and (\ref{eqn:OCP}:y) and (\ref{eqn:OCP}:u) into
\begin{align*}
f\big(\,\dot{y}_h(t),y_h(t),u_h(t),t\,\big)&=\bO 	& &\text{ at each collocation point }t\,,\\
\yL(t)\leq y(t)&\leq\yR(t) 	& &\text{ at each collocation point }t\,,\\
\uL(t)\leq u(t)&\leq\uR(t) 	& &\text{ at each collocation point }t\,.
\end{align*}

\begin{remark}
	The name ``collocation method'' emerged from initial value problems. In an equation like~\eqref{eqn:IVP}, a polynomial $y_h$ of degree $p$ is used such that the equations $y_{\text{ini}}=y_h(0)$ and $\dot{y}_h(t) = f(y_h(t),t)$ are satisfied at $p$ fixed points $t$ on the interval $[0,T]$; thus, co-locating (=colloquere, Latin) the polynomial's derivative $\dot{y}_h(t)$ with the flux-function $f(y_h(t),t)$ at each co-location point $t$ in order to determine $y_h$ uniquely. The coefficients of the interpolating polynomial were typically found via Newton's method \cite[Sec.~3.1]{Hairer1}.
\end{remark}

All collocation methods have their own individual sets of collocation points. Figure~\ref{fig:collocationconstructv2rot} shows the location of the collocation points for a method of degree $p=4$ via axis tics.

\subsection{Restriction on the Number of Collocation Points}
In general, each collocation method must use exactly $p$ collocation points per group. The reason for this can be inferred from the above remark: For any other number of collocation points, the polynomials of $y_h$ would not be determined uniquely from co-location of~(\ref{eqn:OCP}:f1). Using more than $p$ points would result in an equation system that possesses no solution; using fewer than $p$ points would result in an equation that has infinitely many solutions. In either case, the method would fail because the collocation principle is supposed to (locally) uniquely determine the solution to the differential equations at hand. For a further discussion of the fact that collocation methods of polynomial degree $p$ must use precisely $p$ collocation points, we refer to \cite[Sec.~7]{Hairer1}.

We discuss below some effects of collocation.

\paragraph{Effects on Equality Constraints}
As per requirement of collocation methods, \mbox{$f\big(\,\dot{y}_h(t),y_h(t),u_h(t),t\,\big)=\bO$} must hold at all collocation points. Figure~\ref{fig:speedchangeproblem} depicts $f$ in orange. Outside of the collocation points, $f\big(\,\dot{y}_h(t),y_h(t),u_h(t),t\,\big)$ can take on arbitrary values, can have poles, discontinuities, and edges.  This is in contrast to exact minimizers, which yield $f\big(\,\dot{y}(t),y(t),u(t),t\,\big)=\bO$ almost everywhere, as is required per~\eqref{eqn:OCP:dae}.

\paragraph{Effects on Inequality Constraints}
Likewise, as per requirement of collocation methods, $\uL(t)\leq u_h(t)$ at all collocation points. Figure~\ref{fig:speedchangeproblem} depicts $\uL$ in grey. Outside of the collocation points, $\uL(t)\leq u_h(t)$ can be violated arbitrarily. This is in contrast to exact minimizers, which obey $\uL(t)\leq u(t)$ everywhere, as is required by~\mbox{(\ref{eqn:OCP}:y)--(\ref{eqn:OCP}:u)}.

\section{Implementation as NLP}
The purpose of the approximation and the relaxation is to transcribe the optimal control problem~\eqref{eqn:OCP} into an NLP. An example of an NLP is~\eqref{eqn:ExampleEuler:NLP}, which we used when demonstrating in Section~\ref{sec:Intro:OCP} how to solve the optimal control problem~\eqref{eqn:exampleOCP} with the explicit Euler method.

In the present section, we formalize the NLP that direct collocation methods use when solving problems of format~\eqref{eqn:OCP}. To improve readability, we introduce some notation first.

\subsection{Notation of Piecewise Polynomials and Collocation Points}\label{def:spaceXhp}

\paragraph{Mesh}
In Figure~\ref{fig:collocationconstructv2rot}, the arrangement into groups has been indicated with dashed vertical lines. These lines separate the span $[0,T]$ into $N$ non-overlapping \emph{intervals} $I_i:=[t_i,t_{i+1}]$, $i=1,\dots,N$. This separation is called \emph{mesh}. The points $t_i$ satisfy $0=t_1<t_2<\dots<t_{N+1}=T$.

The parameter $h$ denotes the \emph{mesh size}. This is the diameter of the longest interval $I_i$:
\begin{align*}
h := \operatornamewithlimits{max}_{1\leq i\leq N} t_{i+1}-t_i\,.
\end{align*}

\paragraph{Sets of Collocation Points}
We use the notation $\cT_{h,p}$ for a set of $N \cdot p$ distinct collocation points on $[0,T]$, such that there are $p$ collocation points on each mesh interval $I_i$.

In Figure~\ref{fig:collocationconstructv2rot}, the collocation points are spaced in the same pattern on each mesh interval $I_i$. This reflects the typical case. Section~\ref{sec:COL:examples} gives examples of popular collocation methods in the literature.

\paragraph{Spaces of Piecewise Polynomials}
Section~\ref{sec:SolutionFormat} introduced the candidate space $\cX$ of states $y$ and controls $u$ so that $y,u$ may have edges and $u$ may have discontinuities, cf. the illustration of exact minimizers in Figure~\ref{fig:collocationconstructv2rot}. The direct collocation method constructs $y_h,u_h$ via piecewise polynomials such that they live in $\cX$. We denote this space of piecewise polynomials with $\cX_{h,p}$. It holds that $\cX_{h,p} \subset \cX$ because all $(y_h,u_h) \in \cX_{h,p}$ satisfy the aforementioned conditions.

We now formally define and illustrate the space $\cX_{h,p}$ in preparation of later results and to improve understanding of possible shapes that $y_h,u_h$ can attain. We make use of the spaces $\cP_p(I)$, that contain all functions that equal a polynomial of degree $\leq p \in \N$ on a bounded interval $I \subset \R$. Figure~\ref{fig:spacepiecewisepolynomials}~(a) shows an example of a pathological function $\phi$ (in blue) that has infinitely many jumps\footnote{cf. Cantor function}, has edges, and has a pole. Regardless, $\phi$ lives in $\cP_3([a,b])$ because on the interior of the interval $[a,b]$ it takes on the form of a cubic polynomial that is indicated in red.

We can define $\cX_{h,p}$ via the spaces $\cP_p(I_i)$:
\begin{align}
\cX_{h,p}=\left\lbrace\ (y_h,u_h)\ \Bigg\vert\ y_h \in \cC^0([0,T])\ \land\ y_h \in \cP_{p}(I_i)\ \land\ u_h \in \cP_{p-1}(I_i) \quad \text{for } i=1,\dots,N. \ \right\rbrace\label{eqn:def:spaceXhp}
\end{align}
Figure~\ref{fig:spacepiecewisepolynomials}~(b) shows an example of piecewise polynomials $y_h,u_h$ on a non-equidistant mesh of $N=3$ intervals. The function $y_h$ is continuous and consists of piecewise parabolas. In contrast, $u_h$ is discontinuous and piecewise linear. Hence, $(y_h,u_h) \in \cX_{h,p}$ for $p=2$. In the depicted case, $u_h=\dot{y}_h$.

\begin{figure}
	\centering
	\includegraphics[width=0.85\linewidth]{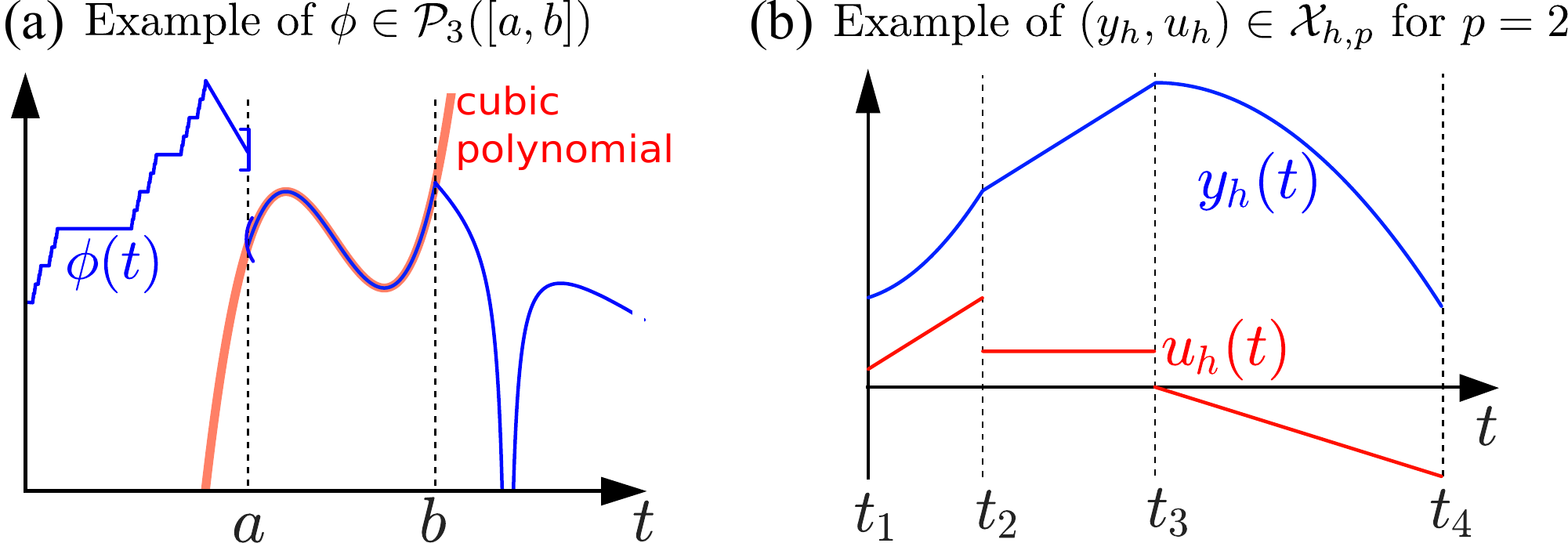}
	\caption{Examples of functions in the spaces $\cP_p(I)$ and $\cX_{h,p}$: (a) The space $\cP_3([a,b])$ contains all functions that take on the shape of a polynomial of degree $\leq 3$ on the interval $(a,b)$. (b) The space $\cX_{h,2}$ contains all piecewise polynomials where $y_h$ is of degree $2$ and $u_h$ of degree $1$.}
	\label{fig:spacepiecewisepolynomials}
\end{figure}

\subsection{NLP in Optimal Control Notation}
Using the notation with $\cX_{h,p}$ and $\cT_{h,p}$, we can state the transcribed optimal control problem in collocation methods as follows:
\begin{equation}
\label{eqn:COL}
\left\lbrace
\begin{aligned}
&\operatornamewithlimits{min}_{(y_h,u_h) \in \cX_{h,p}} & M \big(\,y_h(0),y_h(T)\,\big) & & &\\
& \text{subject to} & b\big(\,y_h(0),y_h(T)\,\big) &= \bO\,,\\
& 					& f\big(\,\dot{y}_h(t),y_h(t),u_h(t),t\,\big)&=\bO &\forall\ &t \in \cT_{h,p}\,,\\
& 					& \yL(t) \leq y_h(t) &\leq \yR(t)\quad&\forall\ &t \in \cT_{h,p}\,,\\
& 					& \uL(t) \leq u_h(t) &\leq \uR(t)\quad&\forall\ &t \in \cT_{h,p}\,.
\end{aligned}
\right\rbrace
\end{equation}
In the problem statement, the states and controls are approximated with piecewise polynomials in the space $\cX_{h,p}$. The differential and algebraic constraints~(\ref{eqn:OCP}:f) together with the bound constraints~\mbox{(\ref{eqn:OCP}:y)--(\ref{eqn:OCP}:u)} are relaxed to the collocation points in the set $\cT_{h,p}$.

\subsection{NLP in Standard Notation}\label{sec:COL:optim}\label{sec:NLP_of_DCM}
In order to solve \eqref{eqn:COL} with available numerical algorithms, it is helpful to re-express \eqref{eqn:COL} in the format
\begin{align}
{\left\lbrace
	\label{eqn:COL:NLP}
	\begin{aligned}
	&\operatornamewithlimits{min}_{\bx \in \R^{n_{\bx}}}  	& 	&\bf(\bx)\\
	&\text{subject to}										& 	&\bc(\bx)=\bO\,,\\
	&       												& 	&\bbL \leq \bA \cdot \bx \leq \bbR\,.
	\end{aligned}
	\right\rbrace}
\end{align}
We explain in the following how this can be achieved.

The functions $y_h,u_h$ can be identified with a vector $\bx \in \R^{n_{\bx}}$ of dimension
\begin{align*}
n_{\bx} := p \cdot \big( N \cdot (n_y+n_u) + n_y \big)\,.
\end{align*}
This vector contains all the nodal values of $y_h,u_h$ that are illustrated in Figure~\ref{fig:collocationconstructv2rot}. The figure also shows encircled numbers. These give one possible order in which the nodal values of $y_h,u_h$ and the final value $y_h(T)$ can be listed in $\bx$. We write
\begin{align*}
\bx = \left[\begin{array}{c}
\vdots\\
y_h(t)\\
u_h(t)\\
\vdots\\
\hline
y_h(T)
\end{array}\right]\quad\forall t \in \cT_{h,p}\,.
\end{align*}
I.e., $\bx$ is a vector that contains a list of all values of $y_h,u_h$ at all collocation points -- plus the additional final node of at $y_h(T)$ because $t=T$ is not a collocation point in the depicted setting of Figure~\ref{fig:collocationconstructv2rot}. Given the values of $y_h(t),u_h(t)$ at the abscissae $t$ in $\bx$, we can interpolate the piecewise polynomials of $y_h,u_h$ over $[0,T]$ uniquely. The values in $\bx$ define the vertical position of the black crosses in Figure~\ref{fig:collocationconstructv2rot}. We see that the blue curve is continuous while the red interpolation is not. This is so because the blue interpolation takes one additional point per mesh interval.

Using $\bx$, we can evaluate the functions $y_h,u_h$ that $\bx$ represents. We do this to construct the properties $\bf,\bc,\bA,\bbL,\bbR$ in \eqref{eqn:COL:NLP}:
\begin{align*}
\bf(\bx)&:= M\big(y_h(0),y_h(T)\big)\,, & &\\[3pt]
\bc(\bx)&:= \left[\begin{array}{c}
b\big(y_h(0),y_h(T)\big)\\[2pt]
\hline
\vdots\\
f\big(\dot{y}_h(t),{y}_h(t),u_h(t),t\big)\\
\vdots
\end{array}\right]\quad \forall t \in \cT_{h,p}\,,\qquad&
\bbL&:=\begin{bmatrix}
\vdots\\
\yL(t)\\
\uL(t)\\
\vdots
\end{bmatrix}\,,\ 
\bbR:=\begin{bmatrix}
\vdots\\
\yR(t)\\
\uR(t)\\
\vdots
\end{bmatrix}\quad \forall t \in \cT_{h,p}\,.
\end{align*}
The matrix $\bA$ is constructed such that
\begin{align*}
\bA \cdot \bx = \begin{bmatrix}
\vdots\\
y_h(t)\\
u_h(t)\\
\vdots
\end{bmatrix}\quad\forall t \in \cT_{h,p}\,.
\end{align*}
From the above definitions, we obtain a vectorial function $\bc : \R^{n_{\bx}} \rightarrow \R^{n_{\bc}}$ and vectors $\bbL,\bbR\in\R^{n_{\bb}}$ of dimensions
\begin{align*}
n_{\bc} &:= n_b + N \cdot p \cdot (n_y + n_c)\,,\qquad &n_{\bb} &:= N \cdot p \cdot (n_y + n_u)\,.
\end{align*}

\subsection{Numerical Solution of the NLP}
The problem~\eqref{eqn:COL:NLP} matches precisely with the problem format~\eqref{eqn:NLP} in Section~\ref{sec:NLP} below. Hence, the method presented in Section~\ref{sec:NLP}, which discusses numerical solution algorithms for NLP, can be uesd to solve this optimization problem numerically.

Some optimal control problems have a large number of states $n_y$ and/or of controls $n_u$. Further, some problems have very long time-horizons $[0,T]$ or dynamic phenomena that require a very small mesh-size $h$ in order to resolve. In any of these scenarios, the dimensions $n_\bx,\ n_\bc,\ n_\bb$ of the NLP can become very large. This can lead to numerical issues in the NLP solver. One issue is with the linear equation system: The time and RAM needed to solve the system increases with the dimension; and the accuracy of the computed solution decreases. There is a point where the system is so large that the computer runs out of RAM or the user runs out of computation time or the computed solution is uselessly inaccurate. Another issue is with the convergence of the NLP solver: For larger problems, the rate of convergence from a remote initial guess $\bx_0$ towards a local minimizer can be very slow. This can result in an impractically large amount of NLP solver iterations in order to converge, thereby rendering the solution procedure impractical.

\section{Examples of Collocation Methods}\label{sec:COL:examples}

Collocation methods differ only in their polynomial degree $p$ and in the way how they place the collocation points.
Typically, the collocation points are placed in the same way on each mesh interval. An example of this can be best observed in the graph of $u_h$ in Figure~\ref{fig:collocationconstructv2rot}: Each mesh interval uses the same four points. Mapped on the reference interval $\Iref = [-1,1]$, these points are (rounded to four digits):
\begin{align*}
\cT_{\text{ref},4}^\text{LGR} = \lbrace\,-1.0000,\,-0.5753,\,0.1811,\,0.8228\,\rbrace\,.
\end{align*}
We call a set on $\Iref$ a \emph{reference set}. In collocation methods of degree $p$, reference sets of collocation points hold $p$ points.

We use the subscript $\text{ref}$ to indicate reference sets $\cT_{\text{ref},p}$. We write $\cT_{i,p}$ for the transformation of $\cT_{\text{ref},p}$ from $\Iref$ onto $I_i$. We write $\cT_{h,p}$ for the union of all sets $\cT_{i,p}$ for $i=1,\dots,N$ for a mesh of size $h$.

In the above example, the superscript LGR denotes the method name: Legendre-Gauss-Radau collocation. This is one particular collocation method. In the following, we introduce several examples.

\paragraph{Explicit and Implicit Euler Method}
Direct collocation with the explicit Euler (EE) and the implicit Euler (IE) method is of degree $p=1$ and uses the collocation sets
\begin{align*}
\cT_{\text{ref},1}^\text{EE} = \lbrace\,-1.0000\,\rbrace\,,\qquad \cT_{\text{ref},1}^\text{IE} = \lbrace\,1.0000\,\rbrace\,.
\end{align*}
These examples are given to improve accessibility of the notation.

\paragraph{Trapezoidal Method}
The trapezoidal method (TZ) is a collocation method of degree $p=2$. It uses the collocation set
\begin{align*}
\cT_{\text{ref},1}^\text{TZ} &= \lbrace\,-1.0000,\,1.0000\,\rbrace\,.
\end{align*}
This example is given because we will use the trapezoidal method later for one numerical demonstration. Due to its symmetry, the trapezoidal method has some interesting properties such as symplecticity \cite{HairerSymplect}.

\paragraph{Gauss-Legendre Collocation Method}
The Gauss-Legendre (LG) collocation method is an \emph{orthogonal collocation method}, meaning that it uses collocation points that are the roots of orthogonal polynomials. The method is of parametric degree $p$. The collocation points of the LG collocation of degree $p$ are the $p$ roots of the Gauss-Legendre polynomial \cite{GaussQuad} of degree $p$. For $p=1,\dots,4$, these are (to four digits):
\begin{align*}
\cT_{\text{ref},1}^\text{LG} &= \lbrace\, 0.0000\,\rbrace\,, &\cT_{\text{ref},2}^\text{LG} &= \lbrace\,-0.5774,\,0.5774\,\rbrace\,,\\ \cT_{\text{ref},3}^\text{LG} &= \lbrace\,-0.7746,\,0.0000,\,0.7746\,\rbrace\,, & \cT_{\text{ref},4}^\text{LG} &= \lbrace\,-0.8611,\,-0.3400,\,0.3400,\,0.8611\,\rbrace\,.
\end{align*}
The Gauss-Legendre collocation method will be used later in one of the numerical illustrations.

As is known, the Gauss-Legendre quadrature uses these points in conjunction with suitable positive quadrature weights to approximate integrals. We will also use quadrature with Gauss-Legendre points later in Section~\ref{sec:DQIPM} for the construction of a particular quadrature penalty method.

\paragraph{Gauss-Legendre-Radau Collocation Method}
The Gauss-Legendre-Radau (LGR) collocation method is another example of an orthogonal collocation method. LGR collocation of degree $p$ uses the $p$ roots of the Gauss-Legendre-Radau polynomial of degree $p$. Just like Gauss-Legendre polynomials, the Gauss-Legendre-Radau polynomials are a particular sequence of orthogonal polynomials. For $p=1,\dots,3$, these are (to four digits):
\begin{align*}
\cT_{\text{ref},1}^\text{LGR} &= \lbrace\,-1.0000\,\rbrace\,, &\cT_{\text{ref},2}^\text{LGR} &= \lbrace\,-1.0000,\,0.3333\rbrace\,, & \cT_{\text{ref},3}^\text{LGR} &= \lbrace\,-1.0000,\,-0.2899,\,0.6899\,\rbrace\,.
\end{align*}
The set $\cT_{\text{ref},4}^\text{LGR}$ is given above.

\paragraph{Chebyshev-Gauss-Lobatto Points}
The Chebyshev-Gauss-Lobatto (CGL) points of degree $m \in \N$ on $\Iref$ are the $m+1$ points
\begin{align}
\tau_k:=-\cos\left(\frac{k}{m} \cdot \pi\right)\qquad \text{for }\ k=0,1,\dots,m\,. \label{eqn:cheblob}
\end{align}
These are the constrained extrema of the $m^\text{th}$ Chebyshev polynomials of the first kind \cite{ChebLob} on $I_{\text{ref}}$. We write them into the set $\Tclref{m}$. The CGL collocation method of degree $p$ uses the CGL points of degree $m=p-1$ as collocation points. 

We write $\cT_{i,m}$ for the transformation of $\cT^{\text{CGL}}_{\text{ref},m}$ from $\Iref$ onto $I_i$. We write $\cT^{\text{CGL}}_{h,m}$ for the union of all $\cT^{\text{CGL}}_{i,m}$ for $i=1,\dots,N$ for a mesh of size $h$. We will use the CGL\footnote{The CGL points are unrelated to the Legendre-Gauss-Lobatto (LGL) points. This thesis does not use LGL points anywhere.} points later in Section~\ref{sec:DQIPM} for the construction of a particular quadrature penalty method.

\chapter{Numerical Solution of NLP}\label{sec:NLP}
The gradient-descent method is for minimizing unconstrained optimization problems in $\R^n$.
This section discusses the more general class of NLP. These are optimization problems in $\R^n$ where the objective and the constraints may be nonlinear and non-convex.

Figure~\ref{fig:frameworkdirecttranscriptionv2} illustrates the steps to be followed in order to numerically solve an optimal control problem: The procedure begins with the direct transcription method, that is used to approximate the optimal control problem with an NLP. However, the direct transcription is only the first step, and involves no computational cost\footnote{because it only specifies the formulas that a computer program is supposed to implement}. Thus, the second step is much more crucial: the numerical solution of this NLP.

Each direct transcription method generates a different NLP in order to solve the same optimal control problem. These NLP may differ dramatically in the computational cost that is required to solve them. If an NLP cannot be solved efficiently then the respective direct transcription method is practically useless. Hence, the discussion of solution algorithms for NLP is vital to the assessment of direct transcription methods.

We first discuss the format and properties of NLP. We then review one important class of practical algorithms for solving NLP. The presented class of algorithms is suitable in particular for those NLP that result from direct transcription methods.

\section{Format and Properties of NLP}
In the following we introduce one possible standard problem format of NLP together with a few relevant properties.

\subsection{Problem Format}\label{sec:NLP:format}
Without loss of generality, NLPs can be posed in the following form \cite{IPOPT}:
\begin{align}
{\left\lbrace
	\begin{aligned}
	&\operatornamewithlimits{min}_{\bx \in \R^{n_{\bx}}}  & 	&\bf(\bx)\\
	&\text{subject to}									& 	&\bc(\bx)=\bO\,,\\
	&       											& 	&\bbL \leq \bA \cdot \bx \leq \bbR\,.
	\end{aligned}
	\right\rbrace}\label{eqn:NLP}
\end{align}
In this problem, one seeks a local minimizer $\bx \in \R^{n_{\bx}}$ that minimizes $\bf : \R^{n_{\bx}} \rightarrow \R$ subject to satisfying $n_\bc \in \N_0$ equality constraints $\bc\,:\,\R^{n_\bx} \rightarrow \R^{n_\bc}$ and $n_\bb \in \N_0$ linear inequality-constraints with $\bbL,\bbR \in \R^{n_\bb}$, $\bbL<\bbR$ and $\bA \in \R^{n_\bb \times n_\bx}$. 

Notice that $\bf$ has nothing to do with $f_1,f_2$; and $\bbL,\bbR$ have nothing to do with $b$. We opted to keep standard notation of each discipline. This keeps each subject readable in separate and avoids use of esoteric/obscure symbols.

\subsection{Degrees of Freedom, Determination, and Feasibility}\label{sec:overdetermination}
In problem~\eqref{eqn:NLP}, the dimension $n_\bx$ of the local minimizer $\bx$ is called the number of \emph{degrees of freedom}, or for short just degrees of freedom.

The \emph{determination} of an NLP describes the relation between the number of degrees of freedom and the number of constraints. The number of equality constraints $n_\bc$ in \eqref{eqn:NLP} is typically expected\footnote{Some versions of IPOPT, SNOPT, and Knitro reject problems when $n_\bc>n_\bx$.} to be bounded by $n_\bx$. This is because it is known from linear algebra that a vector $\bx$ can solve up to $n_\bx$ independent linear equations. If there are more than $n_x$ equations then the problem~\eqref{eqn:NLP} is called \emph{overdetermined} because there are potentially too many constraints to be satisfied.

The term \emph{feasibility} describes whether or not there exist vectors $\bx \in \R^{n_\bx}$ that satisfy the constraints of \eqref{eqn:NLP}. As a rule of thumb, each constraint in $\bc(\bx)=\bO$ takes one degree of freedom. Intuitively, a problem is more likely to be infeasible when it has fewer degrees of freedom. Likewise, overdetermined problems are more likely to be infeasible. Inequality constraints can render an NLP infeasible as well; e.g., when $\bA\cdot\bx=\bbL$ has no solution and $\bbL,\bbR$ are close together.

Overdetermination/infeasibility are undesirable because they imply that it is challenging/impossible to find a feasible local minimizer $\bx$ to~\eqref{eqn:NLP}.

\subsection{Optimality Conditions}
For each local minimizer $\bx$ of \eqref{eqn:NLP} there exist Lagrange multipliers $\by \in \R^{n_\bc}$, $\bzL,\bzR \in \R^{n_\bx}_{\geq 0}$ such that the \emph{Karush Kuhn-Tucker equations} are satisfied \cite{MR2936770,Nocedal}. We call $\bx$ the \emph{primal solution} and $\by,\bzL,\bzR$ the \emph{dual solution} \cite{ForsgrenGillSIREV}. Notice that the Lagrange multipliers $\by$ have nothing to do with the states $y$. We consider a regularization of the Karush Kuhn-Tucker equations:
\begin{subequations}
	\label{eqn:KKT}
	\begin{align}
	\nabla_\bx \bf(\bx) - \nabla_\bx \bc(\bx) \cdot \by - \bA\t \cdot (\bzL-\bzR) &= \bO\,,\\
	\bc(\bx)+\omega \cdot \by &= \bO\,,\\
	\opdiag(\bzL) \cdot (\bA \cdot \bx - \bbL)- \mu \cdot \be &=\bO\,,\\
	\opdiag(\bzR) \cdot (\bbR - \bA \cdot \bx)- \mu \cdot \be &=\bO\,.
	\end{align}
\end{subequations}
This is a system of $n_\bx+n_\bc + 2 \cdot n_\bb$ nonlinear equations for the~\mbox{$n_\bx+n_\bc + 2 \cdot n_\bb$} variables $\bx,\by,\bzL,\bzR$. In this equation system, $\omega,\mu \in \R_{>0}$ are small regularization parameters to make the equations easier to solve numerically.

\section{Optimization Algorithms for NLP}\label{sec:IPM}\label{sec:NLP:solver}
Direct collocation methods transcribe optimal control problems into NLP. These NLP become very large as $h$ decreases (because then $N$ increases). So-called \emph{sparse optimization algorithms} \cite[Chap.~2]{BettsChap2} must be used in order to solve these very large NLP. As we saw in the historical background in Section~\ref{sec:literatureReview:History}, direct transcription methods became popular only after these sparse optimization algorithms were invented. This is plausible because direct transcription into an NLP makes no practical sense unless that NLP can be solved reliably and with low computational effort.

To provide a picture how NLPs can be solved in this way, we review superficially one widely used class of algorithms for large sparse optimization. Our review is based on Newton iterations and a gradient-descent method. The main purpose of this section is to provide an understanding of the fundamental aspects that determine the computational cost of solving large NLP. These are: the number of iterations, and the computational cost per iteration. The latter is determined by the so-called \emph{sparsity pattern} of the NLP, a property that we discuss at the end.

\paragraph{Primal-Dual Optimization Algorithms}
Many solution algorithms for NLP work by solving \eqref{eqn:KKT} via application of Newton iterations for the primal and the dual solution. We call the left-hand sides in \eqref{eqn:KKT} the \emph{KKT residuals}. Hence, Newton's method is supposed to drive the KKT residuals to zero.

Here we review superficially one particular class of solution algorithms in order to highlight the computational cost and the working principle of Newton-based solution algorithms for constrained optimization. Section~\ref{sec:IPM} explains the working principle. Afterwards, Section~\ref{sec:NLP:cost} discusses aspects of computational cost. We refer to \cite{ForsgrenGill1998,Nocedal} for detailed recipes of solution algorithms. Below, we only illustrate the principle ideas.

Figure~\ref{fig:newtoniteration} presents a schematic diagram for the widely-used class of primal-dual penalty-interior-point line search methods \cite{Renke,IPOPT,ForsgrenGill1998,ForsgrenGillSIREV}. The steps in the figure will be explained in the following paragraphs. Methods of the depicted class are iterative in two levels: The \emph{outer loop} is depicted in violet, and the \emph{inner loop} is in light blue and light beige. Methods are initialized in (S.1) with a solution guess for \eqref{eqn:KKT} and regularization parameters of, e.g., $\omega=\mu=0.1\,$.

\begin{figure}
	\centering
	\includegraphics[width=0.8\linewidth]{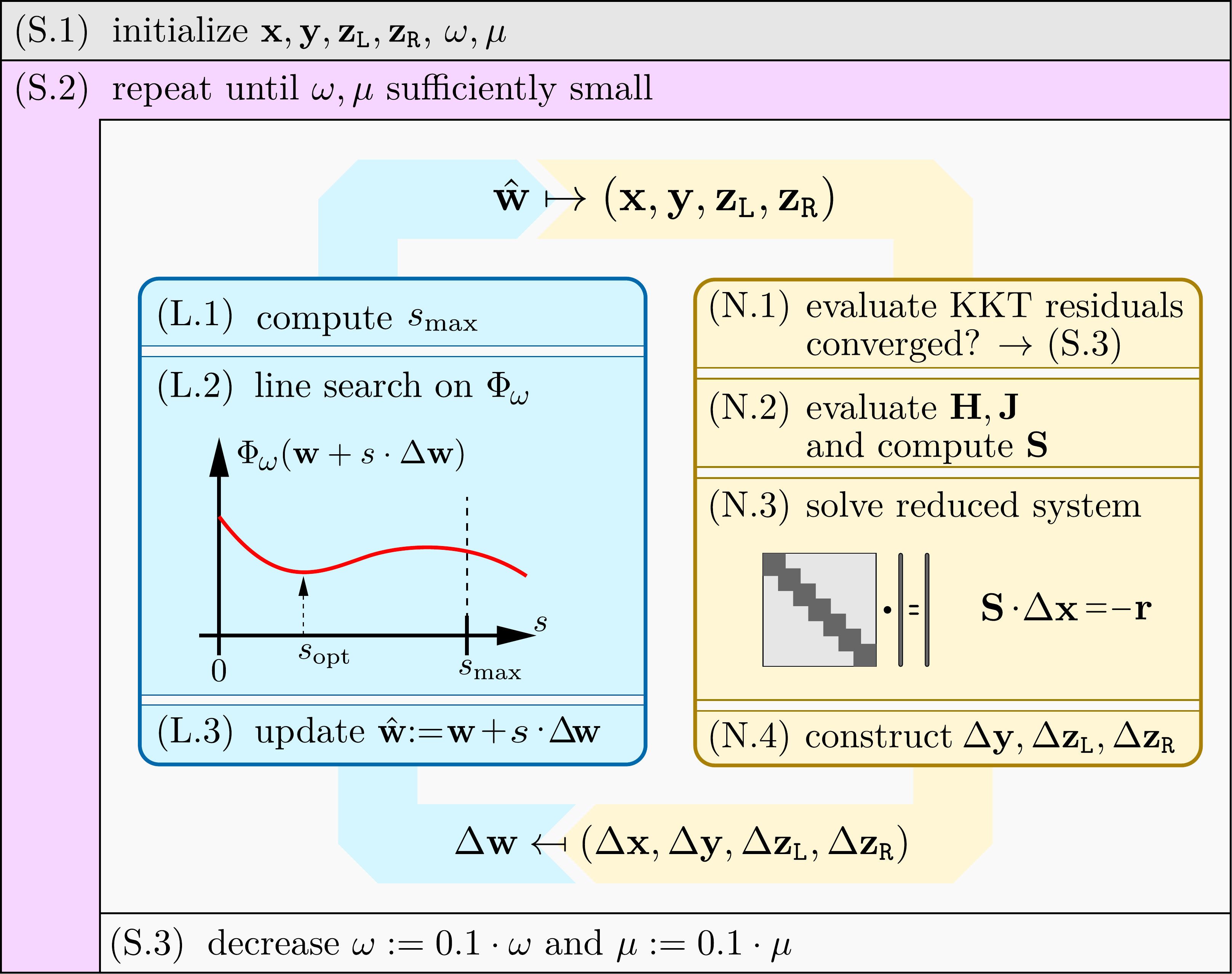}
	\caption{Working principle of primal-dual penalty-interior-point line search algorithms for the solution of \eqref{eqn:NLP}, based on line search (light blue) and Newton iterations (light beige) within an outer loop (violet).}
	\label{fig:newtoniteration}
\end{figure}

\subsection{Outer Loop}
Each outer loop (S.2) poses one minimization problem of the following form for one particular value of $\omega$:
\begin{align}
\label{eqn:Merit}
\left\lbrace
\begin{aligned}
&\operatornamewithlimits{min}_{\bx \in \R^{n_\bx}}  	& 	&\Phi_\omega(\bx):=\bf(\bx)+\frac{1}{2 \cdot \omega} \cdot \|\bc(\bx)\|_2^2\\
&\text{subject to}      								& 	&\bbL \leq \bA \cdot \bx \leq \bbR\,.
\end{aligned}
\right\rbrace
\end{align}
E.g., the first outer loop poses this problem for $\omega=10^{-1}$, the second for $\omega=10^{-2}$, and so on. The job of the inner loop is to compute a minimizer $\bx$ to the problem that the outer loop poses. The entire algorithm terminates and returns $\bx$ as a minimizer of \eqref{eqn:NLP} once $\omega,\mu$ are smaller than some prescribed tolerance.

In problem~\eqref{eqn:Merit}, the function
\begin{align}
\Phi_\omega(\bx) = \bf(\bx) + \frac{1}{2 \cdot \omega} \cdot \|\bc(\bx)\|_2^2
\end{align}
is called \emph{merit function}. This function is a bias of objective value and constraint violation \cite{Nocedal}. There are several different merit functions. The merit function depicted here is also called \emph{quadratic penalty function} \cite{Armand,ForsgrenGill1998} in the literature.

The inner loops solve the problem~\eqref{eqn:Merit} with the current outer loop's parameters $\omega,\mu$ in an iterative manner. Each inner loop performs two alternating steps:
i) Newton iteration;
ii) line search.

\subsection{Inner Loop}
In the inner loop, the Newton iteration is applied to find a solution $\bx,\by,\bzL,\bzR$ that drives the KKT residuals to zero, i.e., that solves the equation system \eqref{eqn:KKT}. One Newton iteration computes one \emph{Newton step} $\Delta\bx,\Delta\by,\Delta\bzL,\Delta\bzR$.

Linearizing \eqref{eqn:KKT} gives the \emph{Newton matrix}, that is used to compute the Newton step:
\begin{align}
\begin{bmatrix}
\bH 					& -\bJ\t 			&-\bA\t  				 		&\bA\t 	 							\\
\bJ 					& \omega \cdot \bI	& 		 				 		& 		 							\\
\phantom{-}{}\opdiag(\bzL)\cdot \bA 	& 			 		&\opdiag(\bA \cdot \bx-\bbL)	& 		 							\\
-{}\opdiag(\bzR)\cdot \bA & 					& 						 		& \opdiag(\bbR - \bA \cdot \bx)
\end{bmatrix} \label{eqn:KKTmatrix}
\end{align}
In~\eqref{eqn:KKTmatrix}, the symmetric matrix $\bH=\nabla^2_{\bx,\bx}\big(\bf(\bx)-\by \cdot \bc(\bx)\big) \in \R^{n_\bx \times n_\bx}$ is called \emph{Hessian of the Lagrangian} and $\bJ=\nabla_\bx\bC(\bx)\t \in \R^{n_\bx \times n_\bc}$ is called \emph{Jacobian of the equality constraints}.

The Newton iteration evaluates the KKT residuals from \eqref{eqn:KKT} at the current iterate $\bx,\by,\bzL,\bzR$. This is step (N.1) in Figure~\ref{fig:newtoniteration}, which also checks for convergence of Newton's method: If the KKT residuals are small then the inner loop terminates and $\omega,\mu$ are decreased (S.3)\,. Otherwise, Newton's method continues in (N.2) by evaluating the derivative matrices $\bH,\bJ$ at $\bx,\by$. The Newton step is computed in (N.3) by solving the Newton system. To save computations, this can be done in a reduced manner.

\paragraph{Reduced Linear System}
The authors in \cite{ALMIPM,Nocedal} explain how the Newton direction can be computed from a reduced system of linear equations with the matrix $\bS \in \R^{n_{\bx} \times n_{\bx}}$ below, where $\bD \in \R^{n_{\bb} \times n_{\bb}}$ is a positive definite diagonal matrix:
\begin{align*}
\underbrace{\left(\bH + \frac{1}{\omega} \cdot \bJ\t\cdot\bJ+\bA\t\cdot \bD \cdot \bA\right)}_{=:\bS} \cdot \Delta\bx = -\br\,. \tageq\label{eqn:ReducedKKTlinsys}
\end{align*}
In optimal control applications, this matrix is narrowly banded, as is illustrated in Figure~\ref{fig:newtoniteration}. This has to do with structural properties of the matrices $\bH,\bJ,\bA$, as will be discussed later in Section~\ref{sec:CostCompare:Sparsity} along Figure~\ref{fig:sparsity}.

A formula for the construction of $\br$ can be found in \cite[eqn.~10]{ALMIPM}.
Using the partial solution $\Delta\bx$ of the reduced linear equation system, the other parts $\Delta\by,\Delta\bzL,\Delta\bzR$ of the Newton step (N.4) can be reconstructed as described in \cite[eqn.~11]{ALMIPM}.

\paragraph{Line Search}
There are two kinds of methods that can be used to improve the convergence of the Newton method: line search methods and trust region methods.

Primal-dual penalty-interior-point line search methods \cite{Renke,ForsgrenGill1998} are popular. Their idea is to use the Newton direction $(\Delta\bx,\Delta\by,\Delta\bzL,\Delta\bzR)$ in a line search of step size $s \in ]0,s_\text{max}]$ such that the updated iterate \mbox{$(\hbx,\hby,\hbzL,\hbzR)=(\bx+s\cdot\Delta\bx,\by+s\cdot\Delta\by,\bzL+s\cdot\Delta\bzL,\bzR+s\cdot\Delta\bzR)$} satisfies the conditions $\bbL<\bA\cdot\hbx<\bbR$ and $\hbzL,\hbzR>\bO$ strictly. The strictness is asserted via the bound $s_\text{max}$, which is determined by a formula called \emph{fraction-to-boundary rule} \cite{IPOPT}. The value $s$ is selected such that $0<s\leq s_\text{max}$. We choose $s$ so to approximately minimize the merit function $\Phi_\omega$.

All in all, the line search achieves two purposes: i) keeping $\bbL<\bA\cdot\bx<\bbR$ and $\bzL,\bzR>0$; ii) promoting convergence to a minimizer of \eqref{eqn:Merit} by minimizing $\Phi_\omega$ along the Newton direction.

\section{Computational Cost}\label{sec:NLP:cost}
The computational cost for solving \eqref{eqn:NLP} via primal-dual interior-point methods can be decoded from Figure~\ref{fig:newtoniteration}: The steps (N.2) and (N.3) perform expensive matrix computations, whereas in contrast all the other \mbox{steps --- i.e.,~(S.1),} (S.2), (S.3), (L.1), (L.2), (L.3), (L.4), (N.1), \mbox{and~(N.4) --- only} perform negligibly cheap vector computations. The references \cite{IPOPT,ForsgrenGill1998,Nocedal} confirm that in Newton-type optimization algorithms the computation of derivatives and of solutions to the Newton systems overshadow the computational cost of everything else.

Our particular interest is in the computational cost when solving optimization problems that arise from direct transcription of optimal control problems. This case is depicted in Figure~\ref{fig:costdiagram}~(a) and will be discussed in Section~\ref{sec:CostCompare}.

\part{Quadrature Penalty Methods} 
\label{part:QPM}

\chapter{Limitations in Direct Collocation Methods}\label{sec:Misconceptions}
This part presents quadrature penalty methods as a reliable and practical direct transcription method.
We showcase how the quadrature penalty method surpasses numerical issues that collocation methods suffer from.
We first identify these issues. Afterwards, we present the construction of quadrature penalty methods. Eventually, we compare computational cost and provide numerical experiments.

There are two main limitations in the design of direct collocation methods. These are on the convergence of the feasibility to the equality constraints and inequality feasibility, respectively. Each limitation will be illustrated in a separate subsection with an example.

\section{Limitation on Equality Feasibility}
\subsection{Example}
Consider the below optimal control problem over the interval~\mbox{$[0,1]$}:
\begin{equation}
\left\lbrace
\begin{aligned}
&\operatornamewithlimits{min}_{y,u}  		& 	&y_{[3]}(1)\\
&\text{subject to} 							& 	&y_{[1]}(0)=0\,,\ \dot{y}_{[1]}(t)=u(t)\,,\\
& 											& 	&y_{[2]}(0)=0\,,\ \dot{y}_{[2]}(t)=-u(t)\,,\\
& 											& 	&y_{[3]}(0)=0\,,\ \dot{y}_{[3]}(t)=y_{[1]}(t)\,,\\
&											& 	&y_{[1]}(t)^2 = y_{[2]}(t)\,.
\end{aligned}
\right\rbrace\quad \begin{matrix}
(\ref{eqn:col_counter}\text{a})\\[7pt]
(\ref{eqn:col_counter}\text{b})\\[3pt]
(\ref{eqn:col_counter}\text{c})\\[3pt]
(\ref{eqn:col_counter}\text{d})\\[6pt]
(\ref{eqn:col_counter}\text{e})
\end{matrix}
\label{eqn:col_counter}
\end{equation}
The only feasible solution and hence unique global minimizer is $y(t)=\bO$, $u(t)=0$; with minimum $y_{[3]}(1)=0$.
To see this, notice that ${y}_{[2]}(t)=-{y}_{[1]}(t)$ follows from~\mbox{(\ref{eqn:col_counter}b)} and~\mbox{(\ref{eqn:col_counter}c)}, whereas $2 \cdot y_{[1]}(t) \cdot \dot{y}_1(t)=\dot{y}_2(t)$ follows from differentiation of~\mbox{(\ref{eqn:col_counter}e)}. This yields the unique solution $y_{[1]}=0$. Given $y_{[1]}$, the arcs for $u,y_{[2]},y_{[3]}$ are determined in order by the constraints~\mbox{(\ref{eqn:col_counter}b)},~\mbox{(\ref{eqn:col_counter}c)}, and~\mbox{(\ref{eqn:col_counter}d)}.

While it should be $y_{[1]}=0$, direct collocation solutions interpolate $y_{[1]}(t)=0$ at $t=0$ and $y_{[1]}(t)=-1$ at all collocation points $t>0$ because this yields a smaller value of $y_{[3]}(1)$. This is so because $y_{[3]}(1)$ is the average of all values of $y_{[1]}(t)$ over $t \in [0,1]$. Figure~\ref{fig:counter1col} illustrates the numerical solution of $y_{[1]}$ for Lgendre-Gauss-Radau collocation (LGRC) and Legendre-Gauss collocation (LGC) of degree $p=2$ on a mesh of three intervals.

\subsection{Limitation} The limitation is that the equality feasibility measure $\rho$ from equation~\ref{eqn:meas:rho} will not converge to zero as $h$ decreases or $p$ increases. The convergence failure arises because the process of collocation can result in extraneous and/or missing solutions. An extraneous solution is a solution that only exists for the numerical discretization, but is not a solution to the original optimal control problem. In the given example, the extraneous solutions of the collocation methods are depicted in Figure~\ref{fig:counter1col} centre and right. In Section~\ref{sec:NumExp:Round2:Sat} we give an example where collocation results in missing solutions. These are solutions of the optimal control problem that do not exist in the discretized version of the problem.

\begin{figure}
	\centering
	\includegraphics[width=0.8\linewidth]{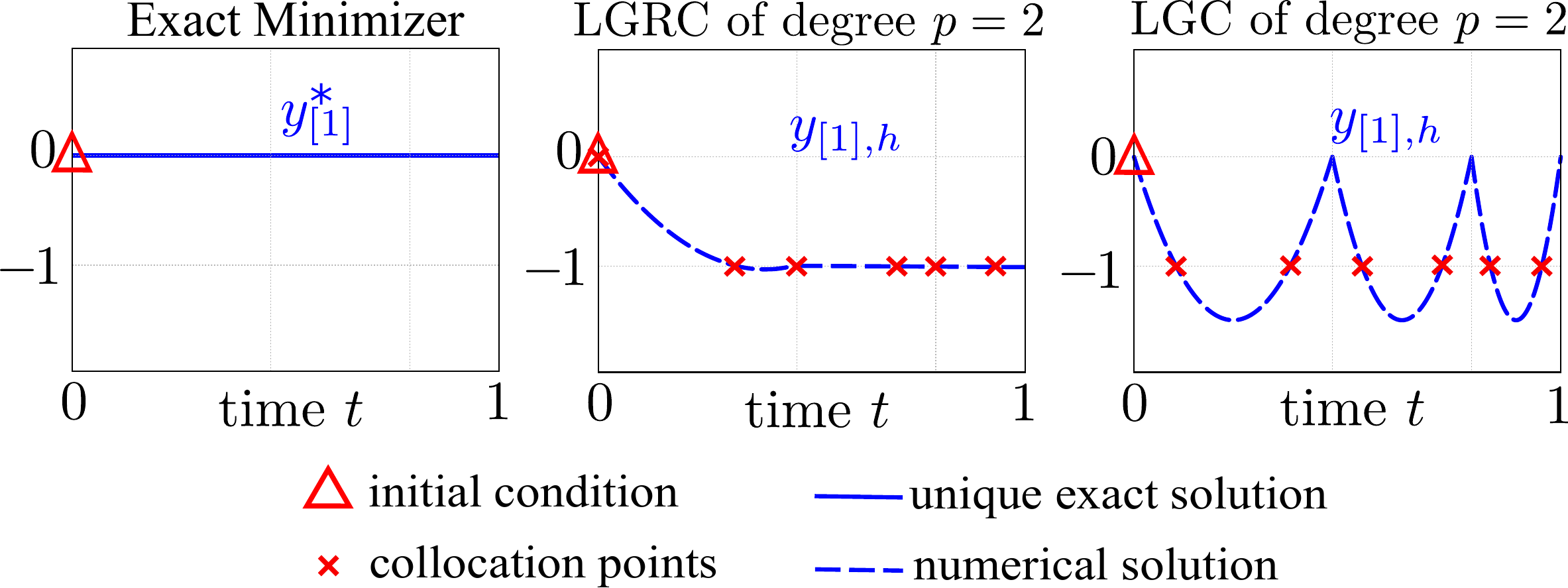}
	\caption{Comparison of the unique exact minimizer with two direct collocation solutions for Problem \eqref{eqn:col_counter}. The direct collocation uses Lgendre-Gauss-Radau collocation (LGRC) and Legendre-Gauss Collocation (LGC) of degree $p=2$ on a mesh of three intervals. Neither method converges.}
	\label{fig:counter1col}
\end{figure}

\section{Limitation on Inequality Feasibility}
\subsection{Example}
Consider the below optimal control problem over the interval $[0,2]$:
\begin{equation}
\label{eqn:box_counter}
\left\lbrace
\begin{aligned}
&\operatornamewithlimits{min}_{y,u}  		& 	&y(2)\\
&\text{subject to} 								& 	&y(0)=1\,,\ \dot{y}(t)=u(t)\,,\\
&											& 	&u(t) = \operatorname{sign}(t-1)\,,\\
& 											& 	&-1\leq u(t)\leq 1\,.
\end{aligned}
\right\rbrace\quad \begin{matrix}
(\ref{eqn:box_counter}\text{a})\\[7pt]
(\ref{eqn:box_counter}\text{b})\\[3pt]
(\ref{eqn:box_counter}\text{c})\\[3pt]
(\ref{eqn:box_counter}\text{d})
\end{matrix}
\end{equation}

The only feasible solution and hence unique global minimizer is $y(t)=|t-1|$, $u(t)=\operatorname{sign}(t-1)$. To see this, notice that $u(t)=\operatorname{sign}(t-1)$ is determined uniquely from the algebraic constraint, whereas $y(t)$ follows uniquely from the initial value problem $y(0)=1,\ \dot{y}(t)=\operatorname{sign}(t-1)$.

Figure~\ref{fig:counter2col} shows the numerical solution to this problem with LGRC of degree $p$ on an odd number $N$ of equidistant mesh intervals. We see that on an even number of equidistant mesh intervals the finite elements could exactly re-capture the shape of the analytic solution. However, in practice there are often discontinuous features that cannot be captured exactly unless additional techniques (e.g., mesh refinement, adaptive discontinuous elements, etc.), each with their own numerical issues in turn, are relied upon. Therefor, the purpose of this experiment is to explore what happens when a single discontinuity is not captured by the mesh.

As we see from the numerical solutions in the figure, neither a decrease of $h$ nor an increase of $p$ yields convergence of the inequality feasibility residual $\gamma$ from~\eqref{eqn:meas:gamma}. Instead, it remains at $\gamma \approx 0.25$\,. This phenomenon is known as the Gibbs phenomenon \cite{Gibbs}. This phenomenon states the observation that interpolations overshoot discontinuities by a fixed margin.

\begin{figure}
	\centering
	\includegraphics[width=0.8\linewidth]{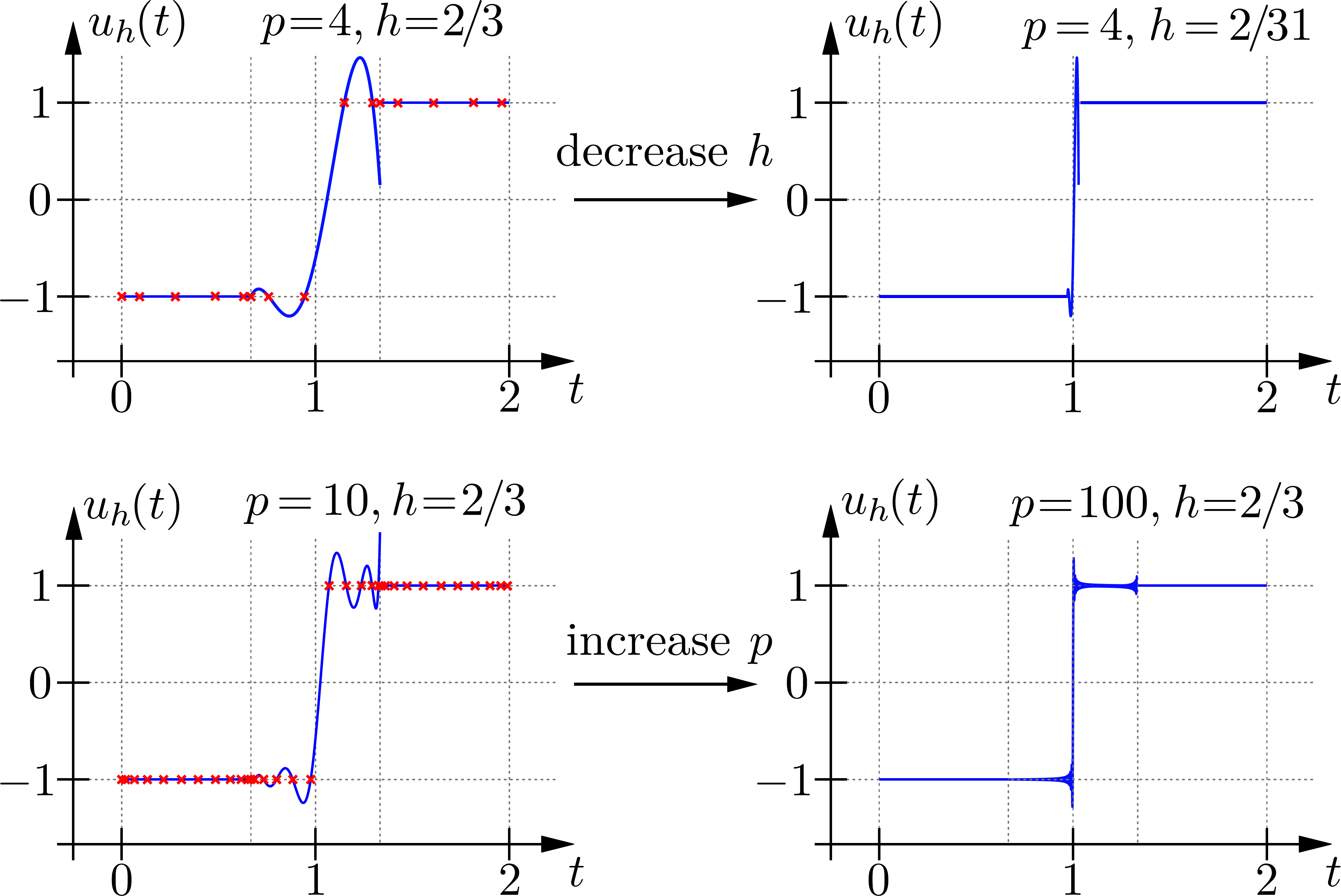}
	\caption{Direct collocation solution to \eqref{eqn:box_counter} with LGRC of various degree on various meshes. Red crosses mark the position of the collocation points. Neither method yields convergence of $\gamma \rightarrow 0$ from~\eqref{eqn:meas:gamma}. Instead, it remains at $\gamma \approx 0.25$\,.}
	\label{fig:counter2col}
\end{figure}

\subsection{Limitation}
The limitation is that $\gamma$ will not converge to zero as $h$ decreases or $p$ increases. Instead, $\gamma\approx 0.25$ in this example, regardless of $h$ and $p$. There are simply not enough collocation points to prevent $u_h$ from violating the bound constraints in-between the collocation points.

\subsection{Misconception}
It seems that by virtue of more collocation points we could enforce convergence of $\gamma$.
This is a misconception. In collocation, the solution $u_h$ is uniquely determined from interpolating $\operatornamewithlimits{sign}(t-1)$ at the collocation points. Due to the Gibbs phenomenon, this interpolation overshoots the bounds in any case. Thus, $\gamma$ cannot converge whenever solving (\ref{eqn:box_counter}c) by means of collocation.

\section{Summary and Outlook}
The prior examples have shown misconceptions on the convergence of collocation methods in the context of solving optimal control problems: Example~\eqref{eqn:col_counter} shows that collocation is inappropriate for relaxation of systems of differential and algebraic constraints~\eqref{eqn:OCP:dae} because the collocation can result in extraneous and/or missing solutions. Example~\eqref{eqn:box_counter} shows that a higher density of collocation points is necessary in the relaxation of~\mbox{(\ref{eqn:OCP}:y)--(\ref{eqn:OCP}:u)} to force convergence of $\gamma$; but that this cannot be done whenever \eqref{eqn:OCP:dae} is relaxed via collocation.
As a logical consequence, if one seeks to assert convergence of $\rho$ and $\gamma$ then one must abandon the concept of collocation and use a different relaxation concept instead. This alternative concept is introduced in the next section.

\chapter{Direct Transcription via Quadrature Penalty Methods}\label{sec:DQIPM}
As illustrated in Figure~\ref{fig:methodclasses}, direct transcription methods have two building blocks: approximation and relaxation. As discussed in Section~\ref{sec:Scope}, relaxation of constraints in optimal control problems can be conducted via two means: (i) the number of points in which the constraints are solved and (ii) the accuracy to which the constraints are solved at each of these points. Collocation methods use a (i)~relatively small number of points and (ii)~exact accuracy. The opposite concept to collocation are integral penalty methods. These integrate over (i)~all points but use only (i)~moderate accuracy in each point. A middle-ground and generalization of both methods is achieved with quadrature penalty methods. These work like integral penalty methods but replace the integral with a quadrature approximation. Thereby, (i)~the number of points can be chosen flexible via the number of quadrature points, and (ii)~the accuracy is parametrized via the penalty parameter.

This section presents direct transcription via quadrature penalty methods. The structure is identical to Section~\ref{sec:DirectCollocation}: We first explain the construction and then present the resulting optimization problem in a standard form.

\section{Construction}
In contrast to Section~\ref{sec:DirectCollocation}, here we first introduce the relaxation and then apply the approximation. In addition, there will be a final step of discretization.

\subsection{Relaxation of the Optimal Control Problem}
Unlike collocation, we treat the equality constraints~\mbox{(\ref{eqn:OCP}:b) and \eqref{eqn:OCP:dae}} with an integral penalty term. This yields:
\begin{equation}
\label{eqn:POCP}
\left\lbrace
\begin{aligned}
&\operatornamewithlimits{min}_{(y,u) \in \cX} & M \big(\,&y(0),y(T)\,\big) +\frac{1}{2 \cdot \omega} \cdot \Big( \int_0^T \|f\big(\dot{y}(t),y(t),u(t),t\big)\|_2^2\,\mathrm{d}t + \|b\big(y(0),y(T)\big)\|_2^2 \Big)\\
& \text{subject to} 		& &\yL(t) \leq y(t) \leq \yR(t)\quad 	\forall\ t \in [0,T]\,,\\
& 					& &\uL(t) \leq u(t) \leq \uR(t)\quad 	\forall\ t \in [0,T]\,,
\end{aligned}
\right\rbrace
\end{equation}
with some small penalty parameter $\omega \in \R_{>0}$; e.g., $\omega=10^{-6}$, which is the standard tolerance for most algorithms and most packages (e.g., all Matlab solvers, IPOPT, SNOPT, WORHP, EISPACK, RADAU). Solutions to this problem do not attempt to solve the constraints \eqref{eqn:OCP:dae} and (\ref{eqn:OCP}:b) exactly, but instead minimize a bias of objective $M\big(y_h(0),y_h(T)\big)$ and constraint violation $r(y_h,u_h)$ from~\eqref{eqn:IntRes}. Usually, the penalty results in $\rho \in \cO(\omega)$, hence $\omega=10^{-6}$ is suitable for most practical purposes.\footnote{Later in Theorem~\ref{thm:main} we only prove $\rho \in \cO(\sqrt{\omega})$, but this is rather due to our mild assumptions.}

\begin{remark}In the above, we use the \emph{inexact $\ell^2$-penalty} \cite{Nocedal,SUMT} because it is smooth. Other penalties, such as the non-smooth $\ell^1$-penalty, are common in constrained optimization algorithms \cite{IPOPT,WORHP,Knitro} because they are exact. Because of the approximation in transcription methods, exactness is lost anyways. Thus, we only consider the inexact $\ell^2$-penalty. This is advantageous because we can benefit from its smoothness.
\end{remark}

\subsection{Approximation of States and Controls}
Quadrature penalty methods use the same piecewise polynomial functions $(y_h,u_h) \in \cX_{h,p}$ as collocation methods. Hence, at this stage, the problem reads:
\begin{equation}
\label{eqn:POCP2}
\left\lbrace
\begin{aligned}
&\operatornamewithlimits{min}_{(y_h,u_h) \in \cX_{h,p}} & M \big(&y_h(0),y_h(T)\big) +\frac{1}{2 \cdot \omega} \cdot \Big( \int_0^T \|f\big(\dot{y}_h(t),y_h(t),u(t),t\big)\|_2^2\mathrm{d}t + \|b\big(y_h(0),y_h(T)\big)\|_2^2 \Big)\\
& \text{subject to} 		& &\yL(t) \leq y_h(t) \leq \yR(t)\quad 	\forall\ t \in [0,T]\,,\\
& 					& &\uL(t) \leq u_h(t) \leq \uR(t)\quad 	\forall\ t \in [0,T]\,,
\end{aligned}
\right\rbrace
\end{equation}

The risk of extraneous solutions is avoided because the integral penalty keeps the residual of~\eqref{eqn:OCP:dae} small almost everywhere. Also, the risk of missing solutions is avoided because the penalty integral avoids the necessity to solve any equality constraints exactly at any point $t$.

\subsection{Discretization of the Integral and Bound Constraints}
Problem~\eqref{eqn:POCP2} cannot be evaluated because of the integral and because of the bound constraints that are evaluated $\forall\ t \in [0,T]$. These two items are now discretized.

\subsubsection{Quadrature for the Integral}
The integral is discretized by using a set $\cQ_{h,q}$ of $q\in\N$ quadrature weights $\alpha \in \R_{>0}$ and quadrature points $t$ per mesh-interval. This permits the quadrature approximation
\begin{align}
\int_0^T \Big\|f\big(\dot{y}_h(t),y_h(t),u(t),t\big)\Big\|_2^2\,\mathrm{d}t \approx Q_{h,q}(y_h,u_h) := \sum_{(t,\alpha) \in \cQ_{h,q}} \alpha \cdot \Big\|f\big(\dot{y}_h(t),y_h(t),u(t),t\big)\Big\|_2^2\,.
\end{align}

\subsubsection{Sampling for the Bound Constraints}
The bound constraints are discretized to be evaluated $\forall\ t \in \cT_{h,m}$. Therein, $m \in \N$ is a method parameter that can exceed the degree $p$. We call $m$ the \emph{sampling degree}.

\section{Implementation as an NLP}
The construction of approximation, relaxation, and discretization are used to transcribe the optimal control problem~\eqref{eqn:OCP} into an NLP. In this section, we formalize the NLP that quadrature penalty methods use.

\subsection{NLP in Transcription Notation}
Using the notation with $\cX_{h,p}$, $Q_{h,q}$, $\cT_{h,m}$, we can state the transcribed optimal control problem in quadrature penalty methods as follows:
\begin{equation}
\label{eqn:POCPh}
\left\lbrace
\begin{aligned}
&\operatornamewithlimits{min}_{(y_h,u_h) \in \cX_{h,p}} & M\big(\,&y_h(0),y_h(T)\,\big)+\frac{1}{2 \cdot \omega} \cdot \bigg( Q_{h,q}(y_h,u_h) + \left\|b\big(y(0),y(T)\big)\right\|_2^2  \bigg) \\[12pt]
& \text{subject to} & \yL(t) &\leq y_h(t) \leq \yR(t)\qquad 	\forall\ t \in \cT_{h,m}\,,\\
& 					& \uL(t) &\leq u_h(t) \leq \uR(t)\qquad 	\forall\ t \in \cT_{h,m}\,.
\end{aligned}
\right\rbrace
\end{equation}

\subsection{NLP in Standard Notation}\label{sec:NLP_of_QPM}
In order to solve~\eqref{eqn:POCPh} with available numerical algorithms, it is helpful to re-express~\eqref{eqn:POCPh} in the following format:
\begin{align}
\label{eqn:Merit2}
\left\lbrace
\begin{aligned}
&\operatornamewithlimits{min}_{\bx \in \R^{n_\bx}}  	& 	&\bf(\bx)+\frac{1}{2 \cdot \omega} \cdot \|\bc(\bx)\|_2^2\\
&\text{subject to}      								& 	&\bbL \leq \bA \cdot \bx \leq \bbR\,.
\end{aligned}
\right\rbrace
\end{align}
In analogy to Section~\ref{sec:COL:optim}, we explain in the following how this can be achieved.

The functions $y_h,u_h$ are identified in exactly the same way with a vector $\bx \in \R^{n_\bx}$ of dimension $N\cdot (n_y+n_u)+n_y$ as in collocation, described in Section~\ref{sec:COL:optim}:
\begin{align*}
\bx = \left[\begin{array}{c}
\vdots\\
y_h(t)\\
u_h(t)\\
\vdots\\
\hline
y_h(T)
\end{array}\right]\quad\forall t \in \cT_{h,p}\,.
\end{align*}

Using $\bx$, we can evaluate the functions $y_h,u_h$ that $\bx$ represents. We do this to construct the properties $\bf,\bc,\bA,\bbL,\bbR$ in \eqref{eqn:COL:NLP}:
\begin{align*}
\bf(\bx)&:= M\big(y_h(0),y_h(T)\big)\,, &&\\[3pt]
\bc(\bx)&:= \left[\begin{array}{c}
b\big(y_h(0),y_h(T)\big)\\[2pt]
\hline
\vdots\\
\sqrt{\alpha} \cdot f\big(\dot{y}_h(t),{y}_h(t),u_h(t),t\big)\\
\vdots
\end{array}\right]\quad \forall\, (t,\alpha) \in \cQ_{h,q}\,,\\
\bbL&:=\begin{bmatrix}
\vdots\\
\yL(t)\\
\uL(t)\\
\vdots
\end{bmatrix},\ 
\bbR:=\begin{bmatrix}
\vdots\\
\yR(t)\\
\uR(t)\\
\vdots
\end{bmatrix}\quad \forall\, t \in \cT_{h,m}\,.
\end{align*}
The matrix $\bA$ is constructed such that
\begin{align*}
\bA \cdot \bx = \begin{bmatrix}
\vdots\\
y_h(t)\\
u_h(t)\\
\vdots
\end{bmatrix}\quad\forall t \in \cT_{h,m}\,.
\end{align*}
From the above definitions, we obtain a vectorial function $\bc : \R^{n_\bx} \rightarrow \R^{n_\bc}$ and vectors $\bbL,\bbR\in\R^{n_\bb}$ of dimensions
\begin{align*}
n_\bc &:= n_b + N \cdot q \cdot (n_y + n_c)\,,\qquad &n_\bb &:= N \cdot m \cdot (n_y + n_u)\,.
\end{align*}

\subsection{Numerical Solution of the Optimization Problem}
The problem~\eqref{eqn:COL:NLP} matches precisely with the problem format~\eqref{eqn:NLP}. Hence, the method presented in Section~\ref{sec:NLP} can be used to solve this optimization problem numerically.

\chapter{Discussion of Quadrature Penalty Methods}
This section discusses quadrature penalty methods. We first emphasize some potential benefits of the method. We then motivate the use of higher-order quadrature schemes within quadrature penalty methods. Finally, we give examples. Computational cost will be discussed in Section~\ref{sec:CostCompare}.

\section{Extension to Least-Square Collocation Methods}

Quadrature penalty methods treat the differential and algebraic constraints via integrals of the squared constraint residual. Upon discretization of the integral via quadrature, this is similar to least-squares collocation methods, described in \cite{ascher1978,HankeDAEcol1,HankeDAEcol2}. In contrast to conventional collocation discretizations, the number of collocation points exceeds the number of degrees of freedom of the polynomial interpolants. To resolve the overdetermination, the collocation conditions are solved in a nonlinear least-squares manner. Typical weightings for the nonlinear least-squares residual result from quadrature.

The references \cite{ascher1978,HankeDAEcol1,HankeDAEcol2} consider least-squares solutions to DAE boundary value problems. The quadrature penalty method presented in this thesis extends these methods to optimal control problems; involving an objective function and potentially inequality constraints. The objective makes it necessary to introduce a penalty parameter in order to tell in which relation the scales of constraint residual and objective value stay.

\section{Key Benefits in Quadrature Penalty Methods}
Collocation methods have one method parameter: (i)~the polynomial degree $p \in \N$. In contrast, quadrature penalty methods have three additional parameters: (ii)~the quadrature degree $q \in \N$; (iii)~the sampling degree $m \in \N$; and (iv)~the penalty parameter $\omega \in \R_{>0}$.

\subsubsection{Generalization of Direct Collocation Methods}
Quadrature Penalty methods are a true generalization of direct collocation methods. Choosing $m=q=p$ and letting $\omega\rightarrow 0$, the numerical minimizer of the quadrature penalty method matches with the numerical minimizer of a collocation method of degree $p$ that uses the quadrature points as collocation points. Thus, quadrature penalty methods can inherit all benefits of collocation methods. However, quadrature penalty methods can also do the following things that collocation methods cannot do.

\subsubsection{Prioritizability between Feasibility and Optimality}
Due to the penalty parameter $\omega$, quadrature penalty methods allow tuning between feasibility and optimality: On a given mesh, the functions $(y_h,u_h) \in \cX_{h,p}$ are likely unable to achieve $r(y_h,u_h)=0$. Instead, there will be some strictly positive lower bound on the smallest possible value for $r(y_h,u_h)$. In quadrature penalty methods, we can approach this value by selecting $\omega$ very small. Alternatively, we may opt for the opposite by selecting $\omega$ rather large. With collocation this is not possible.

\subsubsection{Improved Robustness in Comparison to Collocation}
In collocation, an increase of $p$ means more collocation points per mesh interval. This means \eqref{eqn:OCP:dae} is forced to zero at more points. This seems advantageous for driving $\rho\rightarrow 0$. However, an increase of $p$ also means that $y_h,u_h$ can take on more possible shapes. Thus, also $f$ can fluctuate more wildly in-between the collocation points. Practical experience show that collocation methods of higher degree $p$ can perform less reliably than methods of low degree.

Quadrature penalty methods decouple the number of quadrature points $q$ and sampling points $m$ from the polynomial degree $p$. This allows more robustness by simply choosing $q$ and $m$ larger than $p$. The motivation for doing this is given in the next section.

\section{Motivation for Higher-Order Quadrature Schemes}\label{sec:HigherOrderQuadMotiv}
It is advantageous for the robustness of a direct transcription method when the quadrature degree $q$ (i.e., the number of quadrature point per mesh interval) exceeds the polynomial degree $p$. In this section we illustrate the reasons for this. The section closes with a definition of a quadrature order that is suitable for optimal control problems.

\subsection{Quadrature Points and Collocation Points applied to Algebraic Constraints}
In general, collocation methods can struggle with solving algebraic equality constraints~(\ref{eqn:OCP}:f2). The following example illustrates the reasons for why these constraints are more likely to not converge. Consider the algebraic constraint
\begin{align*}
y_h(t)=0 \quad \tforall\,t \in [0,1]\,.
\end{align*}
According to the definition of the functional measure $r$ from~\eqref{eqn:IntRes}, convergence of the equality feasibility residual $\rho$ necessitates that the integral
\begin{align*}
\int_0^1 y_h^2(t)\,\mathrm{d}t
\end{align*}
converges to zero.

Quadrature penalty methods approximate the integral with a quadrature approximation. The approximation is driven to zero as $\omega$ is decreased:
\begin{align*}
\int_0^1 y_h^2(t)\,\mathrm{d}t \approx \sum_{(t,\alpha) \in \cQ_{h,q}} \alpha \cdot y_h^2(t)\xrightarrow{\omega\rightarrow 0} 0\,.
\end{align*}
This is supposed to yield convergence of $\rho \rightarrow 0$ as $\omega\rightarrow 0$ and $h\rightarrow 0$. Similarly, collocation methods use collocation points at which $y_h$ is set to zero:
\begin{align*}
& y_h(t)=0\qquad  \forall\ t \in \cT_{h,p}\\
\Rightarrow\quad & \int_0^1 y_h^2(t)\,\mathrm{d}t \approx \sum_{t \in \cT_{h,q}} \alpha(t) \cdot y_h^2(t) = 0\,,
\end{align*}
where the weights $\alpha(t)$ are some suitable quadrature weights with respect to the collocation points. They can be computed, e.g., as the integral of Lagrange basis polynomials. Likewise, this is supposed to yield convergence of $\rho \rightarrow 0$ as $h\rightarrow 0$.

The issue with the quadrature is that $y_h$ lives on the same mesh as the quadrature rule of $\cQ_{h,q}$ or $\cT_{h,p}$. Thus, when $h \rightarrow 0$ decreases then $y_h$ may oscillate more wildly. This can prevent the quadrature approximations from converging to the integral; hence, $\rho$ may fail to converge to zero. In the following, we illustrate a numerical example for the quadrature error in the above approximations.

\subsection{Numerical Example for the Quadrature Error}\label{sec:QuadError}
Figure~\ref{fig:quaddiv} shows a piecewise polynomial function $y_h$ of degree $p=2$ in red and the Gauss-Legendre quadrature points of degree $q=p$ in blue. The function $y_h$, and thus $y_h^2$, are zero at the quadrature points. Hence, $Q_{h,p}[y_h^2]=0$, independent of the mesh size $h$. However, $\int_0^1 y_h^2(t)\,\mathrm{dt}=0.2$, independent of $h$. This integral is the red area in the top of the figure. The quadrature does not converge to this integral as $h\rightarrow 0$.
\begin{figure}
	\centering
	\includegraphics[width=0.75\linewidth]{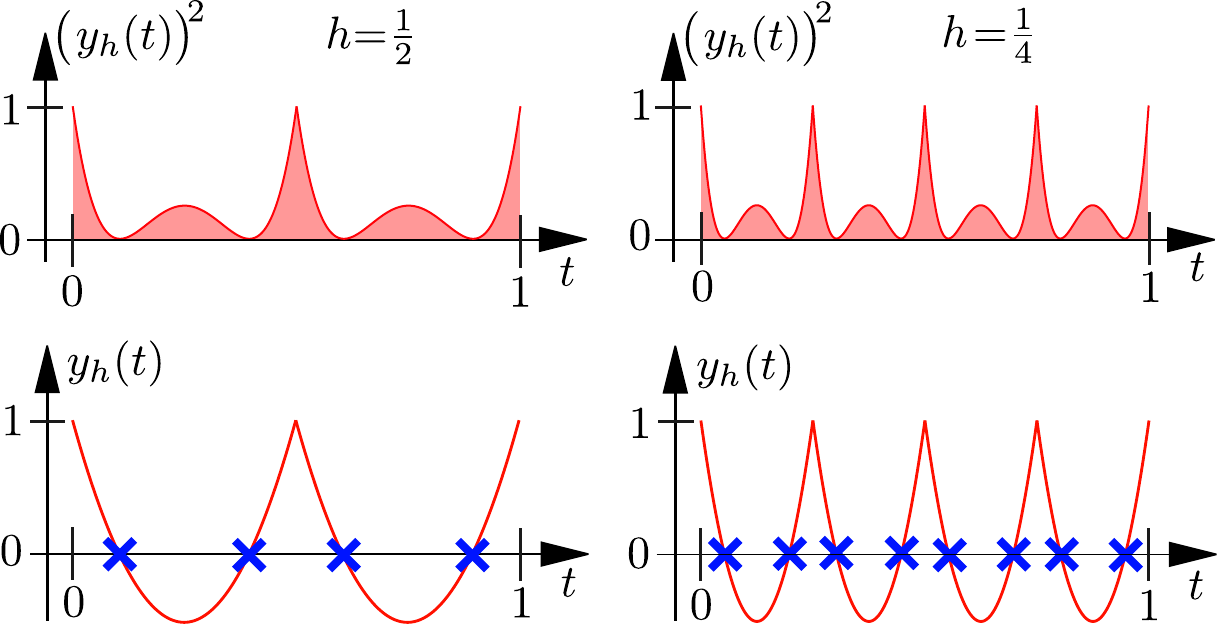}
	\caption{Bottom: Piecewise polynomial function $y_h$. Top: Integral over $y^2_h(t)$. Blue crosses show the points of Gauss-Legendre quadrature of degree $q=2$.}
	\label{fig:quaddiv}
\end{figure}

\subsection{Definition of Quadrature Order in the Literature}
In the literature, the quadrature order is the order at which a quadrature error decreases to zero as $h\rightarrow 0$. The Gauss-Legendre quadrature $G_q$ of degree $q \in \N$ over a sufficiently smooth function $g : [0,T] \rightarrow \R$ yields \cite{GaussQuad}:
\begin{align}
\left| \int_0^T g(t)\,\mathrm{d}t - G_q(g) \right| \leq const \cdot h^{2 \cdot q-1} \qquad \forall h\leq \text{ some positive constant} \label{eqn:def:quadOrder}
\end{align}
Using Landau notation, this can be abbreviated as
\begin{align*}
\left| \int_0^T g(t)\,\mathrm{d}t - G_q(g) \right| \in \cO(h^{2\cdot q-1})\,.
\end{align*}
We say: Gauss-Legendre quadrature of degree $q$ has the \emph{quadrature order} $2\cdot q-1$. For comparison, Newton-Cotes quadrature only achieves a quadrature order of $q$ \cite{GaussQuad}.

In contrary to what the quadrature order seems to imply, we just witnessed in Section~\ref{sec:QuadError} that Gauss-Legendre quadrature of degree $q=2$ is insufficient to approximate the integral over $y_h^2$ in a convergent manner. This is so because $y_h$ is not sufficiently smooth. To resolve the issue, we next introduce a stricter measure for the quadrature order. This measure is called \emph{piecewise polynomials quadrature order}.

\subsection{Definition of Piecewise Polynomials Quadrature Order}\label{sec:QuadOrder}
Suppose $(y_h,u_h) \in \cX_{h,p}$ from \eqref{eqn:def:spaceXhp}. Then a quadrature rule $Q_{h,q}$ has the \emph{piecewise polynomials quadrature order} $\ell \in \R_{>0}$ if the following condition holds: 
\begin{mdframedwithfoot}
	There exist a finite constant $\Cquad \in \R_{>0}$ and a constant $h_0 \in \R_{>0}$ finitely above zero\footnote{Strictly speaking, the relation $0.\overline{9}=1$ holds. In a less strict mathematical sense, the number $1-0.\overline{9}$ might appear strictly larger than zero but not finitely larger than zero. We write ``finitely above zero'' to avoid misunderstandings.}, such that $\forall h\leq h_0$ the following holds:
	\begin{equation}
	\begin{aligned}
	\Bigg|\int_0^T \left\|f\big(\dot{y}_h(t),y_h(t),u_h(t),t\big)\right\|_2^2\,\mathrm{d}t-Q_{h,q}(y_h,u_h)\Bigg|
	\leq \Cquad \cdot h^\ell \quad \forall\, (y_h,u_h) \in \cX_{h,p}
	\end{aligned}
	\label{eqn:quadcond}%
	\end{equation}
\end{mdframedwithfoot}
\noindent
For short, the quadrature error must live in $\cO(h^\ell)\quad \forall\,(y_h,u_h) \in \cX_{h,p}$.

The difference to the definition of the conventional quadrature order from~\eqref{eqn:def:quadOrder} is in the detail that it must hold $\forall\ (y_h,u_h) \in \cX_{h,p}$. This is significant because a refinement of $h$ leads to an increase of $\cX_{h,p}$. Thus, as $h\rightarrow 0$, the quadrature must become more accurate for an increasing space $\cX_{h,p}$ of possible functions. In comparison, the definition of conventional quadrature order in \eqref{eqn:def:quadOrder} only considers convergence for a fixed arbitrary function $g$.

For example, if in Figure~\ref{fig:quaddiv} we had chosen Gauss-Legendre quadrature of degree $q\geq p+1$ then $\ell=2\cdot q-1$. As a rule of thumb, using Gauss-Legendre quadrature of degree $q$ sufficiently larger than $p$ results in~\mbox{$\ell=2\cdot q-1$}.

\section{Examples of Quadrature Penalty Methods}
We give a few examples of quadrature penalty methods. These showcase ways in which quadrature penalty methods can generalize collocation methods.

\subsubsection{Explicit Euler Method}
We demonstrate that the presented framework of quadrature penalty methods generalizes collocation via explicit Euler.
We set $p=1$ and $q=m=1$. The reference sets of quadrature and the reference sets of bounds sampling are:
\begin{align*}
\cQ_{\text{ref},q} &= \lbrace (-1,2)\rbrace\,,\\
\cT_{\text{ref},m} &= \lbrace -1\rbrace\,.
\end{align*}
I.e., the explicit Euler method is attained by using quadrature with left Riemann sums and sampling the left-most point on each mesh interval. However, we motivated in Section~\ref{sec:HigherOrderQuadMotiv} that it can be advantageous to choose a higher-order quadrature scheme of degree $q$ that exceeds the piecewise polynomial degree $p$.

\subsubsection{Penalty Euler Method}
Collocation via Euler methods is characterized by choosing $p=1$ and setting the collocation point to either $-1$ or $+1$ on $\Iref$. In quadrature penalty methods, we do not have to decide for either point because we can choose the quadrature degree $q$ and sampling degree $m$ independent of $p$. We propose $q=3$, $m=2$, with the following sets:
\begin{align*}
\cQ_{\text{ref},q} &= \lbrace (-1,0.5),(0,1),(1,0.5)\rbrace\,,\\
\cT_{\text{ref},m} &= \lbrace -1,0,1\rbrace\,.
\end{align*}
I.e., we can use Hermite-Simpson quadrature and sample the bound constraints at the CGL points of degree $m=2$. This may improve the robustness of the method.

\subsubsection{Gauss-Legendre Quadrature and Chebyshev-Gauss-Lobatto Sampling}
We now present a more sophisticated quadrature penalty method. This method is parametric in the choice of all three parameters $p,q,m$. This method will be used for the numerical experiments in Section~\ref{sec:NumExp} and for the theoretical analysis in Part~\ref{part:convproof}.

This method uses the quadrature points and weights $\cQ_{\text{ref},q}$ of the Gauss-Legendre quadrature of degree $q$. Further, it samples the bound constraints at the Chebyshev-Gauss-Lobatto points $\cT_{\text{ref},m}$ of degree $m$.

\section{Comparison of Computational Cost between QPM and DCM}\label{sec:CostCompare}
This section compares the two direct transcription methods discussed in Section~\ref{sec:DirectCollocation} and Section~\ref{sec:DQIPM}: Direct Collocation Method (DCM) and Quadrature Penalty Method (QPM).

The computational cost (in terms of computation time) of both methods is identical to the computational cost for solving the NLPs that their transcriptions result in. As discussed in Section~\ref{sec:NLP:cost}, the cost for solving an NLP depends on the following aspects:
\begin{itemize}
	\item number of iterations
	\item computation time per iteration
\end{itemize}
An experimental analysis of the number of iterations between DCM and QPM is given in Section~\ref{sec:NumExp}. Because the number of iterations can vary a lot, a theoretical analysis is possible only for the computation time per iteration. This analysis is provided in the following.

\subsection{Overview}
Figure~\ref{fig:costdiagram} shows the computation time per iteration for DCM and QPM for the parameters described in Section~\ref{sec:NumExp:Setup}. The figure shows the computation time per iteration of the NLP solver. Step (N.3) takes the most time per iteration in each method. DCM and QPM require the exact same time for step (N.3) because their reduced linear systems have the exact same dimension and sparsity pattern, as we will showcase below. The assembly step (N.2) is more expensive in QPM than in DCM. The cost of all other steps is negligible in both methods. The reasons for this are given below.

Figure~\ref{fig:costdiagram}~(b) shows that DCM takes only $79\%$ as much time per iteration as QPM. The parallel versions of both methods perform more similarly in speed. This is so because some tasks can benefit dramatically from parallelisation, however the most expensive part only benefits by a limited amount. Further details on these aspects are discussed below.

The starting point of all discussions is the sparsity pattern. This is discussed in the next subsection. Afterwards, we analyze cost and parallelizability in the assembly step (N.2) and in the computation step (N.3).

\begin{figure}
	\centering
	\includegraphics[width=0.7\linewidth]{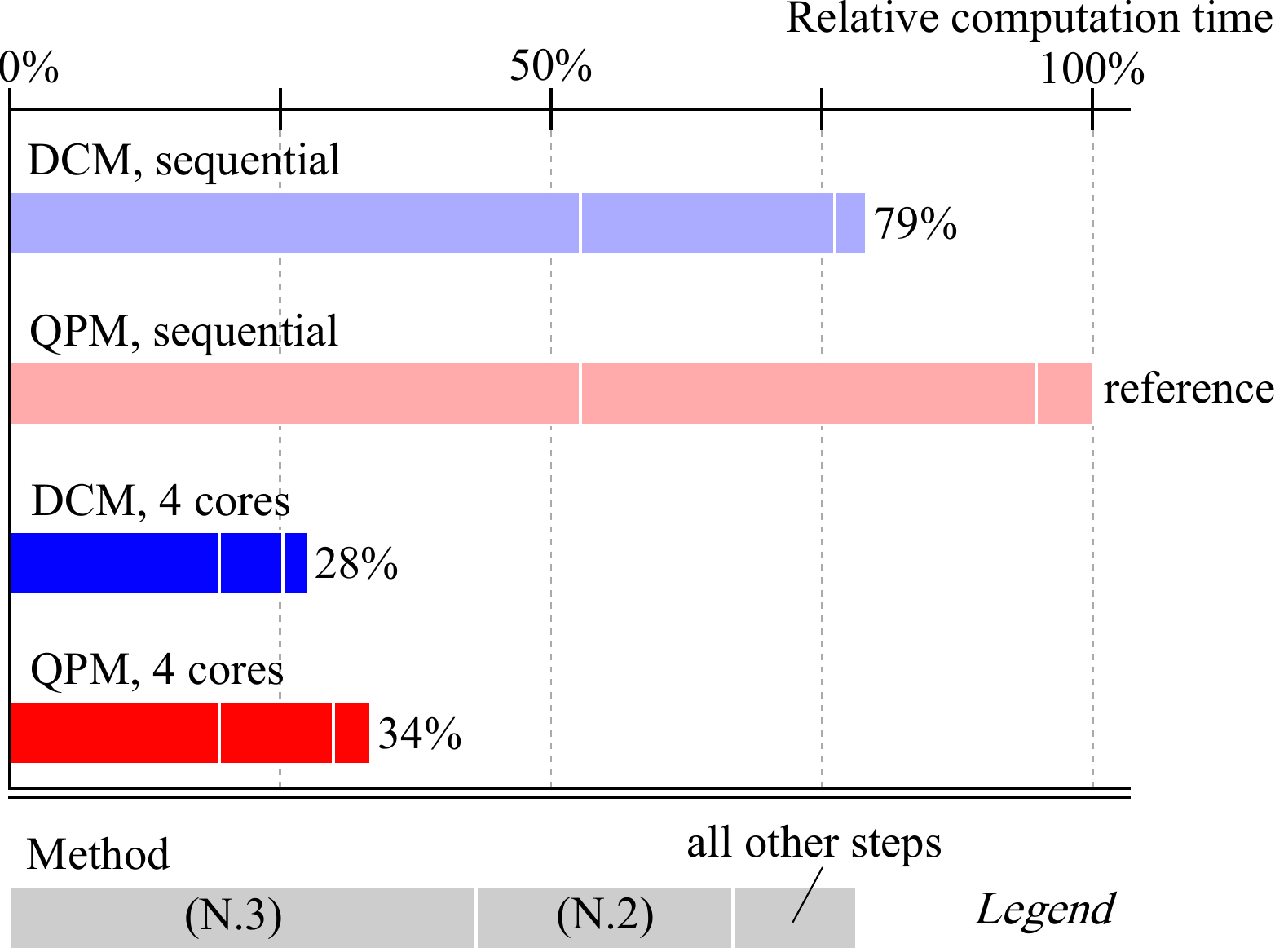}
	\caption{Computation time per iteration of the constrained optimization algorithm. Different methods and numbers of CPU cores are compared. The steps (N.3), (N.2) from Figure~\ref{fig:newtoniteration} are indicated.}
	\label{fig:costdiagram}
\end{figure}

\subsection{Sparsity Structure in the NLP}\label{sec:CostCompare:Sparsity}
Figure~\ref{fig:sparsity} shows the worst-case sparsity patterns of the matrices $\bH,\bJ,\bA$ in the Newton matrix~\eqref{eqn:KKTmatrix} of DCM and QPM when the problem dimensions are $n_y=2,\ n_u=1,\ n_c=0,\ n_b=2$. Both methods use degree $p=5$ and $N=10$ mesh intervals. QPM uses $q=7$ and $m=10$. The number of non-zeros (nnz) for each matrix is given below each sparsity pattern.

\begin{figure}
	\centering
	\includegraphics[width=0.85\linewidth]{}
	\caption{Sparsity of matrices $\bH,\bJ,\bA$ in \eqref{eqn:KKTmatrix} for DCM and QPM of degree $p=5$. QPM uses $q=7$ and $m=10$.}
	\label{fig:sparsity}
\end{figure}

Both methods feature banded matrices of overlapping dense blocks. In the special case of collocation, the matrix $\bA$ matches with the identity matrix. Depending on the nonlinearity of $f$ in $\dot{y},y,u$, some of these blocks in $\bH,\bJ$ could actually be sparser in both methods; this is ignored here for simplicity. Importantly, the reduced Newton matrix $\bS$ in the reduced system~\eqref{eqn:ReducedKKTlinsys} has the same sparsity pattern as $\bH$.

In the following we explain the cost of steps (N.2) and (N.3) in Figure~\ref{fig:costdiagram} based on the sparsity patterns.

\subsection{Assembly of Derivative Matrices}\label{sec:cost:assembly}
The matrices $\bH,\bJ$ must be recomputed in each iteration in step~(N.2) because they depend on $\bx,\by$, which may change in each Newton iteration. In contrast, the matrix $\bA$ needs only be computed once for both methods. For QPM, this results in about $q/p$ times as many computations as for DCM because the derivatives of $f$ must be computed at $q$ instead of $p$ quadrature points per mesh interval. Because the computation of derivatives can be performed in parallel on each mesh interval, dramatic time savings are possible when parallelizing these computations onto multiple cores \cite{BandedMatmul}.

\subsection{Computation of the Newton Direction}
The Newton direction is computed by factorizing the reduced Newton matrix $\bS$ in \eqref{eqn:ReducedKKTlinsys}. Because this matrix has the same size and pattern for both methods, the computational cost of this task is identical in both methods.

The factorization of $\bS$ is most time-consuming and can only be parallelized by a limited amount \cite{parallelfactorization}. This is why on parallel computers the computation times per iteration are quite similar between DCM and QPM.

It remains to answer whether one method pathologically requires more Newton iterations to converge than the other. The numerical experiments in Section~\ref{sec:NumExp} do not indicate that this is the case.

\chapter{Numerical Experiments}\label{sec:NumExp}
Part~\ref{part:convproof} proves convergence of QPM in general whereas the counter-examples in Section~\ref{sec:Misconceptions} disprove convergence of DCM in general. However, these examples were practically irrelevant crafted edge cases. We now compare DCM and QPM on a variety of practical examples for optimal control problems in order to compare their practical performance.

\section{Experimental Setting}\label{sec:NumExp:Setup}
We compare DCM and QPM in two rounds of numerical experiments: i) The first round comprises realistic test problems from the literature for which DCM is known to converge. In this round, we compare accuracy and computational cost of both methods. ii) The second round showcases two problems for which DCM are known to struggle.

\subsubsection{Method Parameters}
DCM and QPM are method classes. We give here the particular parameters and options that we use for each method. For DCM we use Legendre-Gauss-Radau collocation (LGRC) from Section~\ref{sec:COL:examples} because this is the most widely implemented DCM. We choose the polynomial degree $p=4$ because this is a good trade-off between convergence rate and sparsity. We construct a comparable QPM by letting $p=4$, and choosing Gauss-Legendre quadrature of degree $q=8$ for $Q_{h,q}$. We select the sampling spaces $\Tcl{i,m}$ of degree $m=8$. Since $m,q\leq 2\cdot p$, this QPM is at most twice as expensive in terms of computation time per iteration as this DCM.

\section{Round 1: Problems where DCM works well}\label{sec:NumExp:Round1}

The first round comprises of 21 problems. Each problem is solved with DCM and QPM on three meshes; a coarse mesh of $N$ intervals, a medium mesh of $4\cdot N$ intervals, and a fine mesh of $16 \cdot N$ intervals. To improve comparability, both methods use the same three meshes and the same initial guesses. We use equidistant meshes because different refinement strategies may favor either method, resulting in unfair comparison. For each method on each mesh, we measure solution accuracy in terms of $\delta,\rho,\gamma$, the number of Newton iterations in the constrained optimization algorithm, and the solution time in seconds.

For better comparability, both transcriptions use the exact same NLP solver from Section~\ref{sec:NLP:solver} in Figure~\ref{fig:newtoniteration}. The constrained optimization algorithm uses exact first and second derivatives and terminates when \eqref{eqn:KKT} are solved to $\|\cdot\|_\infty$-accuracy $\leq \tol=10^{-7}$.

\subsection{Test Problems}
Table~\ref{tab:testprobs} depicts the test problems with their respective properties from left to right: active inequality constraints on $y,u$; smoothness properties of the literature solution $y^\star,u^\star$; properties of the minimizer, such as whether it features a so-called singular arc \cite{BettsChap2} or is a unique or strict/non-strict minimizer (cf.~Figure~\ref{fig:minimizertypes}); and numerical properties such as stiffness of the optimality system (in terms of Euler-Lagrange equations or generalizations thereof; cf.~Section~\ref{sec:ELODE}), scaling issues due to large discrepancy in magnitude of variables, and long timespans $[0,T]$.

The problems are sorted into categories. Some problems permit analytic solutions, while others are models from engineering applications. Finally, there are two classes of challenges, commonly seen in nonlinear optimal control. These challenges are explained in the following.

The first class of challenges deals with non-strictness and non-uniqueness of solutions: We compute two distinct minimizers to the same problem, to confirm that both methods are able to converge to both minimizers. We also compute non-strict minimizers for a landing-abortion problem that features a family of equally good solutions with regards to how the plane escapes from the abortion zone. The second class of challenges deals with irregular constraints: The constrained brachistochrone problem features a singular Jacobi matrix \cite{BettsChap2}; the pendulum determines the beam force implicitly from a differential-algebraic equation of varying index and eventually also imposes a bound on the beam force. This results in singular optimality conditions. Details on each problem are given in the references in the table.


\begin{table}[]
	\centering
	\scalebox{0.94}{
		\begin{tabular}{
				!{\vrule width 3pt}>{\centering}p{0.5cm}|
				>{\centering}l!{\vrule width 3pt}
				>{\centering}p{0.42cm}|
				>{\centering}p{0.42cm}!{\vrule width 2pt}
				>{\centering}p{0.42cm}|
				>{\centering}p{0.42cm}!{\vrule width 2pt}
				>{\centering}p{0.42cm}|
				>{\centering}p{1.1cm}!{\vrule width 2pt}
				>{\centering}p{0.42cm}|
				>{\centering}p{0.42cm}|
				>{\centering}p{0.42cm}!{\vrule width 3pt}
				l!{\vrule width 3pt}
			}
			\noalign{\hrule height 3pt}
			\multicolumn{2}{!{\vrule width 3pt}c!{\vrule width 3pt}}{\textbf{problem}} 									& \multicolumn{9}{c!{\vrule width 3pt}}{\textbf{properties}} 																																																																									& 	\\ \cline{1-11}
			&  															& \multicolumn{2}{c!{\vrule width 2pt}}{\textbf{{ineq.}}} 			& \multicolumn{2}{c!{\vrule width 2pt}}{\textbf{{cont.}}} 		& \multicolumn{2}{c!{\vrule width 2pt}}{\textbf{{minimizer}}} 						& \multicolumn{3}{c!{\vrule width 3pt}}{\SetTracking{encoding=*}{-50}\lsstyle\textbf{{conditioning}}} 										&   										\\ \cline{3-11}
			\multicolumn{1}{!{\vrule width 3pt}c|}{\textbf{\rotatebox{90}{index}}}				& \multicolumn{1}{c!{\vrule width 3pt}}{\textbf{\rotatebox{90}{name}}}						& \textbf{\rotatebox{90}{bound $y$}}	& \textbf{\rotatebox{90}{bound $u$}}	& \textbf{\rotatebox{90}{jump $u$}}	& \textbf{\rotatebox{90}{edge}}	& \textbf{\rotatebox{90}{singular}}	& \multicolumn{1}{c!{\vrule width 2pt}}{\textbf{\rotatebox{90}{kind}}}	& \textbf{\rotatebox{90}{stiff}}	& \textbf{\rotatebox{90}{bad scale\phantom{x}}}	& \textbf{\rotatebox{90}{long span}}	& \multicolumn{1}{c!{\vrule width 3pt}}{\textbf{\rotatebox{90}{reference}}}		 									\\ \noalign{\hrule height 1.5pt} \hline \hline \noalign{\hrule height 3pt}
			\multicolumn{12}{!{\vrule width 3pt}c!{\vrule width 3pt}}{\phantom{$\frac{\frac{a}{b}}{c}$}\textbf{Analytic solution available}\phantom{$\frac{\frac{a}{b}}{c}$}}																																																																																															 		\\ \hline \hline \noalign{\hrule height 1.5pt}
			1 														& Hager Problem										&  									&  									&  									&    							&  									& unique 						&  									&  												&  										& \cite{kameswaran_biegler_2008}		\\ \noalign{\hrule height 1.5pt}
			2 														& Bryson-Denham	Problem								& \checkmark						&  									&  									& \checkmark 					&  									& unique 						&  									&  												&  										& \cite{bryson1999dynamic}				\\ \noalign{\hrule height 1.5pt}
			3 														& Singular Regulator								& 									& \checkmark						& \checkmark						& \checkmark 					& \checkmark						& unique 						&  									&  												&  										& \cite{AlyChan}						\\ \noalign{\hrule height 1.5pt}\hline \hline \noalign{\hrule height 3pt}
			\multicolumn{12}{!{\vrule width 3pt}c!{\vrule width 3pt}}{\phantom{$\frac{\frac{a}{b}}{c}$}\textbf{Applications}\phantom{$\frac{\frac{a}{b}}{c}$}}																																																																																															 		\\ \hline \hline
			\multicolumn{12}{!{\vrule width 3pt}c!{\vrule width 3pt}}{\textit{Robotics}}		\\ \hline \noalign{\hrule height 1.5pt}
			4 														& Two-Link Robot Arm								& 									& \checkmark						& \checkmark						& \checkmark					&            						& strict 						&  									&  												&  										& \cite{luus2019iterative}				\\ \noalign{\hrule height 1.5pt}
			5 														& Container Crane									& 									& \checkmark						&  									& \checkmark					&            						& strict 						&  									&  												&  										& \cite{augustin2001sensitivity}		\\ \noalign{\hrule height 1.5pt} \hline \hline
			\multicolumn{12}{!{\vrule width 3pt}c!{\vrule width 3pt}}{\textit{Aircrafts}}		\\ \hline \noalign{\hrule height 1.5pt}
			6 														& Alp-Rider 										& 									& \checkmark						&           						& 								& 									& strict 						& \checkmark						& \checkmark									&  										& \cite{BettsChap2}						\\ \noalign{\hrule height 1.5pt}
			7 														& Dynamic Soaring									& \checkmark						&  									& 									& \checkmark					& 									& strict 						& \checkmark						&  												&  										& \cite{zhao2004optimal}				\\ \noalign{\hrule height 1.5pt}
			\hline \hline
			\multicolumn{12}{!{\vrule width 3pt}c!{\vrule width 3pt}}{\textit{Rockets}}		\\ \hline \noalign{\hrule height 1.5pt}
			8														& Goddard Rocket Max Height							& \checkmark						& \checkmark						& \checkmark   						& \checkmark					& \checkmark						& strict 						&  									&  												&  										& \cite{BettsChap2}						\\ \noalign{\hrule height 1.5pt}
			9														& Spaceship Control									& 									& \checkmark						&           						& 								& 									& strict 						&  									&  												&  										& \cite{battin1999introduction} 							\\ \noalign{\hrule height 1.5pt}
			10 														& Spaceshuttle Reentry								& \checkmark						& \checkmark						& 									& \checkmark					& 									& strict 						& \checkmark						& \checkmark 									& \checkmark							& \cite{BettsChap2} 							\\ \noalign{\hrule height 1.5pt}
			\hline \hline
			\multicolumn{12}{!{\vrule width 3pt}c!{\vrule width 3pt}}{\textit{Satellites}}		\\ \hline \noalign{\hrule height 1.5pt}
			11														& Orbit Raising										& 									& \checkmark						&           						& 								& 									& strict 						&  									&  												&  										& \cite{Bryson1975} 							\\ \noalign{\hrule height 1.5pt}
			12 														& Low-Thrust MEO-GEO Transfer						& 									& \checkmark						& 									&  								& 									& strict 						&  									& 			 									& \checkmark							& \cite{kluever1995optimal} 							\\ \noalign{\hrule height 1.5pt}  \hline \hline
			\multicolumn{12}{!{\vrule width 3pt}c!{\vrule width 3pt}}{\textit{Biochemistry}}		\\ \noalign{\hrule height 1.5pt}
			13														& Tuberculosis Treatment							& \checkmark						& \checkmark						&  			   						& \checkmark					&  									& strict 						& \checkmark						& \checkmark									& \checkmark							& \cite{jung2002optimal}				\\ \noalign{\hrule height 1.5pt}
			14														& Batch Fermentation								& \checkmark						& \checkmark						& \checkmark						& \checkmark					& \checkmark						& strict 						& \checkmark 						&  												&  										& \cite{cuthrell1989simultaneous}		\\ \noalign{\hrule height 1.5pt}
			15 														& Kiln Heating PDE									& 									& \checkmark						& 									& \checkmark					& 									& strict 						& \checkmark						& 			 									&   									& \cite{BettsChap2}						\\ \noalign{\hrule height 1.5pt} \hline \hline \noalign{\hrule height 3pt}
			\multicolumn{12}{!{\vrule width 3pt}c!{\vrule width 3pt}}{\phantom{$\frac{\frac{a}{b}}{c}$}\textbf{Challenges}\phantom{$\frac{\frac{a}{b}}{c}$}} \\ \hline \hline 
			
			\multicolumn{12}{!{\vrule width 3pt}c!{\vrule width 3pt}}{\textit{due to non-uniqueness}}		\\ \noalign{\hrule height 1.5pt}
			
			16a														& Obstacle Avoidance below							&   								& \checkmark						&  			   						& \checkmark					&  									& strict 						&  									&  												&  										& \cite{neuenhofen2018dynamic}			\\ \noalign{\hrule height 0.4pt}
			16b														& \phantom{Obstacle Avoidance} above				&   								& \checkmark						&  			   						& \checkmark					&  									& strict 						&  									&  												&  										& 										\\ \noalign{\hrule height 1.5pt}
			17a														& Free-Flying Robot book							&   								& \checkmark						& \checkmark   						& \checkmark					&  									& strict 						&  									&  												&  										& \cite{BettsChap2}						\\ \noalign{\hrule height 0.4pt}
			17b														& \phantom{Free-Flying Robot} asymmetric			&   								& \checkmark						& \checkmark   						& \checkmark					&  									& strict 						&  									&  												&  										& 										\\ \noalign{\hrule height 1.5pt}
			18a														& Landing Abortion low Exit							& \checkmark						& \checkmark						& \checkmark   						& \checkmark					& \checkmark						& {\SetTracking{encoding=*}{-70}\lsstyle\mbox{non-strict}} 			&  		& \checkmark							&  										& \cite{BettsChap2}						\\ \noalign{\hrule height 0.4pt}
			18b														& \phantom{Landing Abortion} high Exit				& \checkmark						& \checkmark						& \checkmark   						& \checkmark					& \checkmark						& {\SetTracking{encoding=*}{-70}\lsstyle\mbox{non-strict}} 			&  		& \checkmark							&	  									&  										\\ \noalign{\hrule height 1.5pt}\hline \hline 
			\multicolumn{12}{!{\vrule width 3pt}c!{\vrule width 3pt}}{\textit{due to non-regularity}}		\\ \noalign{\hrule height 1.5pt}
			19a														& Brachistochrone unconstr.							&  									&  									&  			   						&  								&  									& unique 						&  									&  												&  										& \cite{BettsChap2}						\\ \noalign{\hrule height 0.4pt}
			19b														& \phantom{Brachistochrone} constr. $h=0.1$			&  									& \checkmark						&  			   						& \checkmark					&  									& unique 						&  									&  												&  										&  										\\ \noalign{\hrule height 0.4pt}
			19c														& \phantom{Brachistochrone} constr. $h=0$			&  									& \checkmark						&  			   						& \checkmark					& \checkmark						& unique 						&  									&  												&  										& 										\\ \noalign{\hrule height 1.5pt}
			20a														& Pendulum Index 1									&  									&  									&  			   						&  								& \checkmark						& strict 						&  									&  												&  										& \cite{campbell2016solving}			\\ \noalign{\hrule height 0.4pt}
			20b														& \phantom{Pendulum} Index 2 						&  									&  									&  			   						&  								& \checkmark						& strict 						&  									&  												&  										& 										\\ \noalign{\hrule height 0.4pt}
			20c														& \phantom{Pendulum} Index 3 						&  									&  									&  			   						&  								& \checkmark						& strict 						&  									&  												&  										& 										\\ \noalign{\hrule height 0.4pt}
			20d														& \phantom{Pendulum} Index 3 + constr.				&  									& \checkmark						&  			   						& \checkmark					& \checkmark						& strict 						&  									&  												&  										& \cite{neuenhofen2018dynamic}			\\ \noalign{\hrule height 1.5pt}
	\end{tabular}}
	\caption{List of numerical test problems for Round~1. Abbreviations: ``ineq.''=inequalities; ``cont.''=continuity.}\label{tab:testprobs}
\end{table}

\subsection{Results and Discussion}
\begin{table}
	\centering
	\includegraphics[width=1\linewidth]{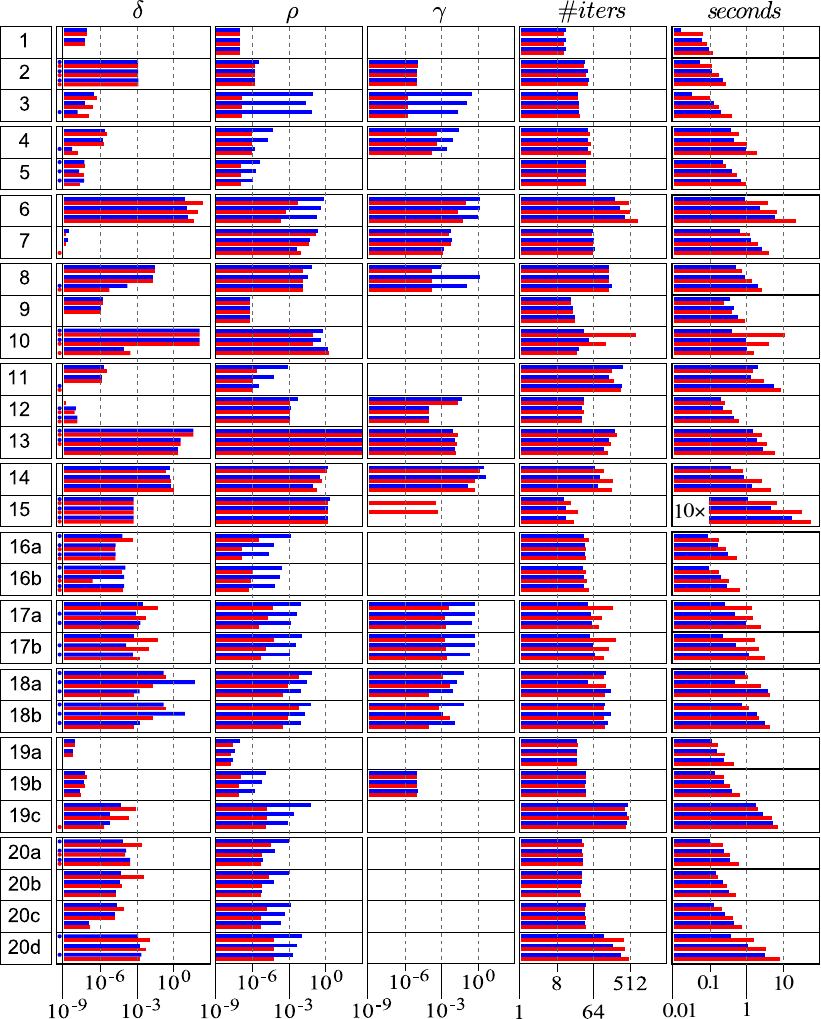}
	\caption{Convergence measures from~Section~\ref{sec:ConvMeasures} for DCM (blue) and QPM (red), both of degree $p=4$. Three bars per method per cell give the measure from coarsest (top) to finest mesh (bottom).
		Problems where the optimality gap converges from below are indicated with a dot in the left of the $\delta$ column. Problem 15 took $10\times$ as many seconds as plotted.}
	\label{fig:table2}
\end{table}

Table~\ref{fig:table2} presents the convergence measures from Section~\ref{sec:ConvMeasures} on each mesh of each problem from Table~\ref{tab:testprobs} for both DCM and QPM. Both direct transcriptions succeed on all problems in the sense that they generate reasonably good numerical solutions. Also, the NLP solver converges in a reasonable number of iterations for each problem.

\subsubsection{General Observations}
\begin{itemize}
	\item The computational time of QPM is at most twice as large as that of DCM.
	\item QPM yields on average three orders of magnitude smaller   equality constraint residuals $\rho$.
	\item QPM yields on average one   order  of magnitude smaller inequality constraint residuals $\gamma$.
	\item The optimality gap is similar for QPM and DCM.
	\item The number of iterations is similar for QPM and DCM when both methods yield similarly accurate solutions.
\end{itemize}

\subsubsection{Exceptions}
The three biochemistry problems (index 14--16) are very stiff, hence some states' derivatives have very large values. Therefore, the equality feasibility residual $\rho$ is large in these cases.

On the problems with index 6, 18, 21d (Alp-Rider, Free-Flying Robot, constrained Pendulum), the iteration count and optimality gap of QPM are significantly larger than of DCM. All these are problems where QPM converges to very accurate solutions (in terms of feasibility) whereas DCM converges to rather inaccurate solutions. However, comparison of solver cost (in terms of iterations and computation time) and optimality gap make sense only when both methods have similarly feasible solutions.

\FloatBarrier

\section{Round 2: Problems where DCM struggles}\label{sec:NumExp:Round2}
We now show two important practical classes of optimal control applications for which DCM is known to have convergence issues.
As a disclaimer, the problems that we present here are intended to be comprehensible. Hence, there are experts who are able to rewrite or modify these problems in a way such that DCM succeeds. Problems in industry may be incomprehensible or unreadable in terms of time available. Also, it can be impractical or uneconomic to rewrite or modify optimal control problems on a regular basis in order to be able to solve them. Additional reasons are discussed in \cite{MR4046772,MR1990061,campbell2016solving}.

We focus on two classes of optimal control problems: i) overdetermined optimal control problems; ii) problems with singular controls. The first class are problems where some of the constraints can only be met due to conservation properties of the states. The second class arises naturally in many areas. Both types of problems can be very difficult to spot in practical-sized optimal control problems.

\subsection{Reorientation of a Satellite}\label{sec:NumExp:Round2:Sat}
\subsubsection{Lateral Newtonian Boundary Value Problems}
Pushing a stone with position $x$ and speed $v$ can be described by the dynamics
\begin{align*}
\dot{x}(t) &= v(t)\,, 	&
\dot{v}(t) &= u(t)\,,
\end{align*}
with the control force $u$. Supposing that we want to move the stone by $0.96$ meters, we can prescribe the following boundary conditions:
\begin{align*}
x(0)&=0\,,& x(T)&=0.96\,,\\
v(0)&=0\,,& v(T)&=0\,.
\end{align*}
This is a shooting problem of positional placement because we have a second-order differential equation of the position $x \in \R$ and we can use the control $u\in\R$ to move the stone to its destination.

\subsubsection{Angular Newtonian Boundary Value Problems}
In this section we consider the optimal control problem from \cite[eqn.~6.123]{BettsChap2}, which is a shooting problem of \emph{angular} placement. A torque profile must be determined to turn NASA’s X-ray Timing Explorer in Figure~\ref{fig:satellitereorientation} around:
\newcommand{\bomega}{\boldsymbol{\omega}\xspace}
\begin{align*}
\dot{\bq}(t)&=\frac{1}{2}\cdot
\bomega(t)
\boldsymbol{\cdot} \bq(t)\,,&
\dot{\bomega}(t)&=\bM\inv \cdot \bu(t)\,.
\end{align*}
Therein, $\bq = (\cos(\theta),\sin(\theta)\cdot\bn) \in \R^4$ is a so-called quaternion. The quaternion describes the satellite's orientation via the axis vector $\bn$ and the rotation angle $\theta$. The state $\bomega \in \R^3$ describes the angular velocity of the satellite. The control input is the torque $\bu \in \R^3$.

\begin{figure}
	\centering
	\includegraphics[width=1\linewidth]{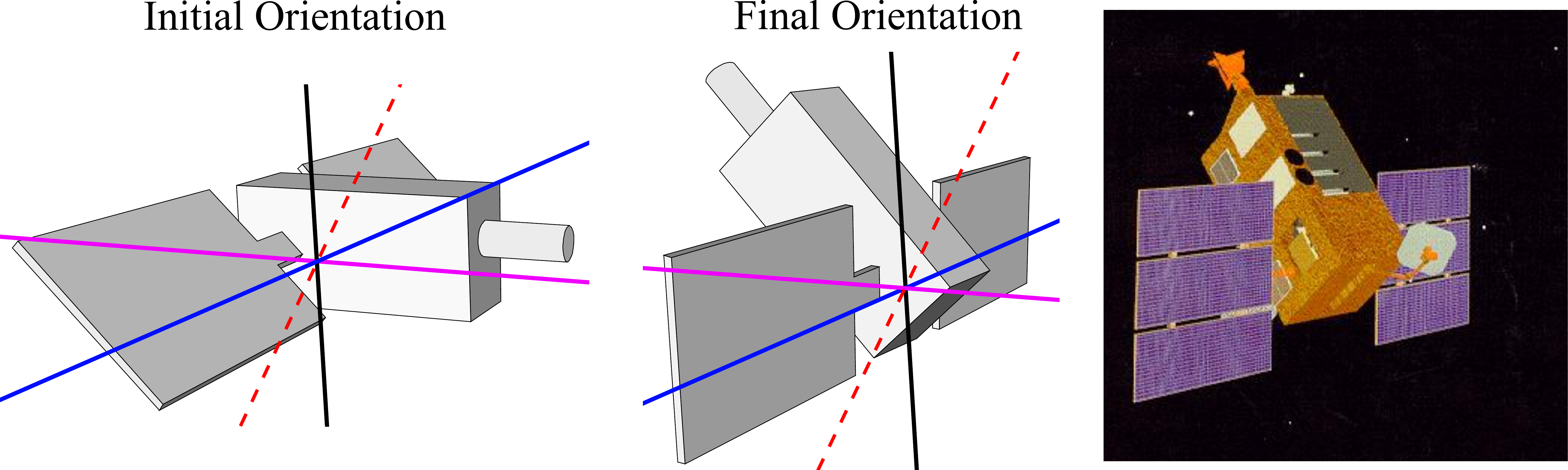}
	\caption{Left: Satellite's initial orientation. Centre: Satellite's final orientation. Right: Computer animation of the satellite.}
	\label{fig:satellitereorientation}
\end{figure}

In analogy to the stone, there are two initial and two end conditions on the angular position and on the angular speed: The satellite will start from the prescribed initial orientation and will stop at the prescribed final orientation. These boundary conditions read:
\begin{align*}
\bq(0)&=[0,0,0,1]^{\textsf{T}}\,,& \textcolor{red}{\bq(T)}&\textcolor{red}{=[0.96,0,0,0.28]^{\textsf{T}}}\,,\\
\bomega(0)&=\bO\,,& \dot{\bomega}(T)&=\bO\,.
\end{align*}
Essentially, the satellite will be rotated by $147^\circ$ around the blue axis in Figure~\ref{fig:satellitereorientation}. Because the satellite is long and narrow, the optimal control solution will actually not rotate the satellite around the blue axis, but instead simultaneously rotate by $180^\circ$ around the red and violet axes. The exact manner of doing this is non-trivial, hence requiring a numerical solution.

\subsubsection{Consistent Overdetermination}
Unfortunately, we only have three controls $\bu\in\R^3$ but four orientational states $\bq \in \R^4$. Hence, the problem is overdetermined. The problem is not infeasible though\footnote{because obviously one can rotate the satellite as depicted}. Solutions of the initial value problem naturally satisfy
\begin{align*}
\textcolor{blue}{\boldsymbol{-1} \leq \bq(t)}&\textcolor{blue}{\leq \be}\,,&\textcolor{violet}{1{}}{}&{}\textcolor{violet}{{}=\|\bq(t)\|_2^2}\,.
\end{align*}
Thus, the problem is overdetermined but still feasible.

The optimal control problem is completed by adding bound constraints that prevent rapid accelerations and spinning:
\begin{align*}
\boldsymbol{-20} \leq \bomega(t)&\leq \boldsymbol{20}\,,
& -\boldsymbol{50}\leq \bu(t)&\leq \boldsymbol{50}\,.
\end{align*}

\subsubsection{Illustration of Computational Issues}
Unless by modification, it is impossible to solve the above equations and bounds with any collocation method according to the definition in~\cite[Def.~7.6]{Hairer1}. This is due to the angular dynamics. To visualize the issue, we now indeed restrict the problem into the plane that is perpendicular to the blue axis in Figure~\ref{fig:satellitereorientation}. The restricted optimal control problem reads:
\begin{align*}
\bq(0)&=[0,\,0]\t\,,& \begin{bmatrix}
\dot{q}_{[1]}(t)\\
\dot{q}_{[2]}(t)
\end{bmatrix}&=\begin{bmatrix}
{}-{} {}q_{[2]}(t)\cdot\omega(t)\\
\phantom{{}-{}}{}q_{[1]}(t)\cdot\omega(t)
\end{bmatrix}\,,& \textcolor{red}{\bq(T)}&\textcolor{red}{=[-0.96,\ 0.28]\t}\,,\\
\omega(0)&=0\,,& \dot{\omega}(t)&=u(t)\,, & \omega(T)&=0\,,\\
\textcolor{blue}{-\be\leq \bq(t)}&\textcolor{blue}{\leq\be}\,,& -20\leq \omega(t)&\leq 20\,,& -50\leq u(t)& \leq 50\,,\\
\textcolor{violet}{\|\bq(t)\|_2^2}&\textcolor{violet}{=1}\,.& & & & &
\end{align*}
Of course, this is an oversimplification of the original problem because the satellite now indeed has to rotate around the blue axis. Nonetheless, the simpler example still suffices for illustrating where the numerical solution breaks.

\subsubsection{Numerical Results}
Figure~\ref{fig:satellitereorientationcollsolution} shows three collocation solutions for an adapation of this (otherwise unsolvable) problem. The adaptation removes the violet algebraic constraint, the blue bound constraints, and replaces the red end condition with $0.28\cdot q_{[1]}(T)+0.96\cdot q_{[2]}(T)=0$. The adaptation fixes $T=5$ and minimizes the integral over $u^2 + \omega^2$.

The figure shows numerical solutions of Explicit Euler, Trapezoidal Method (which is another collocation method), and LGR collocation of degree $p=2$. We see that all three methods violate all three color-indicated constraints. This explains why also the satellite reorientation problem cannot be solved with collocation methods. A larger number $N$ of mesh-intervals could decrease the magnitudes of these inconsistencies, but clearly they will not suddenly drop to zero. Hence, the conceptual idea of solving constraints exactly at certain points turns out once more to be flawed on this example.

\begin{remark}
	We included the trapezoidal method into the test because it is a \emph{symplectic} method, meaning that under special circumstances this it would satisfy $\|\bq(t)\|_2=1$ exactly at each collocation point $t$ \cite{HairerSymplect}. However, these special circumstances are not satisfied in this example, because $\omega$ is nonlinear in $t$.
\end{remark}
\begin{remark}
	When solved in practice, \cite{BettsChap2} proposes removal of the differential equation for $\bq_{[1]}$ and treating it as a control. However, this requires human intervention, is physically unintuitive, and compromises the  continuity (and hence differentiability) of $\bq_{[1]}$.
\end{remark}

\begin{figure}
	\centering
	\includegraphics[width=\linewidth]{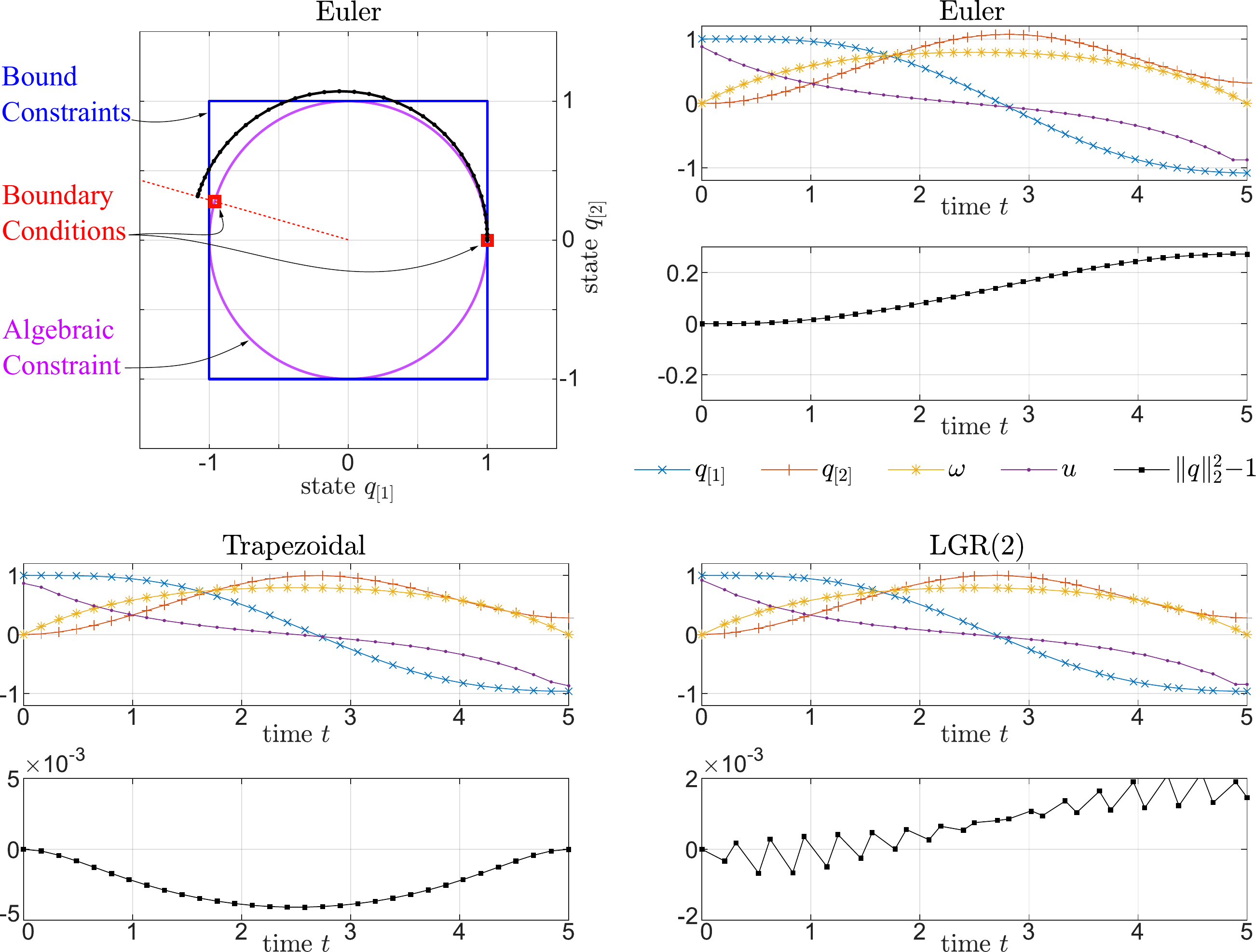}
	\caption{Top: Explicit Euler solution in state space and trajectory space. Bottom: Two collocation solutions in trajectory space. Observation: Each of these methods violates the equation $\|\bq(t)\|_2^2=1$.}
	\label{fig:satellitereorientationcollsolution}
\end{figure}

\subsection{Control of a Resonant Electric Circuit}
Optimization of electric circuits arises frequently in optimal control; e.g., the maximization of computational throughput subject to bounds on the CPU's temperature is an optimal control problem.

Electric circuits are modeled modularly: Each module (resistor, inductor, capacitor, diode, triode,...) has a model equation. A connection between any two modules is reflected in the model by equating certain variables.

\begin{figure}
	\centering
	\includegraphics[width=0.65\linewidth]{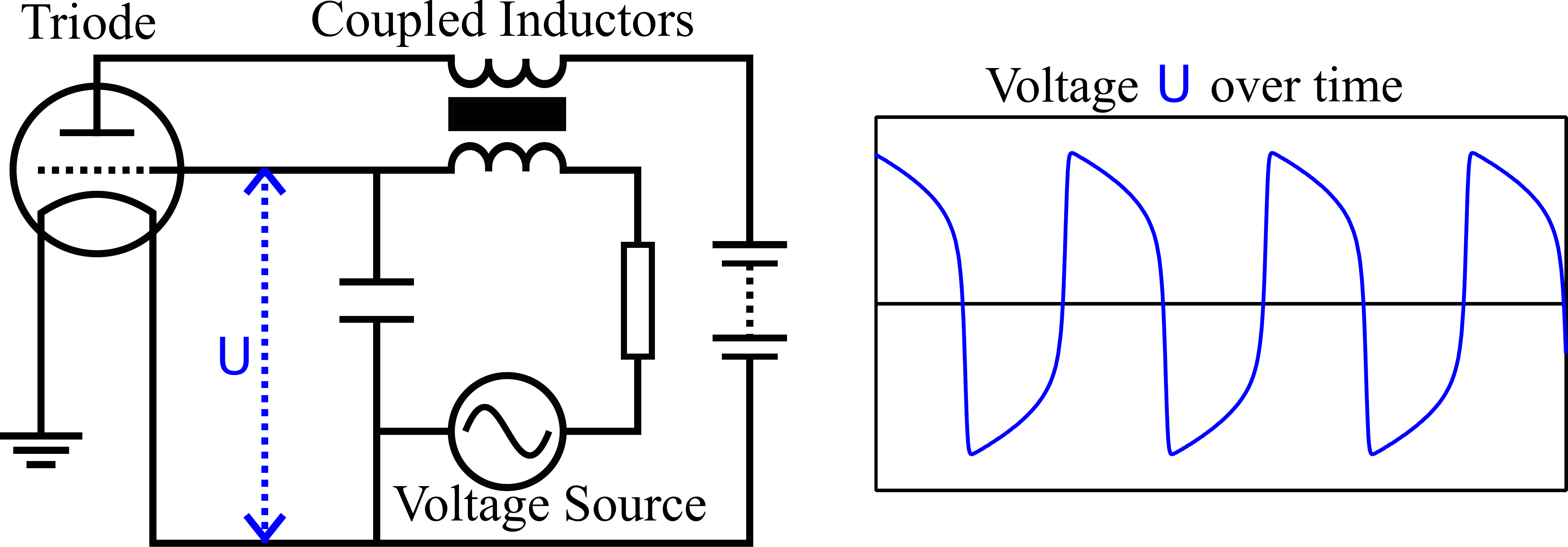}
	\caption{Left: Diagram of an electric circuit. Right: Dynamics of measured voltage reveals a Van-der-Pol oscillation.}
	\label{fig:vanderpolcircuit}
\end{figure}

In this example, we consider the Van-der-Pol optimal control problem from \cite{maurer2007theory}. 

Originally, the Van-der-Pol equation was a model for the electrodynamics in a triode, as depicted in Figure~\ref{fig:vanderpolcircuit}. Today, it is a basic oscillatory model for processes in physics, electronics, biology, neurology, sociology and economics \cite{MR952149}.

For the given electric circuit, an optimal control will be computed for the voltage source in Figure~\ref{fig:vanderpolcircuit} in order to damp the oscillator's amplitude. The problem reads:
\begin{equation*}
\left\lbrace
\begin{aligned}
&\operatornamewithlimits{min}_{y,u} & 	&\frac{1}{2} \cdot \int_0^4 \Big(\, y_{[1]}(t)^2 + y_{[2]}(t)^2 \,\Big)\,\mathrm{d}t\\
&\text{s.t.} 						& 	&y_{[1]}(0)=0\,,\quad \dot{y}_{[1]}(t)=y_{[2]}(t)\,,\quad -1\leq u_{[1]}(t)\leq 1\,,\\
& 									& 	&y_{[2]}(0)=1\,,\quad \dot{y}_{[2]}(t)=-y_{[1]}(t) + y_{[2]}(t) \cdot \big(1-y_{[1]}(t)^2\big) + u_{[1]}(t)\,.
\end{aligned}
\right\rbrace
\end{equation*}

We solve this problem with the same DCM and QPM methods specified in Section~\ref{sec:NumExp:Setup}.
To avoid doubts on the NLP accuracy, we solve DCM in ICLOCS2 \cite{Nie2018ICLOCS2TT} with IPOPT Version 3.12.3. We use $N=100$ mesh intervals. Figure~\ref{fig:vanderpolsolution} shows the numerical solution for $u_{h,[1]}$. As the figure shows, only the DCM solution overshoots the bound constraints and only the DCM solution rings on the last sub-arc.

We have seen in Section~\ref{sec:HigherOrderQuadMotiv} that higher-degree quadrature schemes can help preventing oscillations of numerical solutions. We observe in Figure~\ref{fig:vanderpolsolution} that the higher-order quadrature in QPM suppresses the ringing. This can be verified by comparing the values of $\rho$. DCM achieves $\rho\approx 3.7 \cdot 10^{-3}$ whereas QPM achieves $\rho\approx 6.5 \cdot 10^{-6}$.

The overshoot in DCM results in $\gamma \approx 8.9\cdot 10^{-1}$. In contrast, QPM keeps $\gamma\approx 3.1 \cdot 10^{-3}$.

\begin{figure}
	\centering
	\includegraphics[width=0.9\linewidth]{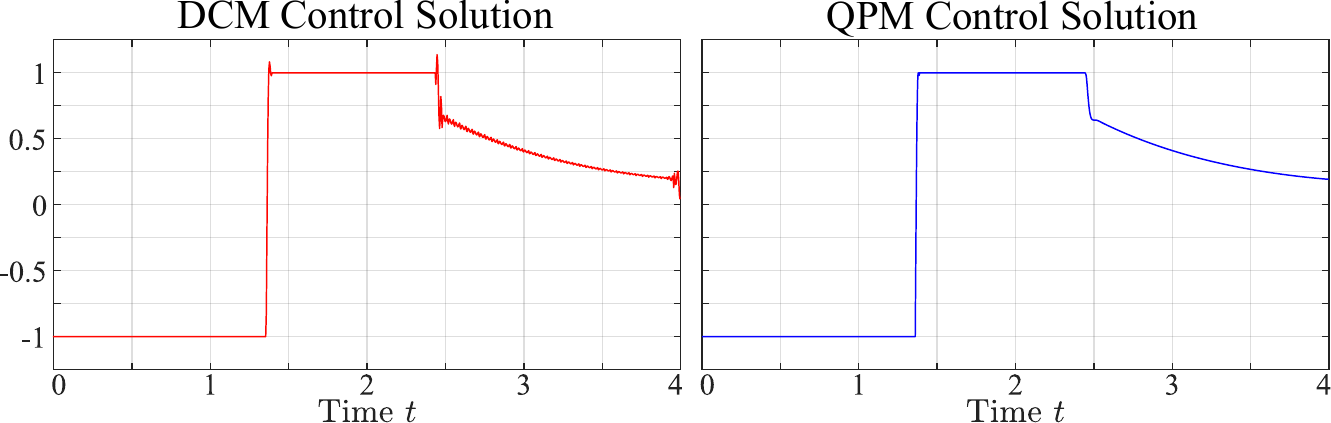}
	\caption{Solution to Van-der-Pol Optimal Control Problem with DCM and QPM as specified in Section~\ref{sec:NumExp:Setup}.}
	\label{fig:vanderpolsolution}
\end{figure}

\newpage

\part{Tailored Computation and Discretization Methods}
\label{part:TailoredMethods}
\chapter{Computation: Modified Augmented Lagranian Method}
\label{chap:MALM}

\section{Introduction}
This chapter describes a tailored mathematical algorithm for the computation of numerical solutions of instances of NLP when the objective contains large quadratic penalty terms. This method can be useful for solving instances of NLP that arise from direct transcriptions of optimal control problems via either the quadrature penalty or penalty-barrier method. The latter method is presented in Chapter~\ref{chap:PBF_SICON}.

\subsection{Problem Statement}
This chapter describes and analyzes a modified augmented Lagrangian method (MALM) for the numerical solution of a \textit{quadratic penalty program}:
\begin{align*}
\min_{\bx \in \cB} \ \Phi_{\pval}(\bx)&:= f(\bx) + \frac{1}{2  \pval}  \|c(\bx)\|_2^2\,, \tag{QPP}\label{eqn:CQPP}\\
\cB&:= \lbrace \bx \in \R^n\, \vert\, g(\bx)\geq \bO \rbrace\,,
\end{align*}
where $f : \R^n \rightarrow \R$, $c : \R^n \rightarrow \R^m$, $g : \R^n \rightarrow \R^p$ are possibly non-convex and nonlinear functions; $\geq$ is meant for each vector component; $\cB$ is the feasible set; $m,n,p \in \N$ are dimensions; $\pval \in \R_{>0}$ is part of the problem data.

\subsubsection{Relation to Constrained Programs (CP)}\label{sec:Intro:Relate}
When $\pval>0$ is close to zero then the penalty forces $c(\bx)\approx \bO$, provided such a point exists. Hence, the problem may be considered to be related to:
\begin{equation*}
\min_{\bx \in \cB} \qquad f(\bx) \quad\text{s.t.  }c(\bx)=\bO
\tag{CP}\label{eqn:CP}
\end{equation*}

We define the associated \emph{Lagrangian function} $\cL(\bx,\bdual,\boldsymbol{\eta}):=f(\bx)-\bdual\t\cdot c(\bx) - \boldsymbol{\eta}\t\cdot g(\bx)$, with Lagrange multipliers $\bdual \in \R^m$, $\boldsymbol{\eta} \in \R^p_{\geq 0}$. In~\eqref{eqn:CP}, $c,g$ are equality and inequality constraint functions with Lagrange multipliers $\bdual \in \R^m$, $\boldsymbol{\eta} \in \R^p_{\geq 0}$.

\subsubsection{Inconsistency}
\eqref{eqn:CP} only makes sense when $c(\bx)=\bO$ is consistent. However, for the scope of this work we are particularly interested in the case when $c$ is inconsistent. Experiments show that for inconsistent $c$ the solution of~\eqref{eqn:CQPP} depends significantly on the value of $\pval$; cf. Section~\ref{sec:Intro:Motiv} and Figure~\ref{fig:ocp_study}.

\subsubsection{Optimality Conditions}
From \cite[Thm~12.1]{Nocedal}:
\begin{align*}
&\underbrace{\nabla f(\bx) - \nabla c(\bx) \cdot \frac{-1}{\pval} \cdot c(\bx)}_{\equiv \nabla \Phi_{\pval}(\bx)} - \nabla g(\bx) \cdot \boldsymbol{\eta} = \bO\tag{KKT1'}\label{eqn:KKT1'}\\
&g_i(\bx)=0 	 \text{ and }\boldsymbol{\eta}_i\geq 0 		\qquad \forall i \in \cA\tag{KKT2a}\\
&g_i(\bx)>0  	 \text{ and }\boldsymbol{\eta}_i =   0     	\qquad \forall i \notin \cA\tag{KKT2b}
\end{align*}
where $\cA \subseteq \lbrace 1,\dots,p\rbrace$ is the \textit{active set}, and $g_i$ is the $i^\text{th}$ component of the vector $g(\bx)$.

Substituting $\bdual = \frac{-1}{\pval} \cdot c(\bx)$, we can re-express~\eqref{eqn:KKT1'}:
\begin{align}
\nabla_\bx \cL(\bx,\bdual,\boldsymbol{\eta})=\bO\,,\qquad c(\bx) + \pval \cdot \bdual =\bO
\tag{KKT1}\label{eqn:KKT1}
\end{align}
\eqnKKT (i.e., \eqnKKTa and \eqnKKTb) determines $\bx,\bdual,\boldsymbol{\eta}$. \eqnKKT are the optimality conditions of~\eqref{eqn:CQPP} when $\pval>0$ and the optimality conditions of~\eqref{eqn:CP} when $\pval=0$.

\subsection{Motivation}

\subsubsection{Necessity of Tailored Solvers for~\eqref{eqn:CQPP}}\label{sec:Intro:Motiv:Num}
Minimizing~\eqref{eqn:CQPP} directly appears natural but, unless $c$ is affine, will result in many iterations. This is caused by the bad scaling of the penalties.

As a demonstration, consider the instance
\begin{subequations}
	\begin{align}
	f(\bx)&:= -x_1\label{eqn:CircleF}\,,\quad g(\bx) :=\begin{bmatrix} x_1\\
	x_2-x_1
	\end{bmatrix} \in \R^{2}\\
	c(\bx)&:= \begin{bmatrix}
	(x_1 +\varepsilon)^2 + x_2^2 - 2\\
	(x_1 -\varepsilon)^2 + x_2^2 - 2
	\end{bmatrix} \in \R^{2}\label{eqn:CircleC}
	\end{align}
	\label{eqn:Circle}%
\end{subequations}
with primal and dual initial guesses $\bx_0 :=[2 \quad 1]\t$ and $\bdual_0 :=\bO$, for $\varepsilon=0$. We discuss later with \abbtab~\ref{tab:Exp_Circ_iter} that minimization of~\eqref{eqn:CQPP} of~\eqref{eqn:Circle} with a direct minimization method takes $134$ iterations when $\pval=10^{-6}$. This is inefficient when compared to our later proposed MALM, which solves the same instance in only $39$ iterations.

\subsubsection{Relevant Instances of~\eqref{eqn:CQPP}}\label{sec:Intro:Motiv}
Integral penalty methods \cite{Balakrishnan68,Hager90,Neuenhofen2020AnIP} are an alternative to collocation methods for solving dynamic optimization problems. Integral penalty methods can solve dynamic optimization problems with singular arcs and high-index differential-algebraic path-constraints as a problem of form~\eqref{eqn:CQPP}. Consider the bang-singular example
\begin{equation*}
\begin{aligned}
&\min_{y,u} &\quad &J:=\int_0^5 \left(\,y(t)^2 + t\,u(t)\,\right)\,\mathrm{d}t,\\
&\text{s.t.} & y(0)&=0.5,\quad\dot{y}(t)=\frac{1}{2}y(t)^2+u(t)\,,\\
&  & y(t),u(t)& \in [-1,1]\quad\forall t \in [0,5]\,.
\end{aligned}\tag{OCP}\label{eqn:ExampleOCP}
\end{equation*}
In integral-penalty-methods, the idea is to force $y(t)^2/2+u-\dot{y}=0$ not only at collocation points, but instead add an integral penalty $r=\int_0^5 \|y^2/2+u-\dot{y}\|_2^2\,\mathrm{d}t$ to the objective.

Consider using continuous piecewise linear finite elements $y_h$ for $y$ and discontinuous ones $u_h$ for $u$ on a uniform mesh of $N \in \N$ intervals (mesh size $h=5/N$); represented with $\bx := [y_h(h),\dots, y_h(Nh), u^+_h(0), u^-_h(h),u^+_h(h) \dots, u^-_h(Nh)]\t \in \R^n$, $n:={3 N}$. $y_h(0)=0.5$ is fixed and removed from $\bx$. We can minimize a quadrature approximation of $J + \frac{1}{2\pval}r$ by solving an instance of~\eqref{eqn:CQPP}, where
\begin{subequations}
	\label{eqn:OCPdisc}
	\begin{align}
	f(\bx)&:= \sum_{j=1}^{Nq} \wquad_{j} \left( y_h(\tau_{j})^2 + \tau_{j} \, u_h(\tau_j) \right)\label{eqn:OCPdiscF}\\
	c(\bx)&:= \begin{bmatrix} \vdots \\
	\sqrt{\wquad_{j}} \big(y_h(\tau_j)^2/2 + u_h(\tau_j)-\dot{y}_h(\tau_{j}) \big)\\
	\vdots
	\end{bmatrix} \in \R^{m}\label{eqn:OCPdiscC}\\
	g(\bx)&:= \begin{bmatrix}
	\be-\bx\\
	\be+\bx
	\end{bmatrix} \in \R^{p}\label{eqn:OCPdiscG}
	\end{align}
\end{subequations}
with $q$ quadrature points $\tau_j$ and weights $\alpha_j>0$ per mesh-interval. $\pval \in \R_{>0}$ is ideally chosen in $\cO(1/N)$ \cite{Hager90,neuenhofen2018dynamic}. Figure~\ref{fig:ocp_study} plots numerical solution $y_h,u_h$ for different values of $\pval$ against the analytic solution. The numerical solutions are vastly different for different $\pval$. Problem~\eqref{eqn:CP} is infeasible for~\eqref{eqn:OCPdisc} because $c(\bx)\neq \bO$ $\forall \bx \in \cB$.

In conclusion: Problems~\eqref{eqn:CQPP} and~\eqref{eqn:CP} have different solutions. Solutions of~\eqref{eqn:CQPP} depend on $\pval$. For a discussion on integral penalty methods, implementation, and choice of $\pval$, we refer to \cite{Hager90,Balakrishnan68,Neuenhofen2020AnIP}. The experiments in \cite{neuenhofen2018dynamic,Neuenhofen2020AnIP} present singular-arc and differential-algebraic optimal control problems where collocation methods fail to converge, but integral penalty methods converge.

\begin{figure}
	\centering
	\includegraphics[width=0.65\linewidth]{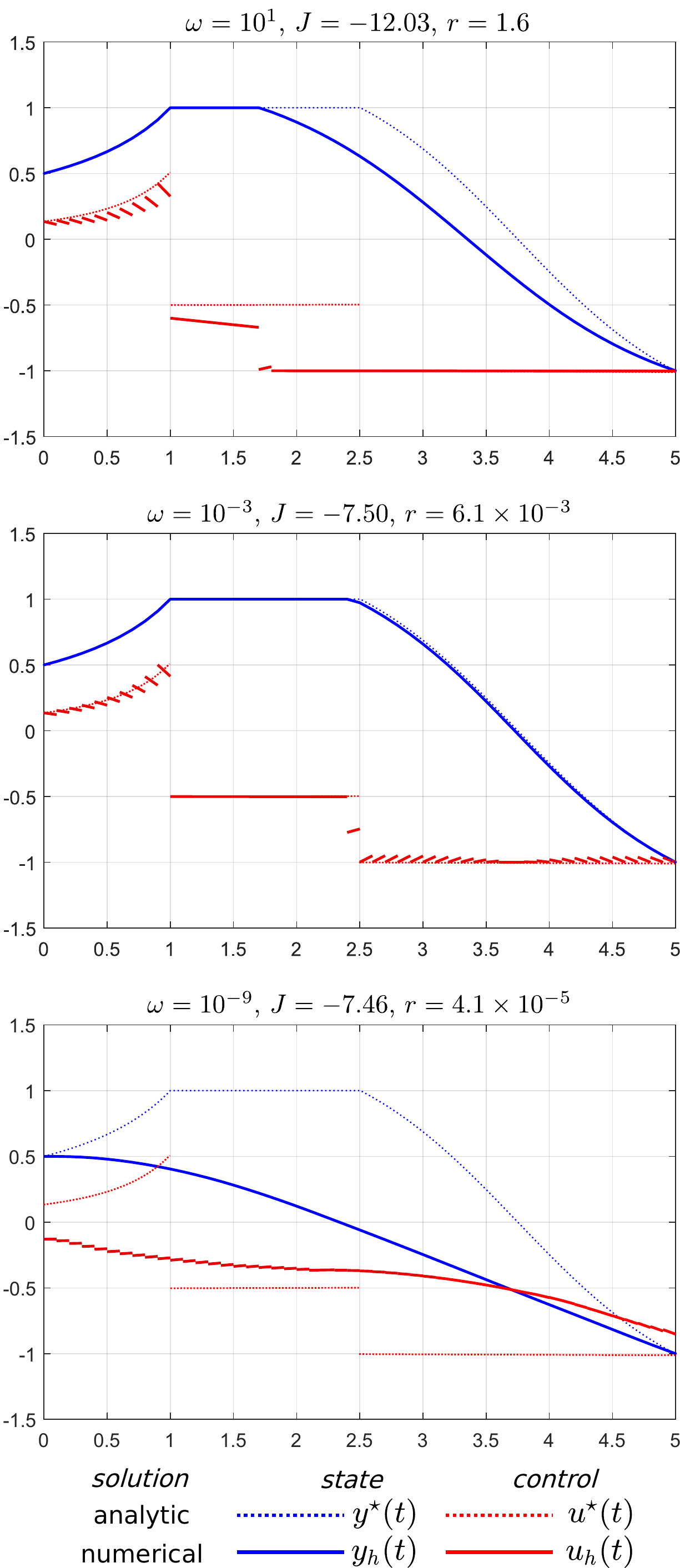}
	\caption{Numerical solution to~\eqref{eqn:ExampleOCP} for $N=40$ and different values of $\pval$.}
	\label{fig:ocp_study}
\end{figure}

\subsection{Literature Review}
We saw in Section~\ref{sec:Intro:Motiv:Num} that straightforward numerical minimization of~\eqref{eqn:CQPP} is inefficient due to bad scaling when $\pval$ is close to zero, hence necessitating tailored algorithms.

\subsubsection{Penalty Method (PM)}
PMs compensate for the bad scaling by iteratively minimizing a sequence of problems~\eqref{eqn:CQPP}. Therein, $\pval$ is replaced by a sequence of values $\lbrace\pmval_k\rbrace_{k \in \N_0}$ that converges to $\pval$ from above. We refer to \cite{Courant43,SUMT} for details. Actually, these methods have been proposed for problem~\eqref{eqn:CP}, i.e.\ when $\pval=0$; but they can also be used for~\eqref{eqn:CQPP}. This is so because PMs solve penalty problems of form~\eqref{eqn:CQPP}. PMs can converge slowly due to bad scaling \cite{Murray71}.

\subsubsection{Augmented Lagrangian Method (ALM)}
ALMs have been developed as a replacement for PMs when solving~\eqref{eqn:CP}. They work like PMs, but augment $\Phi_{\pmval_k}(\bx)$ with the term $-\bdual_{k}\t \cdot c(\bx)$. This term with $\bdual_k \in \R^m$ creates a term such that the inequality constrained minimizer of $\Phi_{\pmval_k}(\bx)-\bdual_{k}\t \cdot c(\bx)$ eventually matches with the minimizer of~\eqref{eqn:CP}. We refer to \cite[Alg.~17.3]{Nocedal} and the references therein for all details on how $\bdual_k \in \R^m$ is iteratively refined to achieve this. Convergence of $\bdual_k$ is asserted under suitable conditions \cite{Bertsekas1,Conn95}.

\subsubsection{Extensions of ALM to Inequality Constraints}
Originally, ALM treated only equality constraints \cite{Hestenes1,Powell1} by means of quadratic penalties of $c$ and update schemes for $\bdual$. In this case, inequalities can be \emph{subjected}~\cite{Conn95}, i.e.\ minimize the sequence of functions $\Phi_{\pmval_k}(\bx)-\bdual_{k}\t \cdot c(\bx)$ subject to $\bx \in \cB$. Alternatively, penalty or barrier terms of $g$ can be \emph{augmented} \cite{Rockafellar1} with according update strategies for $\boldsymbol{\eta}$. Subjections are considered more efficient in practice than augmentations~\cite{Lancelot}. Augmentations can suffer from non-smooth, non-differentiable, or low-order smooth penalties/barriers, and can converge slower or less reliably.

\subsubsection{Extensions of ALM to~\eqref{eqn:CQPP}}
Originally, ALM treated only~\eqref{eqn:CP} as opposed to~\eqref{eqn:CQPP}. The work \cite{SHARIFF2003257} proposes a modified scheme (MALM) for~\eqref{eqn:CQPP} when $f$ is quadratic, $c$ linear, $\Phi_{\pval}$ convex, and $\cB=\R^n$. They prove global convergence of their scheme. Our previous work in \cite{MALM_CDC20} extended the algorithm of MALM to problems where $f$ is nonlinear and may be non-convex, $c$ nonlinear, without additional inequality constraints $g$. Also, there is no convergence proof yet in the literature for the case when $f,c$ are general, regardless of the presence of $g$. This chapter will present such proofs.

\subsection{Challenges}
Our goal is in devising a method that solves~\eqref{eqn:CQPP} by solving a sequence of penalty problems with moderate penalty parameter $\pmval \gg \pval$, and prove its convergence. In the limit $\pval \rightarrow 0$, MALM should match ALM due to the relation of the problems~\eqref{eqn:CQPP} and~\eqref{eqn:CP} as described in Section~\ref{sec:Intro:Relate}.

Proving convergence for non-convex $\Phi_{\pval}$ is challenging because solutions of sub-problems may be non-unique and hence alternating. Convergence of $\boldsymbol{\eta}_k$ may be challenging to prove because the solution $\boldsymbol{\eta}$ of \eqnKKT may be non-unique. We will assert uniqueness of $\boldsymbol{\eta}$ from a strict complementarity assumption. Striking the right balance between mild assumptions and strong convergence assertions appears non-trivial in this context.

\subsection{Contributions}
We present MALM for general functions $f,c,g$ (Algorithm~1). We prove convergence for the case when $f,c$ are twice continuously differentiable and $g$ is linear (Theorem~\ref{thm:globconv}). Furthermore, we give a local rate-of-convergence result for the case when $f,c,g$ are twice local Lipschitz-differentiable (Theorem~\ref{thm:locconv}).

Theorem~1 is not easily extendable to nonlinear $g$ because it uses a result for ALM on~\eqref{eqn:CP} for linear~$g$. Theorem~2 works for general $g$ but assumes convergence and Lipschitz-continuous second derivatives of $f,c,g$. In the iteration limit, convergence of ALM can only be guaranteed to be at least at a linear rate \cite[Thm~17.6]{Nocedal}. Likewise, our rate-of-convergence result for MALM asserts only a linear rate. However, this linear rate is slightly better than the linear rate of ALM. Hence, our work draws connections between the rate of convergence between MALM and ALM.

\section{Derivation of the Algorithm of MALM}
\label{sec:MALM}
MALM is a solution method for~\eqref{eqn:CQPP}. MALM has been presented in \cite{SHARIFF2003257} for the special case when $f$ is quadratic, $c$ linear, and $\Phi_{\pval}$ convex. The method has been presented for the case where $f,c$ are general in \cite{MALM_CDC20} but without inequality constraints and without a convergence analysis. Here, we derive MALM for general nonlinear non-convex $f,c,g$, and in a stronger relation to its origins in ALM \cite{Hestenes1,Powell1}. For the method presented here, we give global and local convergence proofs.

The derivation poses an auxiliary problem, applies ALM to it, and then eliminates variables.

\subsection{Auxiliary Problem}
The following problem is equivalent to~\eqref{eqn:CQPP} but of the form~\eqref{eqn:CP}:
\begin{subequations}
	\begin{align}
	&\operatornamewithlimits{min}_{\hbx:=(\bx,\bxi) \in \R^{(n+m)}} 	&\quad 	\hat{f}(\hbx)&:=f(\bx) + \frac{\pval}{2}  \|\bxi\|_2^2 \label{eqn:SubstObjective}	\\
	&\text{s.t.} 	&		\hat{c}(\hbx)&:=c(\bx) + \pval  \bxi =\bO\,, \label{eqn:SubstConstraints}\\[3pt]
	& 				 	&		\hat{g}(\hbx)&:=g(\bx)\geq \bO\,.
	\end{align}
	\label{eqn:Subst}%
\end{subequations}
The optimality conditions of~\eqref{eqn:Subst} are \eqnKKTb and
\begin{subequations}
	\label{eqn:KKT1_Subst}
	\begin{align}
	\begin{bmatrix}
	\nabla f(\bx)\\
	\pval \bxi
	\end{bmatrix} - \begin{bmatrix}
	\nabla c(\bx)\\
	\pval  \bI
	\end{bmatrix}  \bdual -\begin{bmatrix}
	\nabla g(\bx)\\
	\bO
	\end{bmatrix} \boldsymbol{\eta} &=\bO\\
	c(\bx) + \pval  \bxi &= \bO\,. \label{eqn:5b}
	\end{align}
\end{subequations}

\subsection{Augmented Optimality System}
Since~\eqref{eqn:Subst} is of form~\eqref{eqn:CP}, we can apply ALM with augmented inequality constraints as in \cite{Conn95}. To this end, we introduce an auxiliary vector $\bz \in \R^m$ and a moderate penalty parameter $\pmval>0$. These are added to~\eqref{eqn:SubstConstraints} and in the gradient of the Lagrangian function:
\begin{subequations}
	\begin{align}
	\begin{bmatrix}
	\nabla f(\bx)\\
	\pval\bxi
	\end{bmatrix} - \begin{bmatrix}
	\nabla c(\bx)\\
	\pval  \bI
	\end{bmatrix}  (\bdual+\bz) - \begin{bmatrix}
	\nabla g(\bx)\\
	\bO
	\end{bmatrix}\boldsymbol{\eta} &=\bO\label{eqn:KKT1_Subst_ALM_unreduced1}\\
	c(\bx) + \pval  \bxi + \pmval  \bz &= \bO\,.\label{eqn:KKT1_Subst_ALM_unreduced2}
	\end{align}
	\label{eqn:KKT1_Subst_ALM_unreduced}%
\end{subequations}

We could use~\eqref{eqn:KKT1_Subst_ALM_unreduced} directly in order to form an ALM iteration. That iteration would consist of two alternating steps: 1) solving the optimality system~\eqref{eqn:KKT1_Subst_ALM_unreduced} together with \eqnKKTb for $(\bx,\bxi,\bz,\boldsymbol{\eta},\cA)$ where $\bdual$ is fixed; 2) updating $\bdual \leftarrow \bdual + \bz$, being equivalent to $\bdual \leftarrow \bdual - \frac{1}{\pmval}\left(c(\bx)+\pval\bxi\right)$.

\subsection{Elimination of the Auxiliary Vector}
Instead, we propose to eliminate $\bxi = \bdual+\bz$ to obtain
\begin{subequations}
	\begin{align}
	\nabla f(\bx) - \nabla c(\bx)  (\bdual+\bz) - \nabla g(\bx) \boldsymbol{\eta} &=\bO\\
	c(\bx) + \pval  \bdual + (\pval+\pmval)  \bz &= \bO \label{eqn:AlternatedKKT_MALM_primal}\,.
	\end{align}
	\label{eqn:AlternatedKKT_MALM}%
\end{subequations}
As in ALM, we solve~\eqref{eqn:AlternatedKKT_MALM} and \eqnKKTb with an iteration of two alternating steps:
\begin{enumerate}
	\item Keep the value of $\bdual$ fixed, and solve~\eqref{eqn:AlternatedKKT_MALM} and \eqnKKTb for $(\bx,\bz,\boldsymbol{\eta},\cA)$.
	\item Update $\bdual$ as $\bdual \leftarrow \bdual +\bz\,.$
\end{enumerate}
Analogous to ALM, the first step can be realized by minimizing an augmented Lagrangian function
for $\bx$ at fixed $\bdual$ subject to $\bx \in \cB$, whereas in the second step $\bz$ can be expressed in terms of $\bx$ from~\eqref{eqn:AlternatedKKT_MALM_primal}. Using this, the method can be expressed  in \abbalg~\ref{algo:MALM}, where
\begin{align}
\Psi^\pval_{k+1}(\bx) := \cL(\bx,\bdual_{k},\bO) + \frac{0.5}{\pval + \pmval} \left\|c(\bx)+\pval  \bdual_{k}\right\|_2^2 \label{eqn:ALF}
\end{align}
is the augmented Lagrangian function, with $\cL(\bx,\bdual,\bO)\equiv f(\bx)-\bdual\t \cdot c(\bx)$.

\begin{algorithm}[tb]
	\caption{Modified Augmented Lagrangian Method}
	\label{algo:MALM}
	\begin{algorithmic}[1]
		\Procedure{MALM}{$f,c,g,\pval,\bx_0,\bdual_0,\tol$}
		\State $\pmval \leftarrow \pmval_0$
		\For{$k=1,2,3,\dots,k_\text{max}$}
		\State Compute $\bx_k$, and optionally $\boldsymbol{\eta}_k$, by solving
		\begin{align}
		&\min_{\bx \in \R^{n}}\quad\Psi^\omega_{k}(\bx)\quad\text{s.t. }\ g(\bx)\geq \bO\,. \label{eqn:ALF_Prob}
		\end{align}
		\State Update $\bdual_k \leftarrow \bdual_{k-1} - \frac{1}{\pval + \pmval} \left(c(\bx_k)+\pval \bdual_{k-1}\right)$
		\vspace{2mm}
		\If{$\|c(\bx_k)+\pval \bdual_k\|_\infty\leq\tol$}
		\State \Return $\bx_k,\bdual_k$ and optionally $\boldsymbol{\eta}_k$
		\Else
		\State Decrease $\pmval\leftarrow c_{\pmval} \pmval$ to promote convergence.
		\EndIf
		\EndFor
		\EndProcedure
	\end{algorithmic}
\end{algorithm}

\subsection{Practical Aspects}\label{sec:pracAspects}

\paragraph{Parameters}
Values that we have found work well in practice are $\tol=10^{-8}, c_\pmval=0.1, \pmval_0=0.1$. Care must be taken that $\Psi^\omega_k$ in~\eqref{eqn:ALF_Prob} is bounded below. To this end, practical methods impose box constraints $\bx_L \leq \bx \leq \bx_U$ \cite[eq.~3.2.2]{Lancelot}, expressible via $g(\bx)\geq \bO$, or a trust-region constraint $g(\bx)=\Delta^2 - \|\bx-\bx_{k-1}\|_2^2 \geq 0$ \cite[eq.~3.2.4]{Lancelot} with trust-region radius $\Delta>0$.

\paragraph{Inner Optimization Algorithm}
In order to minimize~\eqref{eqn:ALF_Prob}, one can use any numerical method for inequality constrained nonlinear minimization; e.g.\ an interior-point method like IPOPT \cite{IPOPT} or an active set method like SNOPT \cite{GilMS05}. These methods can use second-order information and thereby attain a quadratic local rate of convergence for each inner optimization problem. Alternatively, pure first-order methods such as the projected gradient descent algorithm in \cite{Nocedal} could be used.

\paragraph{Linear Systems}
We refer to \cite[eq.~17.21]{Nocedal} for details on how the quasi-Newton direction for the quadratic penalty function can be computed in a more numerically stable fashion from a saddle-point linear equation system.

\paragraph{Preconditioning}
Apart from solution procedures for the linear systems involved in any inner optimization algorithm, one could imagine the use of preconditioning techniques to accelerate the rate of convergence for the outer iteration; that is, to yield faster convergence of the sequence $\lbrace\blambda_k\rbrace$. Section~\ref{sec:convAnalysisMALM} provides a local convergence analysis with a matrix $\bM$ in \eqref{eqn:Banach} that yields the update for $\blambda$. If we could precondition the optimization problem in such a way that $\|\bM\|_2$ is reduced then this would dramatically improve the rate of convergence. For general NLPs, however, such a scheme is unknown.

\subsection{Discussion}
\subsubsection{True Generalization of ALM}\label{sec:truegeneralization}
MALM is a true generalization of ALM, because they differ only by the parameter $\pval$. In particular, if $\pval=0$ then MALM in Algorithm~\ref{algo:MALM} is identical to ALM in \cite[Algorithm~3.1]{Conn95}. In contrast, when selecting $\pval>0$, we show below that MALM converges to critical points of~\eqref{eqn:CQPP} with the given~$\pval$.

\subsubsection{Benefit}
MALM solves the penalty function $\Phi_{\pval}$ in~\eqref{eqn:CQPP} by minimizing a sequence of penalty functions~$\Psi^\omega_k$. When does this make sense? If we select $\pmval \gg \pval$. Thereby, the penalty functions~$\Psi^\omega_k$ have better scaling and hence can often be minimized more efficiently in comparison to a single minimization of $\Phi_{\pval}$. The computational performance results in Section~\ref{sec:NumExp} verify this claim.

\section{Convergence Analysis}\label{sec:convAnalysisMALM}
The below analyses assume that all sub-problems~\eqref{eqn:ALF_Prob} are solved exactly, and that computations are performed in exact arithmetic. Throughout this subsection, MALM means the callback-function in \abbalg~\ref{algo:MALM}, wherein any black-box method can be used to solve~\eqref{eqn:ALF_Prob}.

\subsection{Global Convergence}
For the analyses, we consider a call of Algorithm~1 with instance $\cI:=(f,c,g,\pval,\bx_0,\bdual_0,\tol)$. MALM will create a sequence of iterates $\bx_k,\bdual_k$. 

\begin{lem}[Equivalence]
	MALM on the instance $\cJ:=(\hat{f},\hat{c},\hat{g},0,\hbx_0,\bdual_0,\tol)$ from~\eqref{eqn:Subst} will generate the same iterates $(\bx_k,\bxi_k),\bdual_k$ as MALM on the instance $\cI$ in terms of $\bx_k,\bdual_k$.
\end{lem}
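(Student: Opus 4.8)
The plan is to prove the claim by induction on the outer iteration index $k$, maintaining the invariant that both runs share the same dual iterate $\bdual_{k}$ and the same primal iterate $\bx_k$, and in addition that the auxiliary component produced on $\cJ$ satisfies $\bxi_k=\bdual_k$. The starting observation is that, because $\cJ$ carries penalty parameter $0$, MALM on $\cJ$ coincides with plain ALM (Section~\ref{sec:truegeneralization}); hence MALM on $\cJ$ is exactly ALM applied to the auxiliary program~\eqref{eqn:Subst}, whose inner subproblem at outer iteration $k$, with current multiplier $\bdual_{k-1}$ and penalty $\pmval$, minimizes
\[
\hat\Psi_k(\bx,\bxi) := \hat{f}(\bx,\bxi) - \bdual_{k-1}\t \hat{c}(\bx,\bxi) + \tfrac{1}{2\pmval}\,\|\hat{c}(\bx,\bxi)\|_2^2
\]
over $(\bx,\bxi)$ subject to $\hat{g}(\bx,\bxi)=g(\bx)\ge\bO$.

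First I would exploit the product structure of the feasible set. Since $\hat{g}$ depends only on $\bx$, the feasible region is $\{\bx:g(\bx)\ge\bO\}\times\R^m$, so (under the standing assumption of Section~\ref{sec:convAnalysisMALM} that subproblems are solved exactly) the joint minimization equals the iterated minimization $\min_{\bx:g(\bx)\ge\bO}\min_{\bxi\in\R^m}\hat\Psi_k$. For fixed $\bx$ the inner function is a strictly convex quadratic in $\bxi$ whenever $\pval>0$ (its Hessian $\tfrac{\pval(\pval+\pmval)}{\pmval}\bI$ is positive definite), so the inner minimizer is unique and available in closed form; a short calculation gives $\bxi^\star(\bx)=\bigl(\pmval\,\bdual_{k-1}-c(\bx)\bigr)/(\pval+\pmval)$, together with the key identity $c(\bx)+\pval\,\bxi^\star(\bx)=\tfrac{\pmval}{\pval+\pmval}\bigl(c(\bx)+\pval\,\bdual_{k-1}\bigr)$.

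The central algebraic step is to substitute $\bxi^\star(\bx)$ back and complete the square: this shows $\min_{\bxi}\hat\Psi_k(\bx,\bxi)=\Psi^{\pval}_k(\bx)-\tfrac{\pval}{2}\,\|\bdual_{k-1}\|_2^2$, where $\Psi^{\pval}_k$ is the MALM augmented Lagrangian~\eqref{eqn:ALF} on $\cI$ and the extra term is independent of $\bx$. Consequently the two subproblems have identical reduced $\bx$-objectives, so $\bx_k$ agrees and the auxiliary component is $\bxi_k=\bxi^\star(\bx_k)$. It then remains to match the dual updates: the ALM update on $\cJ$ reads $\bdual_k=\bdual_{k-1}-\tfrac{1}{\pmval}\hat{c}(\bx_k,\bxi_k)$, and inserting the identity for $c(\bx_k)+\pval\,\bxi_k$ cancels the factor $\pmval$ to yield exactly the MALM update $\bdual_k=\bdual_{k-1}-\tfrac{1}{\pval+\pmval}\bigl(c(\bx_k)+\pval\,\bdual_{k-1}\bigr)$ of Algorithm~\ref{algo:MALM}; the same computation reveals $\bxi_k=\bdual_k$. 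Because the $\cJ$-run has penalty $0$, its stopping test is $\|\hat{c}(\bx_k,\bxi_k)\|_\infty\le\tol$, which via $\bxi_k=\bdual_k$ equals the $\cI$-test $\|c(\bx_k)+\pval\,\bdual_k\|_\infty\le\tol$, so both runs terminate simultaneously, closing the induction. The optional multiplier $\boldsymbol{\eta}_k$ also agrees, since the inequality data $g$ and the $\bx_k$ that determine the active set are common to both subproblems.

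I expect the main obstacle to be the bookkeeping around non-uniqueness rather than the algebra: if $f,c$ are nonconvex the subproblems may admit several solutions, so the equivalence must be stated at the level of solution sets — the product structure of the feasible set together with strict convexity in $\bxi$ guarantees that $(\bx_k,\bxi_k)$ solves the $\cJ$-subproblem if and only if $\bx_k$ solves the $\cI$-subproblem and $\bxi_k=\bxi^\star(\bx_k)$ — and I would lean on the ``solved exactly'' convention to select matching minimizers consistently across the two runs. The degenerate case $\pval=0$ should be dispatched first and separately, since then $\hat{f}=f$, $\hat{c}=c$, $\hat{g}=g$, so $\cJ=\cI$ verbatim and there is nothing to prove.
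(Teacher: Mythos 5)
Your proof is correct, and while it shares the paper's overall skeleton (induction over $k$, the closed-form expression $\bxi^\star(\bx)=\bigl(\pmval\bdual_{k-1}-c(\bx)\bigr)/(\pval+\pmval)$, and the identity $-\tfrac{1}{\pmval}\hat{c}(\hbx_k)=-\tfrac{1}{\pval+\pmval}\bigl(c(\bx_k)+\pval\bdual_{k-1}\bigr)$ for matching the dual updates), the key step is carried out differently. The paper equates the first-order stationarity conditions of the two subproblems: it observes that $\nabla_{\hbx}\hat{\Psi}^0_k(\hbx_k)-\nabla_{\hbx}\hat{g}(\hbx_k)\cdot\boldsymbol{\eta}_k=\bO$ is equivalent to $\nabla_{\bx}\Psi^\pval_k(\bx_k)-\nabla_{\bx}g(\bx_k)\cdot\boldsymbol{\eta}_k=\bO$ under the substitution \eqref{eqn:bxi_func}, so the correspondence is established at the level of critical points. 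You instead eliminate $\bxi$ by exact partial minimization (justified by the product structure of the feasible set and strict convexity in $\bxi$) and complete the square to obtain $\min_{\bxi}\hat\Psi_k(\bx,\bxi)=\Psi^\pval_k(\bx)-\tfrac{\pval}{2}\|\bdual_{k-1}\|_2^2$; I checked this identity and it holds. Your route buys something the paper's does not: since the reduced objectives differ by an $\bx$-independent constant, actual (global or local) minimizers correspond, not merely stationary points, which is what line~4 of Algorithm~\ref{algo:MALM} really asks for; you also verify the coincidence of the stopping tests via $\bxi_k=\bdual_k$ and address the non-uniqueness bookkeeping explicitly, neither of which the paper does. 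The paper's argument is shorter and avoids any convexity-in-$\bxi$ discussion, but at the cost of silently identifying ``solving the subproblem'' with ``satisfying its stationarity condition.'' Your separate dispatch of the degenerate case $\pval=0$ is also appropriate, since your strict-convexity argument for the inner minimization genuinely requires $\pval>0$.
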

\noindent
\textit{Proof:} By induction over $k$. Base: For $k=0$ the proposition holds by construction of the initial guesses. Step: Let the proposition hold for $k-1$. We now show that the proposition holds for $k$. The iterate $\hbx_k$ from $\cJ$ in line~4 necessarily satisfies $\nabla_{\hbx}\hat{\Psi}^0_k(\hbx_k)- \nabla_{\hbx} \hat{g}(\hbx_k)\cdot\boldsymbol{\eta}_k=\bO$, which is equivalent to $\nabla_{\bx}{\Psi}^\pval_k(\bx_k)- \nabla_{\bx} {g}(\bx_k)\cdot\boldsymbol{\eta}_k=\bO$ when $\hbx_k:=(\bx_k,\bxi_k)$ with
\begin{align}
\bxi_k := \frac{1}{\pval+\pmval} \big(\pmval\bdual_{k-1}-c(\bx_k)\big)\,.\label{eqn:bxi_func}
\end{align}
Thus, the first component $\bx_k$ of $\hbx_k$ is a valid $k^\text{th}$ iterate of MALM on $\cI$.
Finally, by insertion of \eqref{eqn:bxi_func} into the below, notice that $\bdual_k$ in $\cI,\cJ$ are identical because the increments $\bdual_k-\bdual_{k-1}$ are identical:
$$ -\frac{1}{\rho} \hat{c}(\hbx_k) = -\frac{1}{\rho+\pval}\big(c(\bx_k)+\pval\blambda_{k-1}\big)\,.$$
q.e.d.

In turn, MALM with $\pval=0$ is identical to ALM in \cite[Algorithm~3.1]{Conn95}. We can hence use the convergence result from \cite[Thm~4.6]{Conn95}:

\begin{theorem}[Global Convergence]\label{thm:globconv}
	Choose a bounded domain $\Omega \subset \R^n$. Let $\pval>0$, $c$ be bounded on $\Omega$, and let $f,c$ be twice continuously differentiable in $\Omega$, and $g$ affine. Suppose all iterates $\lbrace\bx_k\rbrace_{k \in \N_0}$ of MALM live in $\Omega$. If $\rho_0$ is sufficiently small then $\lbrace\bx_k\rbrace_{k \in \N_0}$ converges to a critical point of~\eqref{eqn:CQPP}.
\end{theorem}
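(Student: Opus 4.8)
The plan is to prove \abbthm~\ref{thm:globconv} by reducing MALM on the instance $\cI=(f,c,g,\pval,\bx_0,\bdual_0,\tol)$ to classical augmented-Lagrangian convergence theory, via the auxiliary reformulation~\eqref{eqn:Subst}. By the preceding Equivalence Lemma, the iterates $\bx_k,\bdual_k$ generated by MALM on $\cI$ coincide with those generated by MALM on $\cJ=(\hat{f},\hat{c},\hat{g},0,\hbx_0,\bdual_0,\tol)$, whose penalty parameter is zero. As observed in Section~\ref{sec:truegeneralization}, MALM with a vanishing penalty parameter is \emph{identical} to the ALM of \cite[Algorithm~3.1]{Conn95} applied to~\eqref{eqn:Subst}, which has the equality-constrained form~\eqref{eqn:CP}. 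Hence it suffices to check the hypotheses of \cite[Thm~4.6]{Conn95} for~\eqref{eqn:Subst} and then translate its conclusion back to~\eqref{eqn:CQPP}.

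First I would verify the smoothness and structural hypotheses. Since $\hat{f}(\hbx)=f(\bx)+\tfrac{\pval}{2}\|\bxi\|_2^2$ and $\hat{c}(\hbx)=c(\bx)+\pval\bxi$ differ from $f,c$ only by terms polynomial in $\bxi$, they are twice continuously differentiable wherever $f,c$ are, i.e. on $\Omega\times\R^m$; and $\hat{g}(\hbx)=g(\bx)$ is affine because $g$ is. The smallness requirement on $\pmval_0=\rho_0$ is inherited verbatim as the corresponding initial-penalty hypothesis of \cite[Thm~4.6]{Conn95}.

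The main obstacle is the boundedness hypothesis: \cite[Thm~4.6]{Conn95} requires the iterates to stay in a bounded set $\hat{\Omega}\subset\R^{n+m}$. I am given $\bx_k\in\Omega$ with $\Omega$ bounded and $c$ bounded on $\Omega$, so it remains to bound the auxiliary component $\bxi_k$, which by~\eqref{eqn:bxi_func} is affine in $\bdual_{k-1}$ and $c(\bx_k)$. The crucial point is that the $\bdual$-update of Algorithm~\ref{algo:MALM} is a contraction: rewriting it as $\bdual_k=\tfrac{\pmval}{\pval+\pmval}\bdual_{k-1}-\tfrac{1}{\pval+\pmval}c(\bx_k)$, the factor $q:=\pmval/(\pval+\pmval)$ satisfies $q<1$ whenever $\pval>0$, and it only decreases as $\pmval$ is shrunk. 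With $\|c(\bx_k)\|\le C$ on $\Omega$, a geometric-series estimate then yields a uniform bound $\sup_k\|\bdual_k\|\le\|\bdual_0\|+C/\pval<\infty$, independent of $k$. Feeding this back into~\eqref{eqn:bxi_func} bounds $\bxi_k$ uniformly, so $\hbx_k=(\bx_k,\bxi_k)$ lives in a bounded $\hat{\Omega}$ and \cite[Thm~4.6]{Conn95} applies, giving convergence of $\hbx_k$ to a critical point of~\eqref{eqn:Subst}.

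Finally I would translate this limit back to~\eqref{eqn:CQPP}. A critical point of~\eqref{eqn:Subst} satisfies \eqnKKTb together with~\eqref{eqn:KKT1_Subst}; its second gradient block forces $\bxi=\bdual$, while the constraint row~\eqref{eqn:5b} gives $c(\bx)+\pval\bdual=\bO$, i.e. $\bdual=-c(\bx)/\pval$. Substituting this into the first block of~\eqref{eqn:KKT1_Subst} recovers precisely $\nabla\Phi_\pval(\bx)-\nabla g(\bx)\,\boldsymbol{\eta}=\bO$ which, combined with \eqnKKTb, are the optimality conditions of~\eqref{eqn:CQPP}. Therefore the limit of the sequence $\{\bx_k\}_{k\in\N_0}$ is a critical point of~\eqref{eqn:CQPP}, as claimed. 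Aside from the contraction argument for boundedness, every step is a bookkeeping reduction through the auxiliary problem, so I expect the boundedness verification to be the only genuinely delicate part.
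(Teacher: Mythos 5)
Your overall route is exactly the paper's: reduce MALM on $\cI$ to ALM on the auxiliary instance $\cJ$ via the Equivalence Lemma and then invoke \cite[Thm~4.6]{Conn95}. Your explicit contraction argument for $\sup_k\|\bdual_k\|$ (using the factor $\pmval/(\pval+\pmval)<1$) is a welcome sharpening of the boundedness step, which the paper dispatches rather tersely, and your back-translation of a critical point of~\eqref{eqn:Subst} into one of~\eqref{eqn:CQPP} via $\bxi=\bdual=-c(\bx)/\pval$ is correct.

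However, there is a genuine gap: \cite[Thm~4.6]{Conn95} has four hypotheses, and you verify only three of them. The one you omit, (AS4), is a regularity (constraint-qualification) condition requiring that $\nabla\hat{c}(\hbx)\cdot\bZ$ have column rank at least $m$, where $\bZ$ spans the null space of the active rows of the (affine) inequality constraints. This is not a bookkeeping step -- it is precisely the hypothesis that could fail for the \emph{original} problem when $c$ is inconsistent and $\nabla c(\bx)$ is rank-deficient, which is the regime the whole chapter targets. The verification works only because of the specific structure of the reformulation: since $\hat{g}(\hbx)=\hbA\cdot\hbx-\bb$ with $\hbA=[\bA\ \ \bO]$, the null-space basis $\bZ$ contains the full $\bxi$-block, and $\nabla_{\hbx}\hat{c}(\hbx)\t=[\nabla c(\bx)\t\ \ \pval\,\bI]$ has full row rank $m$ regardless of $\nabla c(\bx)$, thanks to the $\pval\,\bI$ block (this is also where $\pval>0$ is used essentially). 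Without this argument you have not established that the cited convergence theorem applies, so you should add the (AS4) verification (and, for completeness, note that feasibility (AS1) of~\eqref{eqn:Subst} holds trivially by taking $\bxi=-c(\bx)/\pval$).
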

\noindent
\textit{Proof:} \cite[Thm~4.6]{Conn95} shows convergence of ALM for $\cJ$ under four assumptions (AS1)-(AS4). It suffices to show that $\hat{f},\hat{c},\hat{g}$ satisfy these assumptions.

Feasibility \cite[AS1]{Conn95} of~\eqref{eqn:Subst} holds naturally by $\bxi= \frac{-1}{\pval} \,c(\bx)$.
Twice continuous differentiability \cite[AS2]{Conn95} of $\hat{f},\hat{c}$ holds per requirement.
Boundedness \cite[AS3]{Conn95} of all $\hbx_k \in \Omega \times c(\Omega)$ follows from boundedness of $\Omega$ and $c$ on $\Omega$.

The last assumption \cite[AS4]{Conn95} is more technical. Since $g$ is affine, we can express $g(\bx)=\bA\cdot\bx-\bb$, and likewise $\hat{g}(\hbx)=\hbA\cdot\hbx-\bb$, where $\hbA = [\bA\ \bO]$. We define the matrix $\bZ$ of orthonormal columns that span the null-space of $\hbA_\cA$, i.e.\ the matrix of sub-rows of $\hbA$ of the active constraints at $\hbx$. (AS4) requires $\nabla\hat{c}(\hbx)\cdot\bZ$ to be of column rank $\geq m$. Due to the special structure of $\hbA$, we see that $\bZ$ has a structure like

\begin{align*}
\bZ = \begin{bmatrix}
\begin{matrix}
\bO\\
\bI
\end{matrix}&
\begin{matrix}
\dots\\
\dots
\end{matrix}
\end{bmatrix}\,.
\end{align*}
Since $\nabla_{\hbx}\hat{c}(\hbx)\t=[\nabla c(\hbx)\t \ \pval\bI]$ has full row rank, the rank of $\nabla\hat{c}(\hbx)\t\cdot\bZ$ is bounded below by the number of columns of $\bZ$, i.e.\ bounded below by $m$. q.e.d.

Some of the requirements in Theorem~\ref{thm:globconv} may be forcible: Section~\ref{sec:pracAspects} explains how $\cB$ can be bounded. In this case, choosing $\Omega=\cB$ yields $\lbrace \bx_k\rbrace \subset \Omega$. Also, $c$ may be bounded over $\Omega$ by approximating $c(\bx)$ with $\arctan\big(c(\bx)\big)$. If $\|c(\bx)\|_2$ is very small at the minimizer of~\eqref{eqn:CQPP} then the approximation error of $\arctan$ is negligible. To make $g$ affine, several practical ALM implementations (Lancelot, MINOS) convert inequalities to equalities via the addition of slack variables $\bs\geq 0$~\cite[Sec.~17.4]{Nocedal}. The constraints $g(\bx)-\bs=\bO$ (as in~\cite[eqn~17.47]{Nocedal}) can be merged into $c$ and scaled such that they hold tightly. Also, interior-point methods like IPOPT~\cite{IPOPT} use slacks to ensure iterates are strictly interior.

\subsection{Local Convergence}\label{app:Local}
\cite[Thm~17.6]{Nocedal} asserts linear convergence of ALM when $\nabla_{\bx\bx}^2\cL,\nabla c,\nabla_\bx\cL$ are local Lipschitz-continuous and $\pmval$ is a constant. Section~\ref{sec:NumExp:Circle:Rate} and Figure~\ref{fig:convrate} show this. Likewise, MALM attains a linear rate in the limit when $\pmval$ is a constant. Upper bounds for these rates can be computed. In this section we prove that the rate of MALM is strictly smaller than that of ALM.

For the following result, we compare the iteration of ALM and MALM from the same initial guess $\bx_0,\bdual_0$ and the same problem-defining functions $f,c,g$. We assume that $\lbrace \bx_k \rbrace \subset \cU$, where $\cU\subset \R^n$ is an open neighborhood which contains unique local minimizers of both~\eqref{eqn:CP} and~\eqref{eqn:CQPP}.

\begin{theorem}[Local Convergence]\label{thm:locconv}
	Let $\nabla f$, $\nabla c$, $\nabla_{\bx\bx}^2 \cL$ be Lipschitz-continuous $\forall \bx \in \cU$ and let all iterates of ALM and MALM remain in $\cU$. Let the local minimizers satisfy strict complementarity. Apply ALM and MALM with fixed penalty parameter $\pmval$ to solve either problem, each starting from $\bx_k$. If both methods converge and if $\bx_k$ is sufficiently close to the local minimizer of~\eqref{eqn:CQPP}, then the linear rates of convergence of MALM and ALM satisfy the relation $C_\text{MALM}=\frac{\pmval}{\pmval+\pval} \cdot C_\text{ALM}<C_\text{ALM}$.
\end{theorem}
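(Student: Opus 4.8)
The plan is to reduce the local analysis of MALM to the existing local theory for ALM and then track how the modification rescales the multiplier iteration. By the Equivalence Lemma, MALM applied to the instance $\cI$ generates exactly the same iterates $\bx_k,\bdual_k$ as ALM (i.e.\ MALM with $\pval=0$) applied to the auxiliary instance $\cJ$ built from~\eqref{eqn:Subst}. Hence the classical local rate result \cite[Thm~17.6]{Nocedal} applies to MALM verbatim through $\cJ$: under Lipschitz-continuity of $\nabla f,\nabla c,\nabla^2_{\bx\bx}\cL$ and strict complementarity, with $\pmval$ held fixed, the multiplier sequence contracts linearly with a rate governed by the Jacobian $\bM$ of the multiplier fixed-point map at the solution (the matrix appearing in~\eqref{eqn:Banach}). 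First I would record that strict complementarity freezes the active set $\cA$ in a neighbourhood, so that both iterations may be analysed on the reduced space of the active constraints and the inequality multipliers $\boldsymbol\eta$ drop out of the linearisation; this is what collapses the problem to an equality-constrained comparison, and it is also where the hypothesis that $\bx_k$ lies in $\cU$ near the minimizer of~\eqref{eqn:CQPP} is used.

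Next I would linearise each multiplier map at its own fixed point. For ALM on~\eqref{eqn:CP} at $(\bx^\star,\bdual^\star)$ with $c(\bx^\star)=\bO$, differentiating the subproblem stationarity $\nabla_\bx\cL(\bx_k,\bdual_k)=\bO$ together with the update $\bdual_k=\bdual_{k-1}-\tfrac{1}{\pmval}c(\bx_k)$ yields $\delta\bdual_k=\bM_{\text{ALM}}\,\delta\bdual_{k-1}$ with $\bM_{\text{ALM}}=(\bI+\tfrac{1}{\pmval}\bW)^{-1}$, where $\bW:=\bJ\bH^{-1}\bJ\t$ is assembled from the constraint Jacobian $\bJ=\nabla c(\bx^\star)\t$ and the Lagrangian Hessian $\bH=\nabla^2_{\bx\bx}\cL$, restricted to the active set (invertibility of $\bH$ on this space coming from the second-order sufficient condition). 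The same computation for MALM, performed on~\eqref{eqn:Subst}, uses the auxiliary data $\hat\bJ=[\bJ\ \ \pval\bI]$ and $\hat\bH=\opdiag(\bH,\pval\bI)$. The decisive step is the elementary identity $\hat\bW:=\hat\bJ\hat\bH^{-1}\hat\bJ\t=\bW+\pval\bI$, from which $\bM_{\text{MALM}}=(\bI+\tfrac{1}{\pmval}\hat\bW)^{-1}=\tfrac{\pmval}{\pmval+\pval}\,(\bI+\tfrac{1}{\pmval+\pval}\bW)^{-1}$ factors out precisely the scalar $\tfrac{\pmval}{\pmval+\pval}$.

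Finally I would read off the rate constants as the relevant matrix measure of $\bM$ underlying~\eqref{eqn:Banach} and conclude $C_{\text{MALM}}=\tfrac{\pmval}{\pmval+\pval}C_{\text{ALM}}$, with the strict inequality $C_{\text{MALM}}<C_{\text{ALM}}$ immediate from $\pval>0$, so that $\tfrac{\pmval}{\pmval+\pval}<1$. As a consistency check, letting $\pval\to0$ gives $\hat\bW\to\bW$ and $C_{\text{MALM}}\to C_{\text{ALM}}$, matching the fact that MALM degenerates into ALM (Section~\ref{sec:truegeneralization}), and the global convergence backbone of Theorem~\ref{thm:globconv} guarantees that the iterates to which this local analysis applies are actually reached.

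I expect the main obstacle to be matching the two rate constants so that the factor is exactly $\tfrac{\pmval}{\pmval+\pval}$ rather than merely asymptotic: since $\bM_{\text{MALM}}$ carries the effective penalty $\pmval+\pval$ inside the resolvent while $\bM_{\text{ALM}}$ carries $\pmval$, one must fix a consistent notion of ``fixed penalty parameter'' for the two methods — equivalently, compare them at equal subproblem conditioning, $\pmval_{\text{ALM}}=\pmval+\pval$, which is harmless because $\pmval\gg\pval$ — for the identity to hold on the nose rather than only to leading order. Controlling the higher-order remainder of the linearisation uniformly over the neighbourhood $\cU$, and checking that the two distinct fixed points (of~\eqref{eqn:CP} and~\eqref{eqn:CQPP}) share the reduced data $\bW$ up to $\cO(\pval)$, are the remaining technical points.
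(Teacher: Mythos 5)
Your proposal is correct in substance but takes a genuinely different route from the paper. The paper does not pass through the auxiliary instance $\cJ$ for the local analysis: it Taylor-expands $\nabla\Psi^\omega_k(\bx_k,\bdual_{k-1})$ around the limit point, solves the resulting quadratic model for $\bx_k$, substitutes into the multiplier update of Algorithm~\ref{algo:MALM}, and reads off a Banach iteration $\bdual_k=\bM\bdual_{k-1}+\mathbf{f}+R_\bdual$ with $\bM=\frac{\pmval}{\pval+\pmval}\bigl(\bI-\frac{1}{\pval+\pmval}\bJ\bH\inv\bJ\t\bigr)$, where $\bH$ is the \emph{augmented} Hessian $\nabla^2_{\bx\bx}\cL+\frac{1}{\pval+\pmval}\bJ\t\bJ$. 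Your route instead invokes the Equivalence Lemma to identify MALM on $\cI$ with ALM on the lifted problem~\eqref{eqn:Subst}, and then exploits the block structure $\hat\bJ=[\bJ\ \ \pval\bI]$, $\hat\bH=\opdiag(\bH,\pval\bI)$ to obtain $\hat\bW=\bW+\pval\bI$ and factor out $\tfrac{\pmval}{\pmval+\pval}$. The two contraction matrices coincide: by the Sherman--Morrison--Woodbury identity, $\bigl(\bI+\tfrac{1}{\rho}\bJ(\nabla^2\cL)^{-1}\bJ\t\bigr)^{-1}=\bI-\tfrac{1}{\rho}\bJ\bigl(\nabla^2\cL+\tfrac{1}{\rho}\bJ\t\bJ\bigr)^{-1}\bJ\t$ with $\rho=\pval+\pmval$, so your resolvent form and the paper's augmented-Hessian form are the same matrix. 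Both arguments treat inequalities identically (strict complementarity freezes $\cA$, then project onto the nullspace of the active Jacobian). What each buys: your reduction inherits the classical ALM rate theory wholesale, makes the shift of the dual Schur complement by $\pval\bI$ completely transparent, and gets full row rank of $\hat\bJ$ for free from the $\pval\bI$ block even when $\bJ$ is rank-deficient; the paper's direct expansion is self-contained and yields explicit Lagrange-remainder bounds quantifying ``in the limit''. Finally, the caveat you raise about the exact factor is well-founded and applies equally to the paper's own proof: since $\|\bM_{\text{MALM}}\|_2=\max_i\frac{\pmval}{\pmval+\pval+w_i}$ and $\|\bM_{\text{ALM}}\|_2=\max_i\frac{\pmval}{\pmval+w_i}$ for the eigenvalues $w_i\geq 0$ of $\bW$, the strict inequality $C_{\text{MALM}}<C_{\text{ALM}}$ always holds, but the identity $C_{\text{MALM}}=\frac{\pmval}{\pmval+\pval}C_{\text{ALM}}$ is exact only when $w_{\min}=0$ (or when the two methods are compared at matched effective penalties, as you suggest); the paper simply reads the scalar prefactor off $\bM$ without addressing that the resolvent part also changes, so on this point you are, if anything, more careful than the source.
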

\noindent
\textit{Proof:}
We use the Taylor series
\begin{align*}
{}&{}\nabla \Psi^\omega_k(\bx_k,\bdual_{k-1})\\
={}&{} \bH\bx_k+\bg-\frac{1}{\pval+\pmval}\bJ\t\left(\pmval\bdual_{k-1}+\bc\right) + R_L(\bx_k,\bdual_{k-1})
\end{align*}
with $\bJ\t := \nabla c(\bx_\infty)$, $\bH := \nabla^2_{\bx\bx}\cL(\bx_\infty,\bdual_\infty,\bO) + \frac{1}{\pval+\rho}\bJ\t\bJ$, $\bc := \bJ \bx_\infty - c(\bx_\infty)$ and $\bg:=\nabla f(\bx_\infty)$ has the Lagrange remainder $\|R_L(\bx_k,\bdual_k)\|_2 \leq \frac{L}{\rho+\pval} (\|\bx_k-\bx_\infty\|_2 + \|\bdual_{k-1}-\bdual_\infty\|_2)^2$, where $L$ is the Lipschitz constant.

We now first consider the case where $p=0$, i.e.\ when there are no inequality constraints. Since $\bx_k$ is convergent by requirement, $\bH$ must be positive semi-definite and, if $\bx_\infty$ is locally unique, $\bH$ must be positive definite. Clearly, local convergence to a unique point depends quantitatively on uniqueness, hence we imply $\lambda_\text{min}(\bH)\geq \mu>0$. For the induced 2-norm it follows that $\|\bH\inv\|_2\leq \mu$, hence
\begin{align*}
&{}\left\|\bx_k-\bH\inv\left(\bg-\frac{1}{\pval+\pmval}\bJ\t(\pmval\bdual_{k-1}+\bc)\right)\right\|_2\\
\leq{}&{}\frac{L}{\mu(\rho+\pval)}\|\bdual_{k-1}-\bdual_\infty\|^2_2.
\end{align*}
Inserting the estimate for $\bx_k$ into line~5 in \abbalg~\ref{algo:MALM} gives a formula for $\bdual_k$ that only depends on $\bdual_{k-1}$:
\begin{align}
\bdual_k = \bM \cdot \bdual_{k-1} + \mathbf{f} + R_{\bdual}(\bdual_k) \label{eqn:Banach}
\end{align}
with $\bM \in \R^{m \times m}$  below, some $\mathbf{f} \in \R^m$, and $\|R_{\bdual}(\bdual_k)\|_2 \leq \frac{1}{\mu}\left(\frac{L}{\rho+\pval}\right)^2 \|\bdual_k-\bdual_\infty\|_2^2$. Rearranging  reveals
\begin{align*}
\bM &= \frac{\pmval}{\pval+\pmval}\left(\bI - \frac{1}{\pval+\pmval}\bJ\bH\inv\bJ\t\right)\,.
\end{align*}
Since \abbthm~\ref{thm:globconv} asserts convergence of $\bdual_k$, the second order terms become negligible compared to the first-order terms and can hence be ignored in the limit. Then,~\eqref{eqn:Banach} is a Banach iteration. Thus, in the limit, the rate of convergence for $\bdual_k$ is linear with contraction $\|\bM\|_2<1$. The analysis holds regardless of whether $\pval=0$ or $>0$.

We see that in the limit MALM converges faster than ALM because $\frac{\pmval}{\pval+\pmval}<1$ when $\pval>0$, whereas $\frac{\pmval}{\pval+\pmval}=1$ when $\pval=0$. Hence, in the limit $k\rightarrow \infty$ MALM yields a stronger contraction for the errors per iteration than ALM. This is in particular an advantage in cases where ALM would converge slowly. For instance, choosing $\rho=10\pval$ guarantees convergence in the limit with at least a rate of contraction of $\frac{\pmval}{\pval+\pmval}<0.91$\,.

\balance

From the above, when dropping the Lagrange remainder terms, we can 
identify the local rate of convergence by that of the following quadratic model iteration: 1) Solve
\begin{align*}
\operatornamewithlimits{min}_{x} \frac{1}{2}\bx\t\bH\bx + \left(\bg+\frac{1}{\pval+\pmval}\bJ\t(\bJ\bx_{k-1}-\bc-\pval\bdual_{k-1})\right)\t\bx\,.
\end{align*}
2) Update $\bdual_k:=\bdual_{k-1}-\frac{1}{\pval+\pmval}(\bJ\bx_k-\bc-\pval\bdual_{k-1})$.

We  discuss the case when $p>0$, i.e.\ when inequality constraints are present. We use our assumption on strict complementarity, i.e.\ $i \in \cA \Leftrightarrow \boldsymbol{\eta}_i>\beta$ for some real $\beta>0$. Since $\bx_k$ converges by requirement, $\bdual_{k-1}$ converges and thus also $\nabla \Psi^\omega_k(\bx_k)$ converges. Hence, $\boldsymbol{\eta}_k$ must converge in order to yield $\nabla_\bx \Psi^\omega_k(\bx_k)-\nabla g(\bx)\,\boldsymbol{\eta}_k=\bO$. Once $\boldsymbol{\eta}_k$ changes less than $\beta$ at some finite $k_0 \in \N$, the active set $\cA_k$ will remain unchanged $\cA_\infty$ for all subsequent iterations $k\geq k_0$. We use $\bg_\infty$ for only the active constraints of $g$ and define $\bA_\infty := \nabla \bg_\infty(\bx_\infty)\t$, $\bb_\infty:=\nabla \bg_\infty(\bx_\infty)\t \cdot \bx_\infty$; hence $g_\infty(\bx) = \bA_\infty \cdot \bx - \bb_\infty + \cO(\|\bx-\bx_\infty\|_2^2)$.

Given the above intermezzo, the appropriate model iteration in the limit becomes obvious: 1) Solve
\begin{align*}
&\operatornamewithlimits{min}_{x} \frac{1}{2}\bx\t\bH\bx + \left(\bg+\frac{1}{\pval+\pmval}\bJ\t(\bJ\bx_{k-1}-\bc-\pval\bdual_{k-1})\right)\t\bx\\
&\text{s.t. }\bA_\infty \cdot \bx = \bb_\infty\,.
\end{align*}
2) Update $\bdual_k:=\bdual_{k-1}-\frac{1}{\pval+\pmval}(\bJ\bx_k-\bc-\pval\bdual_{k-1})$.

This is just a projection of the iteration above. Thus, we can project the iteration for $\bx_k$ onto the nullspace of $\bA_\infty$, identifying $\bx_k = \bx_r + \bN \tbx_k$ $\forall k \geq k_0$, where $\bx_r \in \R^n$ has active set $\cA_\infty$, $\tbx_k \in \R^{n-\dim(\cA_\infty)}$ and $\bN$ is a matrix of orthogonal columns that span the nullspace of $\nabla g_\infty(\bx_\infty)$. Defining $\tbH:=\bN\t\bH\bN$, $\tbJ:=\bJ\bN$, and $\tbg,\tbc$ appropriately, we arrive at the former unconstrained quadratic model iteration form, but with $\bH,\bg,\bJ,\bc,\bx_k$ replaced by the tilded quantities. Accordingly, the Banach iteration matrix $\bM$ is replaced with the matrix 
$$ 	\tbM = \frac{\pmval}{\pval+\pmval} \left(\bI-\frac{1}{\pval+\pmval}\tbJ\tbH\inv\tbJ\t\right)\,. 	$$
The resulting contraction matrix $\tbM$ for the Banach iteration of the inequality constrained case has  a factor $\frac{\rho}{\pval+\pmval}$ in front, just like for the case when $p=0$. Thus, for $\pmval>0$ the method converges locally faster in the limit $k\rightarrow\infty$. q.e.d.

\section{Numerical Experiments}\label{sec:NumExp}
For our tests we use two instances:~\eqref{eqn:Circle} and~\eqref{eqn:OCPdisc}. Each instance will be considered once as~\eqref{eqn:CQPP} and once as~\eqref{eqn:CP}. Both instances are parametric: The inconsistency of~\eqref{eqn:Circle} grows in the order of $\varepsilon$ and inconsistency of~\eqref{eqn:OCPdisc} grows in the order of the mesh size $h$. The sub-problems in~\eqref{eqn:ALF_Prob} are solved with IPOPT version 12.0.3. For tests on examples with equality constraints only, we refer to \cite{MALM_CDC20}.

\subsection{Circle Problem}
\subsubsection{Setting}
\paragraph{Initial Guess and Numerical Methods}
We use the initial guess $\bx_0=[2\,\ 1]\t$, $\bdual_0=\bO$. Fig.~\ref{fig:example1geometry} shows the instance's geometry. The figure also shows two points $\bx_A := [0\,\ \sqrt{2}]\t,\ \bx_B := [1\,\ 1]\t$.

\begin{figure}
	\centering
	\includegraphics[width=0.5\linewidth]{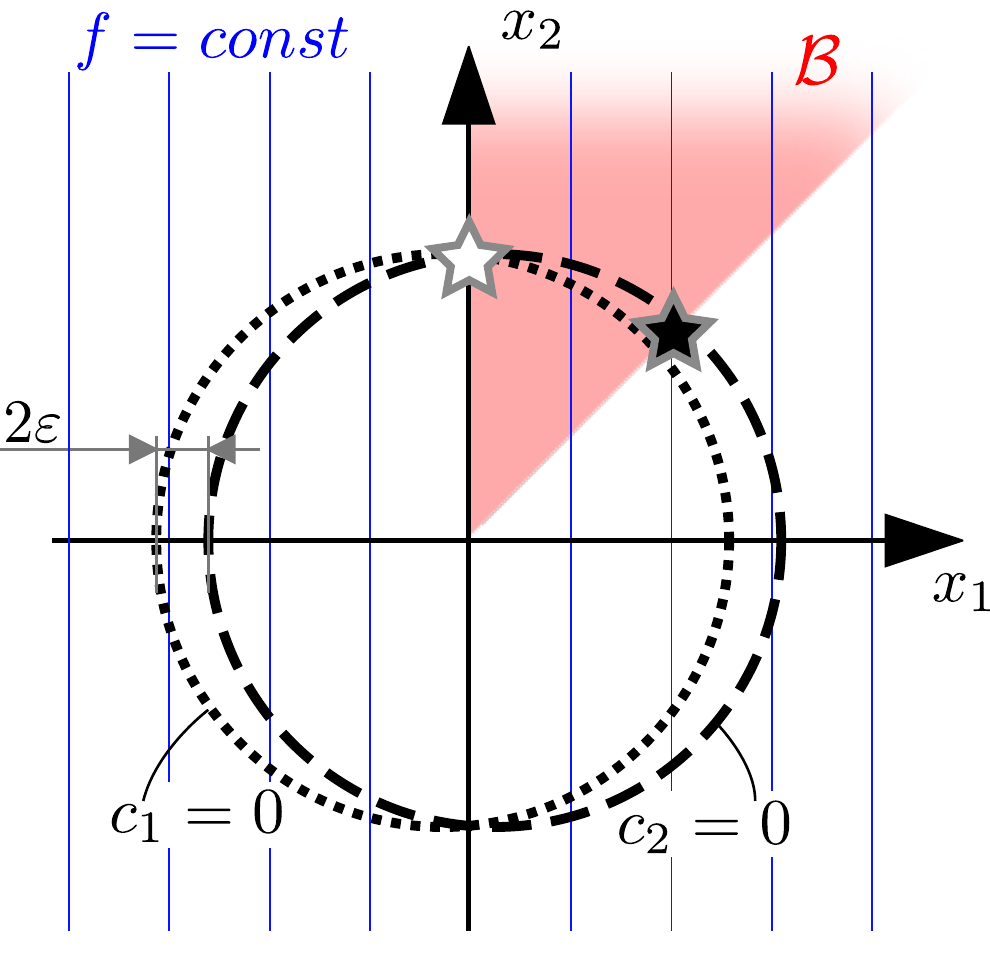}
	\caption{Geometry of the Circle Problem, with level sets of $f,c_1,c_2$ in blue solid, black dotted, and black dashed lines, respectively. The domain $\cB$ is highlighted in red. The points $\bx_A,\bx_B$ are marked as white and black star, respectively.}
	\label{fig:example1geometry}
\end{figure}

\paragraph{Expected Minimizers}
When considering the instance as~\eqref{eqn:CP} then we expect that $\bx_A$ would be the solution. To see this, notice that $c(\bx)=\bO$ is only satisfied at $\bx=\bx_A$. When $\varepsilon \rightarrow 0$, \eqnKKT becomes ill-conditioned for $\bx_A$. Once $\varepsilon=0$, the minimizer is suddenly $\bx_B$.

When considering the instance as~\eqref{eqn:CQPP} then a point close to $\bx_B$ should be the solution unless $\varepsilon$ becomes large relative in comparison to $\pval$. To see this, note that $\bx_B$ minimizes $f$ among all points in $\cB$ that yield $\|c(\bx)\|_2$ small relative to $\pval$.

\paragraph{Scope}
Both ways~\eqref{eqn:CP} and~\eqref{eqn:CQPP} of interpreting the instance~\eqref{eqn:Circle} and both solutions $\bx_A,\bx_B$ make sense in their own right. We want to find out which solver works best for solving a respective combination $\pval,\varepsilon$.

\subsubsection{Computational Results}
We observe that all iterates of all methods remain in $\Omega=\cB \cap \lbrace \bx \in \R^2 \vert x_2 \leq 2 \rbrace$. Hence, Theorem~\ref{thm:globconv} asserts that MALM and ALM converge because $f,c$ are twice continuously differentiable on $\Omega$ and $g$ is affine.

We solve the instance with MALM and PM, for various values of $\varepsilon,\pval$, including~0. We implement PM by solving~\eqref{eqn:CQPP} directly in IPOPT with objective $\Phi_{\pval}$. Recall that MALM=ALM for $\pval=0$ and that PM is not applicable (n.a.) when $\pval=0$, since $\Phi_{\pval}$ is undefined.

\paragraph{Confirmation of Expected Minimizers}
We first analyze the limit points $\bx_\infty$ (which are identical for both tested methods throughout all tests) for each $\varepsilon,\pval$, by measuring the quantities
\begin{align*}
e_A := \|\bx_\infty - \bx_A\|_2\,,\qquad e_B:=\|\bx_\infty-\bx_B\|_2\,.
\end{align*}

\abbTab~\ref{tab:Exp_Circ_conv} shows the quantities $e_A,e_B$ for respective $\varepsilon,\pval$. Dividing the table into a lower left and an upper right triangle, we see that indeed solutions in the lower triangle are close to $\bx_A$ and those on the diagonal and in the upper right are close to $\bx_B$. This confirms that solutions of~\eqref{eqn:CP} and~\eqref{eqn:CQPP} can be very distinct and the latter depend on the value of $\pval$.

\begin{table}
	\caption{Solution of the Circle Problem with respect to $\varepsilon,\pval$. Smaller values mean closer convergence to either point. Cells in the lower left converge to $\bx_A$, cells in the upper right to $\bx_B$.}
	\label{tab:Exp_Circ_conv}
	\centering
	\begin{tabular}{cl||c|c|c|c|c|}    \cline{3-7}\multicolumn{1}{l}{}&&\multicolumn{5}{c|}{$\varepsilon$}\\ \cline{2-7}  
		\multicolumn{1}{l|}{}& $\begin{matrix}e_A\\e_B\end{matrix}$			&$1.0\text{e--}1$											&$1.0\text{e--}2$ 				 								&$1.0\text{e--}4$ 												&$1.0\text{e--}6$ 												&$0.0$\\ \hline\hline
		\multicolumn{1}{|c|}{\multirow{5}{*}{$\pval$}} 	& $1.0\text{e--}1$	&$\begin{matrix}1.1\text{e--}0\\2.6\text{e--}3\end{matrix}$	&$\begin{matrix}1.1\text{e+}0	\\4.3\text{e--}3\end{matrix}$	&$\begin{matrix}1.1\text{e+}0	\\4.4\text{e--}3\end{matrix}$	&$\begin{matrix}1.1\text{e+}0	\\4.4\text{e--}3\end{matrix}$ 	&$\begin{matrix}1.1\text{e+}0	\\4.4\text{e--}3\end{matrix}$\\ \cline{2-7}
		\multicolumn{1}{|c|}{\multirow{5}{*}{}}  		& $1.0\text{e--}2$	&$\begin{matrix}1.2\text{e--}1\\9.6\text{e--}1\end{matrix}$	&$\begin{matrix}1.1\text{e+}0	\\3.7\text{e--}4\end{matrix}$	&$\begin{matrix}1.1\text{e+}0	\\4.4\text{e--}4\end{matrix}$	&$\begin{matrix}1.1\text{e+}0	\\4.4\text{e--}4\end{matrix}$ 	&$\begin{matrix}1.1\text{e+}0	\\4.4\text{e--}4\end{matrix}$\\ \cline{2-7}
		\multicolumn{1}{|c|}{\multirow{5}{*}{}} 		& $1.0\text{e--}4$	&$\begin{matrix}3.8\text{e--}3\\1.1\text{e+}0\end{matrix}$	&$\begin{matrix}1.2\text{e--}1	\\9.6\text{e--}1\end{matrix}$	&$\begin{matrix}1.1\text{e+}0	\\4.4\text{e--}6\end{matrix}$	&$\begin{matrix}1.1\text{e+}0	\\4.4\text{e--}6\end{matrix}$ 	&$\begin{matrix}1.1\text{e+}0	\\4.4\text{e--}6\end{matrix}$\\ \cline{2-7}
		\multicolumn{1}{|c|}{\multirow{5}{*}{}} 		& $1.0\text{e--}6$	&$\begin{matrix}3.5\text{e--}3\\1.1\text{e+}0\end{matrix}$	&$\begin{matrix}1.3\text{e--}3	\\1.1\text{e+}0\end{matrix}$	&$\begin{matrix}1.1\text{e+}0	\\3.7\text{e--}8\end{matrix}$	&$\begin{matrix}1.1\text{e+}0	\\4.4\text{e--}8\end{matrix}$ 	&$\begin{matrix}1.1\text{e+}0	\\4.4\text{e--}8\end{matrix}$\\ \cline{2-7}
		\multicolumn{1}{|c|}{\multirow{5}{*}{}} 	    & $1.0\text{e--}8$	&$\begin{matrix}3.5\text{e--}3\\1.1\text{e+}0\end{matrix}$	&$\begin{matrix}3.7\text{e--}5	\\1.1\text{e+}0\end{matrix}$	&$\begin{matrix}1.2\text{e--}1	\\9.6\text{e--}1\end{matrix}$	&$\begin{matrix}1.3\text{+}0	\\7.1\text{e--}9\end{matrix}$ 	&$\begin{matrix}1.3\text{e+}0	\\7.1\text{e--}9\end{matrix}$\\ \cline{2-7}
		\multicolumn{1}{|c|}{\multirow{5}{*}{}} 	    & $0.0$	&$\begin{matrix}3.5\text{e--}3\\1.1\text{e+}0\end{matrix}$	&$\begin{matrix}3.7\text{e--}5	\\1.1\text{e+}0\end{matrix}$	&$\begin{matrix}1.2\text{e--}1	\\9.6\text{e--}1\end{matrix}$	&$\begin{matrix}1.3\text{+}0	\\7.1\text{e--}9\end{matrix}$ 	&$\begin{matrix}1.3\text{e+}0	\\0.0\end{matrix}$\\ \hline
	\end{tabular}
\end{table}

\paragraph{Computational Performance}
\abbTab~\ref{tab:Exp_Circ_iter} shows the sum of the number of all inner iterations of PM and MALM for respective $\varepsilon,\pval$. We see a trend for each of the two methods: PM converges in a few iterations when $\pval$ is moderate. However, when $\varepsilon,\pval$ both decrease, the iteration count blows up. The trend for MALM is different. MALM converges reliably for all $\varepsilon,\pval$ in the upper right triangle, including those where $\varepsilon,\pval$ are very small.

The last row of \abbTab~\ref{tab:Exp_Circ_iter} shows ALM. ALM converges quickly to $\bx_B$ when $\varepsilon=0$. In contrast, when $\varepsilon\neq 0$ then ALM should converge to $\bx_A$ but its iteration count blows up for small $\varepsilon>0$. In two instances ALM did not converge (n.c.) within $1000$ iterations. In conclusion, ALM is inefficient when $c$ has small inconsistencies.

\begin{table}
	\caption{Total number of IPOPT iterations for MALM and PM for the Circle Problem with respect to $\varepsilon,\pval$.  Fewer iterations mean better computational efficiency; highlighting best in slanted (PM) or bold (MALM).}
	\label{tab:Exp_Circ_iter}
	\centering
	\begin{tabular}{cl||c|c|c|c|c|}    \cline{3-7}\multicolumn{1}{l}{}&&\multicolumn{5}{c|}{$\varepsilon$}\\ \cline{2-7}  
		\multicolumn{1}{l|}{}&$\begin{matrix}\#_\text{MALM}\\\#_\text{PM}\end{matrix}$& $1.0\text{e--}1$& $1.0\text{e--}2$& $1.0\text{e--}4$& $1.0\text{e--}6$& $0.0$\\ \hline\hline
		\multicolumn{1}{|c|}{\multirow{5}{*}{$\pval$}} & $1.0\text{e--}1$	& $\begin{matrix}\text{28}\\ \textsl{14}\end{matrix}$ 	& $\begin{matrix}\text{22}\\ \textsl{13}\end{matrix}$& 	$\begin{matrix}\text{22}\\ 		\textsl{13}\end{matrix}$&  	$\begin{matrix}\text{19}\\ \textsl{13}\end{matrix}$& 		$\begin{matrix}\text{19}\\ \textsl{13}\end{matrix}$\\ \cline{2-7}
		\multicolumn{1}{|c|}{\multirow{5}{*}{}} & $1.0\text{e--}2$ 			& $\begin{matrix}\text{36}\\ \textsl{12}\end{matrix}$ 	& $\begin{matrix}\text{28}\\ \textsl{16}\end{matrix}$& 	$\begin{matrix}\text{16}\\ 		\text{16}\end{matrix}$& 	$\begin{matrix}\text{23}\\ \textsl{16}\end{matrix}$& 		$\begin{matrix}\text{20}\\ \textsl{16}\end{matrix}$\\ \cline{2-7}
		\multicolumn{1}{|c|}{\multirow{5}{*}{}} & $1.0\text{e--}4$ 			& $\begin{matrix}\text{21}\\ \textsl{16}\end{matrix}$ 	& $\begin{matrix}\text{56}\\ \textsl{36}\end{matrix}$& 	$\begin{matrix}\textbf{32}\\ 		\text{43}\end{matrix}$& 	$\begin{matrix}\textbf{29}\\ \text{43}\end{matrix}$& 		$\begin{matrix}\textbf{23}\\ \text{43}\end{matrix}$\\ \cline{2-7}
		\multicolumn{1}{|c|}{\multirow{5}{*}{}} & $1.0\text{e--}6$ 			& $\begin{matrix}\text{29}\\ \textsl{16}\end{matrix}$ 	& $\begin{matrix}\text{68}\\ \textsl{35}\end{matrix}$& 	$\begin{matrix}\textbf{45}\\ 		\text{138}\end{matrix}$& 	$\begin{matrix}\textbf{39}\\ \text{134}\end{matrix}$& 	$\begin{matrix}\textbf{31}\\ \text{134}\end{matrix}$\\ \cline{2-7}
		\multicolumn{1}{|c|}{\multirow{5}{*}{}} & $1.0\text{e--}8$			& $\begin{matrix}\text{34}\\ \text{n.~c.}\end{matrix}$	& $\begin{matrix}\text{60}\\ \text{n.~c.}\end{matrix}$& $\begin{matrix}\text{n.~c.}\\ 	\text{n.~c.}\end{matrix}$& 	$\begin{matrix}\textbf{52}\\ \text{429}\end{matrix}$& 	$\begin{matrix}\textbf{40}\\ \text{374}\end{matrix}$\\ \cline{2-7}
		\multicolumn{1}{|c|}{\multirow{5}{*}{}} & $0.0$ 					& $\begin{matrix}\text{34}\\ \text{n.~a.}\end{matrix}$	& $\begin{matrix}\text{60}\\ \text{n.~a.}\end{matrix}$& $\begin{matrix}\text{n.~c.}\\ 	\text{n.~a.}\end{matrix}$& 	$\begin{matrix}\text{52}\\ \text{n.~a.}\end{matrix}$& 	$\begin{matrix}\text{40}\\ \text{n.~a.}\end{matrix}$\\ \hline
	\end{tabular}
\end{table}

\paragraph{Rate-of-Convergence Comparison}\label{sec:NumExp:Circle:Rate}
We compare the rate of local convergence of MALM and ALM to the theoretical prediction from Theorem~\ref{thm:locconv}. We use $\varepsilon=0$ and $\rho=1$\,. MALM solves~\eqref{eqn:CQPP} with $\pval=10^{-1}$ whereas ALM solves~\eqref{eqn:CP}. Both minimizers are close to $\bx_B$. As shown in Theorem~\ref{thm:locconv}, local convergence is only linear for our constant choice of $\rho$. Hence, by Cauchy criterion, $\|\bdual_{k}-\bdual_{k-1}\|_2$ converges at the same rate as $\|\bdual_{k}-\bdual^\star\|_2$, where $\bdual^\star$ is the exact dual solution. Because $\bdual^\star$ is unknown, Figure~\ref{fig:convrate} plots $\|\bdual_{k}-\bdual_{k-1}\|_2$ for both methods over the outer iteration index $k$ of Algorithm~\ref{algo:MALM}. Thereby, we find the rate of convergence for $\bdual_k$ and thus for $\bx_{k+1}$. We observe convergence at linear rates. We see that both methods converge in very few outer iterations to the order of machine precision. At $k\geq 9$ both methods have roughly attained their limit convergence rates.

\begin{figure}
	\centering
	\includegraphics[width=0.7\linewidth]{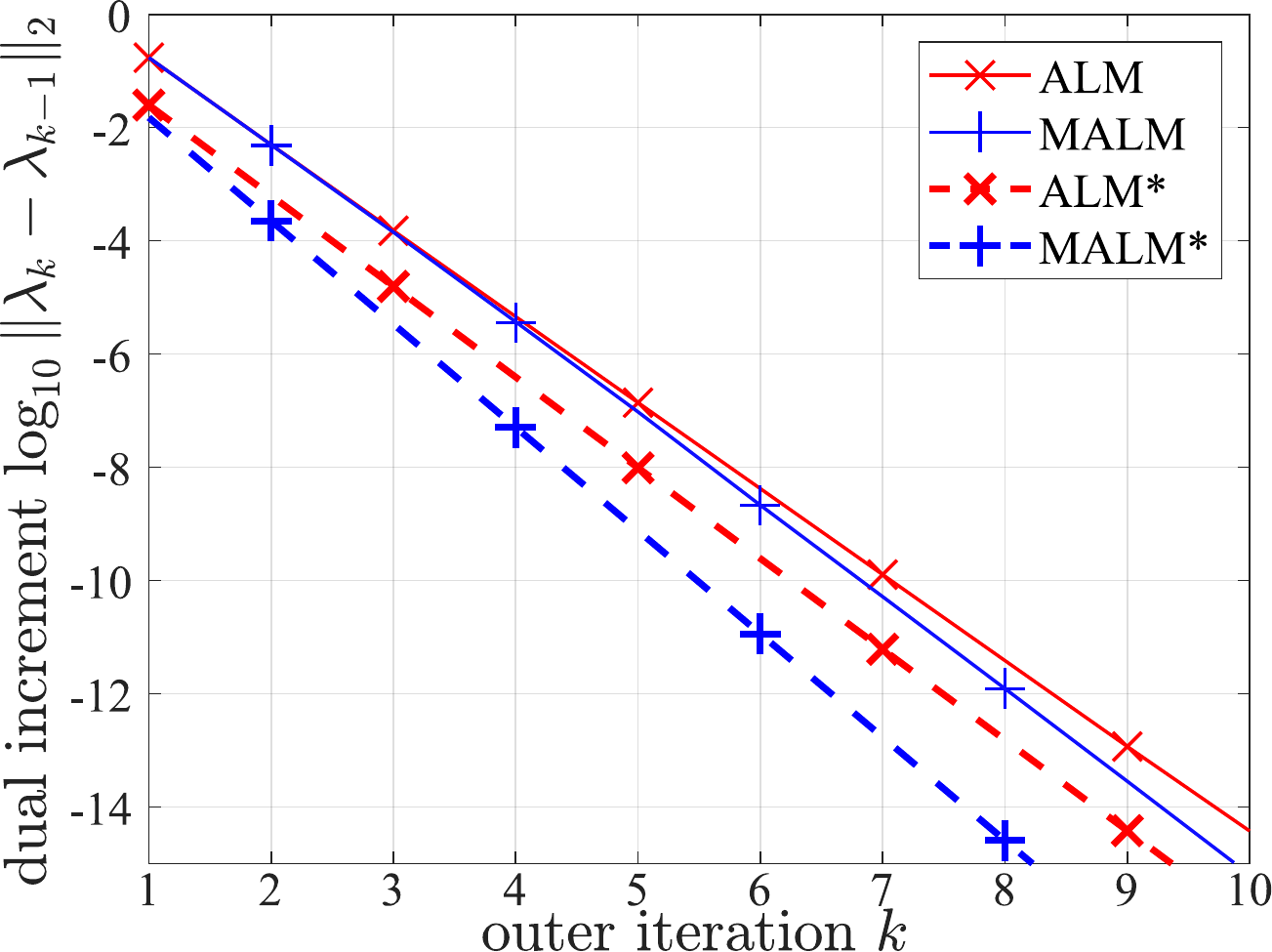}
	\caption{Comparison of convergence for ALM and MALM for the circle problem. Solid lines are measured convergence rates. Dotted lines indicate the theoretical rates of ALM and MALM in the limit $k \rightarrow \infty$.}
	\label{fig:convrate}
\end{figure}

\subsection{Integral Penalty-Discretization for Optimal Control}
\subsubsection{Setting}
\paragraph{Initial Guess and Solvers}
We solve the instance~\eqref{eqn:OCPdisc} with MALM and PM for various values of $h,\pval$ from $\bx_0=\bO,\bdual_0=\bO$. Recall that $h$ is the mesh size and $\|c(\bx)\|_2^2\equiv \int_0^5 (y_h^2/2 + u_h-\dot{y}_h)^2\,\mathrm{d}t$.

For this example, using the bounded domain $\Omega=\cB$, Theorem~\ref{thm:globconv} asserts a priori that MALM converges because $f,c$ are twice continuously differentiable on $\Omega$ and $g$ is affine.

\paragraph{Expected Minimizers}
We expect that the optimality gap and feasibility residual
\newcommand{\symgap}{\delta J}
\begin{align*}
\delta J= f(\bx)-J(y^\star,u^\star)\,,\quad r= \|c(\bx)\|_2^2
\end{align*}
both converge for increasing mesh sizes $N$ when choosing $\pval \in \cO(h)$; cf. discussion in Section~\ref{sec:Intro:Motiv}. For $\pval$ too large, $r$ should not converge and for $\pval$ too small $\delta J$ should not converge. To see this, notice that $\Phi_{\pval}=J+\frac{1}{2 \pval} r$; thus, minimization of $\Phi_{\pval}$ only strikes a balance between minimizing both terms when $\pval$ is chosen in the right order of magnitude

\paragraph{Scope}
We expect that again PM will be faster than MALM when $\pval$ is moderate and vice versa when $\pval$ is very small.
We shall also try ALM (i.e., Algorithm~\ref{algo:MALM} with $\pval=0$) but just for completeness, because this will not converge to the optimal control solution.

\subsubsection{Computational Results}
\paragraph{Confirmation of Expected Minimizers}
\abbTab~\ref{tab:Exp_OCP_conv} shows the quantities $\symgap,r$ for respective $h,\pval$. Dividing the table into a lower left and an upper right triangle, we find our expected minimizers confirmed: solutions in the lower left of the table achieve small $r$ but large $\delta J$, whereas solutions in the upper right of the table are not sufficiently feasible with respect to the path-constraints. For a given mesh size $h$, the most accurate control solutions are found on the diagonal cells of the table.

\begin{table}
	\caption{Solution of the Optimal Control Problem with respect to $N,\pval$. For a given mesh size $N$, the value for $\pval$ is suitable when $\delta J$ (optimality gap) and $r$ (feasibility residual) have similar magnitude.}
	\label{tab:Exp_OCP_conv}
	\centering
	\begin{tabular}{cc||c|c|c|c|c|}    \cline{3-7}\multicolumn{1}{l}{}&&\multicolumn{5}{c|}{$h$}\\ \cline{2-7}  
		\multicolumn{1}{l|}{}& $\begin{matrix}\symgap\\r\end{matrix}$ 			&$1.0\text{e--}1$ 																	& $2.0\text{e--}2$ 																& $1.0\text{e--}2$ 																	& $5.0\text{e--}3$ 																& $2.5\text{e--}3$\\ \hline\hline
		\multicolumn{1}{|c|}{\multirow{5}{*}{$\pval$}} 	& $1.0\text{e--}2$		&$\begin{matrix}\text{-}1.7\text{e--}1\\9.2\text{e--}2\end{matrix}$		&$\begin{matrix}\text{-}1.7\text{e--}1	\\4.3\text{e--}2\end{matrix}$ 	&$\begin{matrix}\text{-}1.7\text{e--}1	\\4.3\text{e--}2\end{matrix}$		&$\begin{matrix}\text{-}1.7\text{e--}1\\4.2\text{e--}2\end{matrix}$		&$\begin{matrix}\text{-}1.7\text{e--}1\\4.3\text{e--}2\end{matrix}$\\ \cline{2-7}
		\multicolumn{1}{|c|}{\multirow{5}{*}{}} 		& $1.0\text{e--}3$ 		&$\begin{matrix}        4.3\text{e--}2\\2.7\text{e--}2\end{matrix}$		&$\begin{matrix}\text{-}8.6\text{e--}3	\\5.3\text{e--}3\end{matrix}$ 	&$\begin{matrix}\text{-}9.6\text{e--}3	\\4.6\text{e--}3\end{matrix}$		&$\begin{matrix}\text{-}9.8\text{e--}3\\4.4\text{e--}3\end{matrix}$		&$\begin{matrix}\text{-}9.9\text{e--}3\\4.4\text{e--}3\end{matrix}$\\ \cline{2-7}
		\multicolumn{1}{|c|}{\multirow{5}{*}{}} 		& $1.0\text{e--}4$ 		&$\begin{matrix}        7.6\text{e--}2\\8.8\text{e--}3\end{matrix}$		&$\begin{matrix}1.9\text{e--}2 			\\6.1\text{e--}4\end{matrix}$ 	&$\begin{matrix}1.2\text{e--}2			\\6.1\text{e--}4\end{matrix}$		&$\begin{matrix}\text{-}8.7\text{e--}3\\6.0\text{e--}4\end{matrix}$		&$\begin{matrix}\text{-}7.2\text{e--}3\\5.7\text{e--}4\end{matrix}$\\ \cline{2-7}
		\multicolumn{1}{|c|}{\multirow{5}{*}{}} 		& $1.0\text{e--}5$ 		&$\begin{matrix}        7.9\text{e--}2\\2.5\text{e--}3\end{matrix}$		&$\begin{matrix}2.3\text{e--}2			\\6.4\text{e--}5\end{matrix}$ 	&$\begin{matrix}1.6\text{e--}2			\\6.2\text{e--}5\end{matrix}$		&$\begin{matrix}        1.2\text{e--}2\\6.3\text{e--}5\end{matrix}$		&$\begin{matrix}        1.0\text{e--}2\\6.6\text{e--}5\end{matrix}$\\ \cline{2-7}
		\multicolumn{1}{|c|}{\multirow{5}{*}{}} 		& $1.0\text{e--}6$ 		&$\begin{matrix}        8.0\text{e--}2\\6.5\text{e--}4\end{matrix}$		&$\begin{matrix}2.3\text{e--}2			\\1.8\text{e--}5\end{matrix}$ 	&$\begin{matrix}1.6\text{e--}2			\\7.5\text{e--}6\end{matrix}$		&$\begin{matrix}        1.2\text{e--}2\\1.1\text{e--}5\end{matrix}$		&$\begin{matrix}        1.0\text{e--}2\\1.5\text{e--}5\end{matrix}$\\ \cline{2-7}
		\multicolumn{1}{|c|}{\multirow{5}{*}{}} 		& $0.0$ 				&$\begin{matrix}        7.2\text{e+}0\\0.0\end{matrix}$ 				&$\begin{matrix}7.2\text{e+}0			\\0.0\end{matrix}$ 				&$\begin{matrix}7.2\text{e+}0			\\0.0\end{matrix}$					&$\begin{matrix}        7.2\text{e+}0\\0.0           \end{matrix}$		&$\begin{matrix}        7.2\text{e+}0\\0.0           \end{matrix}$\\ \hline
	\end{tabular}
\end{table}

\paragraph{Computational Performance}
\abbTab~\ref{tab:Exp_OCP_iter} shows the sum of the number of all inner iterations of MALM and PM for respective $h,\pval$. We see the same trend as for the circle problem: PM converges faster than MALM when $\pval$ is moderate and vice versa when $\pval$ is small. We underline that MALM converges reliably for all $h,\pval$ in the upper right triangle, including those where $h,\pval$ are very small. Needless to say, accurate numerical optimal control solutions require $h,\pval$ very small; thus MALM seems very attractive for solving these classes of problems.

The last row shows that ALM does not converge (n.c.) within $500$ iterations for any mesh size.

\begin{table}
	\caption{Total number of IPOPT iterations for MALM and PM for the Optimal Control Problem with respect to $N,\pval$. Fewer iterations mean better computational efficiency; highlighting best in slanted (PM) or bold (MALM).}
	\label{tab:Exp_OCP_iter}
	\centering
	\begin{tabular}{cc||c|c|c|c|c|}    \cline{3-7}\multicolumn{1}{l}{}&&\multicolumn{5}{c|}{$h$}\\ \cline{2-7}  
		\multicolumn{1}{l|}{}&$\begin{matrix}\#_\text{MALM}\\\#_\text{PM}\end{matrix}$& 	$1.0\text{e--}1$ 																	& $2.0\text{e--}2$ 																& $1.0\text{e--}2$ 																	& $5.0\text{e--}3$ 																& $2.5\text{e--}3$\\ \hline\hline
		\multicolumn{1}{|c|}{\multirow{5}{*}{$\pval$}} 	& $1.0\text{e--}2$& 				$\begin{matrix}\text{39}\\ 		\textsl{17}\end{matrix}$& 	$\begin{matrix}\text{53}\\ \textsl{24}\end{matrix}$& 			$\begin{matrix}\text{64}\\    \textsl{23}\end{matrix}$& 		$\begin{matrix}\text{69}\\    \textsl{26}\end{matrix}$& 		$\begin{matrix}\text{69}\\    \textsl{44}\end{matrix}$\\ \cline{2-7}
		\multicolumn{1}{|c|}{\multirow{5}{*}{}}			& $1.0\text{e--}3$& 				$\begin{matrix}\text{47}\\ 		\textsl{38}\end{matrix}$& 	$\begin{matrix}\text{63}\\ \textsl{44}\end{matrix}$& 			$\begin{matrix}\text{74}\\    \textsl{43}\end{matrix}$& 		$\begin{matrix}\text{90}\\    \textsl{44}\end{matrix}$& 		$\begin{matrix}\text{89}\\    \textsl{68}\end{matrix}$\\ \cline{2-7}
		\multicolumn{1}{|c|}{\multirow{5}{*}{}}  		& $1.0\text{e--}4$& 				$\begin{matrix}\text{61}\\ 		\textsl{57}\end{matrix}$& 	$\begin{matrix}\textbf{66}\\ \text{92}\end{matrix}$& 			$\begin{matrix}\textbf{80}\\    \text{85}\end{matrix}$& 		$\begin{matrix}\textbf{93}\\    \text{133}\end{matrix}$& 		$\begin{matrix}\textbf{121}\\   \text{124}\end{matrix}$\\ \cline{2-7}
		\multicolumn{1}{|c|}{\multirow{5}{*}{}} 		& $1.0\text{e--}5$& 				$\begin{matrix}\textbf{64}\\ 		\text{102}\end{matrix}$& 	$\begin{matrix}\textbf{78}\\ \text{161}\end{matrix}$& 		$\begin{matrix}\textbf{80}\\    \text{266}\end{matrix}$& 		$\begin{matrix}\textbf{93}\\    \text{242}\end{matrix}$& 		$\begin{matrix}\textbf{113}\\   \text{252}\end{matrix}$\\ \cline{2-7}
		\multicolumn{1}{|c|}{\multirow{5}{*}{}} 		& $1.0\text{e--}6$& 				$\begin{matrix}\textbf{81}\\ 		\text{163}\end{matrix}$& 	$\begin{matrix}\textbf{78}\\ \text{222}\end{matrix}$& 		$\begin{matrix}\textbf{110}\\   \text{328}\end{matrix}$& 		$\begin{matrix}\textbf{97}\\    \text{278}\end{matrix}$& 		$\begin{matrix}\textbf{126}\\   \text{224}\end{matrix}$\\ \cline{2-7}
		\multicolumn{1}{|c|}{\multirow{5}{*}{}} 		& $0.0$& 							$\begin{matrix}\text{n.~c.}\\ 	\text{n.~a.}\end{matrix}$& 	$\begin{matrix}\text{n.~c.}\\ \text{n.~a.}\end{matrix}$& 	$\begin{matrix}\text{n.~c.}\\ \text{n.~a.}\end{matrix}$& 	$\begin{matrix}\text{n.~c.}\\ \text{n.~a.}\end{matrix}$& 	$\begin{matrix}\text{n.~c.}\\ \text{n.~a.}\end{matrix}$\\ \hline
	\end{tabular}
\end{table}

\section{Conclusions}
We presented a modified augmented Lagrangian method (MALM), generalized to non-convex optimization problems with additional inequality constraints. We proved global convergence for our generalized method when the inequalities are affine. A local rate-of-convergence result shows that MALM inherits all the local convergence results of ALM while the regularization in $\pval>0$ also yields a slight benefit to its rate of local convergence in the iteration limit.

Our numerical experiments demonstrate that MALM outperforms PM when minimizing
quadratic penalty programs~\eqref{eqn:CQPP} in those situations where $\pval$ is very small, in a similar manner as ALM outperforms PM when solving equality constrained programs~\eqref{eqn:CP}. The experiments further show that ALM cannot solve~\eqref{eqn:CQPP}, but solves~\eqref{eqn:CP} instead. Hence, MALM is the best candidate for solving~\eqref{eqn:CQPP} when $\pval$ is very small.

In the experiments we have assumed that the sub-problems~\eqref{eqn:ALF_Prob} are solved to high accuracy. Future work could extend the approach to inexact iterations and sub-iterations to mild tolerances. This could reduce computations at sub-iterations where the dual is far from converged. Another open subject is the extension of global convergence analysis to the cases when $g$ is convex nonlinear or non-convex nonlinear.

\chapter{Discretization: Integral Penalty Barrier Method}
\label{chap:PBF_SICON}

	\section{Introduction}
This chapter presents and analyses a mathematical algorithm for the direct transcription of optimal control problems. In analogy to QPM, we use finite elements and nonlinear programming solvers to solve the optimal control problem as an NLP. However, in contrast to QPM, the method of this chapter uses not only quadratic penalty terms but also logarithmic barrier terms. This yields a better alignment between the merit-functional of the direct transcription and the merit-function of the interior-point algorithm. As motivated in Section~\ref{sec:Scope}, this may decrease the number of iterations for the NLP solver to converge. Still, the new way of discretization requires a separate proof of convergence, which is the emphasize of this chapter. The notation of this chapter is independent and uncorrelated to the rest of the thesis. For instance, the functions $f,c,b$ have different purposes. As another example, the symbol $\cX$ denotes the solution space in this chapter, too, but it uses a different definition from the one in Part~\ref{part:Intro}. Also, there are assumptions (A.1)--(A.5), but these are different from the assumptions that will be used in Part~\ref{part:convproof} of this thesis.

\subsection{An Important Class of Dynamic Optimization Problems}
\label{sec:classDOP}
Many optimal control, estimation, system identification and design problems can be written as a dynamic optimization problem in the Lagrange form
\newcommand{\refOCP}{(DOP)\xspace}
\begin{align}
\operatornamewithlimits{min}_{x:=(y,z) \in \cX} 	
\int_\Omega f( \dot{y}(t),y(t),z(t),t)&\, \mathrm{d}t  \label{eqn:OCP_SICON} \tag{DOPa}\\
\text{subject to\quad}  	
b\left(y(t_1),\ldots,y(t_M)\right) &=0,\label{eqn:point} \tag{DOPb}\\
c\left(\dot{y}(t),y(t),z(t),t\right)
&=0\quad\text{ f.a.e.\ }t \in \Omega, \label{eqn:dae} \tag{DOPc}\\
z(t) &\geq 0 \quad\text{ f.a.e.\ }t \in \Omega,\label{eqn:positive} \tag{DOPd}
\end{align}
where the open bounded interval
$ 	\Omega := (t_0,t_E) \subsetneq \R$, $\cX$ is an appropriately-defined Hilbert space for solution candidates $x:=(y,z)$ such that $y$ is continuous, and \enquote{f.a.e.} 
means \enquote{for almost every} in the Lebesgue sense. Detailed definitions and assumptions are given  in Section~\ref{sec:assumptions_SICON}. An optimal solution will be denoted with $x^\star$.
We note that the form~\refOCP is quite general and adopted here to minimize notation.

Ordinary differential equations (ODEs) and path constraints are included via the differential-algebraic equations (DAE) in~\eqref{eqn:dae} and the inequalities~\eqref{eqn:positive}. The point constraints~\eqref{eqn:point} enforce boundary constraints, such as initial or final values on the state $y$, or include values obtained by measurements at given time instances.

With techniques presented in \cite[Sect.~3.1]{Patterson},\cite[Sect.~2--3]{Bryson1975},\cite[Chap.~4]{BettsChap2},
problems in the popular Bolza or Mayer forms with general inequalities and free initial- or end-time can be converted into the form~\refOCP. In turn, many problems from control,  estimation and system identification can be stated in Bolza or Mayer form~\cite{BettsChap2}.



Problem~\refOCP is infinite-dimensional, because the optimization is over function spaces subject to an uncountable set of constraints. It is  very hard or impossible to compute an analytic solution, in general. Hence, one often has to resort to numerical methods to solve \refOCP.
When doing so, it is important to eliminate whether  features of the numerical solution have arisen from physical principles or numerical failure.
The need for a numerical method, which has a rigorous proof that the numerical solution convergences to the exact solution, is therefore essential in practice.

One of the more established choices for the numerical solution of \refOCP is to discretize via
direct collocation and finite elements~\cite{Conway2012,BettsChap2,rao_asurvey,Patterson,KellyMatthew}. 
Recall that explicit Runge-Kutta methods are unsuitable for stiff problems and that many popular implicit methods for solving differential equations, e.g.\ variants of Gauss schemes, can be interpreted as collocation methods. Collocation methods include certain classes of implicit Runge-Kutta, pseudospectral, as well as Adams and backward differentiation formula methods~\cite{KellyMatthew,BettsChap2,rao_asurvey,arevelo:2002}.
However, as is known \cite[Sect.~2.5\,\&\,4.14]{BettsChap2},\cite{ChenBiegler16,kameswaran_biegler_2008}, collocation methods can fail to converge if care is not taken. In \cite{Neuenhofen2020AnIP} we present an example where three commonly used collocation-based direct transcription methods diverge, and below in Section~\ref{sec:numExp:CQP} we give a parametric problem for which Legendre-Gauss-Radau collocation~\cite{BettsChap2} of any degree rings.

Notice that \refOCP includes problems with mixed differential and inequality path constraints, for which indirect methods~\cite{Conway2012,BettsChap2} have only a limited range of applicability. Even when applicable, indirect methods require sophisticated user knowledge to set up suitable co-state and switching-structure estimates \cite{Boehme2017}. A detailed discussion of available methods in the literature is given in Section~\ref{sec:literatureReview}.

There is a scarcity of rigorous proofs that show that high-order collocation schemes for dynamic optimization methods converge to a feasible or optimal solution as the discretization is refined. The assumptions in the literature are often highly technical, difficult to enforce or not very general.

\subsection{Contributions}

The penalty-barrier method aims to overcome the limitations of the  numerical methods mentioned above by presenting a novel direct transcription method  for solving~\refOCP. 
Our method combines the following ingredients: quadratic integral penalties for the equality (path) constraints; logarithmic integral barriers for the inequality path constraints; and direct numerical solution via finite elements.
It is this combination, together with a rigorous proof of convergence, that amounts to a novel direct transcription method. 
We also provide order-of-convergence results.

As detailed in Section~\ref{sec:assumptions_SICON}, we  only require existence of a solution to~\refOCP and mild assumptions on the boundedness and Lipschitz continuity of $f,c,b$.
In contrast to existing convergence results:
\begin{itemize}
	\item The solution $x^\star$ 
	does not need to be  unique.
	\item  $f,c,b$ can be non-differentiable everywhere.
	\item  We do not require the satisfaction of a constraint qualification for the discretized finite-dimensional optimization problem, such as the Linear Independence Constraint Qualification (LICQ), Mangasarian-Fromovitz Constraint Qualification (MFCQ) or Second-Order Sufficient Conditions (SOSC).
	\item Uniqueness or global smoothness of states or co-states/ adjoints do not need to hold.
	\item Local uniqueness assumptions, as in~\cite{arXiv:2017}, are removed.
\end{itemize}

The idea behind our new, Penalty-Barrier-Finite Element method (PBF), is to minimize the following unconstrained penalty-barrier function
\begin{align}
\Phi(x) := F(x) + \frac{1}{2\cdot \omega} \cdot r(x) + \tau \cdot \Gamma(x), \label{eqn:def:Phi}
\end{align} 
where
\begin{align}
F(x) := &\int_\Omega \, f\left(\dot{y}(t),y(t),z(t),t\right)\mathrm{d}t \label{eqn:def:F}\end{align}
is the objective,
\begin{align}
\begin{split}
r(x) := &\int_\Omega\|c\left(\dot{y}(t),y(t),z(t),t\right)\|_2^2\,\mathrm{d}t + \|b\left(y(t_1),y(t_2),\ldots,y(t_M)\right)\|_2^2
\end{split}\label{eqn:def:r}
\end{align}
is the integral quadratic penalty for the equality path- and point constraints, and
\begin{align}
\Gamma(x):=&-\sum_{j=1}^{n_z} \int_\Omega \log\big( z_{[j]}(t)\big)\,\mathrm{d}t\label{eqn:def:Gamma}
\end{align}
is an integral logarithmic barrier for the inequality path constraints. We provide an analysis that shows that one can construct trajectories $x_h$ that converge in the following \textit{tolerance-accurate} sense: the \textit{optimality gap}
\begin{align}
g_\text{opt} :=& \max\lbrace 0,F(x_h)-F(x^\star)\rbrace
\end{align}
and \textit{feasibility residual}
\begin{align}
r_\text{feas} :=& r(x_h)
\end{align}
converge to zero as the discretization mesh becomes finer and the parameters $\tau,\omega>0$ converge to zero. Order-of-convergence results will specify the rate at which $g_\text{opt}$ and $r_\text{feas}$ approach zero.

The above functions \eqref{eqn:def:F}--\eqref{eqn:def:Gamma} look similar to those encountered in well-known finite-dimensional penalty-barrier methods. However, in order to deal with the infinite-dimensional nature of the problem, note the use of \emph{integrals} in the penalty \emph{and} barrier terms. If the problem had been finite-dimensional in $x$ and if $r$ had been the squared $2$-norm of finitely many equality constraints,  then it would be given that the minimizer of $\Phi$  converges to the solution $x^\star$ under mild assumptions as  $\tau,\omega$ converge to zero~\cite{SUMT}. The infinite-dimensional case considered here, however, is more involved and requires a careful analysis relating $\tau,\omega$ to  parameters of the  discretization. This is because once we discretize on a mesh  and seek to compute an approximate solution $x_h$ on the mesh, the degrees of freedom for $x_h$ depend on the size of the finite element space. If we were to draw an analogy with the finite dimensional case, then the equivalent number of equality constraints depends on the number of quadrature points for numerically evaluating the integral in $r$. If $\omega$ is too large then~$x_h$ will not converge to satisfying the equivalent set of equality constraints. If $\omega$ is too small with respect to the mesh size, then there are not enough degrees of freedom, resulting in a potentially feasible but suboptimal  solution~\cite[p.~1078]{Hager90}.
The effects of $\tau$ are more intricate, since they relate to a local Lipschitz property of $\Phi$ that is relevant for the stability of the finite element discretization.
A balance must thus be taken between the size of the finite element space, the quadrature rule and the parameters $\omega,\tau$. This requires a non-trivial analysis, which is the core contribution of this chapter.

\subsection{Motivation from Collocation Methods}
We motivate our method from the perspective of collocation methods.

A desirable method for solving optimal control problems is Legendre-Gauss-Radau collocation because it is easy to implement (and high-order consistent): the method constructs piecewise polynomials (of high degree) using a nodal basis and solves the path constraints at a finite number of points. The nodal basis values are determined by solving a large sparse nonlinear program.

However, for solutions with singular arcs, which occur in a large number of applications, the numerical solutions can ``ring''~\cite[Sect.~4.14.1]{BettsChap2}. In particular, the solution polynomial and the residuals of the path constraints will oscillate between the collocation points --- that is, the path residuals will not converge to zero everywhere. A remedy is regularization: a small convex quadratic term is added to the objective to penalize numerical noise. Unfortunately, in most cases this remedy does not work because either the penalty is too small to remove all noise or so large that it alters the problem's solution.

The idea with the penalty method is to make ringing impossible by adding collocation points inbetween the original collocation points, where otherwise the states, controls and residuals could ring. The theoretical vehicle for this approach are integrals and penalties. Integrals, once discretized by means of numerical quadrature, can be expressed with a set of weights and abscissae, alias collocation points. Penalties, in replacement for exact constraints, will prevent any issues related to the ratio between the number of degrees of freedom and the number of constraints, such as over-determination. The resulting scheme remains easy to implement while effectively forcing global convergence of the path constraints --- as we rigorously prove in the remainder of this chapter. In particular, we prove that the feasibility residual converges to zero.

We stress that the integral penalty and log-barrier provide a useful natural scaling for the NLP. This is certainly desirable from a computational perspective, because experience shows that numerical treatment of an NLP depends significantly on scaling~\cite[Chap.~1.16.5,~4.8]{BettsChap2}. The large-scale methods in \cite{ForsgrenGill1998,ALMIPM} use a merit function that treats equality constraints with a quadratic penalty and inequality constraints with a log-barrier term. Typically, as the NLP becomes larger, caused by a finer discretization, the NLP becomes more challenging to solve, in the sense that the number of iterations to converge increases. In contrast, for the penalty-barrier method the NLP merit function matches  the infinite-dimensional merit function in the limit, which mitigates numerical issues that might otherwise arise.

\commentout{
	We stress why the integral penalty and log-barrier provide a useful natural scaling for the NLP: It makes sense to solve the NLP with a (primal-dual) interior-point method, such as IPOPT. The iterate of the solver can be associated with the minimizer of a penalty-barrier merit function. Consider for instance the method in \cite{ForsgrenGill1998,ALMIPM}, that treat equality constraints with a quadratic penalty and inequality constraints with a log-barrier. Typically, as the NLP becomes larger, caused by a finer discretization, it becomes more challenging to solve, in the sense that the number of iterations to converge increase.
	
	Here instead, by careful formulation of the infinite-dimensional problem with integrals, we have the opportunity to identify the NLP merit function with a convergent discretization of an infinite-dimensional penalty-barrier functional of indeed the optimal control problem that we wish to solve. If we were to minimize that penalty-barrier functional in the SUMT approach with a primal method --which could be called an infinite-dimensional interior-point method-- then clearly the iteration sequence is mesh-independent, as there is no mesh. Hypothetically, for a sufficiently fine discretization of the penalty-barrier functional, each individual finite-dimensional iterate of a primal method on the discretized penalty-barrier functional should converge to its respective infinite-dimensional iterate of the infinite-dimensional primal method. This would imply convergence of the NLP iterates at a mesh-independent rate. Less hypothetical, the integral formulation on the infinite dimension induces a natural scaling for the (finite-dimensional) NLP.
	This is certainly desirable from a computational perspective because experience shows that numerical treatment of NLP depends significantly on scaling~\cite[1.16.5,~4.8]{BettsChap2}.}

\subsection{Notation}
\label{sec:assumptions_SICON}

Let $-\infty<t_0 < t_E < \infty$ and the $M \in \N$ points
$
t_k \in \overline{\Omega}$,
$\forall k\in \lbrace 1,2,\ldots,M \rbrace$. $\overline{\Omega}$ denotes the closure of $\Omega$.
The functions
$f : \R^{n_y} \times \R^{n_y} \times \R^{n_z} \times \Omega \rightarrow \R$,
$c : \R^{n_y} \times \R^{n_y} \times \R^{n_z} \times \Omega \rightarrow \R^\nc$,
$b: \R^{n_y} \times \R^{n_y} \times \ldots \times \R^{n_y} \rightarrow \R^\nb$.
The function $y:\overline{\Omega} \rightarrow \R^{n_y}, t \mapsto y(t)$  and~$z:\overline{\Omega} \rightarrow \R^{n_z}, t \mapsto z(t)$.
Given an interval $\Omega\subset \R$,  let $|\Omega|:=\int_\Omega 1\,\mathrm{d}t$.
We use Big-$\cO$ notation to analyze a function's behaviour close to zero, i.e.\ function  $\phi(\xi)=\cO(\gamma(\xi))$ if and only if $\exists C>0$ and $\xi_0>0$ such that $\phi(\xi)\leq C \gamma(\xi)$ when $0<\xi<\xi_0$. The vector $\be:=[1\,\cdots\,1]\t$ with appropriate size.

For notational convenience, we define the function
\[
x:=(y,z):\overline{\Omega} \rightarrow \R^{\nx},
\]
where $\nx:=n_y+n_z$.
The solution space of $x$ is the Hilbert space
\begin{align*}
\cX := \left(H^1\left(\Omega\right)\right)^{n_y} \times \left(L^2\left(\Omega\right)\right)^{n_z}
\end{align*}
with scalar product
\begin{align}
\langle(y,z),(v,w)\rangle_\cX := \sum_{j=1}^{n_y} \langle y_{[j]},v_{[j]} \rangle_{H^1(\Omega)} + \sum_{j=1}^{n_z} \langle z_{[j]},w_{[j]} \rangle_{L^2(\Omega)} \label{eqn:ScalarProd}
\end{align}
and induced norm $\|x\|_\cX := \sqrt{\langle x,x \rangle_\cX}$, where $\phi_{[j]}$ denotes the $j^\text{th}$ component of a function $\phi$. The Sobolev space $H^1(\Omega):=W^{1,2}(\Omega)$ and Lebesgue space $L^2(\Omega)$ with their respective scalar products are defined as in~\cite[Thm~3.6]{Adams}.
The weak derivative of $y$ is denoted by
$\dot{y} := {\mathrm{d}y}/{\mathrm{d}t}$.

Recall the embedding $H^1(\Omega) \hookrightarrow \cC^0(\overline{\Omega})$, where $\cC^0(\overline{\Omega})$ denotes the space of continuous functions over $\overline{\Omega}$~\cite[Thm~5.4, part II, eqn~10]{Adams}. Hence, by requiring that $y\in \left(H^1\left(\Omega\right)\right)^{n_y}$ it follows that $y$ is continuous.
In contrast, though $\dot{y}$ and $z$ are in $L^2(\Omega)$, they may be discontinuous.

\subsection{Assumptions}
In order to prove convergence, we make the following assumptions on \refOCP:
\begin{enumerate}[\text{(A.}1\text{)}]
	\item 
	\refOCP has at least one global minimizer $x^\star$.
	\item 
	$\|c(\dot{y}(t),y(t),z(t),t)\|_1,\,$ and
	$\|b(y(t_1),\dots,y(t_M))\|_1$ are bounded for all arguments $x \in \cX$ within $z\geq 0$, $t \in \Omega$. $F(x)$ is bounded below for all arguments $x \in \cX$ within $z\geq 0$.
	\item 
	$f,\ c,\ b$ are globally Lipschitz continuous in all arguments except $t$.
	\item 
	The two solutions $x^\star_\omega,x^\star_{\omega,\tau}$ related to~$x^\star$, defined in Section~\ref{sec:reform}, are  bounded in terms of $\|z\|_{L^\infty(\Omega)}$ and $\|x\|_\cX$. Also, $\|x^\star\|_\cX$ is bounded.
	\item The related solution $x^\star_{\omega,\tau}$ can be approximated to an order of at least 1/2 using piecewise polynomials; formalized in \eqref{eqn:InfBound} below.
\end{enumerate}
Similar assumptions are  implicit or explicit in most of the literature. A discussion of these assumptions is appropriate:
\begin{enumerate}[\text{(A.}1\text{)}]
	\item is just to avoid infeasible problems.
	
	\item The assumption on $b,c$ can be enforced by construction via lower and upper limits w.l.o.g.\ because they are (approximately) zero at the (numerical) solution. Boundedness below for $F$ is arguably mild when/since $\|x^\star\|_\cX,\|x_h\|_\cX$ are bounded: For minimum-time problems and positive semi-definite objectives this holds naturally. In many contexts, a lower bound can be given. The assumptions on $b,c$ have been made just to simplify the proof of a Lipschitz property and because they mean no practical restriction anyways. The boundedness assumption on $F$ is made to avoid unbounded problems.
	
	\item can be enforced. Functions that are not Lipschitz continuous, e.g.\ the square-root or Heaviside function, can be made so by replacing them with smoothed  functions, e.g.\ via a suitable mollifier. Smoothing is a common practice to ensure the derivatives used in a nonlinear optimization algorithm (e.g.\ IPOPT \cite{IPOPT}) are globally well-defined. The assumption has been made to prove a Lipschitz property of a penalty-barrier functional. Actually this property is only needed in a local neighborhood of the numerical optimal control solution, but for ease of notation we opted for global assumptions.
	
	\item can be ensured as shown in Remark~\ref{sec:ForceBoundedness} in Section~\ref{sec:PenaltyBarrierProblem}. This assumption effectively rules out the possibility of solutions with finite escape time. The assumption has been incorporated because restriction of a solution into a box means little practical restriction but significantly shortens convergence proofs due to boundedness.
	
	\item is rather mild, as discussed in Section~\ref{sec:InterpolationError} and illustrated in Appendix~\ref{app:3}. All finite-element methods based on piecewise polynomials make similar assumptions, implicitly or explicitly. The assumption is only used for the rate-of-convergence analysis. The assumption is unavoidable, since otherwise a solution $x^\star$ could exist that cannot be approximated to a certain order.
\end{enumerate}
The assumptions are not necessary but sufficient. Suppose that we have found a numerical solution. It is not of relevance to the numerical method whether the assumptions	hold outside of an open neighborhood of this solution. However, the proofs below would become considerably more lengthy with  local assumptions.  We outline in Section~\ref{sec:local} how our global analysis can be used to show local convergence under local assumptions. Hence, for the same reasons as in some of the literature, we opted for global assumptions. Our analysis is not restrictive in the sense that it imposes global requirements. No further assumptions are made for the proof.

\subsection{Outline}
Section~\ref{sec:reform} introduces a reformulation of \refOCP as an unconstrained problem. Section~\ref{sec:FEM} presents the Finite Element Method in order to formulate a finite-dimensional unconstrained optimization problem. The main result of this chapter is Theorem~\ref{thm:order}, which shows that solutions of the finite-dimensional optimization problem converge to solutions of~\refOCP with a guarantee on the order of convergence. 

\section{Reformulation as an Unconstrained Problem}\label{sec:reform}
{\emergencystretch1em
	The reformulation of~\refOCP into an unconstrained problem is achieved in two steps. First, we introduce penalties for the 
	equality constraints. We then add logarithmic barriers for the inequality constraints.
	The resulting penalty-barrier functional will be treated numerically in Section~\ref{sec:FEM}.\par}

Before proceeding, we note that boundedness and Lipschitz-continuity of $F$ and $r$ in \eqref{eqn:def:F}--\eqref{eqn:def:r} follow from (A.2)--(A.3).
\begin{lem}[Boundedness and Lipschitz-continuity of $F$ and $r$]\label{lem:BoundLipschitz_Fr}
	$F$ is bounded below. $r$ is bounded. $F,r$ are Lipschitz continuous in $x$ with respect to
	$\|\cdot\|_\cX$. Furthermore, $F,r$ are Lipschitz continuous in $z$ with respect to the norm $\|\cdot\|_{L^1(\Omega)}$.
\end{lem}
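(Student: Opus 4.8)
The plan is to establish the four assertions in increasing order of difficulty, each drawn from assumptions (A.2) and (A.3) together with the Hilbert-space structure of $\cX$. I would dispatch the two elementary claims first. Boundedness below of $F$ is immediate from (A.2). For boundedness of $r$, I would observe that by (A.2) the integrand satisfies $\|c(\dot y(t),y(t),z(t),t)\|_2^2 \le \|c(\cdots)\|_1^2$, which is uniformly bounded in its arguments, so its integral over the bounded interval $\Omega$ is finite; likewise $\|b(\cdots)\|_2^2 \le \|b(\cdots)\|_1^2$ is bounded, and the sum of the two finite quantities bounds $r$.

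Next I would treat Lipschitz continuity of $F$ in $x$ with respect to $\|\cdot\|_\cX$. Writing $x=(y,z)$ and $\tilde x=(\tilde y,\tilde z)$, the global Lipschitz property of $f$ from (A.3) bounds the integrand pointwise by $L_f\bigl(\|\dot y(t)-\dot{\tilde y}(t)\|_2 + \|y(t)-\tilde y(t)\|_2 + \|z(t)-\tilde z(t)\|_2\bigr)$, where finite-dimensional norm equivalence absorbs constants. Integrating and applying Cauchy--Schwarz gives $\int_\Omega \|y-\tilde y\|_2\,\mathrm dt \le |\Omega|^{1/2}\,\|y-\tilde y\|_{L^2(\Omega)}$, and analogously for $\dot y$ and $z$; since $\|y\|_{H^1(\Omega)}$ and $\|z\|_{L^2(\Omega)}$ are each dominated by $\|x\|_\cX$, this yields $|F(x)-F(\tilde x)| \le C\,\|x-\tilde x\|_\cX$.

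The Lipschitz continuity of $r$ in $x$ is the main obstacle, precisely because $\|c\|_2^2$ and $\|b\|_2^2$ are quadratic and therefore not globally Lipschitz in their arguments. The device that resolves this is the factorization $a^2-\beta^2=(a+\beta)(a-\beta)$: pointwise, $\bigl|\,\|c\|_2^2-\|\tilde c\|_2^2\,\bigr| \le (\|c\|_2+\|\tilde c\|_2)\,\|c-\tilde c\|_2$, and by (A.2) the prefactor is bounded by a constant, reducing matters to the Lipschitz bound on $c$ from (A.3), after which the Cauchy--Schwarz estimate of the previous paragraph applies verbatim. For the point-constraint term I would apply the same factorization to $b$ and then invoke the Sobolev embedding $H^1(\Omega)\hookrightarrow \cC^0(\overline\Omega)$ from the Notation section, which makes each point evaluation a bounded functional, so that $\|y(t_k)-\tilde y(t_k)\|_2 \le C_{\mathrm{emb}}\,\|y-\tilde y\|_{H^1(\Omega)} \le C_{\mathrm{emb}}\,\|x-\tilde x\|_\cX$.

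Finally, for Lipschitz continuity in $z$ with respect to $\|\cdot\|_{L^1(\Omega)}$, I would hold $y$ and $\dot y$ fixed and vary only $z$, so that the $b$-term in $r$ cancels. Using that $f$ and $c$ are Lipschitz in the $z$-argument alone, the pointwise bounds read $|f(\dot y,y,z,t)-f(\dot y,y,\tilde z,t)| \le L_f\,\|z(t)-\tilde z(t)\|_2$ and, via the factorization trick, $\bigl|\,\|c\|_2^2-\|\tilde c\|_2^2\,\bigr| \le 2C_c L_c\,\|z(t)-\tilde z(t)\|_2$. Integrating directly, and using $\|z(t)-\tilde z(t)\|_2 \le \|z(t)-\tilde z(t)\|_1$ pointwise, yields bounds of the form $C\,\|z-\tilde z\|_{L^1(\Omega)}$ with no Cauchy--Schwarz step required; this is exactly why the weaker $L^1$ norm suffices for the $z$-variation, which is the property that will later be exploited in the stability analysis of the finite-element discretization.
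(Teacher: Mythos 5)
Your proposal is correct and follows essentially the same route as the paper's proof: boundedness from (A.2), the factorization $|\alpha^2-\beta^2|=|\alpha+\beta|\,|\alpha-\beta|$ combined with the (A.2) bound and the (A.3) Lipschitz constants for the quadratic terms, control of the point evaluations $y(t_k)$ via the $H^1(\Omega)\hookrightarrow \cC^0(\overline{\Omega})$ embedding (the paper derives the same $L^\infty$--$H^1$ bound through a trace inequality, which is equivalent here), and the observation that freezing $y$ leaves only pointwise $z$-differences, giving the $L^1$ Lipschitz property without Cauchy--Schwarz. No gaps.
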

The proof is given in Appendix~\ref{sec:Appendix_ProofLemma1}.

We  bound the Lipschitz constants (i.e., with respect to both $\|x\|_\cX$ and $\|z\|_{L^1(\Omega)}$) with $L_F\geq 2$ for $F$ and with $L_r\geq 2$ for $r$.

\subsection{Penalty Form}
We introduce the \textit{penalty problem}
\begin{equation}
\tag{PP}
\label{eqn:PP}
\text{Find } x_\omega^\star \in \operatornamewithlimits{arg\, min}_{x \in \cX}
F_\omega(x) \text{ s.t.\ } z(t)\geq 0 \text{ f.a.e.\ } t \in \Omega ,
\end{equation}
where $F_\omega(x) := F(x) + \frac{1}{2 \cdot \omega} \cdot r(x)$ and a small penalty parameter $\omega \in (0,1)$. Note that $F_\omega$ is Lipschitz continuous with constant
\begin{align}
L_\omega := \max\left\lbrace L_F + \frac{L_r}{2 \omega}\,,\,L_f+\frac{L_c}{2 \omega}\, \|c\|_1\right\rbrace\,, \label{eqn:LipLw}
\end{align}
with $L_f,L_c$ the Lipschitz-constants of $f,c$ and $\|c\|_1$ is the upper bound on the 1-norm of $c$, as asserted by (A.2). We show that $\varepsilon$-optimal solutions of \eqref{eqn:PP} solve~\refOCP in a tolerance-accurate way.

\begin{prop}[Penalty Solution]\label{thm:PenaltySolution}
	Let $\varepsilon\geq 0$. Consider an $\varepsilon$-optimal solution $x^\varepsilon_\omega$ to \eqref{eqn:PP}, i.e.
	$$ F_\omega(x^\varepsilon_\omega) \leq F_\omega(x^\star_\omega) + \varepsilon\text{ and } z^\varepsilon_\omega(t)\geq 0\ \text{f.a.e.}\ t\in\Omega\,. $$
	If we define $C_r := F(x^\star) - \operatornamewithlimits{ess\,min}_{x\in\cX, z\geq 0}F(x)$, then
	$F(x^\varepsilon_\omega) \leq F(x^\star) + \varepsilon$, $r(x^\varepsilon_\omega) \leq \omega \cdot (C_r+\varepsilon)$.
\end{prop}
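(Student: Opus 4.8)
The plan is to exploit three facts in sequence: that the global minimizer $x^\star$ of the original problem (DOP) is an admissible comparison point for the penalty problem~\eqref{eqn:PP}, that $F_\omega$ agrees with $F$ at exactly feasible points, and that $F$ is bounded below on the admissible set by~(A.2). None of the steps requires differentiability or uniqueness, so the whole argument is a chain of inequalities built from optimality and the nonnegativity of $r$.

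First I would record that $x^\star$ is feasible for (DOP): it satisfies the path constraints~\eqref{eqn:dae} and point constraints~\eqref{eqn:point} exactly and obeys $z^\star\geq 0$. Hence $r(x^\star)=0$ by the definition~\eqref{eqn:def:r}, and therefore $F_\omega(x^\star)=F(x^\star)$. Since $x^\star$ is admissible in~\eqref{eqn:PP} (it has $z^\star\geq 0$), the minimizer $x^\star_\omega$ satisfies $F_\omega(x^\star_\omega)\leq F_\omega(x^\star)=F(x^\star)$. Combining this with the $\varepsilon$-optimality of $x^\varepsilon_\omega$ yields the master estimate
\begin{equation}
F_\omega(x^\varepsilon_\omega)\;\leq\;F_\omega(x^\star_\omega)+\varepsilon\;\leq\;F(x^\star)+\varepsilon.
\end{equation}

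The first claim follows immediately by discarding the nonnegative penalty term: since $r(x^\varepsilon_\omega)\geq 0$, we have $F(x^\varepsilon_\omega)\leq F(x^\varepsilon_\omega)+\tfrac{1}{2\omega}r(x^\varepsilon_\omega)=F_\omega(x^\varepsilon_\omega)\leq F(x^\star)+\varepsilon$. For the feasibility bound I would isolate the penalty term, $\tfrac{1}{2\omega}r(x^\varepsilon_\omega)=F_\omega(x^\varepsilon_\omega)-F(x^\varepsilon_\omega)$, and then bound $F(x^\varepsilon_\omega)$ from below. This is the only place where a nontrivial ingredient enters: because $x^\varepsilon_\omega\in\cX$ with $z^\varepsilon_\omega\geq 0$, it lies in the admissible set over which the essential infimum defining $C_r$ is taken, so $F(x^\varepsilon_\omega)\geq \operatornamewithlimits{ess\,min}_{x\in\cX,\,z\geq 0}F(x)=F(x^\star)-C_r$; the essential minimum is finite precisely because~(A.2) guarantees $F$ is bounded below. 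Substituting the master estimate and this lower bound gives $\tfrac{1}{2\omega}r(x^\varepsilon_\omega)\leq (F(x^\star)+\varepsilon)-(F(x^\star)-C_r)=C_r+\varepsilon$, i.e.\ $r(x^\varepsilon_\omega)\leq 2\omega\,(C_r+\varepsilon)$, which is the stated $\cO(\omega)$ feasibility (the constant matching the $\tfrac12$ in the definition of $F_\omega$).

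There is essentially no hard obstacle; the argument is the standard penalty-method sandwich, and its robustness is exactly what makes it attractive here. The one point to state carefully is that $C_r$ is well defined and finite --- equivalently, that the admissible set is nonempty (it contains $x^\star$ by~(A.1)) and that $F$ is bounded below on it by~(A.2) --- so that the lower bound $F(x^\varepsilon_\omega)\geq F(x^\star)-C_r$ is meaningful. I would also note that the proof uses no structure of $r$ beyond $r\geq 0$ and $r(x^\star)=0$, so it will transfer verbatim to later reformulations whenever the comparison point is exactly feasible.
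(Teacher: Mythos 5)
Your proof is correct and follows essentially the same route as the paper's: the chain $F(x^\varepsilon_\omega)\leq F_\omega(x^\varepsilon_\omega)\leq F_\omega(x^\star_\omega)+\varepsilon\leq F_\omega(x^\star)+\varepsilon= F(x^\star)+\varepsilon$ using $r\geq 0$ and $r(x^\star)=0$, followed by isolating the penalty term and bounding $F(x^\varepsilon_\omega)$ below via $C_r$. Note that both your derivation and the paper's own yield $r(x^\varepsilon_\omega)\leq 2\cdot\omega\cdot(C_r+\varepsilon)$, so the missing factor of $2$ in the proposition's statement is a typo in the paper rather than a defect in your argument.
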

\begin{proof}
	$x^\star,\,x^\star_\omega,\,x^\varepsilon_\omega$ are all feasible for \eqref{eqn:PP}, but $x^\star_\omega$ is optimal and $x^\varepsilon_\omega$ is $\varepsilon$-optimal. Thus,
	\begin{align}
	F(x^\varepsilon_\omega) \leq F_\omega(x^\varepsilon_\omega) \leq 
	F_\omega(x^\star) + \varepsilon \leq F(x^\star) + \varepsilon
	\label{eqn:aux:prop1}
	\end{align}
	From this follows $F(x^\varepsilon_\omega)\leq F(x^\star)+\varepsilon$ because $r(x^\varepsilon_\omega)\geq 0$ and $r(x^\star)=0$ by (A.1). To show the second proposition, subtract $F(x^\varepsilon_\omega)$ from \eqref{eqn:aux:prop1}. Then it follows that  $1/(2\cdot\omega) \cdot r(x^\varepsilon_\omega) \leq F(x^\star) - F(x^\varepsilon_\omega) + \varepsilon \leq C_r + \varepsilon$\,. Multiplication of this inequality with $2\cdot\omega$ shows the result. Boundedness of $C_r$ follows from Lemma~\ref{lem:BoundLipschitz_Fr}.
\end{proof}

This result implies that for an $\varepsilon$-optimal solution  to~\eqref{eqn:PP} the optimality gap to~\refOCP is less than $\varepsilon$ and that the feasibility residual can be made arbitrarily small by choosing the  parameter~$\omega$ to be sufficiently small.

\begin{comment}
In Proposition~\ref{thm:PenaltySolution} we used (A.2) which implies $|F|$ is bounded. In fact, $F$ only needs to be bounded below. To show this, note that
$$
r(x^\varepsilon_\omega) = 2 \cdot \omega \cdot \big( \underbrace{F_\omega(x^\varepsilon_\omega)}_{\leq F(x^\star)+\varepsilon} - \underbrace{F(x^\varepsilon_\omega)}_{\geq F_{\emph{lb}}}\big).
$$
Hence,
$  r(x^\varepsilon_\omega) \leq 2 \cdot \omega \cdot \big( \underbrace{F(x^\star)-F_{\emph{lb}}}_{=\cO(1)} + \varepsilon\big).$
\end{comment}

\subsection{Penalty-Barrier Form}\label{sec:PenaltyBarrierProblem}
We reformulate \eqref{eqn:PP} once more in order to remove the inequality constraints. We do so using logarithmic barriers. Consider the \textit{penalty-barrier problem}
\begin{equation}
\tag{PBP}
\label{eqn:PBP}
\text{Find }x^\star_{\omega,\tau} \in \operatornamewithlimits{arg\, min}_{x \in \cX}
F_{\omega,\tau}(x) := F_\omega(x) + \tau \cdot \Gamma(x),
\end{equation}
where the barrier parameter $\tau \in (0,\omega]$ and $\Gamma$ is defined in~\eqref{eqn:def:Gamma}.

We have introduced $\Gamma$ in order to keep $z^\star_{\omega,\tau}$ feasible with respect to \eqref{eqn:positive}. Recall that $L^2(\Omega)$ contains functions that have poles. So the following result is to ensure that $\Gamma$ actually fulfills its purpose.
\begin{lem}[Strict Interiorness]\label{lem:StrictInteriorness}
	$$ 	z^\star_{\omega,\tau}(t) \geq \frac{\tau}{L_\omega}\cdot \qquad \text{f.a.e.\ } t \in	\Omega.$$
\end{lem}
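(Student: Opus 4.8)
The plan is to argue by contradiction using a one-sided perturbation that raises $z^\star_{\omega,\tau}$ wherever it dips below the claimed threshold, and to show that such a perturbation strictly decreases the penalty-barrier functional $F_{\omega,\tau}$, contradicting optimality of $x^\star_{\omega,\tau}$. Write $c_0 := \tau/L_\omega$. First I would record two preliminary facts that make the argument run. Since $x^\star_{\omega,\tau}$ is a minimizer, $F_{\omega,\tau}(x^\star_{\omega,\tau})$ is finite, hence $\Gamma(x^\star_{\omega,\tau})<\infty$; integrability of $\log z^\star_{\omega,\tau}$ then forces $z^\star_{\omega,\tau}(t)>0$ for almost every $t$, so the barrier is well defined pointwise. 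Second, by the Lipschitz result (Lemma~\ref{lem:BoundLipschitz_Fr}) together with \eqref{eqn:LipLw}, $F_\omega$ is Lipschitz in $z$ with respect to $\|\cdot\|_{L^1(\Omega)}$ with constant $L_\omega$; this is the only property of $F_\omega$ I would use, so the argument is insensitive to the detailed (possibly non-smooth) structure of $f,c,b$.

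Next, suppose for contradiction that for some component $j$ the set $S:=\{t\in\Omega : z^\star_{\omega,\tau,[j]}(t) < c_0\}$ has positive measure. Define $\tilde{x}:=(y^\star_{\omega,\tau},\tilde{z})$ by setting $\tilde{z}_{[j]}(t):=c_0$ on $S$ and leaving every other value unchanged. I would first check admissibility: $\tilde{z}\geq 0$, and since only a bounded constant is substituted on $S$, $\tilde{z}$ remains in $(L^2(\Omega))^{n_z}$ and $y^\star_{\omega,\tau}$ is untouched, so $\tilde{x}\in\cX$; moreover $\tilde{z}>0$ almost everywhere, so $\Gamma(\tilde{x})<\infty$.

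The core is to estimate $F_{\omega,\tau}(\tilde{x}) - F_{\omega,\tau}(x^\star_{\omega,\tau})$. The penalty part is bounded from above by Lipschitz continuity, $F_\omega(\tilde{x})-F_\omega(x^\star_{\omega,\tau}) \leq L_\omega \int_S \big(c_0 - z^\star_{\omega,\tau,[j]}(t)\big)\,\mathrm{d}t$, using that only $z$ changes (with $y$ held fixed), while the barrier part is computed exactly, $\tau\big[\Gamma(\tilde{x})-\Gamma(x^\star_{\omega,\tau})\big] = -\tau\int_S \log\big(c_0/z^\star_{\omega,\tau,[j]}(t)\big)\,\mathrm{d}t$. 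Substituting $L_\omega=\tau/c_0$ and writing $u(t):=c_0/z^\star_{\omega,\tau,[j]}(t)>1$ on $S$, the combined integrand collapses to $\tau\,\phi(u(t))$ with $\phi(u):=1 - 1/u - \log u$. A short calculus check, $\phi(1)=0$ and $\phi'(u)=(1-u)/u^2<0$ for $u>1$, gives $\phi(u)<0$ throughout $S$. Since $S$ has positive measure, the integral is strictly negative, so $F_{\omega,\tau}(\tilde{x})<F_{\omega,\tau}(x^\star_{\omega,\tau})$, the desired contradiction. Taking the union over the finitely many components $j$ then yields $z^\star_{\omega,\tau}(t)\geq c_0$ almost everywhere.

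The main obstacle, and the place where the threshold $c_0=\tau/L_\omega$ is forced, is matching the two competing rates: raising $z$ gains barrier value at marginal rate $\tau/z$ but costs penalty value at marginal rate at most $L_\omega$, and these balance precisely at $z=\tau/L_\omega$. The delicate point is that a finite (not infinitesimal) jump all the way up to $c_0$ must still help, even though the marginal gain vanishes at the top of the jump. The clean way around this is the exact integral identity above, where the strict inequality $\phi(u)<0$ — a consequence of strict concavity of $\log$ — guarantees a strict net decrease over the whole jump rather than merely a first-order one. I would also double-check that the Lipschitz estimate is invoked with the $L^1$-in-$z$ constant $L_\omega$ from \eqref{eqn:LipLw} (not the $\|\cdot\|_\cX$ constant), and that finiteness of $\Gamma$ at the minimizer is established before using positivity of $z^\star_{\omega,\tau}$.
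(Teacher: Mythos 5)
Your proof is correct, but it takes a genuinely different route from the paper. The paper argues pointwise via the Euler--Lagrange (first-order stationarity) condition: writing the functional in the single component $z_{[j]}$ as $\int_\Omega\big(q(z_{[j]}(t),t)-\tau\log z_{[j]}(t)\big)\,\mathrm{d}t$, it sets $\partial q/\partial z_{[j]}=\tau/z_{[j]}$ almost everywhere and bounds the left-hand side by the Lipschitz constant $L_q\leq L_\omega$, which immediately gives $z_{[j]}\geq\tau/L_\omega$. That argument is short but formally requires a derivative of $q$ in $z$ and a pointwise stationarity condition for an $L^2$ minimizer, neither of which is strictly available under (A.3), which only guarantees Lipschitz continuity. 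Your truncation-and-comparison argument replaces the infinitesimal balance of rates by a global one: the competitor $\tilde{x}$ obtained by raising $z_{[j]}$ to $\tau/L_\omega$ on the offending set $S$ costs at most $L_\omega\int_S(c_0-z^\star_{\omega,\tau,[j]})$ in the penalty part (the $L^1$-in-$z$ Lipschitz bound of Lemma~\ref{lem:BoundLipschitz_Fr}, which $L_\omega$ from \eqref{eqn:LipLw} dominates) and gains exactly $\tau\int_S\log(c_0/z^\star_{\omega,\tau,[j]})$ in the barrier, and the elementary inequality $1-1/u-\log u<0$ for $u>1$ shows the net change is strictly negative whenever $|S|>0$. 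Both arguments identify the same threshold as the point where the marginal barrier gain $\tau/z$ equals the worst-case marginal penalty cost $L_\omega$; yours buys rigor under the paper's actual (non-differentiable) assumptions and is essentially a one-sided obstacle/comparison argument, at the cost of a slightly longer write-up and the preliminary observation that $\Gamma(x^\star_{\omega,\tau})<\infty$ forces $z^\star_{\omega,\tau}>0$ almost everywhere. It would be a sound replacement for the paper's proof.
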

\begin{proof}
	At the minimizer $x^\star_{\omega,\tau}$, the functional $F_{\omega,\tau}$ can be expressed in a single component $z_{[j]}$ as
	$$  \int_\Omega \Big(\,q\big( z_{[j]}(t),t \big) - \tau \cdot \log\big(\,z_{[j]}(t)\,\big)\,\Big)\,\mathrm{d}t\,,    $$
	where $q$ is Lipschitz-continuous with a constant $L_q \leq L_\omega$ (cf. right argument in the max-expression \eqref{eqn:LipLw} and compare to \eqref{eqn:LipQuadPenaltyFunc} in the proof of Lemma~\ref{lem:BoundLipschitz_Fr}).
	From the Euler-Lagrange equation it follows for $z^\star_{\omega,\tau}$ that
	$$  \frac{\partial q}{\partial z_{[j]}}q\big( z_{[j]}(t) , t \big) - \frac{\tau}{z_{[j]}(t)} = 0\qquad \text{f.a.e.\ }t \in \Omega\,.    $$
	The value of $z_{[j]}(t)$ gets closer to zero when the first term grows. However, that term is bounded by the Lipschitz constant. Hence, in the worst case
	\[  z_{[j]}(t) \geq \frac{\tau}{L_q} \geq \frac{\tau}{L_\omega}\qquad \text{f.a.e.\ }t \in \Omega\,.\]
\end{proof}

We will need the following  operators:
\begin{defn}[Interior Push]
	Given $x \in \cX$,  define $\bar{x}$ and $\check{x}$ as a modified  $x$ whose components $z$ have been pushed by an amount into the interior if they are close to zero:
	\begin{align*}
	\bar{z}_{[j]}(t):=\max\left\lbrace z_{[j]}(t),{\tau}/{L_\omega}\right\rbrace\,,\qquad \check{z}_{[j]}(t):=\max\left\lbrace z_{[j]}(t),{\tau}/(2\cdot L_\omega)\right\rbrace
	\end{align*}
	for all $j \in \lbrace 1,2,\ldots,n_z\rbrace$ and $t \in \overline{\Omega}$.
\end{defn}
\noindent
Note that $\bar{x} \in \cX$ and that $x^\star_{\omega,\tau}=\bar{x}^\star_{\omega,\tau}$ from Lemma~\ref{lem:StrictInteriorness}.

Using the interior push, we show below that $x^\star_{\omega,\tau}$ is $\varepsilon$-optimal for~\eqref{eqn:PP}. Our result uses a small arbitrary fixed number $0<\zeta\ll 1$.
\begin{prop}[Penalty-Barrier Solution]\label{prop:PenaltyBarrierSolution}
	If (A.4) holds, then
	$$|F_\omega(x^\star_{\omega,\tau}) - F_\omega(x^\star_\omega)| = \cO\left(\tau^{1-\zeta}\right).$$
\end{prop}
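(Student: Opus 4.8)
The plan is to bound the gap from both sides. Since Lemma~\ref{lem:StrictInteriorness} guarantees $z^\star_{\omega,\tau}(t)\geq \tau/L_\omega>0$ f.a.e., the point $x^\star_{\omega,\tau}$ is feasible for~\eqref{eqn:PP}. Optimality of $x^\star_\omega$ over the feasible set $\lbrace z\geq 0\rbrace$ therefore gives $F_\omega(x^\star_\omega)\leq F_\omega(x^\star_{\omega,\tau})$, so the quantity inside the absolute value is nonnegative and it remains only to bound $F_\omega(x^\star_{\omega,\tau})-F_\omega(x^\star_\omega)$ from above by $\cO(\tau^{1-\zeta})$.

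For the upper bound I would use that $x^\star_{\omega,\tau}$ minimizes $F_{\omega,\tau}=F_\omega+\tau\Gamma$ over all of $\cX$. The obvious comparison point $x^\star_\omega$ is inadmissible because $\Gamma(x^\star_\omega)$ may be $+\infty$ (the component $z^\star_\omega$ can touch zero). I would fix this by applying the interior push to $x^\star_\omega$, calling the result $\bar{x}^\star_\omega$, whose $z$-components satisfy $\bar{z}^\star_{\omega,[j]}=\max\lbrace z^\star_{\omega,[j]},\tau/L_\omega\rbrace\geq \tau/L_\omega$; this lies in $\cX$ and is strictly interior, so $\Gamma(\bar{x}^\star_\omega)$ is finite. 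Inserting $\bar{x}^\star_\omega$ into $F_{\omega,\tau}(x^\star_{\omega,\tau})\leq F_{\omega,\tau}(\bar{x}^\star_\omega)$ and rearranging yields
\[
F_\omega(x^\star_{\omega,\tau})-F_\omega(x^\star_\omega)\ \leq\ \big[F_\omega(\bar{x}^\star_\omega)-F_\omega(x^\star_\omega)\big]\ +\ \tau\,\Gamma(\bar{x}^\star_\omega)\ -\ \tau\,\Gamma(x^\star_{\omega,\tau}).
\]
I would then estimate the three terms separately, treating $\omega$ (hence $L_\omega$) as fixed while $\tau\to 0$.

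The first term is controlled by Lipschitz continuity: since $\bar{x}^\star_\omega$ and $x^\star_\omega$ share the same $y$-component and $0\leq \bar{z}^\star_{\omega,[j]}-z^\star_{\omega,[j]}\leq \tau/L_\omega$ pointwise, one has $\|\bar{z}^\star_\omega-z^\star_\omega\|_{L^1(\Omega)}\leq n_z\,|\Omega|\,\tau/L_\omega$, so Lemma~\ref{lem:BoundLipschitz_Fr} together with \eqref{eqn:LipLw} (Lipschitz-in-$z$ with constant $L_\omega$ w.r.t.\ $\|\cdot\|_{L^1(\Omega)}$) gives $F_\omega(\bar{x}^\star_\omega)-F_\omega(x^\star_\omega)\leq n_z\,|\Omega|\,\tau=\cO(\tau)$. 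For the third term, Lemma~\ref{lem:StrictInteriorness} and the $L^\infty(\Omega)$-bound on $z^\star_{\omega,\tau}$ from (A.4) (say by a $\tau$-independent constant $C_z$) give $-\Gamma(x^\star_{\omega,\tau})=\sum_{j}\int_\Omega \log z^\star_{\omega,\tau,[j]}\leq n_z\,|\Omega|\,\log C_z$, hence $-\tau\,\Gamma(x^\star_{\omega,\tau})=\cO(\tau)$. The decisive second term uses only $\bar{z}^\star_{\omega,[j]}\geq \tau/L_\omega$: from \eqref{eqn:def:Gamma},
\[
\tau\,\Gamma(\bar{x}^\star_\omega)=-\tau\sum_{j=1}^{n_z}\int_\Omega \log \bar{z}^\star_{\omega,[j]}\,\mathrm{d}t\ \leq\ \tau\,n_z\,|\Omega|\,\log(L_\omega/\tau)\ =\ n_z\,|\Omega|\,\big(\tau\log L_\omega+\tau\log(1/\tau)\big).
\]
Here $\tau\log L_\omega=\cO(\tau)$, while $\tau\log(1/\tau)=\cO(\tau^{1-\zeta})$ for every fixed $\zeta\in(0,1)$, since $\tau^{\zeta}\log(1/\tau)\to 0$ as $\tau\to 0^+$. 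Summing the three estimates gives the claimed $\cO(\tau^{1-\zeta})$ bound.

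The main obstacle is precisely this $\tau\log(1/\tau)$ term arising from the barrier evaluated at the pushed point: it is the reason the order is $1-\zeta$ rather than $1$, and it explains the role of the arbitrarily small slack $\zeta$. The only other point requiring care is the recognition that the comparison point must be the interior push $\bar{x}^\star_\omega$ rather than $x^\star_\omega$, so that $\Gamma$ stays finite while the $F_\omega$-perturbation remains $\cO(\tau)$; everything else is routine bookkeeping with the uniform bounds supplied by (A.4) and Lemmas~\ref{lem:BoundLipschitz_Fr}--\ref{lem:StrictInteriorness}.
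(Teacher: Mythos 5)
Your proof is correct and follows essentially the same route as the paper: compare $x^\star_{\omega,\tau}$ against the interior push $\bar{x}^\star_\omega$, pay an $\cO(\tau)$ Lipschitz price for the push, invoke minimality of $x^\star_{\omega,\tau}$ for $F_{\omega,\tau}$, and absorb the barrier terms via the $\tau\cdot\log(1/\tau)=\cO(\tau^{1-\zeta})$ estimate, which the paper packages as Lemmas~\ref{lem:tauLw} and~\ref{lem:Gamma}. The only caveat is that you treat $\omega$ as fixed so that $\tau\cdot\log(L_\omega)=\cO(\tau)$; in the paper's regime $\omega\to 0$ with $\tau\leq\omega$, but since then $|\log(L_\omega)|=\cO(|\log(\tau)|)$ this term is still $\cO(\tau^{1-\zeta})$ and your conclusion is unaffected.
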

\begin{proof}
	From the definition of the bar operator, we can use the bound
	\begin{align*}
	&\|x^\star_\omega - \bar{x}^\star_{\omega}\|_\cX = \|z^\star_{\omega}-\bar{z}^\star_{\omega}\|_{L^2(\Omega)}
	=\sqrt{\int_\Omega \|z^\star_{\omega}-\overline{z}^\star_{\omega}\|_2^2\,\mathrm{d}t}\\
	\leq& \operatornamewithlimits{max}_j\sqrt{|\Omega| \cdot n_z \cdot \|z^\star_{\omega\,[j]}-\overline{z}^\star_{\omega\,[j]}\|^2_{L^\infty(\Omega)}}\leq n_z \cdot \sqrt{|\Omega|} \cdot \frac{\tau}{L_\omega},
	\end{align*}
	together with the facts that $x^\star_{\omega,\tau}= \bar{x}^\star_{\omega,\tau}$ and $F_\omega$ is Lipschitz continuous, to get
	\begin{align*}
	0 &\leq
	F_\omega(x^\star_{\omega,\tau}) - F_\omega(x^\star_{\omega}) \leq  F_\omega(\bar{x}^\star_{\omega,\tau}) - F_\omega(\bar{x}_\omega^\star) + L_\omega \cdot \|x^\star_{\omega}-\bar{x}^\star_{\omega}\|_\cX\\
	& \leq \underbrace{F_\omega(\bar{x}^\star_{\omega,\tau}) - F_{\omega,\tau}(\bar{x}^\star_{\omega,\tau})}_{=-\tau\cdot\Gamma(\bar{x}^\star_{\omega,\tau})} + F_{\omega,\tau}(\bar{x}^\star_{\omega,\tau})\\
	&\phantom{ \leq }-\Big(\underbrace{F_\omega(\bar{x}^\star_{\omega}) - F_{\omega,\tau}(\bar{x}^\star_{\omega})}_{=-\tau \cdot \Gamma(\bar{x}^\star_{\omega})} + F_{\omega,\tau}(\bar{x}^\star_{\omega})\Big)+ L_\omega \cdot n_z \cdot \sqrt{|\Omega|} \cdot \frac{\tau}{L_\omega}\\
	& \leq F_{\omega,\tau}(\bar{x}^\star_{\omega,\tau})-F_{\omega,\tau}(\bar{x}^\star_{\omega}) +|\tau \cdot \Gamma(\bar{x}^\star_{\omega,\tau})| + |\tau \cdot \Gamma(\bar{x}^\star_\omega)| + n_z \cdot \sqrt{|\Omega|} \cdot \tau .
	\end{align*}
	We use $|\tau \cdot \Gamma(\bar{x}^\star_{\omega,\tau})|+|\tau \cdot \Gamma(\bar{x}^\star_\omega)|=\cO(\tau^{1-\zeta})$, as per Lemma~\ref{lem:Gamma} in Appendix~\ref{sec:Appendix_BarrierFunctioProperties}, to obtain the result from
	\begin{align*}
	\hspace{1mm}F_\omega(x^\star_{\omega,\tau}) - F_\omega(x^\star_{\omega})
	\leq \underbrace{F_{\omega,\tau}(\bar{x}^\star_{\omega,\tau})-F_{\omega,\tau}(\bar{x}^\star_{\omega})}_{\leq 0} +	\cO\left(\tau^{1-\zeta}\right)+ n_z \cdot \sqrt{|\Omega|} \cdot \tau
	\end{align*}
	The under-braced term is bounded above by zero because $\bar{x}^\star_{\omega,\tau}= x^\star_{\omega,\tau}$ is a minimizer of $F_{\omega,\tau}$.
\end{proof}

\begin{rem}	\label{sec:ForceBoundedness}
	Lemma~\ref{lem:Gamma} in the proof of Prop.~\ref{prop:PenaltyBarrierSolution} needs (A.4), i.e.
	\[\|z^\star_{\omega}\|_{L^\infty(\Omega)},\|z^\star_{\omega,\tau}\|_{L^\infty(\Omega)} = \cO(1).\]
	Note that the assumption can be enforced. For example, the path constraints
	$$ 	z_{[1]}(t)\geq 0,\quad z_{[2]}(t)\geq 0,\quad z_{[1]}(t)+z_{[2]}(t)=const$$
	lead to
	$  \|z_{[j]}\|_{L^\infty(\Omega)} \leq const$ \mbox{for $j=1,2\,.$}
	Constraints like these arise when variables have simple upper and lower  bounds before being transformed 
	into \refOCP.
	
	Similarly, boundedness of $\|x\|_\cX$ can be enforced. To this end, introduce box constraints for each component of $\dot{y},y,z$, before transcribing into the form \refOCP.
\end{rem}

\section{Finite Element Method}\label{sec:FEM}
Our method constructs an approximate finite element solution $x^\epsilon_h$ by solving the unconstrained problem \eqref{eqn:PBP} computationally in a finite element space \mbox{$\cX_{h,p} \subset \cX$,} using an NLP solver.

We introduce a suitable finite element space and show a stability result in this space. Eventually, we prove convergence of the finite element solution to  solutions of \eqref{eqn:PBP} and \refOCP. 

\subsection{Definition of the Finite Element Space}
Let the mesh parameter $h \in (0,|\Omega|]$.
The set	$\cT_h$ is called a \emph{mesh} and consists of open intervals $T \subset \Omega$ that satisfy the usual conditions~\cite[Chap.~2]{Ciarlet1978}:
\begin{enumerate}
	\item Disjunction: $T_1\cap T_2 = \emptyset$, for all  distinct  $T_1,T_2 \in \cT_h$.
	\item Coverage: $\bigcup_{T \in \cT_h} \overline{T} = \overline{\Omega}$.
	\item Resolution: $\max_{T \in \cT_h} |T| = h$.
	\item Quasi-uniformity: $\min_{T_1,T_2 \in \cT_h}\frac{|T_1|}{|T_2|} \geq \vartheta > 0$. The constant $\vartheta$ must not depend on $h$ and $1/\vartheta = \cO(1)$.
\end{enumerate}

We write $\cP_p({T})$ for the space of functions that are polynomials of degree $\leq p \in \N_0$ on interval ${T}$. Our finite element space is then given as
\begin{align*}
\cX_{h,p} := \left\{x:\overline{\Omega}\rightarrow\R^{\nx}  \mid y \in \cC^0(\overline{\Omega}), x \in \cP_p(T)^{n_x} \ \forall T \in \cT_h \right\}.
\end{align*}
$\cX_{h,p}\subset \cX$ is a Hilbert space with scalar product $\langle\cdot,\cdot\rangle_\cX$. 

Note that if $(y,z)\in\cX_{h,p}$, then $y$ is continuous but~$\dot{y}$ and~$z$ can be discontinuous. Figure~\ref{fig:finiteelementsyz} illustrates two functions $(y_h,z_h) \in \cX_{h,p}$ with $\times$ and $+$ for their nodal basis, to identify them with a finite-dimensional vector. 

\begin{figure}[tb]
	\centering
	\includegraphics[width=0.45\columnwidth]{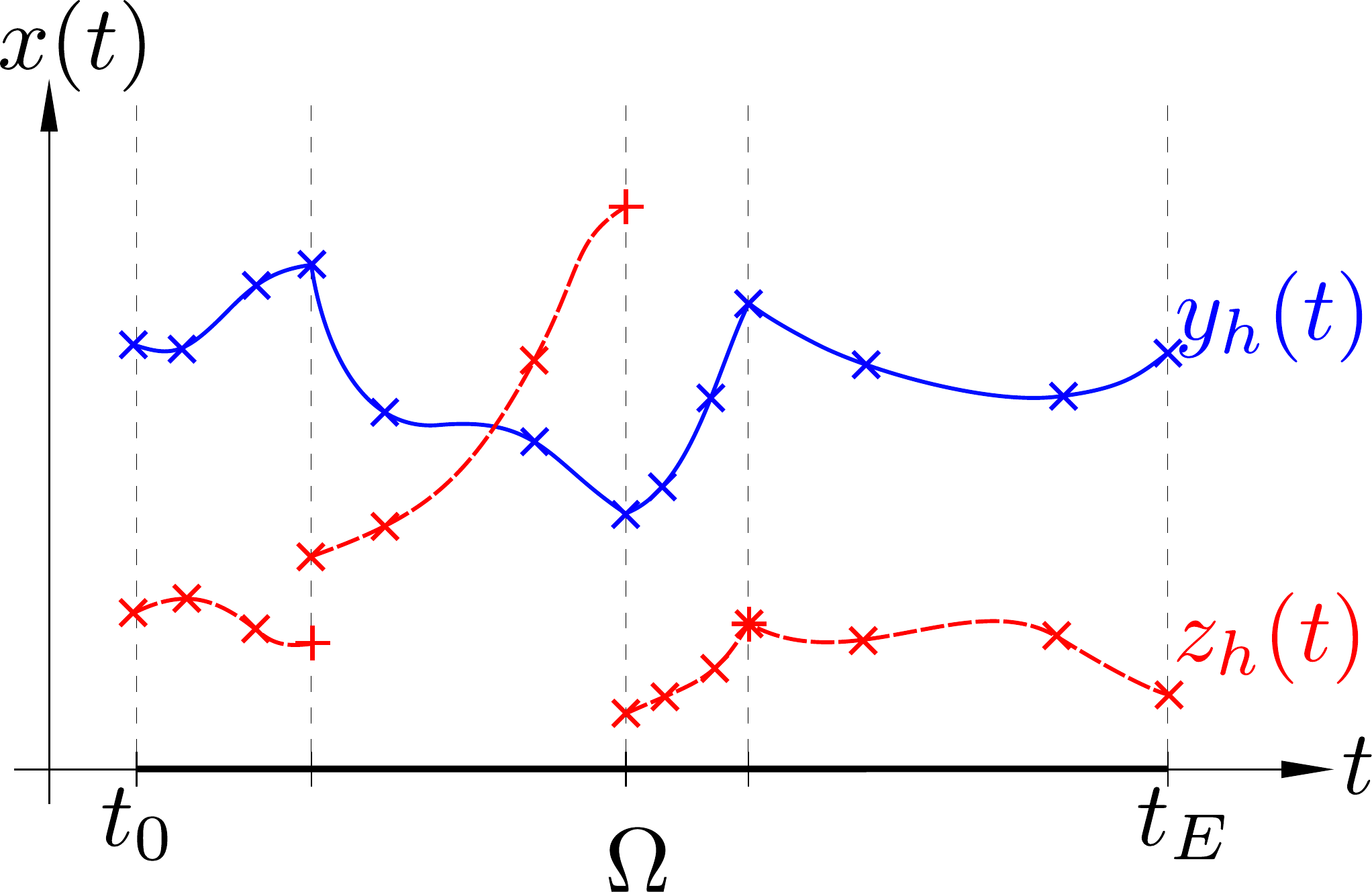}
	\caption{Continuous and discontinuous piecewise polynomial finite element functions $y_h,z_h$ on a mesh $\cT_h$ of four intervals.}
	\label{fig:finiteelementsyz}
\end{figure}

\subsection{Discrete Penalty-Barrier Problem}
We state the \textit{discrete penalty-barrier problem}
as
\begin{equation*}
\text{Find }x^\star_h \in \operatornamewithlimits{arg\,min}_{x \in \cX^{\omega,\tau}_{h,p}}\, F_{\omega,\tau}(x)
\tag{PBP\textsubscript{h}}
\label{eqn:PBP_h}
\end{equation*}
with the space $\cX^{\omega,\tau}_{h,p} := \left\lbrace\,x \in \cX_{h,p} \ \Big\vert \ z(t)\geq \frac{\tau}{2 \cdot L_\omega}\cdot\be \text{ f.a.e.\ }t \in\Omega\right\rbrace$.

Note that Lemma~\ref{lem:StrictInteriorness} is valid only for solutions to~\eqref{eqn:PBP}, whereas below we will consider sub-optimal solutions to~\eqref{eqn:PBP_h}. Hence, we cannot guarantee that these sub-optimal solutions will satisfy $z(t)\geq {\tau}/(1 \cdot L_\omega)\cdot\be$. The looser constraint $z(t)\geq {\tau}/(2 \cdot L_\omega)\cdot\be$ in the definition above will be used in the proof of Theorem~\ref{thm:ConvOCP}.

In a practical implementation,
we neglect these additional constraints. This is reasonable when solving the NLP with interior-point methods, since they keep the numerical solution strictly interior with a distance to zero in the order of $\tau \gg \frac{\tau}{2 \cdot L_\omega}$.

\subsection{Stability}
The following result shows that two particular Lebesgue norms are equivalent in the above finite element space.
\begin{lem}[Norm equivalence]\label{lem:NormEquivalence}
	If $x \in \cX_{h,p}$, then
	\begin{align*}
	\|x_{[j]}\|_{L^\infty(\Omega)} \leq \frac{p+1}{\sqrt{\vartheta \cdot h}} \cdot \|x\|_\cX\quad\forall j\in\lbrace 1,2,\ldots,\nx\rbrace.
	\end{align*}
\end{lem}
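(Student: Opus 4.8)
The plan is to prove this as a standard \emph{inverse inequality}: the estimate is local to each mesh interval, and the global bound then follows from the quasi-uniformity of $\cT_h$. Fix a component index $j$ and an interval $T \in \cT_h$. On $T$ the restriction $x_{[j]}|_T$ is a polynomial of degree $\le p$, so it suffices to bound $\|q\|_{L^\infty(T)}$ by $\|q\|_{L^2(T)}$ with the correct $|T|$-scaling for any $q \in \cP_p(T)$, and then to pass from $L^2(T)$ to the full norm $\|x\|_\cX$.

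First I would reduce to the reference interval $\hat{T}=[-1,1]$ by the affine map, noting that the supremum norm is scale-invariant, $\|q\|_{L^\infty(T)}=\|\hat q\|_{L^\infty(\hat T)}$, while $\|\hat q\|_{L^2(\hat T)}=\sqrt{2/|T|}\,\|q\|_{L^2(T)}$. On $\hat T$ I would expand $\hat q=\sum_{k=0}^p c_k\,\tilde P_k$ in the $L^2(\hat T)$-orthonormal Legendre basis $\tilde P_k=\sqrt{(2k+1)/2}\,P_k$, so that $\|\hat q\|_{L^2(\hat T)}^2=\sum_k c_k^2$. Since $|P_k|\le 1$ on $[-1,1]$ we have $\|\tilde P_k\|_{L^\infty(\hat T)}=\sqrt{(2k+1)/2}$, and Cauchy--Schwarz gives
\[
\|\hat q\|_{L^\infty(\hat T)} \le \sum_{k=0}^p |c_k|\sqrt{\tfrac{2k+1}{2}} \le \Big(\sum_{k=0}^p c_k^2\Big)^{1/2}\Big(\tfrac12\sum_{k=0}^p(2k+1)\Big)^{1/2}.
\]
The key arithmetic is the identity $\sum_{k=0}^p(2k+1)=(p+1)^2$, which collapses the second factor and yields the sharp reference bound $\|\hat q\|_{L^\infty(\hat T)}\le \frac{p+1}{\sqrt2}\,\|\hat q\|_{L^2(\hat T)}$. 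Rescaling back to $T$, the reference constant $\frac{p+1}{\sqrt2}$ multiplied by the factor $\sqrt{2/|T|}$ from the $L^2$ rescaling combine to produce exactly $\|q\|_{L^\infty(T)}\le \frac{p+1}{\sqrt{|T|}}\,\|q\|_{L^2(T)}$.

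To globalize, I would invoke the resolution and quasi-uniformity conditions of $\cT_h$: since $\max_{T}|T|=h$, quasi-uniformity gives $|T|/h\ge \vartheta$, hence $|T|\ge \vartheta\,h$ and $1/\sqrt{|T|}\le 1/\sqrt{\vartheta\,h}$ uniformly over all $T$. Taking the maximum over intervals and using $\|x_{[j]}\|_{L^2(T)}\le \|x_{[j]}\|_{L^2(\Omega)}$ then gives
\[
\|x_{[j]}\|_{L^\infty(\Omega)}=\max_{T\in\cT_h}\|x_{[j]}\|_{L^\infty(T)}\le \frac{p+1}{\sqrt{\vartheta\,h}}\,\|x_{[j]}\|_{L^2(\Omega)}.
\]
Finally I would note $\|x_{[j]}\|_{L^2(\Omega)}\le\|x\|_\cX$ in either case: for a state component, $\|y_{[j]}\|_{L^2(\Omega)}\le\|y_{[j]}\|_{H^1(\Omega)}\le\|x\|_\cX$, and for an algebraic component $\|z_{[j]}\|_{L^2(\Omega)}\le\|x\|_\cX$ directly from the definition of $\langle\cdot,\cdot\rangle_\cX$. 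Chaining these inequalities delivers the claimed estimate.

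The main obstacle is obtaining the \emph{sharp} constant $p+1$ rather than some crude $p$-dependent factor: a naive equivalence-of-norms argument on the finite-dimensional space $\cP_p(\hat T)$ would only yield an unspecified $C_p$. The Legendre expansion together with the identity $\sum_{k=0}^p(2k+1)=(p+1)^2$ is precisely what pins the constant down, while everything else (affine scaling, quasi-uniformity, and passing to $\|x\|_\cX$) is routine bookkeeping.
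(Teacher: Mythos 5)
Your proof is correct, and the core of it -- reduction to the reference interval, expansion in Legendre polynomials, and the identity $\sum_{k=0}^p(2k+1)=(p+1)^2$ -- is the same engine the paper uses; but the way you extract the constant is genuinely different. The paper (in its appendix on the Lebesgue equivalence for polynomials) normalizes $\|u\|_{L^\infty(T)}=1$, poses the \emph{extremal problem} of minimizing $\|v\|_{L^2}$ over polynomials with prescribed point value $v(\hat t)=1$, writes it as an equality-constrained quadratic program in the Legendre coefficients, and solves the KKT system to find the optimal value $1/(p+1)^2$, which gives the constant as the reciprocal of the extremum. You instead bound $|\hat q(t)|\le\sum_k|c_k|\,\|\tilde P_k\|_{L^\infty}$ and apply Cauchy--Schwarz in the orthonormal Legendre basis, where the same identity collapses the second factor to $(p+1)/\sqrt{2}$. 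These are dual views of one computation: Cauchy--Schwarz is tight precisely at the extremal polynomial the paper constructs, so both yield the sharp constant. Your route is shorter and avoids setting up and inverting the saddle-point system; the paper's route has the side benefit of explicitly exhibiting the minimizing polynomial (and hence showing the constant cannot be improved). The remaining steps -- the scale invariance of the sup norm under the affine map, the factor $\sqrt{2/|T|}$ in the $L^2$ rescaling, the use of quasi-uniformity to get $|T|\ge\vartheta\cdot h$, and the bound $\|x_{[j]}\|_{L^2(\Omega)}\le\|x\|_\cX$ from the definition of the $\cX$ scalar product -- coincide with the paper's argument.
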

\begin{proof}
	We can bound
	$\|x_{[j]}\|_{L^\infty(\Omega)}
	\leq\operatornamewithlimits{max}_{T \in \cT_h} \|x_{[j]}\|_{L^\infty(T)}$. We now use \eqref{eqn:PropAppendix2} in Appendix~\ref{sec:Appendix_LebesgueIdentity}. Since $x_{[j]} \in \cP_p(T)$, it follows that
	\begin{align*}
	\operatornamewithlimits{max}_{T \in \cT_h}\|x_{[j]}\|_{L^\infty(T)}
	&\leq \operatornamewithlimits{max}_{T \in \cT_h} \frac{p+1}{\sqrt{|T|}} \cdot \|x_{[j]}\|_{L^2(T)}\leq\frac{p+1}{\sqrt{\vartheta \cdot h}} \cdot \|x_{[j]}\|_{L^2(\Omega)} \leq \frac{p+1}{\sqrt{\vartheta \cdot h}} \cdot \|x\|_\cX.
	\end{align*}
\end{proof}

Below, with the help of Lemma~\ref{lem:NormEquivalence}, we obtain a bound on the growth of $F_{\omega,\tau}$ in a neighborhood of a solution $x^\star_{\omega,\tau}$ to~\eqref{eqn:PBP} for elements in~$\cX_{h,p}$.
\begin{prop}[Lipschitz continuity]\label{prop:Lcont}
	Let
	\begin{align*}
	\delta_{\omega,\tau,h} &:= \frac{\tau}{2 \cdot L_\omega} \cdot \frac{\sqrt{\vartheta \cdot h}}{p+1}\,,\qquad
	L_{\omega,\tau,h} := L_\omega + n_z \cdot |\Omega| \cdot 2 \cdot L_\omega \cdot \frac{p+1}{\sqrt{\vartheta \cdot h}}.
	\end{align*}
	Consider the spherical neighbourhood
	\begin{align*}
	\cB := \big\lbrace\,x \in \cX\ \big\vert\ \|x^\star_{\omega,\tau}-x\|_\cX \leq \delta_{\omega,\tau,h} \big\rbrace.
	\end{align*}
	The following holds $\forall x^\text{A},x^\text{B} \in \cB\cap\cX_{h,p}$:
	\begin{align*}
	|F_{\omega,\tau}(x^\text{A}) - F_{\omega,\tau}(x^\text{B})| \leq L_{\omega,\tau,h} \cdot \|x^\text{A} - x^\text{B}\|_\cX.
	\end{align*}
\end{prop}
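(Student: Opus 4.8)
The plan is to exploit the additive structure $F_{\omega,\tau} = F_\omega + \tau\cdot\Gamma$ and to bound the two contributions separately, so that the claimed constant emerges as $L_{\omega,\tau,h} = L_\omega + n_z\cdot|\Omega|\cdot 2 L_\omega \cdot (p+1)/\sqrt{\vartheta\cdot h}$. For the first contribution I would simply invoke the global Lipschitz continuity of $F_\omega$ already recorded in \eqref{eqn:LipLw}, giving $|F_\omega(x^\text{A})-F_\omega(x^\text{B})| \le L_\omega\cdot\|x^\text{A}-x^\text{B}\|_\cX$ for any two arguments, in particular on $\cB\cap\cX_{h,p}$. Everything then reduces to controlling the barrier increment $\tau\cdot|\Gamma(x^\text{A})-\Gamma(x^\text{B})|$, and the whole difficulty is concentrated there, since $-\log$ is only locally Lipschitz and blows up as its argument approaches zero.

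The crucial preliminary step, and the main obstacle, is to establish a uniform strictly positive lower bound $z_{[j]}(t)\ge \tau/(2\cdot L_\omega)$ for almost every $t\in\Omega$, for every $x=(y,z)\in\cB\cap\cX_{h,p}$ and every component $j$. Here I would combine three ingredients: the Strict Interiorness of the continuous solution, $z^\star_{\omega,\tau\,[j]}(t)\ge\tau/L_\omega$ from Lemma~\ref{lem:StrictInteriorness}; the inverse (norm-equivalence) estimate of Lemma~\ref{lem:NormEquivalence}, which for finite-element functions converts an $\cX$-norm bound into an $L^\infty(\Omega)$ bound at the cost of the factor $(p+1)/\sqrt{\vartheta\cdot h}$; and the precise calibration of the radius $\delta_{\omega,\tau,h}=\tfrac{\tau}{2 L_\omega}\cdot\tfrac{\sqrt{\vartheta\cdot h}}{p+1}$, chosen exactly so that a ball of this radius corresponds to an $L^\infty$-deviation of at most $\tau/(2 L_\omega)$. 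The delicate point is that $x^\star_{\omega,\tau}$ itself need not lie in $\cX_{h,p}$, so Lemma~\ref{lem:NormEquivalence} cannot be applied verbatim to the difference $x-x^\star_{\omega,\tau}$; I would therefore argue elementwise on each $T\in\cT_h$, using that $z_{[j]}|_T$ is a polynomial of degree $\le p$ together with its $L^2(T)$-closeness to $z^\star_{\omega,\tau\,[j]}$, to obtain $z^\star_{\omega,\tau\,[j]}(t)-z_{[j]}(t)\le\tau/(2 L_\omega)$ and hence $z_{[j]}(t)\ge\tau/L_\omega-\tau/(2 L_\omega)=\tau/(2 L_\omega)$.

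With this lower bound in hand, the barrier estimate is routine. On the interval $[\tau/(2 L_\omega),\infty)$ the function $-\log$ is Lipschitz with constant $2 L_\omega/\tau$, so for each component I bound $|\log z^\text{A}_{[j]}(t)-\log z^\text{B}_{[j]}(t)|\le \tfrac{2 L_\omega}{\tau}\,|z^\text{A}_{[j]}(t)-z^\text{B}_{[j]}(t)|$ pointwise. Integrating over $\Omega$ and using $\int_\Omega |z^\text{A}_{[j]}-z^\text{B}_{[j]}|\,\mathrm{d}t \le |\Omega|\cdot\|z^\text{A}_{[j]}-z^\text{B}_{[j]}\|_{L^\infty(\Omega)}$ together with Lemma~\ref{lem:NormEquivalence} applied to the finite-element difference $x^\text{A}-x^\text{B}\in\cX_{h,p}$ gives $\int_\Omega |z^\text{A}_{[j]}-z^\text{B}_{[j]}|\,\mathrm{d}t \le |\Omega|\cdot\tfrac{p+1}{\sqrt{\vartheta\cdot h}}\cdot\|x^\text{A}-x^\text{B}\|_\cX$. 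Summing over the $n_z$ components and multiplying by $\tau$ cancels the $1/\tau$ from the logarithm's Lipschitz constant, leaving $\tau\cdot|\Gamma(x^\text{A})-\Gamma(x^\text{B})|\le n_z\cdot|\Omega|\cdot 2 L_\omega\cdot\tfrac{p+1}{\sqrt{\vartheta\cdot h}}\cdot\|x^\text{A}-x^\text{B}\|_\cX$. Adding the two contributions reproduces exactly $L_{\omega,\tau,h}$, completing the argument. I expect the lower-bound step to be the only genuinely nontrivial part; the cancellation of $\tau$ is what keeps the constant finite despite the barrier singularity, and the $h$-dependence enters solely through the inverse inequality.
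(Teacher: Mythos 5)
Your proposal is correct and follows essentially the same route as the paper's proof: decompose $F_{\omega,\tau}=F_\omega+\tau\cdot\Gamma$, use the global Lipschitz constant $L_\omega$ for the first term, derive the uniform lower bound $z_{[j]}(t)\geq\tau/(2\cdot L_\omega)$ on $\cB\cap\cX_{h,p}$ from Lemma~\ref{lem:StrictInteriorness}, Lemma~\ref{lem:NormEquivalence} and the calibrated radius $\delta_{\omega,\tau,h}$, and then exploit the resulting $2\cdot L_\omega/\tau$ Lipschitz constant of the logarithm together with a second application of Lemma~\ref{lem:NormEquivalence} to $x^\text{A}-x^\text{B}\in\cX_{h,p}$. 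Your extra remark that Lemma~\ref{lem:NormEquivalence} does not apply verbatim to the non-polynomial difference $z^\star_{\omega,\tau,[j]}-z_{[j]}$ is a fair point of additional care which the paper's proof passes over silently, but it does not change the argument.
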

\begin{proof}
	From Lemma~\ref{lem:StrictInteriorness} and Lemma~\ref{lem:NormEquivalence} follows:
	\begin{align*}
	\operatornamewithlimits{ess\,inf}_{t \in \Omega} z_{[j]}(t)\geq&\underbrace{\operatornamewithlimits{ess\,inf}_{t \in \Omega} z^\star_{\omega,\tau,[j]}(t)}_{\geq \frac{\tau}{L_\omega}}
	- \underbrace{\|z^\star_{\omega,\tau,[j]}-z_{[j]}\|_{L^\infty(\Omega)}}_{\leq \frac{p+1}{\sqrt{\vartheta \cdot h}} \cdot \delta_{\omega,\tau,h}\leq \frac{\tau}{2 \cdot L_\omega}}\hspace{4mm} \forall x \in \cB \cap \cX_{h,p}
	\end{align*}
	Hence,
	\begin{align}
	\operatornamewithlimits{min}_{1 \leq j \leq n_z} \operatornamewithlimits{ess\,inf}_{t\in\Omega} z_{[j]}(t) \geq \frac{\tau}{2 \cdot L_\omega}\quad \forall x \in \cB \cap \cX_{h,p}.\label{eqn:aux1:Lcont}
	\end{align}
	From Lipschitz-continuity of $F_\omega$ we find
	\begin{align*}
	&|F_{\omega,\tau}(x^\text{A})-F_{\omega,\tau}(x^\text{B})|\\
	\leq&|F_{\omega}(x^\text{A})-F_{\omega}(x^\text{B})| + \tau \cdot \sum_{j=1}^{n_z} \int_\Omega\,\left|\log\left(z^\text{A}_{[j]}(t)\right)-\log\left(z^\text{B}_{[j]}(t)\right)\right|\,\mathrm{d}t \\
	\leq&L_\omega \cdot \|x^\text{A}-x^\text{B}\|_\cX + \tau \cdot n_z \cdot |\Omega|\cdot\operatornamewithlimits{max}_{1\leq j \leq n_z}\ \operatornamewithlimits{ess\,sup}_{t \in \Omega} \left|\log\left(z^\text{A}_{[j]}(t)\right)-\log\left(z^\text{B}_{[j]}(t)\right)\right|.
	\end{align*}
	We know a lower bound for the arguments in the logarithms from~\eqref{eqn:aux1:Lcont}. Thus, the essential supremum term can be bounded with a Lipschitz result for the logarithm:
	\begin{align*}
	&\operatornamewithlimits{max}_{1\leq j \leq n_z}\ \operatornamewithlimits{ess\,sup}_{t \in \Omega} \left|\log\left(z^\text{A}_{[j]}(t)\right)-\log\left(z^\text{B}_{[j]}(t)\right)\right|\\
	&\quad \leq\operatornamewithlimits{max}_{1\leq j \leq n_z}\ \frac{1}{\, \frac{\tau}{2 \cdot L_\omega} \,} \cdot \|z_{[j]}^\text{A}-z_{[j]}^\text{B}\|_{L^\infty(\Omega)}
	\leq \frac{2 \cdot L_\omega}{\tau} \cdot\frac{p+1}{\sqrt{\vartheta \cdot h}} \cdot \|x^\text{A}-x^\text{B}\|_\cX,
	\end{align*}
	where the latter inequality is obtained using Lemma~\ref{lem:NormEquivalence}. \end{proof}

\subsection{Interpolation Error}\label{sec:InterpolationError}
In order to show high-order convergence results, it is imperative that the solution function can be represented with high accuracy in a finite element space. In the following we introduce a suitable assumption for this purpose.

Motivated by the Bramble-Hilbert Lemma \cite{BrambleHilbert}, we make the assumption (A.5) that for a fixed chosen degree $p = \cO(1)$ there exists an $ \ell \in (0,\infty)$ such that
\begin{align}
\infexpr = \cO\big(h^{\ell+1/2}\big)\,.  \label{eqn:InfBound}
\end{align}
Notice that the best approximation $x_h$ is well-defined since $\cX_{h,p}$ is a Hilbert space with induced norm $\|\cdot\|_\cX$. In Appendix~\ref{app:3} we give two examples to demonstrate the mildness of assumption~\eqref{eqn:InfBound}.

To clarify on the mildness of \eqref{eqn:InfBound}, consider the triangular inequality
\begin{align}
\infexpr \leq \|x^\star_{\omega,\tau} - x^\star\|_\cX + \operatornamewithlimits{min}_{x_h \in \cX_{h,p}}\|x^\star-x_h\|_\cX\,,
\end{align}
where $x^\star$ is the global minimizer. Clearly, the second term converges under the approximability assumption of finite elements, hence could not be milder. The first term holds under several sufficient assumptions; for instance if $x^\star$ is unique because then convergence of feasibility residual and optimality gap will determine --at a convergence rate depending on the problem instance-- that unique solution. Due to round-off errors in computations on digital computers, for numerical methods the notion of well-posedness is imperative, hence must always be assumed, relating to how fast the first term converges as optimality gap and feasibility residual converge.

For the remainder, we define $\nu:= \ell/2$, $\eta:=(1-\zeta)\cdot\nu$ with respect to $\ell,\zeta$. We choose $\tau = \cO(h^{\nu})$ and $\omega = \cO(h^{\eta}) $ with
$h>0$ suitably small such that $0 < \tau \leq \omega < 1$.

Following the assumption \eqref{eqn:InfBound}, the result below shows that the best approximation in the finite element space satisfies an approximation property.
\begin{lem}[Finite Element Approximation Property]
	\label{lem:bestApproximation}
	If~\eqref{eqn:InfBound} holds and  $h>0$ is chosen sufficiently small, then
	\begin{equation}
	\infexpr \leq \delta_{\omega,\tau,h}. \label{eqn:SphereBound}
	\end{equation}
\end{lem}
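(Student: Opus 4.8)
The plan is to show that the interpolation-error term on the left of \eqref{eqn:SphereBound}, which decays like $h^{\ell+1/2}$ by assumption \eqref{eqn:InfBound}, is eventually dominated by $\delta_{\omega,\tau,h}$, which decays at a strictly slower rate in $h$. The whole argument reduces to comparing two powers of $h$, and the slack is supplied by the strictly positive constant $\zeta$.

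First I would determine the order of $\delta_{\omega,\tau,h}$ in $h$. The only nontrivial factor is $1/L_\omega$: since $L_\omega$ in \eqref{eqn:LipLw} is the maximum of two expressions, each of the form $\text{const}+\text{const}/(2\omega)$, it satisfies $L_\omega \le C_1/\omega$ for a fixed constant $C_1$ and all sufficiently small $\omega$, hence $1/L_\omega \ge \omega/C_1$. Substituting into the definition $\delta_{\omega,\tau,h} = \frac{\tau}{2 L_\omega}\cdot\frac{\sqrt{\vartheta h}}{p+1}$ gives $\delta_{\omega,\tau,h} \ge c\,\tau\,\omega\,\sqrt{h}$ for some constant $c>0$. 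Reading the prescribed scalings $\tau=\cO(h^{\nu})$, $\omega=\cO(h^{\eta})$ as two-sided (so that $\tau$ and $\omega$ are also bounded below by constant multiples of $h^\nu$ and $h^\eta$), this yields $\delta_{\omega,\tau,h} \ge c''\,h^{\,\nu+\eta+1/2}$ for a constant $c''>0$ and all sufficiently small $h$.

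Next I would compare exponents. Recall $\nu=\ell/2$ and $\eta=(1-\zeta)\,\nu$, so that $\ell+1/2 = 2\nu+1/2$ while the exponent in the lower bound on $\delta_{\omega,\tau,h}$ is $\nu+\eta+1/2 = (2-\zeta)\,\nu+1/2$. Their difference is $(2\nu+1/2)-\big((2-\zeta)\,\nu+1/2\big)=\zeta\,\nu>0$, since $\zeta>0$ and $\ell>0$. Combining \eqref{eqn:InfBound}, namely $\infexpr \le C_2\,h^{\ell+1/2}$, with the lower bound on $\delta_{\omega,\tau,h}$, the claimed inequality \eqref{eqn:SphereBound} is implied by $C_2\,h^{\zeta\nu} \le c''$. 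Because the exponent $\zeta\nu$ is strictly positive, the left-hand side tends to $0$ as $h\to 0$, so the inequality holds for every $h$ below some threshold $h_0$; this is exactly the hypothesis that $h$ be chosen sufficiently small.

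The point requiring most care is the direction of the estimate on $L_\omega$: because $L_\omega$ sits in the \emph{denominator} of $\delta_{\omega,\tau,h}$, I need an upper bound on $L_\omega$ (equivalently a lower bound on $1/L_\omega$), and hence must verify that \emph{both} branches of the maximum in \eqref{eqn:LipLw} grow no faster than $\cO(1/\omega)$. A related subtlety is that the scalings of $\tau$ and $\omega$ must be treated as two-sided rather than as mere upper bounds, since a lower bound on $\delta_{\omega,\tau,h}$ forces lower bounds on $\tau$ and $\omega$. Everything else is a routine comparison of monomials in $h$, with $\zeta>0$ providing the strict exponent gap $\zeta\nu$ that lets ``sufficiently small $h$'' close the argument.
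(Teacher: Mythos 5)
Your proof is correct and follows essentially the same route as the paper's: lower-bound $\delta_{\omega,\tau,h}$ by a constant times $\tau\cdot\omega\cdot\sqrt{h}=\cO(h^{\nu+\eta+1/2})$ using $L_\omega=\cO(1/\omega)$, then win by the exponent gap $\ell-(\nu+\eta)=\zeta\nu>0$ for $h$ sufficiently small. Your explicit remarks on needing an \emph{upper} bound on $L_\omega$ and a two-sided reading of the scalings $\tau=\cO(h^\nu)$, $\omega=\cO(h^\eta)$ are points the paper passes over more quickly, but they do not change the argument.
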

\begin{proof}
	For $h>0$ sufficiently small it follows from $\ell> \nu + \eta$, that $h^{\ell+1/2} < h^{\nu+\eta+1/2}$. Hence,
	$$	\infexpr\leq const \cdot h^{\nu+\eta+1/2}	$$
	for some constant $const$. The result follows by noting that
	\begin{align*}	
	\delta_{\omega,\tau,h}
	& 
	\geq \frac{\tau}{\,\frac{L_r}{\omega}\,} \cdot \frac{\sqrt{\vartheta \cdot h}}{p+1}
	= \frac{\sqrt{\vartheta}}{L_r\cdot(p+1)} \cdot \tau \cdot \omega \cdot \sqrt{h} \geq const \cdot h^{\nu+\eta+1/2}.
	\end{align*}
\end{proof}
In other words, Lemma~\ref{lem:bestApproximation} says for $h>0$ sufficiently small it follows that $\cB \cap \cX_{h,p}\neq \emptyset$. This is because the minimizing argument of \eqref{eqn:SphereBound} is an element of $\cB$.

\subsection{Optimality}
We show that an $\epsilon$-optimal solution for \eqref{eqn:PBP_h} is an $\varepsilon$-optimal solution for \eqref{eqn:PBP}, where $\varepsilon \geq \epsilon$.
\begin{thm}[Optimality of Unconstrained FEM Minimizer]\label{thm:conv}
	Let $\cB$ as in Proposition~\ref{prop:Lcont}, and $x^\epsilon_h$ an $\epsilon$-optimal solution for \eqref{eqn:PBP_h}, i.e.	
	\begin{align*}
	F_{\omega,\tau}(x^\epsilon_h) \leq F_{\omega,\tau}(x^\star_h) + \epsilon.
	\end{align*}
	If $\cB \cap \cX_{h,p}\neq \emptyset$, then $x^\epsilon_h$ satisfies:
	\begin{align*}
	F_{\omega,\tau}(x^\epsilon_h)
	\leq F_{\omega,\tau}(x^\star_{\omega,\tau})+ \epsilon+ L_{\omega,\tau,h} \cdot \infexpr .
	\end{align*}
\end{thm}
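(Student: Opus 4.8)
The plan is to treat $x^\star_{\omega,\tau}$ as an intermediate reference point and to combine three ingredients: the discrete optimality of $x^\star_h$, the best-approximation property of the finite element space, and the local Lipschitz estimate of Proposition~\ref{prop:Lcont}. First I would introduce the best approximation $x_h^\dagger \in \operatornamewithlimits{arg\,min}_{x_h \in \cX_{h,p}}\|x^\star_{\omega,\tau}-x_h\|_\cX$, which exists because $\cX_{h,p}$ is a finite-dimensional, hence closed, subspace of the Hilbert space $\cX$, and which attains $\|x^\star_{\omega,\tau}-x_h^\dagger\|_\cX = \infexpr$. Since by hypothesis $\cB\cap\cX_{h,p}\neq\emptyset$, some finite element function lies within distance $\delta_{\omega,\tau,h}$ of $x^\star_{\omega,\tau}$; by optimality of $x_h^\dagger$ this forces $\infexpr \leq \delta_{\omega,\tau,h}$, so that $x_h^\dagger \in \cB\cap\cX_{h,p}$.

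Next I would check that $x_h^\dagger$ is admissible for the discrete problem \eqref{eqn:PBP_h}. The estimate \eqref{eqn:aux1:Lcont}, established inside the proof of Proposition~\ref{prop:Lcont}, shows that every element of $\cB\cap\cX_{h,p}$ satisfies $\operatornamewithlimits{min}_{1\le j\le n_z}\operatornamewithlimits{ess\,inf}_{t\in\Omega} z_{[j]}(t)\ge \tau/(2 L_\omega)$, hence $x_h^\dagger\in\cX^{\omega,\tau}_{h,p}$. Because $x^\star_h$ minimizes $F_{\omega,\tau}$ over $\cX^{\omega,\tau}_{h,p}$ and $x_h^\dagger$ lies in this set, the $\epsilon$-optimality hypothesis yields
\begin{align*}
F_{\omega,\tau}(x^\epsilon_h) \le F_{\omega,\tau}(x^\star_h)+\epsilon \le F_{\omega,\tau}(x_h^\dagger)+\epsilon .
\end{align*}
After adding and subtracting $F_{\omega,\tau}(x^\star_{\omega,\tau})$, it then remains only to bound $F_{\omega,\tau}(x_h^\dagger)-F_{\omega,\tau}(x^\star_{\omega,\tau})$ by $L_{\omega,\tau,h}\cdot\infexpr$, and substituting $\|x_h^\dagger - x^\star_{\omega,\tau}\|_\cX=\infexpr$ finishes the argument.

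The one delicate step, and the main obstacle, is precisely this Lipschitz bound $F_{\omega,\tau}(x_h^\dagger)-F_{\omega,\tau}(x^\star_{\omega,\tau}) \le L_{\omega,\tau,h}\,\|x_h^\dagger - x^\star_{\omega,\tau}\|_\cX$. Proposition~\ref{prop:Lcont} is stated for two arguments both lying in $\cB\cap\cX_{h,p}$, whereas here the reference point $x^\star_{\omega,\tau}$ is the center of $\cB$ and is generally not piecewise polynomial, so the proposition cannot be cited verbatim. I would resolve this by re-running its estimate for the pair $(x_h^\dagger, x^\star_{\omega,\tau})$: both points are interior with $z\ge\tau/(2 L_\omega)$ (for $x_h^\dagger$ via \eqref{eqn:aux1:Lcont}, for $x^\star_{\omega,\tau}$ via the stronger bound $\tau/L_\omega$ of Lemma~\ref{lem:StrictInteriorness}), and $F_\omega$ is globally Lipschitz with constant $L_\omega$ on all of $\cX$. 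The only place the proof of Proposition~\ref{prop:Lcont} invokes the finite element structure is in bounding the barrier contribution through the norm equivalence of Lemma~\ref{lem:NormEquivalence}, which is unavailable here since $x_h^\dagger - x^\star_{\omega,\tau}\notin\cX_{h,p}$. Instead I would estimate the barrier term directly in $L^1$: using $|\log a-\log b|\le \tfrac{2 L_\omega}{\tau}|a-b|$ on $[\tfrac{\tau}{2 L_\omega},\infty)$ together with $\|\cdot\|_{L^1(\Omega)}\le\sqrt{|\Omega|}\,\|\cdot\|_{L^2(\Omega)}$, the barrier difference is bounded by $2 L_\omega\, n_z\sqrt{|\Omega|}\,\|x_h^\dagger - x^\star_{\omega,\tau}\|_\cX$. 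Since $h\le|\Omega|$ and $\vartheta\le 1$ give $\sqrt{\vartheta h}\le (p+1)\sqrt{|\Omega|}$, this constant together with the $L_\omega$ coming from $F_\omega$ is dominated by $L_{\omega,\tau,h}=L_\omega + 2\, n_z\,|\Omega|\,L_\omega\,(p+1)/\sqrt{\vartheta\,h}$, so the desired inequality holds with the stated constant. Once this extended Lipschitz estimate is in place, the chain above closes the proof.
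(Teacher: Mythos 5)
Your proof is correct and follows the same overall architecture as the paper's: introduce the best approximation $\tilde{x}_h$ of $x^\star_{\omega,\tau}$ in $\cX_{h,p}$, use Lemma~\ref{lem:bestApproximation} and \eqref{eqn:aux1:Lcont} to place it in $\cB\cap\cX_{h,p}\subset\cX^{\omega,\tau}_{h,p}$, compare $F_{\omega,\tau}(x^\epsilon_h)\leq F_{\omega,\tau}(\tilde{x}_h)+\epsilon$ by discrete $\epsilon$-optimality, and close with a Lipschitz bound on $F_{\omega,\tau}(\tilde{x}_h)-F_{\omega,\tau}(x^\star_{\omega,\tau})$. Where you genuinely diverge is at that last step: the paper simply invokes Proposition~\ref{prop:Lcont} for the pair $(\tilde{x}_h, x^\star_{\omega,\tau})$, even though the proposition is stated for two arguments in $\cB\cap\cX_{h,p}$ and its proof routes the barrier term through the inverse estimate of Lemma~\ref{lem:NormEquivalence}, which requires the difference to be a piecewise polynomial --- something $\tilde{x}_h - x^\star_{\omega,\tau}$ generally is not. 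You correctly flag this and repair it by bounding the barrier difference in $L^1$ instead, using the Lipschitz constant $2L_\omega/\tau$ of the logarithm on $[\tau/(2L_\omega),\infty)$ (valid for both points, via \eqref{eqn:aux1:Lcont} and Lemma~\ref{lem:StrictInteriorness}) together with $\|\cdot\|_{L^1(\Omega)}\leq\sqrt{|\Omega|}\,\|\cdot\|_{L^2(\Omega)}$, and you verify the resulting constant is dominated by $L_{\omega,\tau,h}$ since $\sqrt{\vartheta h}\leq(p+1)\sqrt{|\Omega|}$. The paper's shortcut is repairable exactly as you do it, so nothing in the theorem is at risk, but your version is the more rigorous of the two and in fact yields the slightly sharper $h$-independent constant $L_\omega(1+2n_z\sqrt{|\Omega|})$ for this particular comparison.
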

\begin{proof}
	Consider the unique finite element best approximation from \eqref{eqn:SphereBound}
	$$ 	\tilde{x}_h := \operatornamewithlimits{arg\,min}_{x_h \in\cX_{h,p}}\|x^\star_{\omega,\tau}-x_h\|_\cX\,.  $$
	Since $\cB\cap \cX_{h,p} \neq \emptyset$, it follows
	$\tilde{x}_h \in \cB \cap \cX_{h,p}$. Hence,
	$$ 	\tilde{x}_h = \operatornamewithlimits{arg\,min}_{x_h \in\cB\cap\cX_{h,p}}\|x^\star_{\omega,\tau}-x_h\|_\cX .  $$
	From \eqref{eqn:aux1:Lcont} we find $\cB \cap \cX_{h,p} \subset \cX_{h,p}^{\omega,\tau}$. Thus, $\tilde{x}_h \in \cX^{\omega,\tau}_{h,p}$. Hence,
	$$ 	\tilde{x}_h = \operatornamewithlimits{arg\,min}_{x_h \in\cX^{\omega,\tau}_{h,p}}\|x^\star_{\omega,\tau}-x_h\|_\cX\,.  $$
	
	Proposition~\ref{prop:Lcont} can be used to obtain 
	$	F_{\omega,\tau}(\tilde{x}_h) \leq F_{\omega,\tau}(x^\star_{\omega,\tau}) + L_{\omega,\tau,h} \cdot \|x^\star_{\omega,\tau}-\tilde{x}_h\|_\cX$. 
	Since $x^\epsilon_h$ is a global $\epsilon$-optimal minimizer of $F_{\omega,\tau}$ in $\cX^{\omega,\tau}_{h,p}$ and also $\tilde{x}_h$ lives in $\cX^{\omega,\tau}_{h,p}$, the optimalities must relate as
	$F_{\omega,\tau}(x^\epsilon_h) \leq F_{\omega,\tau}(\tilde{x}_h)+\epsilon$. 
	The result follows. 
\end{proof}

\subsection{Convergence}
We obtain a bound for the optimality gap and feasibility residual of $x^\epsilon_h$.

\begin{thm}[Convergence to~\refOCP]\label{thm:ConvOCP}
	Let $x^\epsilon_h$ be an $\epsilon$-optimal numerical solution to~\eqref{eqn:PBP_h}. If (A.4) holds, then $x^\epsilon_h$ satisfies
	\begin{align*}
	g_\mathrm{opt} = \cO\left(\tau^{1-\zeta}+\varepsilon_{h,p} \right),\
	r_\mathrm{feas} = \cO\big(\omega \cdot \left(1+ \tau^{1-\zeta} + \varepsilon_{h,p}\right)\big),
	\end{align*}
	where
	$$ 	\varepsilon_{h,p} := L_{\omega,\tau,h} \cdot \infexpr + \epsilon\,.	$$
\end{thm}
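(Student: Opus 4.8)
The plan is to chain together the four approximation results already established---Lemma~\ref{lem:bestApproximation}, Theorem~\ref{thm:conv}, Proposition~\ref{prop:PenaltyBarrierSolution} and Proposition~\ref{thm:PenaltySolution}---so as to convert the penalty--barrier near-optimality of $x^\epsilon_h$ into the claimed bounds on the optimality gap and feasibility residual. The organising idea is that $x^\epsilon_h$ is an $\varepsilon_{h,p}$-optimal point for \eqref{eqn:PBP}; that near-optimality for $F_{\omega,\tau}$ transfers to near-optimality for $F_\omega$ up to a barrier error of order $\tau^{1-\zeta}$; and that the structural identity $F_\omega = F + \tfrac{1}{2\omega}r$ then yields the two estimates by reading off the $F$-part and the $r$-part separately.

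First I would invoke Lemma~\ref{lem:bestApproximation}, which for $h$ sufficiently small guarantees $\cB\cap\cX_{h,p}\neq\emptyset$; this is precisely the hypothesis needed to apply Theorem~\ref{thm:conv}, giving
\begin{align*}
F_{\omega,\tau}(x^\epsilon_h) \leq F_{\omega,\tau}(x^\star_{\omega,\tau}) + \varepsilon_{h,p}.
\end{align*}
Writing $F_{\omega,\tau}=F_\omega+\tau\Gamma$ on both sides and rearranging, this reads
\begin{align*}
F_\omega(x^\epsilon_h) \leq F_\omega(x^\star_{\omega,\tau}) + \tau\,\Gamma(x^\star_{\omega,\tau}) - \tau\,\Gamma(x^\epsilon_h) + \varepsilon_{h,p}.
\end{align*}
I would then bound the three correction terms. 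The term $\tau\,\Gamma(x^\star_{\omega,\tau})$ is $\cO(\tau^{1-\zeta})$ by Lemma~\ref{lem:Gamma} together with $x^\star_{\omega,\tau}=\bar{x}^\star_{\omega,\tau}$ from Lemma~\ref{lem:StrictInteriorness}. For $F_\omega(x^\star_{\omega,\tau})$ I would combine Proposition~\ref{prop:PenaltyBarrierSolution}, which gives $F_\omega(x^\star_{\omega,\tau})\leq F_\omega(x^\star_\omega)+\cO(\tau^{1-\zeta})$, with the elementary bound $F_\omega(x^\star_\omega)\leq F_\omega(x^\star)=F(x^\star)$ that uses feasibility $r(x^\star)=0$ from (A.1). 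The remaining term $-\tau\,\Gamma(x^\epsilon_h)$ I would control via $\log s\leq s-1\leq s$, so that $-\tau\,\Gamma(x^\epsilon_h)\leq\tau\sum_j\|z^\epsilon_{h,[j]}\|_{L^1(\Omega)}$; since the construction of Remark~\ref{sec:ForceBoundedness} together with (A.4) keeps $\|x^\epsilon_h\|_\cX=\cO(1)$, this is $\cO(\tau)\subseteq\cO(\tau^{1-\zeta})$ because $0<\tau<1$. Collecting these estimates yields $F_\omega(x^\epsilon_h)\leq F(x^\star)+\cO(\tau^{1-\zeta})+\varepsilon_{h,p}$.

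Finally I would extract the two bounds from $F_\omega(x^\epsilon_h)=F(x^\epsilon_h)+\tfrac{1}{2\omega}r(x^\epsilon_h)$. Dropping the nonnegative term $\tfrac{1}{2\omega}r(x^\epsilon_h)$ gives $F(x^\epsilon_h)-F(x^\star)\leq\cO(\tau^{1-\zeta})+\varepsilon_{h,p}$, hence $g_\mathrm{opt}=\cO(\tau^{1-\zeta}+\varepsilon_{h,p})$. Keeping instead the penalty term and moving $F(x^\epsilon_h)$ to the right gives $\tfrac{1}{2\omega}r(x^\epsilon_h)\leq\big(F(x^\star)-F(x^\epsilon_h)\big)+\cO(\tau^{1-\zeta})+\varepsilon_{h,p}$; bounding $F(x^\star)-F(x^\epsilon_h)\leq C_r=\cO(1)$ through the definition of $C_r$ and the lower bound on $F$ in (A.2), then multiplying by $2\omega$, produces $r_\mathrm{feas}=\cO\big(\omega(1+\tau^{1-\zeta}+\varepsilon_{h,p})\big)$. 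The step I expect to be the main obstacle is controlling the barrier term at the \emph{numerical} iterate, $-\tau\,\Gamma(x^\epsilon_h)$: unlike $\Gamma(x^\star_{\omega,\tau})$, which is governed by the a priori $L^\infty$-bound of (A.4) via Lemma~\ref{lem:Gamma}, the iterate $x^\epsilon_h$ carries no intrinsic upper bound on $z$, so the argument must lean on the enforced boundedness of Remark~\ref{sec:ForceBoundedness} and on the one-sided inequality $\log s\leq s-1$ to keep this contribution at order $\tau^{1-\zeta}$.
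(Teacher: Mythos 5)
Your proposal is correct and follows essentially the same route as the paper: Theorem~\ref{thm:conv} to transfer $\epsilon$-optimality into the finite element space, stripping the barrier terms at order $\cO(\tau^{1-\zeta})$, Proposition~\ref{prop:PenaltyBarrierSolution} to pass from $x^\star_{\omega,\tau}$ to $x^\star_\omega$, and then (in your case an inline unrolling of) Proposition~\ref{thm:PenaltySolution} to read off $g_\mathrm{opt}$ and $r_\mathrm{feas}$. The only local difference is your treatment of $-\tau\cdot\Gamma(x^\epsilon_h)$ via $\log s\leq s$ and boundedness of $z^\epsilon_h$; the paper instead notes that $x^\epsilon_h\in\cX^{\omega,\tau}_{h,p}$ forces $z^\epsilon_h\geq\frac{\tau}{2\cdot L_\omega}\cdot\be$, i.e.\ $x^\epsilon_h=\check{x}^\epsilon_h$, so that Lemma~\ref{lem:Gamma} applies directly --- which is exactly why that constraint was built into the definition of $\cX^{\omega,\tau}_{h,p}$ --- and both arguments rely, to the same (largely implicit) degree, on $L^\infty$-boundedness of $z^\epsilon_h$.
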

\begin{proof}
	From Theorem~\ref{thm:conv} we know \mbox{$F_{\omega,\tau}(x^\epsilon_h) \leq F_{\omega,\tau}(x^\star_{\omega,\tau}) + \varepsilon_{h,p}$}. This is equivalent to
	\begin{align*}
	&F_\omega(x^\epsilon_h) + \tau \cdot \Gamma(x^\epsilon_h) \leq F_\omega(x^\star_{\omega,\tau}) + \tau \cdot \Gamma(x^\star_{\omega,\tau}) + \varepsilon_{h,p} \\
	\Rightarrow\ \,
	&F_\omega(x^\epsilon_h) \leq F_\omega(x^\star_{\omega,\tau})+ \underbrace{|\tau \cdot \Gamma(x^\epsilon_h)| + |\tau \cdot \Gamma(x^\star_{\omega,\tau})|}_{(*)} + \varepsilon_{h,p}.
	\end{align*}
	Since $x^\epsilon_h \in \cX^{\omega,\tau}_{h,p}$, it follows that $z^\epsilon_h \geq \frac{\tau}{2 \cdot L_\omega}\cdot \be$
	and thus $x^\epsilon_h = \check{x}^\epsilon_h$.
	From Lemma~\ref{lem:StrictInteriorness} we know $x^\star_{\omega,\tau} = \bar{x}^\star_{\omega,\tau}$. Thus, we can apply Lemma~\ref{lem:Gamma} to bound $(*)$ with $\cO(\tau^{1-\zeta})$. Hence,
	$F_\omega(x^\epsilon_h) \leq F_\omega(x^\star_{\omega,\tau}) + \cO(\tau^{1-\zeta}) + \varepsilon_{h,p}$.
	Since, according to Proposition~\ref{prop:PenaltyBarrierSolution}, $x^\star_{\omega,\tau}$ is $\tilde{\varepsilon}$-optimal for~\eqref{eqn:PP}, where
	$\tilde{\varepsilon} = \cO(\tau^{1-\zeta}),$
	it follows that
	\begin{align*}
	F_\omega(x^\epsilon_h) \leq F_\omega(x^\star_{\omega}) + \underbrace{\cO(\tau^{1-\zeta}) + \varepsilon_{h,p}}_{=:\varepsilon}.
	\end{align*}
	In other words, $x^\epsilon_h$ is $\varepsilon$-optimal for \eqref{eqn:PP}. The result now follows from Proposition~\ref{thm:PenaltySolution}.
\end{proof}


Below, we translate the above theorem into an order-of-convergence result.
\begin{thm}[Order of Convergence to~\refOCP]\label{thm:order}
	Consider $x^\epsilon_h$ with $\epsilon = \cO(h^{\ell-\eta})$. Then
	$g_\mathrm{opt} = \cO\left(h^\eta\right)$ and
	$r_\mathrm{feas} =\cO\left(h^\eta\right)$.
\end{thm}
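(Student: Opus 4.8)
The plan is to derive Theorem~\ref{thm:order} as a direct specialization of Theorem~\ref{thm:ConvOCP}, by substituting the chosen scaling relations between the discretization parameters and the mesh size $h$. The bulk of the work is already carried out in the preceding results; the remaining task is purely to track how the abstract bounds $g_\mathrm{opt} = \cO(\tau^{1-\zeta}+\varepsilon_{h,p})$ and $r_\mathrm{feas} = \cO(\omega\cdot(1+\tau^{1-\zeta}+\varepsilon_{h,p}))$ collapse into powers of $h$ once we insert $\tau = \cO(h^\nu)$, $\omega = \cO(h^\eta)$, and $\epsilon = \cO(h^{\ell-\eta})$, where $\nu = \ell/2$ and $\eta = (1-\zeta)\cdot\nu$.

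First I would expand $\varepsilon_{h,p} := L_{\omega,\tau,h}\cdot\infexpr + \epsilon$ using the definitions. The interpolation assumption \eqref{eqn:InfBound} gives $\infexpr = \cO(h^{\ell+1/2})$. The constant $L_{\omega,\tau,h} = L_\omega + n_z\cdot|\Omega|\cdot 2\cdot L_\omega\cdot\frac{p+1}{\sqrt{\vartheta\cdot h}}$ from Proposition~\ref{prop:Lcont} contains $L_\omega$, which by \eqref{eqn:LipLw} grows like $\cO(1/\omega) = \cO(h^{-\eta})$, and an extra factor $\cO(h^{-1/2})$. Hence $L_{\omega,\tau,h} = \cO(h^{-\eta-1/2})$. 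Multiplying, $L_{\omega,\tau,h}\cdot\infexpr = \cO(h^{-\eta-1/2}\cdot h^{\ell+1/2}) = \cO(h^{\ell-\eta})$. Since $\epsilon = \cO(h^{\ell-\eta})$ by hypothesis, both contributions to $\varepsilon_{h,p}$ are $\cO(h^{\ell-\eta})$, so $\varepsilon_{h,p} = \cO(h^{\ell-\eta})$.

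Next I would verify $\tau^{1-\zeta} = \cO(h^\eta)$: indeed $\tau^{1-\zeta} = \cO(h^{\nu(1-\zeta)}) = \cO(h^\eta)$ by the very definition $\eta = (1-\zeta)\nu$. It then remains to compare $h^{\ell-\eta}$ against $h^\eta$. Using $\nu = \ell/2$ and $\eta = (1-\zeta)\ell/2 \le \ell/2$, one checks $\ell - \eta \ge \ell - \ell/2 = \ell/2 \ge \eta$, so $h^{\ell-\eta} = \cO(h^\eta)$ for $h$ small. Therefore $g_\mathrm{opt} = \cO(\tau^{1-\zeta}+\varepsilon_{h,p}) = \cO(h^\eta + h^{\ell-\eta}) = \cO(h^\eta)$. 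For the feasibility residual, $r_\mathrm{feas} = \cO(\omega\cdot(1+\tau^{1-\zeta}+\varepsilon_{h,p})) = \cO(h^\eta\cdot(1+\cO(h^\eta))) = \cO(h^\eta)$, since the bracketed factor is bounded as $h\to 0$ and $\omega = \cO(h^\eta)$ supplies the leading power.

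The only subtlety — and the one step I would write out carefully rather than wave through — is the bookkeeping on $L_\omega = \cO(h^{-\eta})$, because $L_\omega$ appears both inside $L_{\omega,\tau,h}$ (affecting $\varepsilon_{h,p}$) and it is precisely the cancellation of its negative power against the surplus $h^{1/2}$ in the interpolation estimate that makes $L_{\omega,\tau,h}\cdot\infexpr$ converge at all. I expect no genuine obstacle here, since every ingredient is a stated result; the proof is essentially a one-line substitution followed by the inequality $\ell - \eta \ge \eta$ to identify the dominant term, and a short sentence confirming the bracket $1+\tau^{1-\zeta}+\varepsilon_{h,p}$ stays $\cO(1)$ so that $\omega$ alone dictates the rate of $r_\mathrm{feas}$.
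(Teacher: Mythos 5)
Your proposal is correct and follows essentially the same route as the paper: compute $L_{\omega,\tau,h}=\cO(h^{-\eta-1/2})$, combine with \eqref{eqn:InfBound} to get $\varepsilon_{h,p}=\cO(h^{\ell-\eta})$, substitute into Theorem~\ref{thm:ConvOCP}, and use $\ell-\eta\geq\eta$ (equivalently the paper's remark $\ell>\ell-\eta>\eta$) to identify $h^\eta$ as the dominant term for both $g_\mathrm{opt}$ and $r_\mathrm{feas}$. The only cosmetic difference is that the paper distributes $\omega$ over the bracket to obtain $\cO(h^{\min\{\eta,\ell\}})$ explicitly, whereas you argue the bracket is $\cO(1)$; both are valid.
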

\begin{proof}
	It holds that
	\begin{align*}
	L_{\omega,\tau,h}
	&= \,\,\,\,L_\omega\,\,\, + (n_z \cdot |\Omega| \cdot 2) \cdot\,\, L_\omega\,\,\,\, \cdot \left(\frac{p+1}{\sqrt{\vartheta \cdot h}}\right)\\
	&=\frac{\cO(1)}{\omega} +\,\,\,\,\,\,\,\,\,\, \cO(1) \,\,\,\,\,\,\,\,\,\cdot \frac{\cO(1)}{\omega} \cdot \,\,\,\,\frac{\cO(1)}{\sqrt{h}}\\
	&=\cO\big(h^{-\eta} + h^{-\eta} \cdot h^{-1/2} \big) = \cO(h^{-\eta -1/2}).
	\end{align*}		
	From \eqref{eqn:InfBound}, we find
	\begin{align*}
	\varepsilon_{h,p} 	
	&= L_{\omega,\tau,h} \cdot \infexpr + \epsilon\\
	&= \cO\big(h^{-\eta -1/2}\big) \cdot \cO\big(h^{\ell+1/2}\big) + \cO\big(h^{\ell-\eta}\big)= \cO\big(h^{\ell-\eta}\big).
	\end{align*}
	Combining this with Theorem~\ref{thm:ConvOCP}, we find $x^\epsilon_h$ satisfies
	\begin{align*}
	g_\text{opt}
	=& \cO\big(\tau^{1-\zeta} + \varepsilon_{h,p}\big)
	=\cO\big(h^{\nu \cdot (1-\zeta)}+h^{\ell-\eta}\big)
	=\cO\big(h^{\min\left\lbrace \eta ,\, \ell-\eta \right\rbrace}\big),\\
	r_\text{feas}
	=& \cO\big(\omega \cdot (1+\tau^{(1-\zeta)\cdot\nu} + \varepsilon_{h,p})\big)
	= \cO\big(h^\eta + h^{\eta+(1-\zeta)\cdot\nu} + h^{\eta+\ell-\eta} \big)\\
	=& \cO\big( h^{\eta} + h^{2 \cdot\eta} + h^{\ell} \big)= \cO\big(h^{\min\lbrace \eta ,\, \ell \rbrace}\big).
	\end{align*}
	Note that $\ell > \ell-\eta>\eta$.
\end{proof}
Recall that $\eta = \ell/2 \cdot (1-\zeta)$, where $0<\zeta\ll 1$. If $\ell\approx p$ and $\eta\approx \ell/2$ it follows that $h^\eta \approx \sqrt{h^p}$.

\subsection{Numerical Quadrature}
When computing $x^\epsilon_h$, usually the integrals in $F$ and~$r$ cannot be evaluated exactly. In this case, one uses numerical quadrature and replaces $F_{\omega,\tau}$ with
$F_{\omega,\tau,h} := F_h + \frac{1}{2 \cdot \omega} \cdot r_h + \tau \cdot \Gamma$. Since $\cX_{h,p}$ is a space of piecewise polynomials, $\Gamma$ can be integrated analytically. However, the analytic integral expressions become very complicated. This is why, for a practical method, one may also wish to use quadrature for $\Gamma$.

If $F$ and $r$ have been replaced with quadrature approximations $F_h,\,r_h$, then it is sufficient that these approximations satisfy
\begin{align}
|F_{\omega,\tau,h}(x)-F_{\omega,\tau}(x)| \leq C_\text{quad} \cdot \frac{h^q}{\omega}\quad\quad \forall x \in \cX^{\omega,\tau}_{h,p},\label{eqn:QuadCond}
\end{align}
with bounded constant $C_\text{quad}$ and quadrature order $q\in\N$, to ensure that  the convergence theory holds. We discuss this further below.
The constrraint \eqref{eqn:QuadCond} poses a consistency and stability condition.

\paragraph{Consistency}
There is a consistency condition in~\eqref{eqn:QuadCond} that relates to suitable values of $q$. In particular, if we want to ensure convergence of order $\cO(h^\eta)$, as presented in Theorem~\ref{thm:order}, then $q$ has to be sufficiently large.

Consider the problem
\begin{equation*}
\tilde{x}^\star_{h} \in \operatornamewithlimits{arg\,min}_{x \in \cX_{h,p}^{\omega,\tau}}\ F_{\omega,\tau,h}(x).
\end{equation*}
Note that $\tilde{x}^\star_{h}$ is $\epsilon$-optimal for \eqref{eqn:PBP_h}, where from
\begin{align*}
F_{\omega,\tau}(\tilde{x}^\star_h)-C_\text{quad} \cdot \frac{h^q}{\omega} &\leq F_{\omega,\tau,h}(\tilde{x}^\star_h)
\leq F_{\omega,\tau,h}(x^\star_h) \leq F_{\omega,\tau}(x^\star_h)+C_\text{quad} \cdot \frac{h^q}{\omega}
\end{align*}
it follows that $ 	\epsilon = \cO\left({h^q}/{\omega}\right) = \cO(h^{q-\eta}).  	$
Hence, $\tilde{x}^\star_{h}$ satisfies the bounds for the optimality gap and feasibility residual presented in Theorem~\ref{thm:ConvOCP}.  We obtain the same order of convergence as in Theorem~\ref{thm:order} when maintaining $\epsilon = \cO(h^{\ell-\eta})$, i.e.\ choosing $q\geq \ell$.

\paragraph{Stability}
Beyond consistency, \eqref{eqn:QuadCond} poses a non-trivial stability condition. This is because the error bound must hold $\forall x \in \cX_{h,p}$. We show this with an example.

Consider $\Omega=(0,1)$, $n_y=0$, $n_z=1$, and \mbox{$c(x):=\sin(\pi \cdot x)$.} The constraint forces $x(t)=0$. Clearly, $c$ and $\nabla c$ are bounded globally. Consider the uniform mesh $\cT_h:=\left\lbrace\,T_j \ \vert \ T_j = \left((j-1)\cdot h, j\cdot h\right), j=1,2,\ldots,1/h \right\rbrace$
for $h \in 1/\N$, choose $p=1$ for the finite element degree, and Gauss-Legendre quadrature of order $q=3$, i.e.\ the mid-point rule quadrature scheme \cite{GaussQuad} of $n_q=1$ point per interval. Then, the finite element function $x_h$, defined as
$x(t) := -{1}/{h} + {2}/{h} \cdot (t-j \cdot h)$ for $t \in T_j$ on each interval, yields the quadrature error
\begin{align*} 	
|r_h(x)-r(x)| = \Bigg|\underbrace{ h \cdot \sum_{j=1}^{1/h} \sin^2\big(\pi \cdot x(j\cdot h - h/2)\big)}_{= 0} - \underbrace{\int_{0}^{1}\sin^2\big(\pi \cdot x(t)\big)\mathrm{d}t}_{=0.5} \Bigg|,
\end{align*}
violating \eqref{eqn:QuadCond}. In contrast, using Gauss-Legendre quadrature of order $5$ (i.e.\ using $n_q=2$ quadrature points per interval) yields satisfaction of \eqref{eqn:QuadCond} with $q=5$.

We see that in order to satisfy \eqref{eqn:QuadCond}, a suitable quadrature rule must take into account the polynomial degree $p$ of the finite element space and the nature of the nonlinearity of $c$. We clarify this using the notation $(\phi \circ \psi)(\cdot):=\phi(\psi(\cdot))$ for function compositions: If
$$ 	\left(f + \frac{1}{2\cdot\omega}\cdot \|c\|_2^2 \right) \circ x \in \cP_d({T})^{n_x},\ \forall T\in\cT_h,\forall x \in \cX_{h,p}\cap\cB,$$
for some $d \in \N$, i.e.\ the integrands of $F$ and~$r$ are polynomials in $t$, then $q \geq d$ is a sufficient order for exact quadrature. For a practical method, we propose to use Gaussian quadrature of order $q=4 \cdot p +1$, i.e.\ using~$n_q=2 \cdot p$ abscissae per interval $T \in \cT_h$. 

\subsection{On Local Minimizers}
\label{sec:local}
Above, we proved that the global NLP minimizer converges to a (or the, in case it is unique) global  minimizer of \refOCP. However, practical NLP solvers can often only compute critical points, which have a local minimality certificate at best.
For collocation methods, all critical  points of \refOCP have convergent critical NLP points if the mesh is sufficiently fine. For PBF the above global convergence result implies a more favorable assertion: For every strict local  minimizer of \refOCP there is exactly one convergent strict local NLP minimizer if the mesh is sufficiently fine. Below we explain the reason why this follows from the above global convergence property.

Consider a strict local minimizer $\tilde{x}^\star$ of \refOCP. By definition of a local minimizer, inactive box constraints $x_L\leq x\leq x_R$ could be imposed on \refOCP such that $\tilde{x}^\star$ is the unique global minimizer of a modified problem. Upon discretization we would keep the box constraints as $\bx_L \leq \bx \leq \bx_R$. From the above convergence result, since $\tilde{x}^\star$ is unique with inactive box constraints, $\bx$ must converge to $\tilde{x}^\star$, leaving the NLP box constraints inactive, as if they had been omitted as in the original problem.

\section{Numerical Experiments}
\label{sec:numerical}
The scope of this paper is the transcription method.
Practical aspects in solving the NLP \eqref{eqn:PBP_h} are discussed in \cite{Neuenhofen2020AnIP}, where we also show non-zero patterns of the sparse Jacobian and Hessian of the constraints and Lagrangian function; and show that the computational cost roughly compares to solving the NLPs from LGR collocation. Below, we present numerical results for two test problems when using our transcription method and minimizing \eqref{eqn:PBP_h} for the given instance, mesh, and finite element degree.


\subsection{Convex Quadratic Problem with a Singular Arc}\label{sec:numExp:CQP}
Consider a small test problem, which demonstrates convergence of PBF in a case where direct collocation methods ring:
\begin{equation}
\begin{aligned}
&\min_{y,u} \quad &&\int_0^{\pi} \left(\,y_0(t)^2 + \cos^{(1-m)}(t) \cdot  u(t)\,\right)\,\mathrm{d}t,\\
&\text{s.t.} &&\dot{y}_{k-1}(t)=y_{k}(t),\text{ for }k=1,\dots,m\,,\qquad\dot{y}_m(t)=u(t),
\end{aligned}
\end{equation}
where $\cos^{(1-m)}$ is the $(1-m)^{th}$ derivative of $\cos$, with negative derivative meaning antiderivative. Figure~\ref{fig:CQVP} shows the numerical solutions for $m=1$ for Trapezoidal (TR), Hermite-Simpson (HS), LGR collocation and PBF, where the latter two use polynomial degree $p=2$. For higher degree $p$, LGR would still ring when $m\geq p-1$. TR and HS require box constraints on $u$ for boundedness of their NLP minimizers.

\begin{figure}[tb]
	\centering
	\includegraphics[width=0.6\columnwidth]{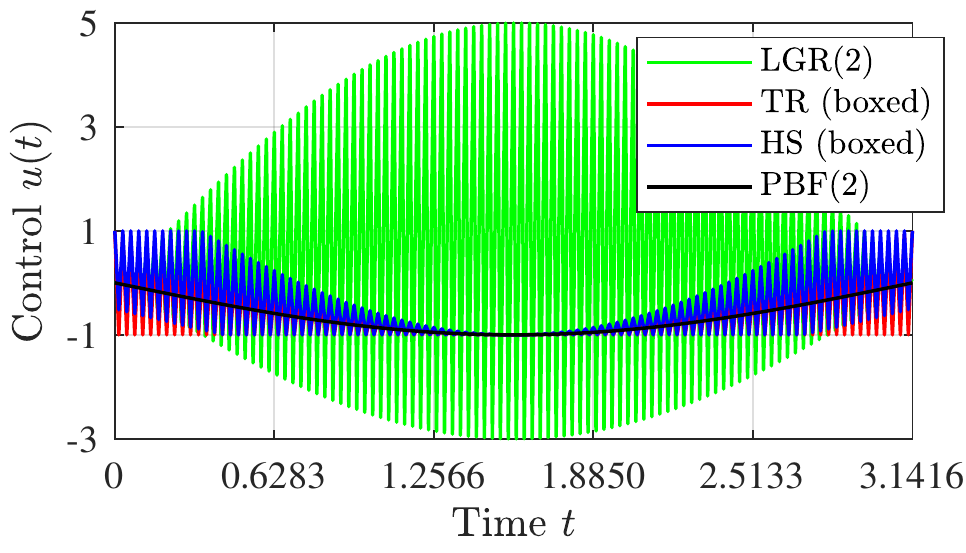}
	\caption{Comparison of control solutions for three collocation methods and PBF.}
	\label{fig:CQVP}
\end{figure}

\subsection{Second-Order Singular Regulator}
This bang-singular control problem from \cite{SOSC} with \mbox{$t_E=5,$}\,\mbox{$\eta=1$} is given as
\begin{equation}
\begin{aligned}
&\min_{y,u} &\quad &\int_0^{t_E} \left(\,y_2(t)^2 + \eta \cdot  y_1(t)^2\,\right)\,\mathrm{d}t,\\
&\text{s.t.} & y_1(0)&=0,\quad y_2(0)=1,\\
&& \dot{y}_1(t)&=y_2(t),\quad \dot{y}_2(t)=u(t),\\
&& -1\leq u(t)&\leq 1.
\end{aligned}\label{eqn:SOSR_ocp}
\end{equation}

Both LGR and PBF use $100$ elements of degree $p=5$. $\bS$~has 24507 non-zeros and bandwidth 15 for both discretizations. Forsgren-Gill solves PBF in 40 and LGR in 41 NLP iterations.

Figure~\ref{fig:SOSController} presents the control profiles of the two numerical solutions. LGR shows ringing on the time interval $[1.5,\,5]$ of the singular arc. In contrast, PBF converges with the error $\|u^\star(t)-u_h(t)\|_{L^2([1.5,\,5])} \approx 1.5 \cdot 10^{-4}$.

\begin{figure}[tb]
	\centering
	\includegraphics[width=0.6\columnwidth]{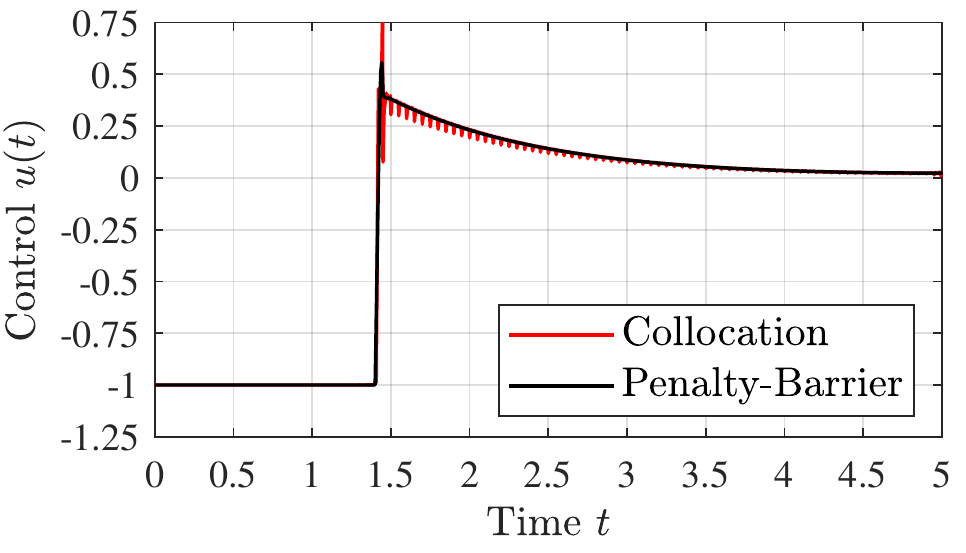}
	\caption{Comparison of control solutions from LGR and PBF for the Second-Order Singular Regulator.}
	\label{fig:SOSController}
\end{figure}

\subsection{Aly-Chan Problem}
The problem in \cite{AlyChan}, namely \eqref{eqn:SOSR_ocp} with \mbox{$t_E=\pi/2,$} \mbox{$\eta=-1,$} has a smooth totally singular control.

Both LGR and PBF use $100$ elements of degree $p=5$. $\bS$ has the same nonzero pattern as before. Forsgren-Gill for PBF/LGR converges in 48/43 iterations.
Figure~\ref{fig:NumExp_AlyChan} presents the control profiles of the two numerical solutions. PBF converges, with error
$  \|u^\star(t)-u_h(t)\|_{L^2(\Omega)} \approx 3.7 \cdot 10^{-6}     . $
LGR does not converge for this problem; cf.~\cite[Fig.~3]{Kameswaran},\cite{ChenBiegler16}. To fix the convergence for this problem, \cite{ChenBiegler16} proposes a particular mesh adaptation scheme whereas \cite{Kameswaran} employs a regularization technique.

\begin{figure}[tb]
	\centering
	\includegraphics[width=0.6\columnwidth]{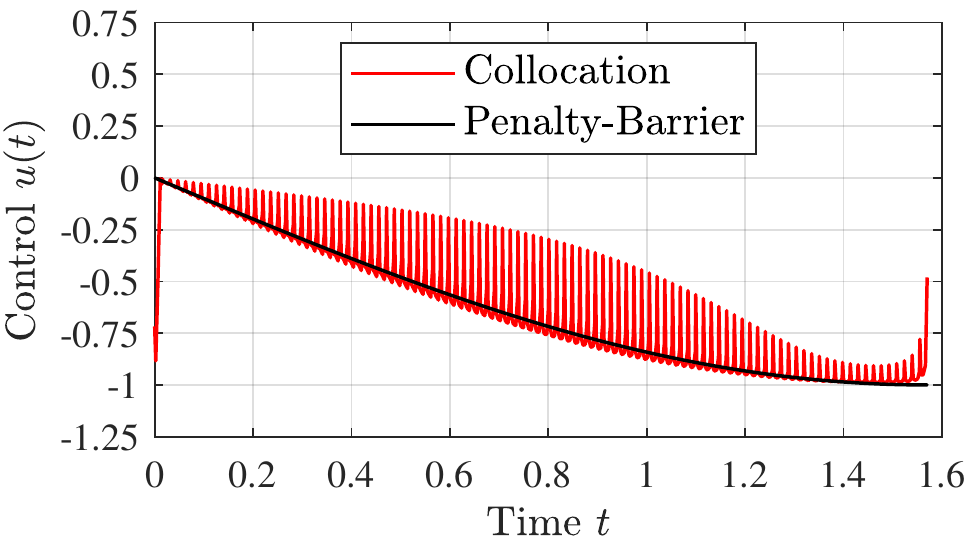}
	\caption{Comparison of control solutions from LGR and PBF for the Aly-Chan Problem.}
	\label{fig:NumExp_AlyChan}
\end{figure}

\subsection{Regular state-constrained problem}\label{sec:numExp:Sparsity}
We now consider a test problem for which both types of methods converge with success, so that we can compare conditioning, convergence, and rate of convergence to a known analytical solution:
\begin{align*}
\min_{y,u}& & J&= y_2(1),\label{eqn:ExampleOCP2}\\
\text{s.t.}& & y_1(0)&=1,\ \, \dot{y}_1(t)=\frac{u(t)}{2y_1(t)},\ \, \sqrt{0.4}\leq y_1(t),\\
& & y_2(0)&=0,\ \, \dot{y}_2(t)=4 y_1(t)^4 + u(t)^2,\ \,  -1\leq u(t)\,.
\end{align*}
The solution is shown in Figure~\ref{fig:exp2}.
\begin{figure}
	\centering
	\includegraphics[width=0.6\columnwidth]{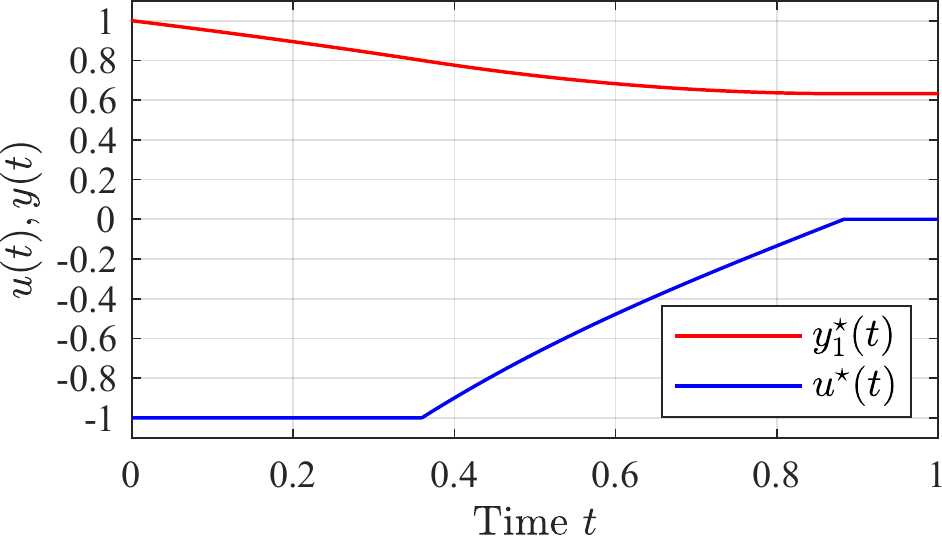}
	\caption{Analytical solution to \eqref{eqn:ExampleOCP2}.}
	\label{fig:exp2}
\end{figure}
$u^\star$ is constant outside $t_0=1-\frac{\sqrt{41}}{10}\approx 0.35$ and $t_1=t_0+\log 2-\frac{\log(\sqrt{41}-5)}{2}\approx 0.88$, between which $u^\star(t)= 0.8 \sinh\left(2 (t - t_1)\right)$, yielding $J\approx 2.0578660621682771255864272367598$. 

All methods yield accurate solutions. Figure~\ref{fig:loglogxpl2} shows the convergence of the optimality gap and feasibility residual of a respective method. Remarking on the former, we computed $J^\star-J(x_h)$ and encircled the cross when $J(x_h)<J^\star$. Note in the figure that for $\geq 40$ elements the most accurate solutions in terms of feasibility are found by PBF with $\omega=10^{-10}$. Further, we find that the collocation methods significantly underestimate the optimality value for this experiment.

\begin{figure}
	\centering
	\includegraphics[width=0.6\columnwidth]{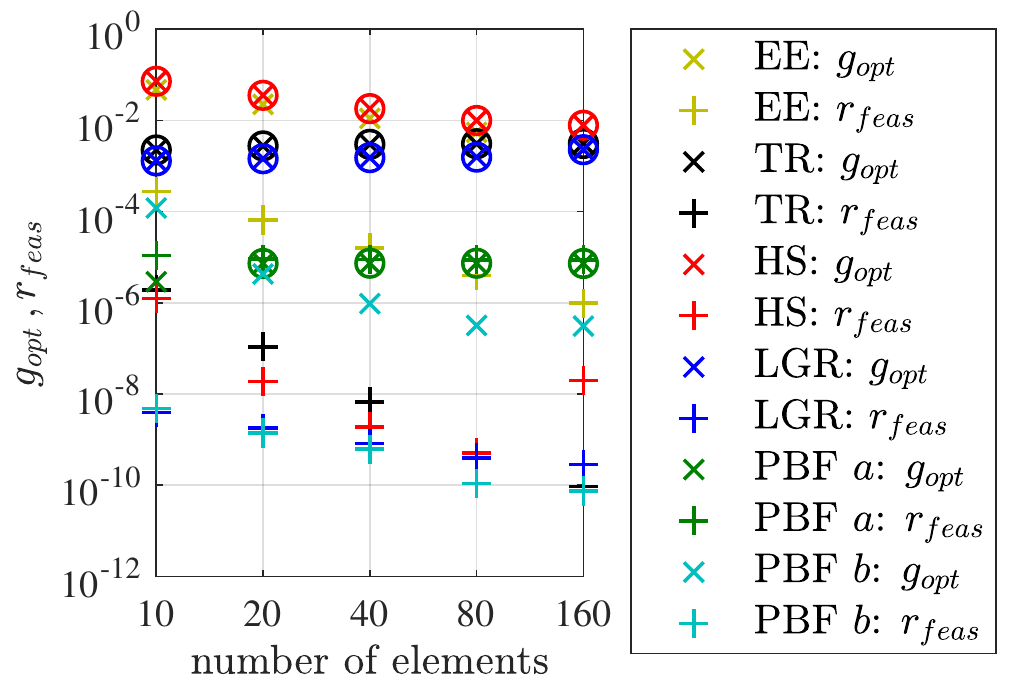}
	\caption{Convergence of optimality gap and feasibility residual
		LGR and PBF use polynomial degree $p=5$. PBF uses two different values: a) $\omega=10^{-5}$, b) $\omega=10^{-10}$.}
	\label{fig:loglogxpl2}
\end{figure}

Now we discuss rates of convergence. Convergence of only first order is expected because higher derivatives of $y^\star$ are non-smooth and $u^\star$ has edges. Indeed, $r_\text{feas}$ converges linearly for all methods. PBF5 with $\omega=10^{-5}$ stagnates early because it converges to the optimal penalty solution, which for this instance is converged from $20$ elements onwards. $g_\text{opt},r_\text{feas}$ are then fully determined by $\omega$. The issue is resolved by choosing $\omega$ smaller. LGR5 and PBF5 with $\omega=10^{-10}$ converge similarly, and stagnate at $r_\text{feas}\approx 10^{-10}$. Due to the high exponent in the objective, small feasibility errors in the collocation methods amount to significant underestimation of the objective.

Finally, we look into computational cost. Solving the collocation methods with IPOPT and the PBF5 discretization with the interior-point method in \cite{ForsgrenGill1998}, the optimization converges in $\approx 20$ iterations for any discretization. Differences in computational cost can arise when one discretization results in much denser or larger problems than others. Here, we compare the sparsity structure of the Jacobian $\nabla_\bx C(\bx)\t$ for LGR5 in Figure~\ref{fig:sparsitylgr} and PBF5 in Figure~\ref{fig:sparsitypbf}, each using a mesh size of $h=\frac{1}{10}$.
\begin{figure}
	\centering
	\includegraphics[width=0.6\columnwidth]{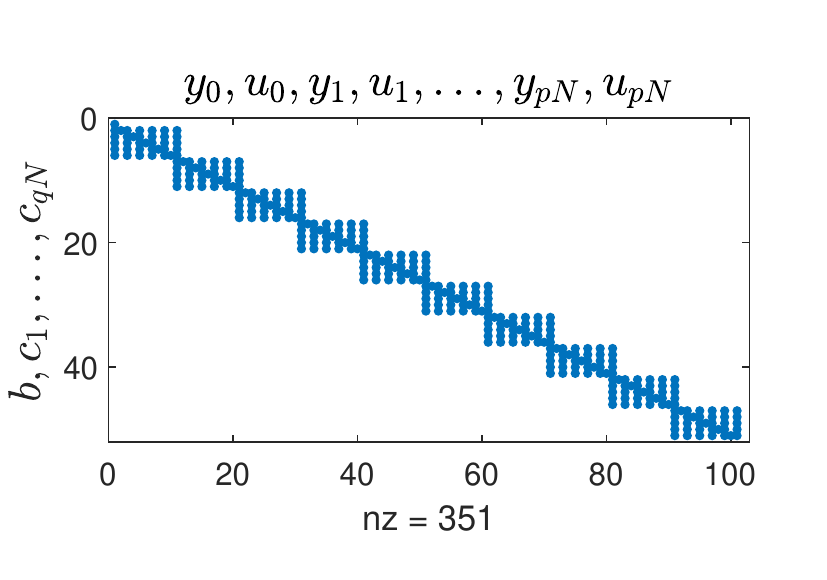}
	\caption{Sparsity of $\nabla_\bx C(\bx)\t$ for LGR5 when $h=\frac{1}{10}$, i.e.~$N=10$. For LGR, notice $q=p-1$. The discretization does not depend on $u_{pN}$.}
	\label{fig:sparsitylgr}
\end{figure}
\begin{figure}
	\centering
	\includegraphics[width=0.6\columnwidth]{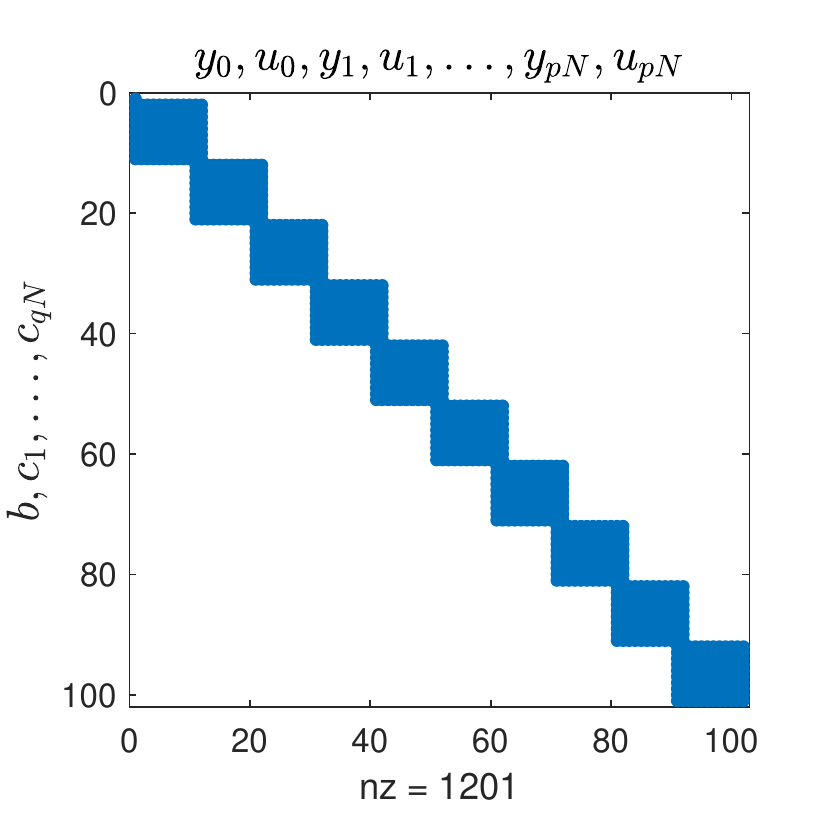}
	\caption{Sparsity of $\nabla_\bx C(\bx)\t$ for PBF5 when $h=\frac{1}{10}$, i.e.~$N=10$, with $q=2p$.}
	\label{fig:sparsitypbf}
\end{figure}
Note that for PBF5, $C(\bx)$ has more rows in the Jacobian than LGR5, thus the Jacobian has hence more non-zeros. However, critical for computations is the primal Schur complement $\mathbf{\Sigma}=\nabla_{\bx\bx}^2 \cL(\bx,\blambda)+\nabla_\bx C(\bx)\t \bD \nabla_\bx C(\bx)$, which is used when solving the KKT system via the reduced form, where $\bD$ is a diagonal matrix. $\mathbf{\Sigma}$ is a narrow-banded matrix with dense band of the same bandwidth for LGR5 and PBF5.

With regard to computational cost, it follows from Fig.~\ref{fig:loglogxpl2} that the ability to choose $\omega$ in PBF can be advantageous. In particular, on coarse meshes, one may opt for small feasibility residual by manually decreasing $\omega$, whereas with a collocation method one is stuck with the feasibility residual that one obtains for that particular mesh. The figure shows this: For $\omega=10^{-10}$, even on the coarsest mesh the PBF method achieves a solution that has a smaller feasibility residual than other methods on the same mesh. For this problem this becomes possible because the path constraint could be satisfied with zero error by choosing $y$ a polynomial of degree 3 (because here PBF uses $p=5$).

\subsection{Van der Pol Controller}

This problem uses a controller to stabilize the van der Pol differential equations on a finite-time horizon. The problem is stated as
\begin{equation*}
\begin{aligned}
&\min_{y,u} &\quad \frac{1}{2}\cdot &\int_0^4 \left(\,y_1(t)^2 + y_2(t)^2\,\right)\,\mathrm{d}t,\\
&\text{s.t.} & y_1(0)&=0,\quad\quad y_2(0)=1,\\
&& \dot{y}_1(t)&=y_2(t),\\
&& \dot{y}_2(t)&=-y_1(t)+y_2(t) \cdot \left(\,1-y_1(t)^2\,\right) + u(t),\\
&& -1 & \leq u(t)\leq 1. 
\end{aligned}
\end{equation*}
The problem features a bang-bang control with a singular arc on one sub-interval. The discontinuities in the optimal control are to five digits at $t_1 = 1.3667$ and $t_2 = 2.4601$.

We solved this problem with LGR collocation on $100$ uniform elements of order $5$. We compare this solution to the one obtained with PBF using $100$ uniform elements of order $p=5$, with $\omega=10^{-10}$ and $\tau=10^{-10}$.

Figure~\ref{fig:NumExp_VanderPol} presents the control profiles of the two numerical solutions.
LGR shows ringing on the time interval $[t_2,\,4]$ of the singular arc. In contrast,  PBF converges to the analytic solution. The solution satisfies the error bounds
$e(0)\approx 7.0\cdot 10^{-2}$, $e(t_2)\approx 1.2\cdot 10^{-2}$, $e(2.5)\approx 8.17\cdot 10^{-4}$, and $e(2.6)\approx 9.6 \cdot 10^{-5}$, where
$e(\hat{t}):=\|u^\star(t)-u_h(t)\|_{L^2([\hat{t},4])}$. The larger errors in the vicinity of the jumps occur due to the non-adaptive mesh.

\begin{figure}[tb]
	\centering
	\includegraphics[width=0.6\columnwidth]{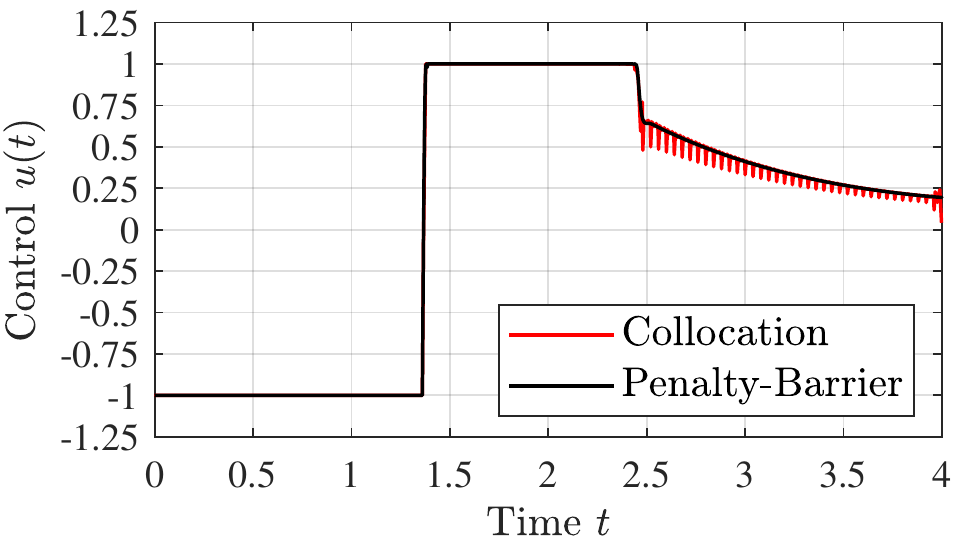}
	\caption{Comparison of control input obtained with Penalty-Barrier method against LGR collocation  for the Van der Pol problem.}
	\label{fig:NumExp_VanderPol}
\end{figure}

\subsection{Reorientation of an Asymmetric Body}
This nonlinear problem from  \cite[Ex.\ 6.12, eqn.\ 6.123]{BettsChap2} in the ODE formulation is considered numerically challenging for its minimum time objective, the control appearing only linearly in the dynamics, the problem having a bang-bang solution and multiple local minima with identical cost. Since the solution is bang-bang, rates of convergence (at least for the optimality gap) can only be linear. We compare convergence of three collocation methods and PBF, where a polynomial degree $p=3$ is used for the $hp$-methods.

Using the same initial guess from forward integration of an approximate control solution, LGR and PBF 
converge on average in 200 iterations. LGR was solved with IPOPT and PBF was solved with a penalty-barrier interior-point method presented in \cite{ForsgrenGill1998}. Both NLP solvers cost one linear system solve per iteration. For $\omega=\tau=10^{-3}$, the finite element solution $x^\star_h$ converges to the penalty-barrier minimizer $x^\star_{\omega,\tau}$ sooner, which however is not very feasible for the DOP at hand. The other collocation methods' NLPs were also solved using IPOPT, which terminated on local infeasibility for TR and HS. In contrast, LGR and PBF provide  numerical solutions that converge at similar rates, which stagnate around $10^{-6}$ for the feasibility residual and $10^{-4}$ for the optimality gap. These methods converge at similar rates. 
The size of the differential constraint violation and optimality gap for HS, LGR and PBF for different mesh sizes are given  in Table~\ref{tab:r_feas}. For HS, due to box constraints on end-time, which has been expressed as the first state, the optimality gap is negative and equal to the lower box constraint on the first state.

As with any regularization method, good values for $\omega,\tau$ can be found on the fly by saving each barrier solution and terminating when differences between subsequent barrier solutions stop decreasing. For computation of the gap, we determined $J^\star:=28.6298010321$ from PBF(3) on 2048 elements, where $r_{\textrm{feas}}\approx 2.4e-7$.

\newcommand{\linebreakcell}[2][c]{\begin{tabular}[#1]{@{}c@{}}#2\end{tabular}}
\newcommand{\lbqcell}[2]{\linebreakcell{#1/\\ \hspace{2mm}#2}\xspace}

\begin{table}[tb]
	\caption{$L^2(\Omega)$-norm for differential constraints / optimality gap of the Asymmetric Body Reorientation Problem. All methods use consistency order $p=3$. $N_\text{el}$ is the number of elements.}
	\label{tab:r_feas}
	\centering
	\begin{tabular}{|c||c|c|c|c|c|c|c|}
		\hline
		$N_\text{el}$ 	& HS 						& LGR 						& \linebreakcell{PBF\\$\omega=\tau=10^{-3}$} 	& \linebreakcell{PBF\\ $\omega=\tau=10^{-7}$} 	& \linebreakcell{PBF\\ $\omega=\tau=10^{-10}$} 	\\ \hline\hline
		8   			& \lbqcell{3.2e-2}{-1.3e-1} & \lbqcell{6.1e-3}{1.2e+0} 	& \lbqcell{4.5e-3}{-4.4e-1} 				& \lbqcell{1.9e-4}{1.4e+0}  				& \lbqcell{1.8e-4}{1.4e+0} 					\\ \hline
		32  			& \lbqcell{3.6e-3}{-1.3e-1} & \lbqcell{6.6e-5}{2.2e-2} 	& \lbqcell{4.2e-3}{-4.5e-1}	 				& \lbqcell{2.5e-5}{3.6e-2}  				& \lbqcell{7.7e-6}{4.2e-1} 					\\ \hline
		128 			& \lbqcell{1.4e-3}{-1.3e-1} & \lbqcell{1.0e-6}{5.7e-4} 	& \lbqcell{4.1e-3}{-4.5e-1} 				& \lbqcell{1.7e-6}{6.9e-4}  				& \lbqcell{3.4e-7}{9.5e-3} 					\\ \hline
		512 			& \lbqcell{7.0e-4}{-1.3e-1} & \lbqcell{2.1e-6}{9.2e-5} 	& \lbqcell{4.1e-3}{-4.5e-1} 				& \lbqcell{1.4e-6}{7.4e-5}  				& \lbqcell{1.1e-8}{6.1e-4} 					\\ \hline
		\hline
	\end{tabular}
	
\end{table}

\subsection{{Obstacle Avoidance Problem}}
Since we limited our presentation to a convergence analysis for global minimizers, we give this example to demonstrate PBF's practical capability to also converge to non-global minimizers.

Consider the minimum-time trajectory from $\vec{\chi}_0=[-10\ 10]\t$ to $\vec{\chi}_E=[10\ 10]\t$ around an obstacle at $\vec{\chi}_C=[0\ 8]\t$ of radius $R=3$:
\begin{equation*}
\begin{aligned}
\min_{\vec{\chi},u,t_E} \quad t_E&,\\
\text{s.t.} \quad \vec{\chi}(0)&=\vec{\chi}_0,\quad\vec{\chi}(t_E)=\vec{\chi}_E, \quad\|\vec{\chi}(t)-\vec{\chi}_C\|_2^2\geq R^2\\
\dot{\vec{\chi}}(t)&=\big[\cos\big(u(t)\big)\ \sin\big(u(t)\big)\big]\t,
\end{aligned}
\end{equation*}
Passing the obstacle above or below results in two locally optimal trajectories. Both are found by PBF, depicted in Figure~\ref{fig:NumExp_Obst}, using the dashed curves as initial guesses (with $t_E$ and $u$ computed feasible from $\vec{\chi}$ via integration and differentiation, respectively) on $100$ finite elements of degree 5.

The computed times as in the figure are accurate except to the last digit. The red/black trajectory converges in 52/58 NLP iterations. For comparison, LGR of the same degree and mesh converges in 51/51 iterations. $\bS$ has 73521 nonzeros and bandwidth 25 for both PBF and LGR.
\begin{figure}[tb]
	\centering
	\includegraphics[width=0.6\columnwidth]{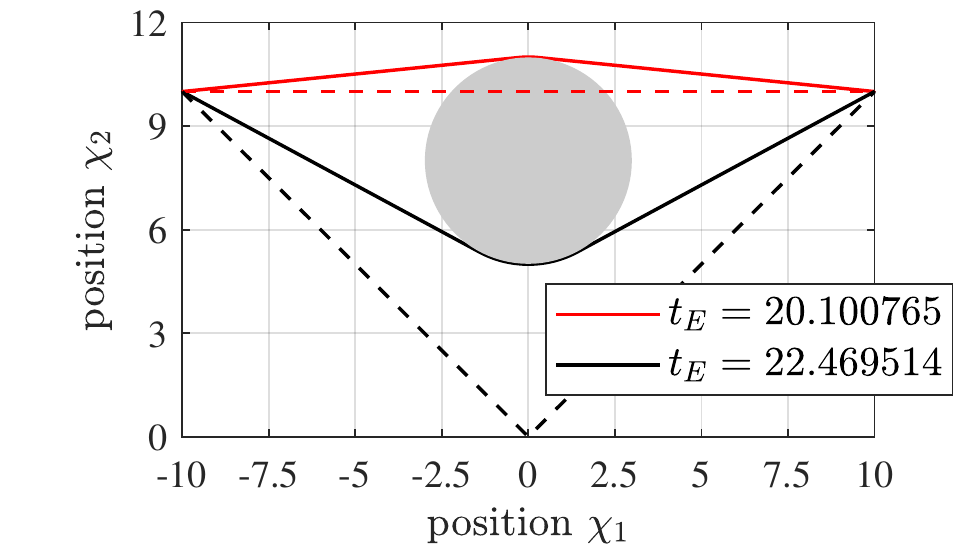}
	\caption{Optimal trajectories from PBF for the Obstacle Avoidance Problem.}
	\label{fig:NumExp_Obst}
\end{figure}

\subsection{Pendulum in Differential-Algebraic Form}
In this example from~\cite[Chap.~55]{betts2015collection}, a control force decelerates a frictionless pendulum to rest. The objective is to minimize the integral of the square of the control:
\begin{equation*}
\begin{aligned}
&\min_{\vec{\chi},\xi,u} & &\int_0^{3} u(t)^2\,\mathrm{d}t,\\
&\text{s.t.} & \vec{\chi}(0)&=[1\ 0]\t,\quad \dot{\vec{\chi}}(0)=\vec{0},\\
&&\vec{\chi}(3)&=[0\ -1]\t,\quad \dot{\vec{\chi}}(3)=\vec{0},\\
&& \ddot{\vec{\chi}}(t)&=[0\ -9.81]\t +2 \cdot \vec{\chi}(t) \cdot \xi(t) + \vec{\chi}^\perp(t) \cdot u(t),
\end{aligned}
\end{equation*}
with an additional DAE constraint introduced below. The ODE for $\vec{\chi}$ is a force balance in the pendulum mass. $u(t)$ is the control force acting in the direction $\vec{\chi}^\perp := [-\chi_2\ \chi_1]\t$.

The DAE constraint determines the beam force $\xi(t)$ in the pendulum arm in an implicit way, such that the length remains 1 for all time;
\cite[Chap.~55]{betts2015collection} uses
\begin{align}
0&=\|\dot{\vec{\chi}}(t)\|_2^2 -2 \cdot \xi(t) -g \cdot \chi_2(t).\label{eqn:BeamDAE1}
\end{align}
The following alternative constraint achieves the same:
\begin{align}
\tag{\ref{eqn:BeamDAE1}'}
0&=\|\vec{\chi}(t)\|_2^2 - 1\,.\label{eqn:BeamDAE3}
\end{align}
\eqref{eqn:BeamDAE1} is a DAE of index 1, whereas \eqref{eqn:BeamDAE3} is of index~3. 

In the following we study the convergence of TR, HS, LGR ($p=5$) and PBF ($p=5$) on meshes of increasing size. Here, the collocation methods are solved with IPOPT in ICLOCS2, whereas PBF is solved with Forsgren-Gill as before.

TR \& HS are likely to converge at a slower rate than PBF \& LGR. However, our focus is primarily on determining whether a given method converges, and only secondarily on rates of convergence. To find out where solvers struggle, we consider three variants of the pendulum problem,
\begin{enumerate}[\text{\ Case\,}A]
	\item where we consider the original problem with \eqref{eqn:BeamDAE1} as given in \cite{betts2015collection}.
	\item where we add the path constraint $\xi(t)\leq 8$.
	\item where we exchange \eqref{eqn:BeamDAE1} with \eqref{eqn:BeamDAE3}.
\end{enumerate}

All methods converge  for case A. Figure~\ref{fig:pendulumcaseAlog} shows that TR converges slowly, while HS, LGR and PBF converge fast. At small magnitudes of $g_{\text{opt}},r_{\text{feas}}$, further decrease of LGR and PBF deteriorates, presumably due to limits in solving the NLP accurately under rounding errors.
\begin{figure}[tb]
	\centering
	\includegraphics[width=0.6\columnwidth]{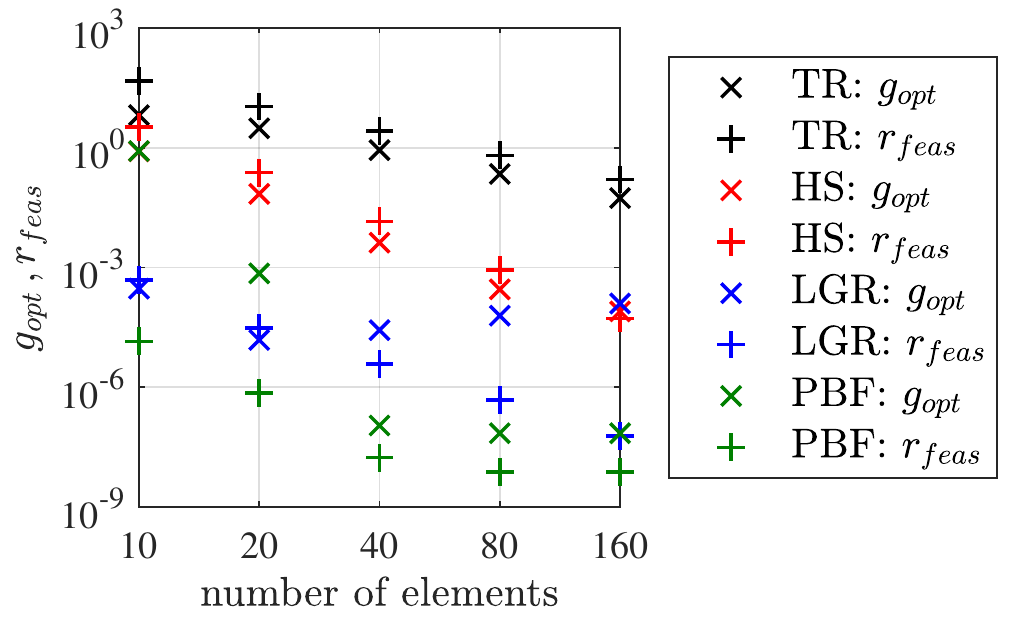}
	\caption{Convergence of optimality gap and feasibility residual for Pendulum example, case A.}
	\label{fig:pendulumcaseAlog}
\end{figure}

Case B is shown in Figure~\ref{fig:pendulumcaseBarcs}. The control force decelerates the pendulum more aggressively before the pendulum mass surpasses the lowest point, such that the beam force obeys the imposed upper bound. Figure~\ref{fig:pendulumcaseBlog}  confirms convergence for all methods. The rate of convergence is  slower compared to case A, as expected, because  the solution of $u$ is locally non-smooth.

\begin{figure}[tb]
	\centering
	\includegraphics[width=0.6\columnwidth]{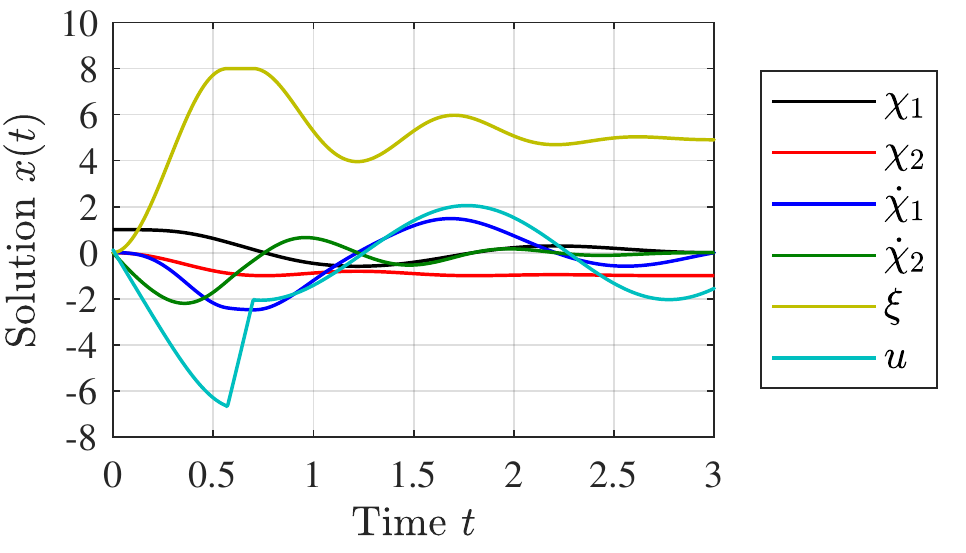}
	\caption{Numerical solution of PBF on $80$ elements for Pendulum example, case B.}
	\label{fig:pendulumcaseBarcs}
\end{figure}	
\begin{figure}[tb]
	\centering
	\includegraphics[width=0.6\columnwidth]{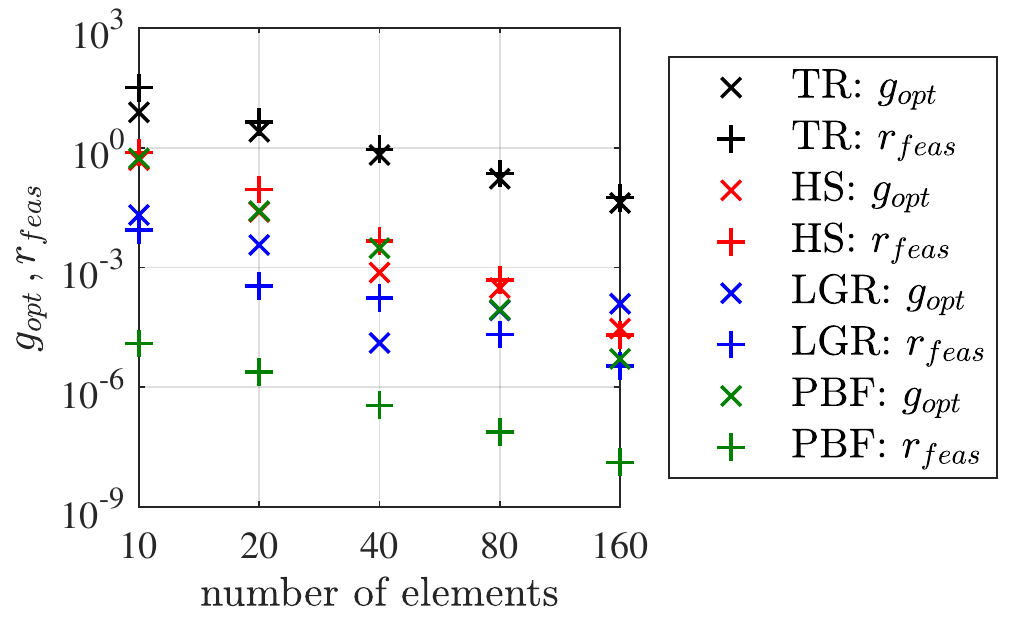}
	\caption{Convergence of optimality gap and feasibility residual for Pendulum experiment, case B.}
	\label{fig:pendulumcaseBlog}
\end{figure}

For case~C, some collocation methods struggle: For HS on all meshes, the restoration phase in IPOPT converged to an infeasible point, indicating infeasibility of \eqref{eqn:NLP} for this scheme \cite[Sec.~3.3]{IPOPT}. For TR, the feasibility residual does not converge, as shown in Figure~\ref{fig:pendulumcaseClog}.
\begin{figure}[tb]
	\centering
	\includegraphics[width=0.6\columnwidth]{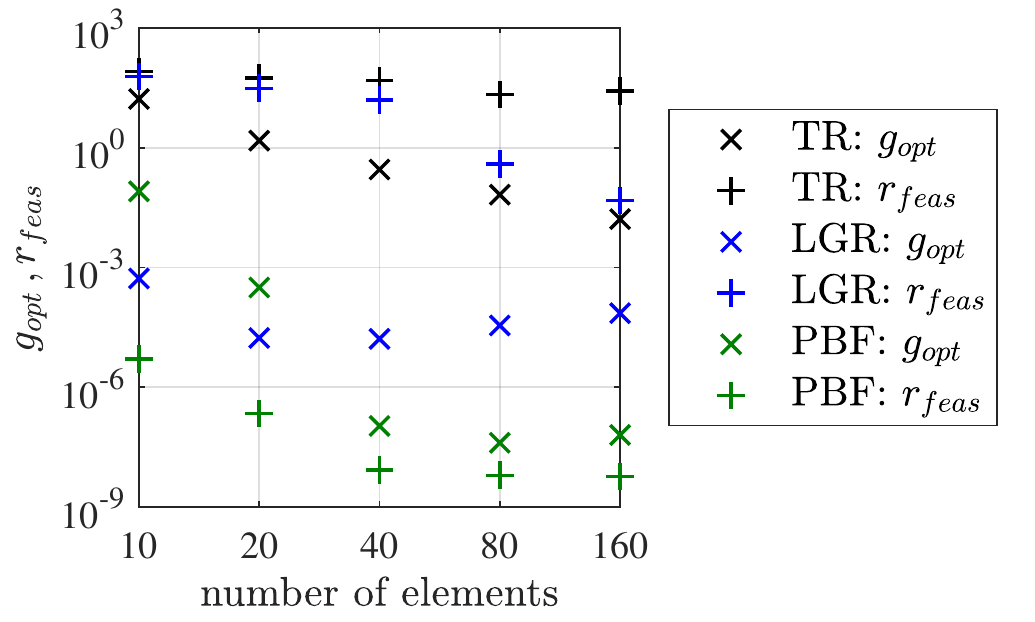}
	\caption{Convergence of optimality gap and feasibility residual for Pendulum example, case C.}
	\label{fig:pendulumcaseClog}
\end{figure}	
Figure~\ref{fig:pendulumcaseCarcs} shows that this is due to ringing in the numerical solution for the beam force.
\begin{figure}[tb]
	\centering
	\includegraphics[width=0.6\columnwidth]{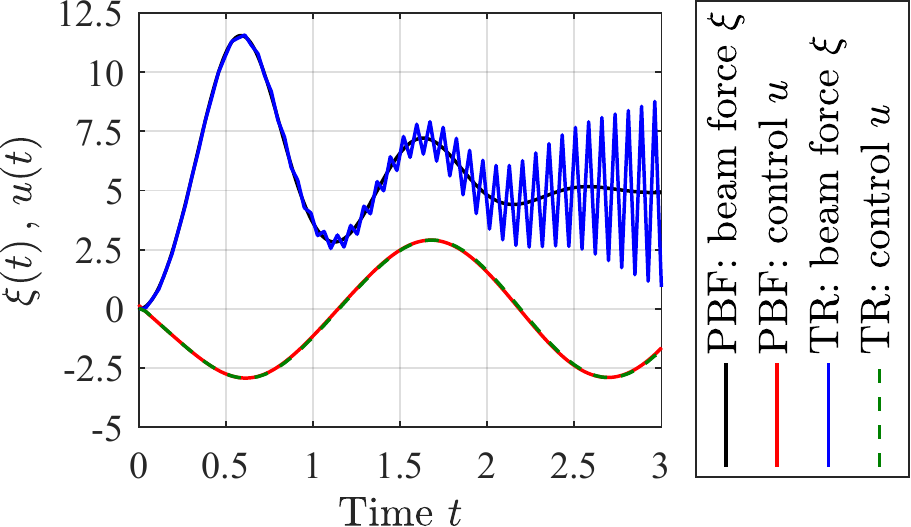}
	\caption{Numerical solutions of PBF and TR on $80$ elements for Pendulum example, case C. The optimal control is identical to case A.}
	\label{fig:pendulumcaseCarcs}
\end{figure}
Regarding LGR, Figure~\ref{fig:pendulumcaseClog} shows that the feasibility residual converges only for relatively fine meshes. In contrast to the collocation methods, PBF converges as fast as for case~A.

Finally, we discuss the computational cost: Using 160 elements, the PBF discretization results in $\bS$ of bandwidth~30, with 44964 nonzeros for cases~\mbox{A,\,C} and 42951 nonzeros for case~B; requiring 66, 51, and 66 NLP iterations for cases A--C. LGR yields the same sparsity pattern for $\bS$ as PBF, solving on average in $30$ IPOPT iterations (with second-order corrections).

\section{Conclusions}
\label{sec:conclusions}
We presented PBF and proved convergence under mild and easily-enforced assumptions. Key to the convergence proof is the formulation of a suitable unconstrained penalty-barrier problem, which is discretized using finite elements and solved with primal-dual penalty-barrier NLP solvers.

Theorem~\ref{thm:order} provides high-order convergence guarantees even if the component $z$ has discontinuities, provided that the trajectory can be approximated accurately in the finite element space; see~\eqref{eqn:InfBound} and the discontinuous elements in Figure~\ref{fig:finiteelementsyz}. It is  a practical matter to employ an adaptive meshing technique for achieving this in an efficient manner.

The practicality of our novel transcription has been illustrated in numerical examples. The scheme converged for challenging problems, which included solutions with singular arcs and discontinuities. These problems caused issues for  three commonly used direct transcription methods based on collocation, namely TR, HS and LGR.

\part{Convergence Analysis of Quadrature Penalty Methods}
\label{part:convproof}

\chapter{Introduction}
This part presents a proof of convergence for quadrature penalty methods.

We suppose that there exists a local\footnote{A global minimizer is a just a special local minimizer} minimizer $(y^\star,u^\star) \in \cX$ of \eqref{eqn:OCP} and we wish to approximate this minimizer numerically.
We consider an approximation $(y^\star_h,u^\star_h) \in \cX_{h,p}$ that is a \emph{suitable}\footnote{In the sense described in Section~\ref{sec:partconv:notation}} local minimizer of \eqref{eqn:POCPh}.
We prove convergence rates of the measures $\delta,\rho,\gamma$ from Section~\ref{sec:ConvMeasures} as $h \rightarrow 0$. The main result is Theorem~\ref{thm:main}. We state some notations and a few prerequisites in advance.

\section{Notation}\label{sec:partconv:notation}

\subsubsection{Feasible Candidates}
We define the space $\cB_{h,p}\subset\cX_{h,p}$ of candidates $(y_h,u_h)$ that satisfy the bound constraints in \eqref{eqn:POCPh}.

\subsubsection{Suitable Minimizer}
NLPs do often have several different local minimizers. We use an NLP minimizer $\bx^\star$ to construct a numerical optimal control solution $y^\star_h,u^\star_h$ to a demanded exact minimizer $y^\star,u^\star$. However, if our NLP solver picked the wrong $\bx^\star$ then $y^\star_h,u^\star_h$ will not converge with respect to $y^\star,u^\star$ but potentially with respect to a different minimizer $\tilde{y}^\star,\tilde{u}^\star$.

Whenever minimizing locally, one needs a local property to identify the exact local minimizer that is considered.
This is why in the following we specify a simple and sufficient characterization of which numerical NLP minimizer we seek. We call such a minimizer a \emph{suitable} local minimizer. For us, this local property is the objective value.
\begin{mdframed}
	For each $h \in \R_{>0}$, define an arbitrary fixed pair $(\hat{y}_h,\hat{u}_h) \in \cB_{h,p} \subset \cX_{h,p}$ that satisfies~\eqref{eqn:infxh}, i.e., such that there is a constant $C_\eta \in \R$ such that:
	\begin{align}
	\big\|(y^\star,u^\star)-(\hat{y}_h,\hat{u}_h)\big\|_{\cX} \leq C_\eta \cdot h^\eta\,. \label{eqn:DefHatyu}
	\end{align}
\end{mdframed}
Then a suitable numerical local minimizer $(y^\star_h,u^\star_h)$ is any local minimizer of \eqref{eqn:POCPh} with smaller objective than $(\hat{y}_h,\hat{u}_h)$, i.e.:
\begin{align}
\begin{split}
&M\big(y^\star_h(0),y^\star_h(T)\big) + \frac{1}{2 \cdot \omega} \cdot \left( Q_{h,q}\left[ \left\|f\big(\dot{y}^\star_h(\cdot),y^\star_h(\cdot),u^\star_h(\cdot),\cdot\big)\right\|_2^2\,\right] + \left\|b\big(\,y^\star_h(0),y^\star_h(T)\,\big)\right\|_2^2 \right)\\
\leq &M\big(\hyh(0),\hyh(T)\big) + \frac{1}{2 \cdot \omega} \cdot \left( Q_{h,q}\left[ \left\|f\big(\dhyh(\cdot),\hyh(\cdot),\huh(\cdot),\cdot\big)\right\|_2^2\,\right] + \left\|b\big(\,\hyh(0),\hyh(T)\,\big)\right\|_2^2 \right)\label{eqn:suitableMinimizer}
\end{split}
\end{align}
Problem~\eqref{eqn:POCPh} certainly has a solution satisfying this bound, because $(\hat{y}_h,\hat{u}_h)$ is feasible for \eqref{eqn:POCPh} and in the extreme case $(\hat{y}_h,\hat{u}_h)$ is coincidentally the global minimizer of \eqref{eqn:POCPh}.

\section{Prerequisites}
We presuppose three properties. We do not pose them as assumptions because two things can be easily agreed upon for each of them: First, the assumptions are inherently necessary in the sense that any numerical method which is based on finite elements must use these assumptions. Second, the assumptions pose an insignificant restriction on the generality of problem instances in the sense that they either hold naturally or can be forced practically with ease.

\subsection{Approximability of Exact Minimizer}\label{sec:approximability}

We shall require that $(y^\star,u^\star)$ satisfies the following approximability condition:
\begin{mdframedwithfoot}
	There exist $h_0 \in \R_{>0}$ finitely above zero, $\eta \in \R_{>0}$ finitely above zero, and $C_\eta \in \R_{>0}$ finite, such that
	\begin{align}
	\forall\,h\leq h_0\ \exists (y_h,u_h) \in \cB_{h,p}\,:\ \big\|(y^\star,u^\star)-({y}_h,{u}_h)\big\|_{\cX} \leq C_\eta \cdot h^\eta\,. 	\label{eqn:infxh}
	\end{align}
\end{mdframedwithfoot}
\noindent
For short, this can be stated as
\begin{align*}
\operatornamewithlimits{inf}_{({y}_h,{u}_h) \in \cB_{h,p}} \big\|(y^\star,u^\star)-({y}_h,{u}_h)\big\|_{\cX} \in \cO(h^\eta)\,.
\end{align*}

Approximability means nothing more than that $y^\star,u^\star$ can be approximated to some order $\eta$. Sufficient criteria on $(y^\star,u^\star)$ to satisfy the condition are widely known: For example, \cite[Thm.~8.8]{Brezis} bounds the $L^\infty$-norm by the $H^1$-norm. In turn, \cite[Prop~1.12]{GuermondInterp} provides orders for the interpolation error in the $H^1$-norm. Very high orders $\eta \gg 1$ can be attained for very smooth functions $y^\star,u^\star$. The following corollary helps for the contrary case when $y^\star,u^\star$ are not very smooth.

\begin{cor}\label{cor:interpolationerror}
	Let $\dot{y}^\star$ and $u^\star$ be $\eta$-H\"older continuous on each mesh-interval for some $0\leq \eta\leq 1$. I.e. there exists some finite constant $L \in \R_{>0}$ such that for $i=1,\dots,N$:
	\begin{align}
	\|\dot{y}^\star(\hat{t})-\dot{y}^\star(\check{t})\|_2 \leq& L \cdot |t-\tilde{t}|^\eta &&\forall\ \check{t},\hat{t} \in I_i\,, \label{eqn:cor:interpolationerror:y}\\
	\|u^\star(\hat{t})-u^\star(\check{t})\|_2 \leq& L \cdot |t-\tilde{t}|^\eta &&\forall\ \check{t},\hat{t} \in I_i\,. \label{eqn:cor:interpolationerror:u}
	\end{align}
	Then \eqref{eqn:infxh} is satisfied.
\end{cor}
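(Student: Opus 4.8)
The plan is to exhibit one explicit feasible piecewise-polynomial pair $(y_h,u_h)\in\cB_{h,p}$ and to bound the three contributions to $\|(y^\star,u^\star)-(y_h,u_h)\|_\cX$ separately, namely $\|\dot y^\star-\dot y_h\|_{L^2}$, $\|y^\star-y_h\|_{L^\infty}$ and $\|u^\star-u_h\|_{L^\infty}$. Since the hypotheses only furnish H\"older control of $\dot y^\star$ and $u^\star$, the classical high-order interpolation estimates are unavailable; instead I would drive every bound directly from the H\"older moduli \eqref{eqn:cor:interpolationerror:y}--\eqref{eqn:cor:interpolationerror:u}, which on an interval of length $\le h$ bound the oscillation of $\dot y^\star$ and $u^\star$ by $L\,h^\eta$. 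This is precisely what produces the order $\eta$ claimed in \eqref{eqn:infxh}.

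For the control I would set $u_h$ on each mesh interval $I_i$ to a degree-$(p-1)$ polynomial tracking $u^\star$; even the crudest admissible choice $u_h\equiv u^\star(t_i)$ (degree $0\le p-1$) gives $\|u^\star-u_h\|_{L^\infty(I_i)}\le L\,h^\eta$ by \eqref{eqn:cor:interpolationerror:u}, hence $\|u^\star-u_h\|_{L^\infty}\le L\,h^\eta$. For the state I would not interpolate $y^\star$ directly but build continuity into the construction: pick on each $I_i$ a polynomial $q_i\in\cP_{p-1}(I_i)$ with $\|\dot y^\star-q_i\|_{L^\infty(I_i)}\le L\,h^\eta$ (again $q_i\equiv\dot y^\star(t_i)$ suffices, using \eqref{eqn:cor:interpolationerror:y}) and define $y_h$ recursively by $y_h(0)=y^\star(0)$ and $y_h(t)=y_h(t_i)+\int_{t_i}^{t}q_i(s)\,\mathrm{d}s$ on $I_i$. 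Then $y_h\in\cP_p(I_i)$, $y_h\in\cC^0$, and $\dot y_h=q_i$ piecewise. The derivative error follows by summation,
\[\|\dot y^\star-\dot y_h\|_{L^2}^2=\sum_{i=1}^N\int_{I_i}\|\dot y^\star-q_i\|_2^2\,\mathrm{d}t\le\sum_{i=1}^N|I_i|\,(L\,h^\eta)^2\le T\,L^2 h^{2\eta},\]
so $\|\dot y^\star-\dot y_h\|_{L^2}\in\cO(h^\eta)$. For the state error I would track $e:=y^\star-y_h$, which satisfies $e(0)=0$ and $e(t)=e(t_i)+\int_{t_i}^{t}(\dot y^\star-q_i)\,\mathrm{d}s$ on $I_i$; hence $|e(t_{i+1})|\le|e(t_i)|+h\,L\,h^\eta$, and accumulating over the $N=\cO(1/h)$ intervals gives $\|y^\star-y_h\|_{L^\infty}\le (N+1)\,L\,h^{1+\eta}\in\cO(h^\eta)$. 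Adding the three contributions yields $\|(y^\star,u^\star)-(y_h,u_h)\|_\cX\in\cO(h^\eta)$, which is \eqref{eqn:infxh} with $C_\eta$ built from $L$ and $T$.

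The main obstacle is not the error estimate but membership in $\cB_{h,p}$: the approximant must respect the bound constraints at every sampling point of $\cT_{h,m}$. The construction above is assembled from endpoint values of $\dot y^\star$ and $u^\star$, so it can violate the bounds at interior sampling points by up to $\cO(h^\eta)$. My strategy would be to force the nodal values to be feasible from the outset: since $\yL(\tau)\le y^\star(\tau)\le\yR(\tau)$ and $\uL(\tau)\le u^\star(\tau)\le\uR(\tau)$ at each $\tau\in\cT_{h,m}$, an interpolant matching $(y^\star,u^\star)$ at the sampling points inherits feasibility automatically, and the H\"older-specialised nodal error bounds again give order $\eta$. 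The delicate regime is $m+1>p$ (the intended choice, e.g.\ $m=8$, $p=4$), where a degree-$(p-1)$ control polynomial has too few coefficients to interpolate $u^\star$ at all $m+1$ nodes of an interval. Here I would exploit that the H\"older oscillation of $u^\star$ across one interval is only $\cO(h^\eta)$, so the admissible boxes $[\uL(\tau),\uR(\tau)]$ attached to the interval's sampling points all lie within an $\cO(h^\eta)$-neighbourhood of $u^\star(t_i)$; a bound-preserving correction (clamping the nodal values into these boxes, or a small inward shift of size $\cO(h^\eta)$) then restores feasibility while perturbing the approximant by at most $\cO(h^\eta)$ in $\|\cdot\|_{L^\infty}$, leaving the order-$\eta$ estimate intact. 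Reconciling this feasibility correction with the continuity of $y_h$ and the fixed polynomial degrees is the single step that genuinely requires care; everything else reduces to the two H\"older inequalities and a telescoping estimate.
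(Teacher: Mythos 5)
Your proposal is correct and rests on the same core mechanism as the paper's proof: on each mesh interval the H\"older moduli \eqref{eqn:cor:interpolationerror:y}--\eqref{eqn:cor:interpolationerror:u} bound the oscillation of $\dot{y}^\star$ and $u^\star$ by $L\cdot h^\eta$, a crude lowest-order approximant (piecewise constant $u_h$, degree-one $y_h$) already achieves this accuracy pointwise, and integration over $[0,T]$ yields \eqref{eqn:infxh}. The one structural difference is in the construction of $y_h$: the paper takes the piecewise \emph{linear interpolant of the nodal values} $y^\star(t_i)$, so that $y_h(t_i)=y^\star(t_i)$ exactly, continuity and non-accumulation of the state error are automatic, and the only technical point is replacing the mean-value theorem by a convex-hull argument for the interval slope (since $y^\star$ may have edges); you instead \emph{integrate the approximate derivative} $q_i\equiv\dot{y}^\star(t_i)$ from $y_h(0)=y^\star(0)$, which forces you to control an accumulating error by telescoping. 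That works, but note the clean form of the telescoped bound is $\sum_{i}|I_i|\cdot L\cdot h^\eta = T\cdot L\cdot h^\eta$; your version $(N+1)\cdot L\cdot h^{1+\eta}$ implicitly uses $N=\cO(1/h)$, i.e.\ quasi-uniformity of the mesh, which is not assumed ($h$ is only the \emph{largest} interval). Finally, your paragraph on membership in $\cB_{h,p}$ addresses a point the paper's proof passes over entirely: the published argument never verifies that the interpolant satisfies the bound constraints at the sampling points $\cT_{h,m}$ (feasibility of $y^\star,u^\star$ at $t_i$ does not by itself give $\uL(\tau)\leq u^\star(t_i)\leq\uR(\tau)$ at interior sampling points $\tau$ where the bounds have moved). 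Your clamping/inward-shift fix of size $\cO(h^\eta)$ is the right repair and preserves the order; the worry about degree-$(p-1)$ polynomials having too few coefficients only arises for your alternative interpolate-at-all-sampling-points strategy, which is unnecessary given the crude endpoint-value construction.
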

\noindent
\underline{Proof}: In Section~\ref{sec:proof:cor:interpolationerror}.\qed

\begin{remark}
	Some optimal control problems feature solutions with many edges and discontinuities. In this case, by virtue of Corollary~\ref{cor:interpolationerror}, convergence with a rate $\eta>0$ can still be asserted if the mesh is adapted appropriately.
	
	But even when the mesh is not adaptive, the rate is likely $\eta\geq 0.5$. For instance, the convergence of a piecewise linear continuous function to a piecewise constant discontinuous function in the $L^2$-norm is in $\cO(h^{0.5})$.
\end{remark}

\subsubsection{Insignificance}
The prerequisite of approximability is an insignificant restriction because the requirement of Corollary~\ref{cor:interpolationerror} is satisfied for all but the most obscure functions $y^\star,u^\star$.

\subsubsection{Necessity}
Like any direct transcription method, we approximate $y^\star,u^\star$ with piecewise polynomials $y_h^\star,u^\star_h$. This approximation is entirely in vain when $y^\star,u^\star$ are not by any means approximatable via piecewise polynomials. Thus, approximability is inherently necessary.

\subsection{Shape of Bound Constraints}
To simplify technicalities, we shall require $(\yL,\uL) \in \cX_{h,p}$ and $(\yR,\uR) \in \cX_{h,p}$. 

\subsubsection{Insignificance}
The prerequisite is an insignificant restriction because it may be forced by simply fitting $\yL,\yR,\uL,\uR$ with piecewise polynomials. This can also be done with sum-of-squares techniques \cite{sos1,sos2} in order to satisfy bound constraints truly everywhere.

\subsubsection{Necessity}
Without exact knowledge of their entire shape it is impossible to make assertions on the violation of $\yL,\yR,\uL,\uR$ at any point $t$ at which these functions have not been evaluated. Hence, it would be impossible to make assertions on the convergence of $\gamma$ from~\eqref{eqn:meas:gamma}.

\subsection{Consistency Order of Quadrature}\label{sec:prelims:quad}
We shall require that the quadrature rule $Q_{h,q}$ is of some consistency order $\ell>0$ according to our definition from~\eqref{eqn:quadcond} in Section~\ref{sec:QuadOrder}.

\subsubsection{Insignificance}
The property can be forced easily in practice by selecting a sufficiently large number $q$ of quadrature points.

\subsubsection{Necessity}
Consistency of quadrature is inherently necessary for convergence whenever integrals are discretized with quadrature. A typical and famous example for this is in the second Strang Lemma \cite{SecondStrangLemma}, where
$$\operatornamewithlimits{sup}_{w_h \in \cW_h} |F_h(w_h)-F(w_h)| \in \cO(h^\ell)$$
is necessary to guarantee convergence of a Galerkin method. In Strang's bound: $\cW_h$ is a piecewise polynomials space like $\cX_{h,p}$; $F$ is a functional like $r^2$; and $F_h$ is the quadrature approximation of $F$ like $Q_{h,q}$ is the quadrature approximation of $r^2$. Strang's requirement on the quadrature's consistency is virtually identical to ours.

\section{Main Result}
In Section~\ref{sec:ConvMeasures} we introduced the three measures $\delta,\rho,\gamma$. These measures quantify, in this order, the convergence of optimality for the objective, the convergence of feasibility for the equality constraints, and the convergence of feasibility for the inequality constraints. The following main theorem asserts that these measures indeed converge to zero at a certain order.

\begin{theorem}\label{thm:main}
	If the below assumptions \AssumpI, \AssumpII and \AssumpIII from Section~\ref{sec:assumptions} hold then the accuracy measures $\delta,\rho,\gamma$ from Section~\ref{sec:ConvMeasures} between the exact minimizer $(y^\star,u^\star)$ and the numerical minimizer $(y^\star_h,u^\star_h)$ of problem~\eqref{eqn:POCPh} satisfy the following bounds:fdsa
	\begin{align*}
	\delta 	& \in 	\cO\big(\,h^{\eta \cdot \lambda} + 
	h^{\min\lbrace 2\cdot\eta\cdot\lambda,\ell\rbrace}/\omega\ \big)\,,\\ 
	\rho  	& \in 	\cO\big(\,\sqrt{\omega}+h^{\min\lbrace \eta\cdot\lambda,\ell/2\rbrace}\,\big)\,,\\
	\gamma 	& \in 	\cO\big(\,\sqrt{p} \cdot (p/m)^2\ \big)\,.
	\end{align*}
	The parameter $\lambda$ is a H\"older exponent that is introduced in assumption \AssumpIII; $\eta$ reflects the smoothness of the exact minimizer $(y^\star,u^\star)$; and $\ell$ is the quadrature order of $Q_{h,q}$.
\end{theorem}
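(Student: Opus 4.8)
The plan is to prove the three bounds on $\delta$, $\rho$, and $\gamma$ essentially independently, leveraging the \emph{suitable minimizer} property~\eqref{eqn:suitableMinimizer} as the single point of contact between the exact and numerical minimizers. The key structural observation is that~\eqref{eqn:suitableMinimizer} compares the penalty objective of the numerical minimizer $(y^\star_h,u^\star_h)$ against that of the \emph{reference interpolant} $(\hat{y}_h,\hat{u}_h)$, which by~\eqref{eqn:DefHatyu} is $\cO(h^\eta)$-close to $(y^\star,u^\star)$ in $\|\cdot\|_\cX$. First I would establish a bound on the penalty objective evaluated at $(\hat{y}_h,\hat{u}_h)$. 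Here the quadrature consistency of order $\ell$ (Section~\ref{sec:prelims:quad}) lets me replace $Q_{h,q}$ with the exact integral up to an $\cO(h^\ell)$ error, and then the $\lambda$-H\"older continuity assumption~\AssumpIII on $f,b$ together with approximability~\eqref{eqn:DefHatyu} gives $r(\hat{y}_h,\hat{u}_h)^2 = \cO(h^{2\eta\lambda})$ and $|M(\hat{y}_h(0),\hat{y}_h(T))-M(y^\star(0),y^\star(T))| = \cO(h^{\eta\lambda})$, since the arguments $y^\star(0),y^\star(T)$ yield exactly zero residual and minimal objective. Plugging these into the right-hand side of~\eqref{eqn:suitableMinimizer} bounds the numerical penalty objective by $M(y^\star(0),y^\star(T)) + \cO(h^{\eta\lambda}) + \cO(h^{\min\{2\eta\lambda,\ell\}}/\omega)$.

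For the feasibility residual $\rho$, I would exploit that the penalty term $\tfrac{1}{2\omega}(Q_{h,q}+\|b\|_2^2)$ at the numerical minimizer is bounded by the same right-hand side minus $M(y^\star_h(0),y^\star_h(T))$. The delicate point is that $M$ evaluated at the numerical minimizer need not be bounded below a priori, but this is exactly where a lower bound on the objective (the $-M$ boundedness of~\AssumpI/\AssumpII) enters: it controls the gap between the penalty objective and the penalty term itself. Multiplying through by $2\omega$ and taking square roots, the quadrature term at $(y^\star_h,u^\star_h)$ satisfies $Q_{h,q}(y^\star_h,u^\star_h) = \cO(\omega + h^{\min\{2\eta\lambda,\ell\}})$; converting the quadrature value back to the true integral $r(y^\star_h,u^\star_h)^2$ costs another $\cO(h^\ell)$ via quadrature consistency, yielding $\rho = r(y^\star_h,u^\star_h) = \cO(\sqrt{\omega}+h^{\min\{\eta\lambda,\ell/2\}})$ after the square root. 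Returning to $\delta$, once $\rho$ is controlled I can bound $M(y^\star_h(0),y^\star_h(T)) - M(y^\star(0),y^\star(T))$ from above using~\eqref{eqn:suitableMinimizer} directly (dropping the nonnegative penalty term on the left) to get $\delta = \cO(h^{\eta\lambda}+h^{\min\{2\eta\lambda,\ell\}}/\omega)$.

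The bound on $\gamma$ is of an entirely different and purely approximation-theoretic nature: it measures how far a piecewise polynomial of degree $p$ (sampled exactly at the $m$ Chebyshev-Gauss-Lobatto points per interval where the bound constraints are enforced) can overshoot its sampled bounds in-between the sample points. I would bound $\gamma$ by a Lebesgue-constant/Markov-inequality argument: since $y^\star_h,u^\star_h$ satisfy the bounds exactly at the $m$ CGL points but are polynomials of degree $p$, the worst-case overshoot between samples scales like $\sqrt{p}\,(p/m)^2$, which is the classical estimate for oversampled polynomial interpolation (the $(p/m)^2$ factor reflecting that $m$ CGL points resolve a degree-$p$ polynomial with room to spare). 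Because the prerequisite in Section~``Shape of Bound Constraints'' forces $\yL,\yR,\uL,\uR \in \cX_{h,p}$ to also be piecewise polynomials, the difference $y^\star_h - \yR$ is itself a piecewise polynomial, so the overshoot estimate applies cleanly without extra interpolation error from the bound functions.

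The main obstacle will be the coupling in the $\rho$ and $\delta$ estimates through the term $M(y^\star_h(0),y^\star_h(T))$: I cannot bound the optimality gap without first controlling the feasibility residual, yet the residual bound itself passes through the penalty objective which contains $M$ at the numerical minimizer. The resolution is to use the lower boundedness of $-M$ (from \AssumpII) to decouple them: boundedness below of $M$ on $\cX$ gives an \emph{a priori} upper bound on the penalty term alone, which then feeds the $\rho$ argument, after which $\delta$ follows. Making this circular dependency rigorous---ensuring the constants are genuinely $h$- and $\omega$-independent and that the reference interpolant $(\hat{y}_h,\hat{u}_h)$ remains feasible for the \emph{discrete} bound-sampling constraints of~\eqref{eqn:POCPh}---is where the technical care concentrates, and it is precisely what justifies packaging the hypotheses into the clean form \AssumpI--\AssumpIII.
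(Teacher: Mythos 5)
Your proposal is correct and follows essentially the same route as the paper: the reference interpolant $(\hat{y}_h,\hat{u}_h)$ plus quadrature consistency and \AssumpIII yield the gap bound $\chi=\cO(h^{\eta\lambda}+h^{\min\{2\eta\lambda,\ell\}}/\omega)$ (the paper's Theorem~\ref{thm:eps} via Lemmas~\ref{lem:fundamental}, \ref{lem:errJ}, \ref{thm:forwardstab}), the lower bound on $M$ from \AssumpII then decouples $\delta$ and $\rho$ exactly as you describe (Theorem~\ref{thm:convdelta}), and the $\gamma$ bound is the separate polynomial-oversampling estimate (Theorem~\ref{thm:convgamma}). The only difference is that you cite the $\sqrt{p}\cdot(p/m)^2$ overshoot bound as a known classical estimate, whereas the paper derives it from scratch via a Chebyshev/cosine transform, a Parseval-type coefficient bound, and a worst-case curvature parabola between adjacent equispaced sample points.
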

\noindent
\underline{Proof:} in Section~\ref{sec:proof:thm:main}. \qed

\noindent
In the experiments we saw that all three measures converge rapidly.

Convergence of $\delta,\rho$ can be forced by selecting $\omega$ sufficiently small and then converging~\mbox{$h \rightarrow 0$}. The measure $\gamma$ can be decreased by selecting $m$ sufficiently large, where $m$ is the Chebyshev-Gauss-Lobatto degree in \eqref{eqn:POCPh}, where $\Tcl{i,m}$ was defined in~\eqref{eqn:cheblob}.

\paragraph{Optimality of the Rates of Convergence}
As is known from piecewise polynomial interpolation, a sufficiently smooth function $f$ may be at best interpolated with a piecewise polynomial function $g$ of degree $p \in \N$ on a mesh of size $h > 0$ with a convergence in $\cO(h^p)$. A faster rate cannot be attained. Thus, the rate $\cO(h^p)$ is called \emph{optimal}.

Likewise, when an optimal control solution $y^\star,u^\star$ and the problem-defining functions are sufficiently smooth then the convergence measures $\delta,\rho$ converge at an optimal rate. In particular, let $\eta=p$ (i.e., the solution is sufficiently piecewise smooth over the mesh such that the finite elements best-approximation converges in $\cO(h^p)$) and $\lambda=1$ (i.e., the problem-defining functions are at least local Lipschitz continuous). Select a quadrature with $\ell\geq2 \cdot p$ such as, e.g., Gauss-Legendre quadrature of sufficient degree. Then $\delta,\rho \in \cO(h^p)$.

For $\gamma$, our analysis has focused solely on the most difficult case, i.e., when bound constraint violations may occur due to discontinuities in the exact solution. Due to the discontinuities and hence lack of any smoothness, an order-of-approximation result is unavailable for the rate of convergence of $\gamma$ within the scope of our analysis. However, we demonstrate a practical approach below Theorem~\ref{thm:convgamma} for the adaptive placement of the number of sampling points $m$ in order to obtain high accuracy for satisfaction of inequality constraints.

\chapter{All Assumptions}\label{sec:assumptions}
In order to prove Theorem~\ref{thm:main}, we make use of three assumptions on~\eqref{eqn:OCP}. These three assumptions are labeled \AssumpI, \AssumpII, \AssumpIII and are stated below.

\section*{(A.1) Boundedness of Bound Constraints}\label{assumption:A1}
We assume that there is some finite constant $\Cbox \in \R_{>0}$ such that
\begin{align*}
\left\|\begin{bmatrix}
\yR(t)-\yL(t)\\
\uR(t)-\uL(t)
\end{bmatrix}\right\|_{\infty} \leq \Cbox\qquad \forall t \in [0,T]\,.
\end{align*}

\section*{(A.2) Lower Boundedness of Objective}\label{assumption:A2}
We assume that there is some finite constant $\Cobj \in \R$ such that
\begin{align*}
M(\tmpyO,\tmpyT)\geq \Cobj \quad \forall\ \tmpyO,\tmpyT \in \R^{n_y}\text{ that satisfy }\yL(0)\leq \tmpyO \leq \yR(0)\ \land\ \yL(T)\leq \tmpyT \leq \yR(T)\,.
\end{align*}
To enhance readability, the symbols $\tmpyO,\tmpyT$ are templates for arbitrary vectors in $\R^{n_y}$.

\section*{(A.3) Local H\"older Continuity of $M,b,f_1,f_2$}\label{assumption:A3}
Recall from~\eqref{eqn:OCP} the problem-defining objective function $M$, the boundary conditions function $b$, and the differential and algrabic equality constraints functions $f_1$ and $f_2$.

We assume that $M,b,f_1,f_2$ are H\"older continuous locally around $(y^\star,u^\star)$. In particular, we assume that there exist constants $\lambda,\Clam,\epsilon$, where $0<\lambda\leq 1$ finitely above zero, $\Clam \in \R_{>0}$ finite, and $\epsilon \in \R_{>0}$ finitely above zero, such that:
\begin{subequations}
	\begin{align}
	\Big|M\big(y^\star(0),y^\star(T)\big)-M(\tmpyO,\tmpyT)\Big| &\leq \Clam \cdot \left\| \begin{bmatrix}
	y^\star(0)-\tmpyO\\
	y^\star(T)-\tmpyT
	\end{bmatrix} \right\|_2^\lambda \tageq\label{eqn:A3:M:bound}\\
	&\forall \tmpyO,\tmpyT \in \R^{n_y} \text{ that satisfy } \left\| \begin{bmatrix}
	y^\star(0)-\tmpyO\\
	y^\star(T)-\tmpyT
	\end{bmatrix} \right\|_2 \leq \epsilon\,, \tageq\label{eqn:A3:M:cond}
	\end{align}
\end{subequations}
and
\begin{subequations}
	\begin{align}
	\Big\|b\big(y^\star(0),y^\star(T)\big)-b(\tmpyO,\tmpyT)\Big\|_2 &\leq \Clam \cdot \left\| \begin{bmatrix}
	y^\star(0)-\tmpyO\\
	y^\star(T)-\tmpyT
	\end{bmatrix} \right\|_2^\lambda 	\tageq\label{eqn:A3:b:bound}\\
	&\forall \tmpyO,\tmpyT \in \R^{n_y} \text{ that satisfy } \left\| \begin{bmatrix}
	y^\star(0)-\tmpyO\\
	y^\star(T)-\tmpyT
	\end{bmatrix} \right\|_2 \leq \epsilon\,,\tageq\label{eqn:A3:b:cond}
	\end{align}
\end{subequations}
and that at each $t \in [0,T]$
\begin{subequations}
	\begin{align}
	\left\|\begin{bmatrix}
	f_1\big(y^\star(t),u^\star(t),t\big)-f_1(\tmpyt,\tmput,t)\\
	f_2\big(y^\star(t),u^\star(t),t\big)-f_2(\tmpyt,\tmput,t)
	\end{bmatrix}\right\|_2 &\leq \Clam \cdot \left\| \begin{bmatrix}
	y^\star(t)-\tmpyt\\
	u^\star(t)-\tmput
	\end{bmatrix} \right\|_2^\lambda\tageq\label{eqn:A3:f:bound}\\
	\forall &\tmpyt \in \R^{n_y},\tmput \in \R^{n_u} \text{ that satisfy }\left\|\begin{bmatrix}
	y^\star(t)-\tmpyt\\
	u^\star(t)-\tmput
	\end{bmatrix} \right\|_2 \leq \epsilon\,.\tageq\label{eqn:A3:f:cond}
	\end{align}
\end{subequations}
The symbols $\tmpyO,\tmpyT,\tmpyt$ are templates for vectors in $\R^{n_y}$ and $\tmput$ is a template for vectors in $\R^{n_u}$. These symbols are used only in Section~\ref{sec:assumptions} and Section~\ref{sec:proof:fundamental}. Local H\"older continuity is a milder assumption than local Lipschitz continuity.

\chapter{Convergence Analysis}\label{sec:convAnalysis}
The present section provides a convergence analysis, comprising of three theorems. The Main Theorem~\ref{thm:main} just summarizes these three theorems.

The separate Section~\ref{sec:proofs} provides illustrated detailed proofs of all theorems and intermediate results. One subsection is dedicated to each proof. Subsection~\ref{sec:proof:thm:main} proves Theorem~\ref{thm:main}.

Since we prove convergence orders, our proofs will use several exponents of $\ell,\eta,\lambda$ and constants $C$. The latter have individual footnotes. The exact formulas of the constants do not matter for the conceptual ideas. We opted to include the formulas anyways, mainly for rigor and for avoidance of excessive big-O notation.

Reminder: The measures $\rho,\gamma,\delta$ measure the convergence of equality feasibility, inequality feasibility, and optimality gap of the numerical optimal control solution. For convergence, they are supposed to converge to zero as $h$ decreases.

\section{Convergence of $\gamma$}
Recall the set of Chebyshev-Gauss-Lobatto points $\Tcl{i,m}$ of sampling degree $m \in \N$ on interval $I_i$ from~\eqref{eqn:cheblob}. Further, recall from Section~\ref{def:spaceXhp} the space $\cP_p(I)$ of functions that are polynomials of degree $\leq p$ on the interval $I$, also depicted in Figure~\ref{fig:spacepiecewisepolynomials}~(a).

\begin{theorem}\label{thm:convgamma}
	Let three scalar functions $\phiL,\phi,\phiR \in \cP_p(I_i)$ for some $p \in \N$, and let $m \in \N \cdot p$. If $\phiL,\phi,\phiR$ satisfy
	\begin{subequations}
		\begin{align}
		&\phiL(t)\leq \phi(t) \leq \phiR(t)\quad \forall t \in \Tcl{i,m}\,, \label{eqn:convgamma:1}\\
		&\sup_{t \in I_i} \big\lbrace\,\phiR(t)-\phiL(t)\,\big\rbrace \leq \Cbox\,, 					\label{eqn:convgamma:2}
		\end{align}
	\end{subequations}
	then the following bounds hold $\forall t \in I_i$:
	\begin{subequations}
		\begin{align}
		\phi(t) & \geq \phiL(t) - \frac{\pi^2 \cdot \Cbox}{8} \cdot \sqrt{p} \cdot \left(\frac{p}{m}\right)^2\,, \label{eqn:boxthmpropL}\\
		\phi(t) & \leq \phiR(t) + \frac{\pi^2 \cdot \Cbox}{8} \cdot \sqrt{p} \cdot \left(\frac{p}{m}\right)^2\,.\label{eqn:boxthmpropR}
		\end{align}
	\end{subequations}
\end{theorem}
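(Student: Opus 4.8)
The plan is to prove Theorem~\ref{thm:convgamma} by reducing it to a statement about how far a polynomial of degree $p$ can stray from zero on an interval, given that it is controlled (sandwiched) at the Chebyshev-Gauss-Lobatto sampling points. First I would observe that it suffices to prove only the lower bound~\eqref{eqn:boxthmpropL}, since the upper bound~\eqref{eqn:boxthmpropR} follows by the symmetric argument applied to $-\phi,-\phiR,-\phiL$ (equivalently, by reflecting the roles of the two envelopes). So I reduce to showing that if $\psi := \phi - \phiL \in \cP_p(I_i)$ satisfies $\psi(t)\geq 0$ at all sampling points $t \in \Tcl{i,m}$ and if $\psi(t) \leq \phiR(t)-\phiL(t) \leq \Cbox$ at those points, then $\psi(t) \geq -\frac{\pi^2 \Cbox}{8}\sqrt{p}(p/m)^2$ everywhere on $I_i$.

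The core idea is that $\psi$ is a polynomial of modest degree $p$ but it is being sampled at $m$ points (with $m$ a multiple of $p$, so $m \geq p$), i.e.\ it is heavily oversampled. The key step is to control the value of $\psi$ at an arbitrary point $t \in I_i$ in terms of its values at the sampling nodes via a Lagrange-type interpolation/aliasing estimate. Concretely, I would map $I_i$ to the reference interval $\Iref=[-1,1]$ (which is scale-invariant since the stated bound involves only the dimensionless ratios $p/m$ and the constant $\Cbox$), and express $\psi$ against the Chebyshev-Gauss-Lobatto nodes of degree $m$. Since $\deg \psi = p \leq m$, $\psi$ is exactly reconstructed by its CGL values, and I would bound the negative excursion of $\psi$ between nodes using the Lebesgue-constant / derivative bounds associated with CGL interpolation. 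The numerical constant $\pi^2/8$ and the factors $\sqrt{p}\,(p/m)^2$ strongly suggest the intended route passes through a Markov/Bernstein-type inequality: a degree-$p$ polynomial bounded by $\Cbox$ in magnitude at the CGL nodes has derivative controlled on the order of $p^2$, and the spacing of the $m$ CGL nodes near any point is on the order of $1/m$ (in the appropriate Chebyshev metric), so the worst-case dip below the sampled nonnegativity is of order $\Cbox \cdot p^2 \cdot (\text{node spacing})^2 \sim \Cbox (p/m)^2$, with the $\sqrt{p}$ arising from an $L^2$-to-$L^\infty$ or Lebesgue-constant factor.

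The precise execution would be: (i) normalize to $\Iref$ and set $\psi$ as above, nonnegative at the $m$ CGL nodes and bounded there by $\Cbox$; (ii) invoke a quantitative bound stating that a degree-$p$ polynomial which is nonnegative at all CGL nodes of degree $m \geq p$ can be negative at an interior point by at most a quantity proportional to its sampled spread times $\sqrt{p}(p/m)^2$ --- this is the engine, and I expect it to follow from combining the Markov brothers' inequality $\|\psi'\|_{L^\infty(\Iref)} \leq p^2 \|\psi\|_{L^\infty(\Iref)}$ with the fact that between consecutive CGL nodes of degree $m$ the gap is $\cO(1/m)$ near the interval center and the endpoint clustering handles the boundary; (iii) track the constant carefully to land on $\pi^2/8$, which is exactly the constant one gets from the CGL node-spacing estimate $|\tau_{k}-\tau_{k-1}| \leq \pi/m$ combined with a one-step Taylor/mean-value argument that incurs a factor $1/2$ and a squared spacing.

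The hardest part will be pinning down the exact constant $\pi^2/8$ and the $\sqrt{p}$ factor rather than just an $\cO(\Cbox (p/m)^2)$ bound. A mean-value argument gives a dip of at most $\tfrac12 \|\psi'\|_\infty \cdot (\text{spacing})$, which is linear in the spacing and would yield $\cO(p/m)$, not $(p/m)^2$; so a naive Taylor step is too weak. Getting the square requires a more careful argument --- presumably bounding $\psi$ below at a point $t$ lying between two sampling nodes where $\psi \geq 0$ by a \emph{second}-order estimate: if $\psi$ dips below zero between two nonnegative sample points, then $\psi$ has a local minimum there with $\psi'' $ controlled by a second Markov inequality ($\|\psi''\|_\infty \lesssim p^4 \|\psi\|_\infty$ is too crude, so one must instead use that the minimum value is bounded by $\tfrac18 \|\psi''\| \cdot (\text{spacing})^2$ with $\|\psi''\|$ on the relevant scale), and balancing these factors to extract precisely $\frac{\pi^2}{8}\sqrt{p}(p/m)^2$. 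I would therefore devote the main effort to a careful second-order interpolation-error estimate for CGL-sampled low-degree polynomials, treating the boundary nodes (where CGL clustering makes the spacing $\cO(1/m^2)$) separately from interior nodes, and verifying that the interior case dominates and produces the stated constant.
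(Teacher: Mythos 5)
Your high-level skeleton matches the paper's: reduce to the lower bound by symmetry, pass to a reference interval, measure node spacing in the $\arccos$ metric (so adjacent sampling points are $\pi/m$ apart), and extract the dip from a second-order parabola estimate $\tfrac{1}{8}\cdot(\text{curvature bound})\cdot(\pi/m)^2$, which is exactly where $\pi^2/8$ comes from. But the engine of the proof --- how to bound the curvature of $\phi$ using \emph{only} its values at the sampling points --- is missing, and the routes you sketch would not close the gap. The Markov inequality $\|\psi'\|_{L^\infty}\leq p^2\|\psi\|_{L^\infty}$ (or its second-order analogue) presupposes a bound on $\|\psi\|_{L^\infty(\Iref)}$ over the \emph{whole} interval, which is precisely the unknown quantity; all you are given is a bound at the sampling points, so that route is circular. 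Moreover, differentiating in the $t$ variable is what forces the endpoint-versus-interior case analysis you anticipate; the paper's argument never needs it.

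The paper's resolution is: (i) affinely normalize so the envelopes become $-1$ and $\leq 1$; (ii) substitute $\xi=-\arccos(t)$, so $\psi$ becomes a cosine polynomial $g(\xi)=\sum_{j=0}^p\beta_j\cos(j\xi)$ and the CGL nodes become exactly equidistant; (iii) use the hypothesis $m\in\N\cdot p$ to observe that the $p+1$ degree-$p$ CGL nodes are \emph{nested} among the $m+1$ sampling nodes, so $|g|\leq 1$ at those $p+1$ coarse points; (iv) invoke the discrete-cosine-transform orthogonality (the Parseval-type Lemma~\ref{thm:Parseval}) to obtain $\|\bbeta\|_2\leq 2$ from those $p+1$ values alone; and (v) conclude $|g''(\xi)|\leq\sum_j|\beta_j|\,j^2\leq\|\bbeta\|_2\cdot\sqrt{\textstyle\sum_j j^4}\leq 2\,p^{2.5}$ by Cauchy--Schwarz --- this is where the $\sqrt{p}$ comes from, not from a Lebesgue constant. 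The parabola dip $\tfrac{1}{8}\cdot 2p^{2.5}\cdot(\pi/m)^2$ then back-transforms with the factor $\Cbox/2$ to the stated bound. Without steps (iii)--(v) --- in particular without the nestedness of the coarse Chebyshev grid inside the sampling grid, which is the only place the hypothesis $m\in\N\cdot p$ enters and which your proposal never uses --- there is no way to control the second derivative, so the proposal has a genuine gap at its central step.
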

\noindent
\underline{Proof:} in Section~\ref{sec:thm:convgamma}. \qed

The theorem provides a template for each component $y_{h,[\upsilon]}$ and $u_{h,[\upsilon]}$ on each interval $I_i$, explained via the following example.

\subsubsection{Example for interval-wise applicability of Theorem~\ref{thm:convgamma}}
Consider problem~\eqref{eqn:box_counter}: We saw in Figure~\ref{fig:counter2col} that $u_{h}$ of degree $p=4,h=2/3$ suffers from Gibbs' phenomenon on $I_2$. To avoid the overshoot, we may choose $m=16 \cdot p = 64$ sampling points for the bound constraints of $u_h$ on the particular interval $I_2$ where the overshoot occurs. Thereby, Theorem~\ref{thm:convgamma} asserts that the overshoot will be no more than
\begin{align*}
\frac{\pi^2 \cdot \Cbox}{8} \cdot \sqrt{p} \cdot \left(\frac{p}{m}\right)^2 = \frac{\pi^2 \cdot 2}{8} \cdot 2 \cdot \left(\frac{4}{64}\right)^2 \approx 0.019276\,.
\end{align*}
In general, the term $\left(\frac{p}{m}\right)^2$ converges to zero as $m$ is increased.

\section{Convergence of $\delta$ and $\rho$}
The next theorem proves convergence of $\delta,\rho$ subject to the condition that the numerical minimizer achieves a certain optimality gap to the exact minimizer, denoted with $\chi$. The subsequent theorem shows that this gap indeed converges.
\begin{thm}\label{thm:convdelta}
	Let
	\begin{align}
	M(y^\star_h,u^\star_h) + \frac{r^2(y^\star_h,u^\star_h)}{2 \cdot \omega} \leq M(y^\star,u^\star)+\chi \label{eqn:def:chi}
	\end{align}
	for some $\chi \in \R_{\geq 0}$. If \AssumpII holds then:
	\begin{align*}
	\delta 	& \leq \chi\,,\\
	\rho  		& \leq \sqrt{2} \cdot \sqrt{M\big(y^\star(0),y^\star(T)\big)-\Cobj+\chi} \cdot \sqrt{\omega}\,.
	\end{align*}
\end{thm}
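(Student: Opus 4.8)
The plan is to read Theorem~\ref{thm:convdelta} as an elementary consequence of the single hypothesis~\eqref{eqn:def:chi}, using only that the penalty term is nonnegative and that the objective is bounded below on the admissible box by~\AssumpII. The argument is structurally the same as the penalty estimate in Proposition~\ref{thm:PenaltySolution} of Chapter~\ref{chap:PBF_SICON}; no quadrature or approximation theory enters here, since the quantitative content (that~\eqref{eqn:def:chi} holds for a suitable $\chi$) is deferred to the subsequent theorem.

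First I would prove $\delta\leq\chi$. Because $\omega>0$ and $r^2(y^\star_h,u^\star_h)\geq 0$, the penalty term $r^2(y^\star_h,u^\star_h)/(2\cdot\omega)$ on the left of~\eqref{eqn:def:chi} is nonnegative and may be discarded, leaving $M\big(y^\star_h(0),y^\star_h(T)\big)\leq M\big(y^\star(0),y^\star(T)\big)+\chi$. Matching this against the definition~\eqref{eqn:meas:delta} of the optimality gap gives $\delta\leq\chi$ at once.

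Next I would bound $\rho$. Rearranging~\eqref{eqn:def:chi} to isolate the penalty term gives
\[
\frac{r^2(y^\star_h,u^\star_h)}{2\cdot\omega}\leq M\big(y^\star(0),y^\star(T)\big)-M\big(y^\star_h(0),y^\star_h(T)\big)+\chi .
\]
The decisive step is to replace $-M\big(y^\star_h(0),y^\star_h(T)\big)$ by the upper bound $-\Cobj$ supplied by~\AssumpII. To invoke~\AssumpII I must first check that its hypothesis is met, namely that the boundary values satisfy $\yL(0)\leq y^\star_h(0)\leq\yR(0)$ and $\yL(T)\leq y^\star_h(T)\leq\yR(T)$. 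This holds because the bound constraints in~\eqref{eqn:POCPh} are sampled on the Chebyshev-Gauss-Lobatto set $\cT_{h,m}$, and the CGL abscissae of~\eqref{eqn:cheblob} include the endpoints $\tau_0=-1$ and $\tau_m=1$; hence $t=0$ (left end of $I_1$) and $t=T$ (right end of $I_N$) belong to $\cT_{h,m}$, so the bounds on $y$ are enforced there exactly. With $M\big(y^\star_h(0),y^\star_h(T)\big)\geq\Cobj$ in hand, I obtain $r^2(y^\star_h,u^\star_h)\leq 2\cdot\omega\cdot\big(M(y^\star(0),y^\star(T))-\Cobj+\chi\big)$; taking the nonnegative square root and recalling the definition~\eqref{eqn:meas:rho} of $\rho$ yields the claimed estimate.

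The computation is routine, so the one point I would treat with care---and the only place the proof can go wrong---is the applicability of~\AssumpII to the numerical minimizer, which hinges on the endpoints being sampling nodes of the CGL rule. Were the endpoints not sampled, the box constraints would not be guaranteed at $t=0,T$, and one would instead have to control $M\big(y^\star_h(0),y^\star_h(T)\big)$ from below through an auxiliary interpolation or continuity bound on $y^\star_h$ near the boundary; I would therefore state this endpoint-sampling fact explicitly rather than leave it implicit.
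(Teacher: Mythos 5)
Your proof is correct and follows essentially the same route as the paper's: drop the nonnegative penalty term to get $\delta\leq\chi$, then rearrange and bound $M\big(y^\star_h(0),y^\star_h(T)\big)$ below by $\Cobj$ via \AssumpII to get the $\rho$ estimate. Your explicit check that \AssumpII applies---because the Chebyshev--Gauss--Lobatto sampling set contains the endpoints $t=0$ and $t=T$, so the box constraints on $y^\star_h$ hold there---is a detail the paper leaves implicit, and it is a worthwhile addition.
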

\noindent
\underline{Proof:} in Section~\ref{sec:thm:convdelta}. \qed

As the next theorem shows, for $h\rightarrow 0$ the measure $\chi$ converges. For the theorem, recall $C_\eta,\eta$ from \eqref{eqn:infxh}, the piecewise polynomials quadrature order $\ell$ from Section~\ref{sec:prelims:quad}, and the H\"older exponent $\lambda$ from \AssumpIII.
\begin{thm}\label{thm:eps}
	\textcolor{orange}{\dunderline[-1pt]{2pt}{\textcolor{black}{If \AssumpIII holds and $h$ is sufficiently small}}} then there exists some finite \mbox{constant $C_\chi \in \R_{>0}$} such that the following bound holds:
	\begin{align}
	M\big(y^\star_h(0),y^\star_h(T)\big) + \frac{r^2(y^\star_h,u^\star_h)}{2 \cdot \omega} \leq M\big(y^\star(0),y^\star(T)\big) + \underbrace{ C_\chi \cdot \left( h^{\eta\cdot\lambda} + \frac{h^{2\cdot\eta\cdot\lambda} + h^\ell}{\omega}\right)}_{\equiv \chi}\label{eqn:thm:eps:prop}
	\end{align}
\end{thm}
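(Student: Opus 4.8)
The plan is to use the defining property of a suitable minimizer, \eqref{eqn:suitableMinimizer}, as the engine: since the penalized objective of $(y^\star_h,u^\star_h)$ is bounded by that of the comparison trajectory $(\hyh,\huh)$, it suffices to bound the penalized objective of $(\hyh,\huh)$ from above by $M\big(y^\star(0),y^\star(T)\big)+\chi$. First, though, I would reconcile the two penalty expressions: the quantity on the left of \eqref{eqn:thm:eps:prop} uses the exact integral $r^2(y^\star_h,u^\star_h)$, whereas \eqref{eqn:suitableMinimizer} is stated with the quadrature $Q_{h,q}$. Since $(y^\star_h,u^\star_h)\in\cX_{h,p}$, the consistency bound \eqref{eqn:quadcond} of order $\ell$ gives $\int_0^T\|f(\dot{y}^\star_h,\ldots)\|_2^2\,\mathrm{d}t \leq Q_{h,q}(y^\star_h,u^\star_h)+\Cquad\cdot h^\ell$, so replacing the integral by the quadrature costs only an extra $\Cquad h^\ell/(2\omega)$, which fits the $h^\ell/\omega$ part of $\chi$. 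After this step the left-hand side is bounded by the quadrature-penalized objective of $(y^\star_h,u^\star_h)$, which by \eqref{eqn:suitableMinimizer} is bounded by that of $(\hyh,\huh)$.

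The heart of the proof is then a term-by-term estimate of the three pieces of the penalized objective at $(\hyh,\huh)$, in each case exploiting that $(y^\star,u^\star)$ is feasible (so $b(y^\star(0),y^\star(T))=\bO$ and $f(\dot{y}^\star,y^\star,u^\star,\cdot)=\bO$ a.e.) together with the approximability bound \eqref{eqn:DefHatyu}. For the objective term I would apply Hölder continuity of $M$, \eqref{eqn:A3:M:bound}, and bound the point-evaluation differences $\|y^\star(0)-\hyh(0)\|$, $\|y^\star(T)-\hyh(T)\|$ by the $L^\infty$-part of the $\cX$-norm in \eqref{eqn:normX}--\eqref{eqn:normX:inf}, yielding a contribution $\cO(h^{\eta\lambda})$. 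For the boundary penalty, feasibility and \eqref{eqn:A3:b:bound} give $\|b(\hyh(0),\hyh(T))\|_2\in\cO(h^{\eta\lambda})$, so its square over $2\omega$ contributes $\cO(h^{2\eta\lambda}/\omega)$. For the path-residual quadrature, I would first undo the quadrature via \eqref{eqn:quadcond} at $(\hyh,\huh)$ (costing $\Cquad h^\ell$), and then estimate $\int_0^T\|f(\dhyh,\hyh,\huh,t)\|_2^2\,\mathrm{d}t$. Using the definition \eqref{eqn:def:f}, which is affine in $\dot{y}$, the integrand splits through $(a+b)^2\leq 2a^2+2b^2$ into $2\|\dhyh-\dot{y}^\star\|_2^2$ — whose integral is $\leq 2\|\dhyh-\dot{y}^\star\|_{L^2}^2 \leq 2C_\eta^2 h^{2\eta}$ through the $L^2$-part of the $\cX$-norm — and a term involving $f_1,f_2$, bounded pointwise by $2\Clam^2\|\cdot\|_2^{2\lambda}$ via \eqref{eqn:A3:f:bound} and integrated using the $L^\infty$-bound, giving $\cO(h^{2\eta\lambda})$. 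Dividing by $2\omega$ contributes $\cO((h^{2\eta\lambda}+h^\ell)/\omega)$, where I use $h^{2\eta}\leq h^{2\eta\lambda}$ for $h\leq 1$ since $\lambda\leq 1$.

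Collecting the three contributions bounds the quadrature-penalized objective of $(\hyh,\huh)$ by $M\big(y^\star(0),y^\star(T)\big)+C_\chi\big(h^{\eta\lambda}+(h^{2\eta\lambda}+h^\ell)/\omega\big)$ for a finite $C_\chi$ assembled from $\Clam,C_\eta,\Cquad,T$ and the problem dimensions, which together with the reconciliation step yields \eqref{eqn:thm:eps:prop}. The main obstacle is bookkeeping the split structure of the $\cX$-norm \eqref{eqn:normX} correctly: the two boundary point-evaluations and the pointwise $f_1,f_2$-Hölder terms must be controlled by the $L^\infty$-component, whereas the $\dot{y}$-residual must be controlled by the $L^2$-component, so that no negative power of $h$ or spurious factor appears. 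A second technical point, and the reason for the hypothesis that $h$ be sufficiently small, is that the local Hölder estimates \eqref{eqn:A3:M:bound}, \eqref{eqn:A3:b:bound}, \eqref{eqn:A3:f:bound} are only valid inside the $\epsilon$-neighborhoods \eqref{eqn:A3:M:cond}, \eqref{eqn:A3:b:cond}, \eqref{eqn:A3:f:cond}; I would verify these are entered by noting that all relevant deviations are bounded by a dimensional constant times $C_\eta h^\eta$, which is $\leq\epsilon$ once $h$ is small since $\eta>0$.
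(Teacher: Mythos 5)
Your proposal is correct and follows essentially the same route as the paper: the paper packages your two quadrature-reconciliation steps into Lemma~\ref{lem:errJ} (applied to $J$ and $J_h$ from \eqref{eqn:def:J}--\eqref{eqn:def:Jh}), and your term-by-term estimates at $(\hyh,\huh)$ are exactly the content of the Fundamental Lemma~\ref{lem:fundamental} and Lemmas~\ref{thm:forwardstab}--\ref{thm:w2bound}, including the split of the path residual into the $\dot{y}$-part (controlled in $L^2$) and the H\"older-controlled $f_1,f_2$-part, and the verification that small $h$ places all deviations inside the $\epsilon$-neighborhoods of \AssumpIII. The only cosmetic differences are your constant $2$ versus the paper's $3$ in the norm-splitting inequality and your use of $h^{2\eta}\leq h^{2\eta\lambda}$ for $h\leq 1$ where the paper splits the power and invokes \eqref{eqn:hSuffSmall}; neither affects the result.
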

\noindent
\underline{Proof:} in Section~\ref{sec:thm:eps}. \qed\\
Some remarks are in order:
\begin{itemize}
	\item $h$ is sufficiently small when it satisfies \eqref{eqn:hSuffSmall}.
	\item The if-clause is \textcolor{orange}{\dunderline[-1pt]{2pt}{\textcolor{black}{underlined in orange}}} to indicate that said if-clause merely propagates itself from an interiorly used \emph{fundamental lemma}. This fundamental lemma is introduced below in Section~\ref{sec:fundamentalLemma}.
\end{itemize}

\section{Fundamental Lemma}\label{sec:fundamentalLemma}
In preparation for the next section, we introduce a fundamental lemma, that is critical for the proof of many intermediate results. We now explain the purpose of the lemma. Afterwards, we state the lemma.

Some of our results make use of assumption \AssumpIII. Usually, the bounds from \AssumpIII cannot be used directly because they hold only locally. For example, \eqref{eqn:A3:M:bound} gives a bound that only holds when the locality condition \eqref{eqn:A3:M:cond} is satisfied.

Thus, before showing any further results, we will prove up-front that the bounds from \AssumpIII can be used directly for $\hyh,\huh$ from Section~\ref{sec:partconv:notation}.

\begin{lem}[Fundamental Lemma]\label{lem:fundamental}
	Recall the H\"older exponent $\lambda$ from \AssumpIII and the approximation order $\eta$ from~Section~\ref{sec:approximability}. \textcolor{orange}{\dunderline[-1pt]{2pt}{\textcolor{black}{If \AssumpIII holds and $h$ is sufficiently small}}} then there exist finite constants $C_M,C_b,C_f \in \R_{>0}$ such that:
	\begin{subequations}
		\begin{align}
		\pig|M\big(y^\star(0),y^\star(T)\big)-M\big(\hyh(0),\hyh(T)\big)\pig| &\leq C_M \cdot h^{\lambda \cdot \eta} \tageq\label{eqn:A3:M:bound:h}\\
		\pig\|b\big(\hyh(0),\hyh(T)\big)\pig\|_2^2 &\leq C_b \cdot h^{2 \cdot \lambda \cdot \eta} 	\tageq\label{eqn:A3:b:bound:h}\\
		\left\|\begin{bmatrix}
		f_1\big(y^\star(t),u^\star(t),t\big)-f_1\big(\hyh(t),\huh(t),t\big)\\
		f_2\big(y^\star(t),u^\star(t),t\big)-f_2\big(\hyh(t),\huh(t),t\big)
		\end{bmatrix}\right\|_2^2 &\leq C_f \cdot h^{2 \cdot \lambda \cdot \eta} \tageq\label{eqn:A3:f:bound:h}
		\end{align}
	\end{subequations}
\end{lem}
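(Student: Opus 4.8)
The plan is to derive \eqref{eqn:A3:M:bound:h}--\eqref{eqn:A3:f:bound:h} directly from the approximability estimate \eqref{eqn:DefHatyu} by feeding its pointwise consequences into the local H\"older bounds of \AssumpIII. First I would unpack the norm in \eqref{eqn:DefHatyu}: since $\|(y,u)\|_\cX = \|\dot{y}\|_{L^2} + \|(y,u)\|_{L^\infty}$ is a sum of nonnegative terms by \eqref{eqn:normX}, the estimate controls the $L^\infty$-part, so $\|(y^\star-\hyh,\,u^\star-\huh)\|_{L^\infty} \leq C_\eta h^\eta$. By \eqref{eqn:normX:inf} this is the a.e.\ pointwise bound $\|[\,y^\star(t)-\hyh(t)\,;\,u^\star(t)-\huh(t)\,]\|_\infty \leq C_\eta h^\eta$. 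Because $y^\star$ and $\hyh$ are continuous (via the embedding $W^{1,2}\hookrightarrow \cC^0$), the $y$-component of this estimate actually holds for \emph{every} $t$, in particular at the endpoints $t=0,T$; this continuity lift is what legitimizes evaluating $M$ and $b$ at the boundary nodes. Throughout, I would convert the $\infty$-norm into the Euclidean norm appearing in \AssumpIII at the cost of a fixed dimensional factor, using $\|v\|_2 \leq \sqrt{n}\,\|v\|_\infty$.

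Next I would discharge the locality hypotheses \eqref{eqn:A3:M:cond}, \eqref{eqn:A3:b:cond}, \eqref{eqn:A3:f:cond}. Stacking the endpoint differences for $M,b$ and the pointwise difference for $f$, and applying the norm conversion, each relevant $\|\cdot\|_2$-distance is bounded by $\sqrt{2n_y}\,C_\eta h^\eta$ (for $M$ and $b$) or by $\sqrt{n_y+n_u}\,C_\eta h^\eta$ (for $f$). Since $\eta>0$ is finitely above zero, all of these fall below $\epsilon$ once $h$ is smaller than an explicit threshold of the form $(\epsilon/(c\,C_\eta))^{1/\eta}$ for a suitable dimensional constant $c$; this is exactly the content of ``$h$ sufficiently small,'' and the total requirement is $h \leq \min\{h_0,\,(\epsilon/(c\,C_\eta))^{1/\eta}\}$. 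I expect this verification to be the only genuinely delicate point: it is where the locality of \AssumpIII is paid for and where the a.e.-to-everywhere lift is needed for the boundary evaluations — everything afterwards is substitution.

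With the locality conditions in force, I would substitute $(\tmpyO,\tmpyT)=(\hyh(0),\hyh(T))$ into \eqref{eqn:A3:M:bound} to obtain $|M(y^\star(0),y^\star(T))-M(\hyh(0),\hyh(T))| \leq \Clam(\sqrt{2n_y}\,C_\eta)^\lambda\, h^{\lambda\eta}$, which is \eqref{eqn:A3:M:bound:h} with $C_M:=\Clam(\sqrt{2n_y}\,C_\eta)^\lambda$. For \eqref{eqn:A3:b:bound:h} I would exploit feasibility of the exact minimizer, $b(y^\star(0),y^\star(T))=\bO$, so the left-hand side equals $\|b(y^\star(0),y^\star(T))-b(\hyh(0),\hyh(T))\|_2$; applying \eqref{eqn:A3:b:bound} and squaring yields $C_b:=\Clam^2(\sqrt{2n_y}\,C_\eta)^{2\lambda}$. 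Finally, for \eqref{eqn:A3:f:bound:h} I would substitute $(\tmpyt,\tmput)=(\hyh(t),\huh(t))$ into \eqref{eqn:A3:f:bound} at a.e.\ fixed $t$ and square, giving $C_f:=\Clam^2(\sqrt{n_y+n_u}\,C_\eta)^{2\lambda}$. Collecting $C_M,C_b,C_f$ then completes the proof, with the underlined ``$h$ sufficiently small'' clause being the one pinned down in the second step.
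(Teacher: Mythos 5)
Your proposal is correct and follows essentially the same route as the paper's proof: unpack the $L^\infty$ part of the $\cX$-norm bound \eqref{eqn:DefHatyu}, convert to Euclidean norms via $\|v\|_2\leq\sqrt{n}\,\|v\|_\infty$, verify the locality conditions of \AssumpIII for $h$ below the threshold $\big(\epsilon/(C_\eta\sqrt{2n_y+n_u})\big)^{1/\eta}$, and then substitute into the H\"older bounds (using $b\big(y^\star(0),y^\star(T)\big)=\bO$ for the second estimate), arriving at the identical constants $C_M=\Clam(\sqrt{2n_y}\,C_\eta)^\lambda$, $C_b=\Clam^2(\sqrt{2n_y}\,C_\eta)^{2\lambda}$, $C_f=\Clam^2(\sqrt{n_y+n_u}\,C_\eta)^{2\lambda}$. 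Your explicit invocation of the $W^{1,2}\hookrightarrow\cC^0$ embedding to justify the endpoint evaluations is a small point of added care over the paper's ess-sup chain, but it does not constitute a different argument.
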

\noindent
\underline{Proof:} in Section~\ref{sec:proof:fundamental}.\qed

Theorem~\ref{thm:eps} and several other results below require that \AssumpIII holds and $h$ be sufficiently small. This is because they make use of Lemma~\ref{lem:fundamental}. To indicate that these requirements originate from Lemma~\ref{lem:fundamental}, we underlined the requirement in orange each time.

\chapter{Proofs of all Theorems}\label{sec:proofs}
Theorem~\ref{thm:main} asserts convergence of several measures and a rate-of-convergence result for each of these measures. By itself, this necessitates a complex proof. A complex proof should not incentivize one to opt for proving a weaker result instead or to forego any helpful details.

Given the result and the volume of its proof, we decided to encapsulate as many intermediate results as possible. This shortens each individual proof but leads to a multitude of small lemmas. Figure~\ref{fig:proofstructure} gives a dependency graph of all proofs and assumptions. As shown in the figure, the majority of the proofs are on an intermediate result that is used for the proof of Theorem~\ref{thm:eps}.

\begin{figure}
	\centering
	\includegraphics[width=0.9\linewidth]{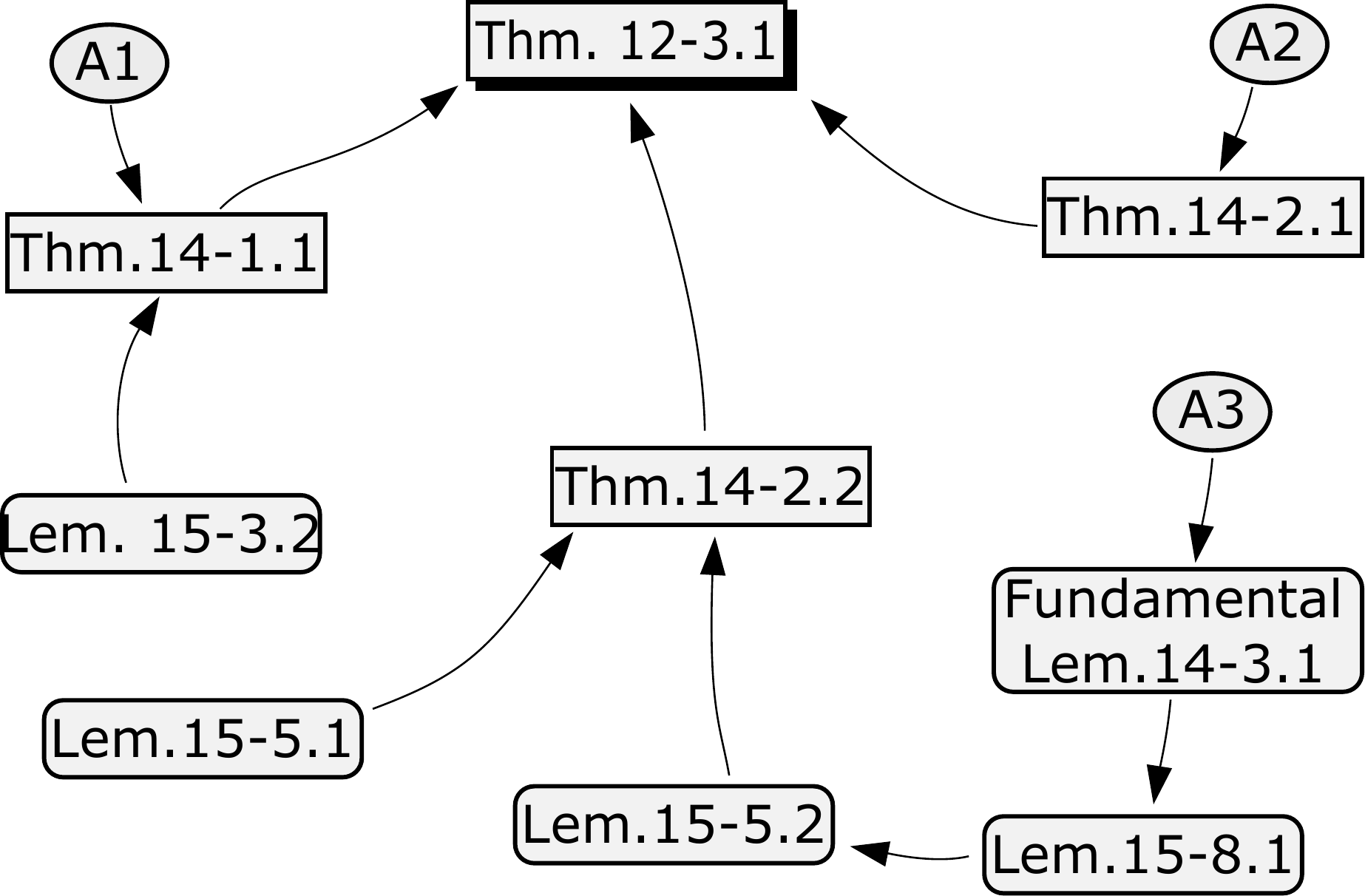}
	\caption{Dependency Graph: The diagram shows all theorems and lemmas that are proven in relation to Theorem~\ref{thm:main} together with the three assumptions and in which proofs they are used.}
	\label{fig:proofstructure}
\end{figure}

\section{Proof of the Fundamental Lemma~\ref{lem:fundamental}}\label{sec:proof:fundamental}

\subsubsection{Structure of the Proof}
Recall the parameter $\epsilon$ from \AssumpIII. The proof depends on the following two norms:
\begin{subequations}
	\begin{align}
	\left\|\begin{bmatrix}
	y^\star(0)-\hyh(0)\\
	y^\star(T)-\hyh(T)
	\end{bmatrix}\right\|_2\,, \label{eqn:norm_y0T}\\
	\left\|\begin{bmatrix}
	y^\star(t)-\hyh(t)\\
	u^\star(t)-\huh(t)
	\end{bmatrix}\right\|_2 \,.\label{eqn:norm_yu}
	\end{align}
\end{subequations}
If $h$ is sufficiently small then both norms are smaller than $\epsilon$. A sufficiently small value for $h$ is developed in Section~\ref{sec:proof:fundamental:1}. Afterwards, Sections~\ref{sec:proof:fundamental:2} and 
\ref{sec:proof:fundamental:3} prove bounds for each of the above norms. Finally, Sections~\ref{sec:proof:fundamental:4},~\ref{sec:proof:fundamental:5},~\ref{sec:proof:fundamental:6} in turn use these bounds to show the propositions \eqref{eqn:A3:M:bound:h},~\eqref{eqn:A3:b:bound:h},~\eqref{eqn:A3:f:bound:h}, respectively.

\subsection{A Sufficiently Small Value for $h$}\label{sec:proof:fundamental:1}
By requirement, $h$ is sufficiently small. Thus, we may consider
\begin{align}
h \leq \left(\frac{\epsilon}{C_\eta \cdot \sqrt{2 \cdot n_y+n_u}}\right)^{\frac{1}{\eta}} \label{eqn:hSuffSmall}
\end{align}
for the purpose that
\begin{align}
\sqrt{2 \cdot n_y + n_u} \cdot C_\eta \cdot h^\eta \leq \epsilon\,. \label{eqn:h_bound}
\end{align}

\subsection{Bound of Norm~\eqref{eqn:norm_y0T}}\label{sec:proof:fundamental:2}
We apply the following estimates:
\medmuskip=0.1mu
\thickmuskip=0.1mu
\begin{align*}
\left\|\begin{bmatrix}
y^\star(0)-\hyh(0)\\
y^\star(T)-\hyh(T)
\end{bmatrix}\right\|_2
\leq& \sqrt{n_y + n_y} \cdot \left\|\begin{bmatrix}
y^\star(0)-\hyh(0)\\
y^\star(T)-\hyh(T)
\end{bmatrix}\right\|_\infty && \text{$\|\cdot\|_2 \leq \sqrt{n} \,\|\cdot\|_\infty$ in $\R^n$}\\
= &\sqrt{n_y + n_y} \cdot \operatorname{max}\left\lbrace\,\|y^\star(0)-\hyh(0)\|_\infty\,,\,\|y^\star(T)-\hyh(T)\|_\infty\,\right\rbrace &&\text{split $\infty$-norm}\\
= &\sqrt{n_y + n_y} \cdot \operatornamewithlimits{max}_{t \in \lbrace 0,T\rbrace}\left\lbrace\,\|y^\star(t)-\hyh(t)\|_\infty\,\right\rbrace &&\text{rewrite max}\\
\leq &\sqrt{n_y + n_y} \cdot \operatornamewithlimits{ess\,sup}_{t \in [0,T]}\left\lbrace\,\|y^\star(t)-\hyh(t)\|_\infty\,\right\rbrace&&\text{use $\lbrace 0,T \rbrace \subset [0,T]$}\\
\leq &\sqrt{n_y + n_y} \cdot \operatornamewithlimits{ess\,sup}_{t \in [0,T]}\left\lbrace\,\left\|\begin{bmatrix}
y^\star(t)-\hyh(t)\\
u^\star(t)-\huh(t)
\end{bmatrix}\right\|_\infty\,\right\rbrace&&\text{upper bound}\\
\leq& \sqrt{n_y + n_y} \cdot \|(y^\star,u^\star)-(\hyh,\huh)\|_{L^\infty}\,.&& \text{use \eqref{eqn:normX:inf}}
\end{align*}
This can be simplified and further bounded:
\begin{subequations}
	\begin{align*}
	&&\left\|\begin{bmatrix}
	y^\star(0)-\hyh(0)\\
	y^\star(T)-\hyh(T)
	\end{bmatrix}\right\|_2 &\leq \sqrt{n_y + n_y} \cdot \|(y^\star,u^\star)-(\hyh,\huh)\|_{L^\infty} && \text{summarized from above}\\
	\Rightarrow &&\left\|\begin{bmatrix}
	y^\star(0)-\hyh(0)\\
	y^\star(T)-\hyh(T)
	\end{bmatrix}\right\|_2 &\leq \sqrt{n_y + n_y} \cdot C_\eta \cdot h^\eta &&  \text{bound with \eqref{eqn:DefHatyu}} \tageq\label{eqn:bound:hyh0T}\\
	\Rightarrow &&\left\|\begin{bmatrix}
	y^\star(0)-\hyh(0)\\
	y^\star(T)-\hyh(T)
	\end{bmatrix}\right\|_2 &\leq \epsilon\,.&&  \text{bound with \eqref{eqn:h_bound}} \tageq\label{eqn:bound:hyh0Teps}
	\end{align*}
\end{subequations}

\subsection{Bound of Norm~\eqref{eqn:norm_yu}}\label{sec:proof:fundamental:3}
We apply the following estimates:
\begin{align*}
\left\|\begin{bmatrix}
y^\star(t)-\hyh(t)\\
u^\star(t)-\huh(t)
\end{bmatrix}\right\|_2 
\leq &\sqrt{n_y+n_u} \cdot \left\|\begin{bmatrix}
y^\star(t)-\hyh(t)\\
u^\star(t)-\huh(t)
\end{bmatrix}\right\|_\infty 	&&\text{use $\|\cdot\|_2 \leq \sqrt{n} \cdot \|\cdot\|_\infty$ in $\R^n$}\\
\leq &\sqrt{n_y+n_u} \cdot \operatornamewithlimits{ess\,sup}_{t \in [0,T]} \left\lbrace\,\left\|\begin{bmatrix}
y^\star(t)-\hyh(t)\\
u^\star(t)-\huh(t)
\end{bmatrix}\right\|_\infty\,\right\rbrace 	&&\text{bound with worst-case for $t$}\\
= &\sqrt{n_y+n_u} \cdot \|(y^\star,u^\star)-(\hyh,\huh)\|_{L^\infty} 	&&\text{use \eqref{eqn:normX:inf}}
\end{align*}
This can be simplified and further bounded:
\begin{subequations}
	\begin{align*}
	&&\left\|\begin{bmatrix}
	y^\star(t)-\hyh(t)\\
	u^\star(t)-\huh(t)
	\end{bmatrix}\right\|_2 &\leq \sqrt{n_y+n_u} \cdot \|(y^\star,u^\star)-(\hyh,\huh)\|_{L^\infty} &&\text{summarized from above}\\
	\Rightarrow&&\left\|\begin{bmatrix}
	y^\star(t)-\hyh(t)\\
	u^\star(t)-\huh(t)
	\end{bmatrix}\right\|_2	& \leq \sqrt{n_y+n_u} \cdot C_\eta \cdot h^\eta &&\text{bound with \eqref{eqn:DefHatyu}} \tageq\label{eqn:bound:hyhu}\\
	\Rightarrow&&\left\|\begin{bmatrix}
	y^\star(t)-\hyh(t)\\
	u^\star(t)-\huh(t)
	\end{bmatrix}\right\|_2 &\leq \epsilon\,. &&\text{bound with \eqref{eqn:h_bound}}\tageq\label{eqn:bound:hyhueps}
	\end{align*}
\end{subequations}
\medmuskip=3mu
\thickmuskip=3mu

\subsection{Proof of~\eqref{eqn:A3:M:bound:h}}\label{sec:proof:fundamental:4}
From \eqref{eqn:bound:hyh0Teps} follows that \eqref{eqn:A3:M:cond} is satisfied for $\tmpyO:=\hyh(0),\,\tmpyT:=\hyh(T)$. Insertion of $\tmpyO:=\hyh(0),\,\tmpyT:=\hyh(T)$ into \eqref{eqn:A3:M:bound} yields:
\begin{align*}
\pig|M\big(y^\star(0),y^\star(T)\big)-M\big(\hyh(0),\hyh(T)\big)\pig| &\leq \Clam \cdot \left\|\begin{bmatrix}
y^\star(0)-\hyh(0)\\
y^\star(T)-\hyh(T)
\end{bmatrix}\right\|_2^\lambda \leq \Clam \cdot \Big( \sqrt{2\cdot n_y} \cdot C_\eta \cdot h^\eta\Big)^\lambda\,,
\end{align*}
where for the last bound we used~\eqref{eqn:bound:hyh0T}. This proves proposition \eqref{eqn:A3:M:bound:h} for $C_M=\Clam \cdot \big(\sqrt{2\cdot n_y} \cdot C_\eta \big)^\lambda$.

\subsection{Proof of~\eqref{eqn:A3:b:bound:h}}\label{sec:proof:fundamental:5}
Notice that $y^\star,u^\star$ satisfies the boundary conditions
\begin{align}
b\big(y^\star(0),y^\star(T)\big)=\bO \label{eqn:ystarb0}
\end{align}
exactly because $y^\star,u^\star$ is an exact solution.

From \eqref{eqn:bound:hyh0Teps} follows that \eqref{eqn:A3:b:cond} is satisfied for $\tmpyO:=\hyh(0),\,\tmpyT:=\hyh(T)$. Insertion of $\tmpyO:=\hyh(0)$, $\tmpyT:=\hyh(T)$ into \eqref{eqn:A3:b:bound} yields:
\begin{align*}
&&\pig\|\underbrace{b\big(y^\star(0),y^\star(T)\big)}_{\equiv \bO}-b\big(\hyh(0),\hyh(T)\big)\pig\|_2 &\leq \Clam \cdot \left\|\begin{bmatrix} 
y^\star(0)-\hyh(0)\\
y^\star(T)-\hyh(T)
\end{bmatrix}\right\|_2^\lambda &&\text{remove \eqref{eqn:ystarb0}}\\ 
\Rightarrow&&\pig\|b\big(\hyh(0),\hyh(T)\big)\pig\|_2 &\leq \Clam \cdot \Big( \sqrt{2 \cdot n_y} \cdot C_\eta \cdot h^\eta\Big)^\lambda &&\text{insert \eqref{eqn:bound:hyh0T}}\\
\Rightarrow&&\pig\|b\big(\hyh(0),\hyh(T)\big)\pig\|_2^2 &\leq \Clam^2 \cdot \Big( \sqrt{2 \cdot n_y} \cdot C_\eta \cdot h^\eta\Big)^{2\cdot\lambda} &&\text{take square}
\end{align*}
This proves proposition \eqref{eqn:A3:b:bound:h} for $C_b=\Clam^2 \cdot \big(\sqrt{2 \cdot n_y} \cdot C_\eta \big)^{2 \cdot \lambda}$.

\subsection{Proof of~\eqref{eqn:A3:f:bound:h}}\label{sec:proof:fundamental:6}
From \eqref{eqn:bound:hyhueps} follows that \eqref{eqn:A3:f:cond} is satisfied for $\tmpyt:=\hyh(t),\,\tmput:=\huh(t)$. Insertion of $\tmpyt:=\hyh(t),\,\tmput:=\huh(t)$ into the square of \eqref{eqn:A3:f:bound} yields:
\medmuskip=0mu
\thickmuskip=0mu
\begin{align*}
\left\|\begin{bmatrix}
f_1\big(y^\star(t),u^\star(t),t\big)-f_1\big(\hyh(t),\huh(t),t\big)\\
f_2\big(y^\star(t),u^\star(t),t\big)-f_2\big(\hyh(t),\huh(t),t\big)
\end{bmatrix}\right\|_2^2 &\leq \Clam^2 \cdot \Bigg(\left\|\begin{bmatrix}
y^\star(t)-\hyh(t)\\
u^\star(t)-\huh(t)
\end{bmatrix}\right\|_2^\lambda\Bigg)^2\\
&\leq \Clam^2 \cdot \Big( \sqrt{n_y+n_u} \cdot C_\eta \cdot h^\eta \Big)^{2 \cdot \lambda}\,,
\end{align*}
\medmuskip=3mu
\thickmuskip=3mu
where in the last bound we inserted \eqref{eqn:bound:hyhu}. This proves proposition \eqref{eqn:A3:f:bound:h} for $C_f=\Clam^2 \cdot \big(\sqrt{n_y+n_u} \cdot C_\eta \big)^{2 \cdot \lambda}$.

\section{Proof of Main Theorem~\ref{thm:main}}\label{sec:proof:thm:main}
From here on, we dedicate one subsection into the proof of one theorem or lemma. This subsection is the proof of Theorem~\ref{thm:main}. Recall that you can go back and forth via the embedded hyperlinks.

\subsubsection{Structure of the Proof}
We first introduce a bound for $\chi$. This helps us bound $\delta,\rho$. Afterwards, we prove convergence of $\gamma$ via straightforward application of Theorem~\ref{thm:convgamma}.

\subsubsection{Convergence Order of $\delta$ and $\rho$}
We use Theorem~\ref{thm:convdelta} and Theorem~\ref{thm:eps}. The latter asserts:
\begin{align*}
\delta \leq \chi \in \cO\left( h^{\eta \cdot \lambda} + \frac{h^{\min\lbrace 2 \cdot \eta \cdot \lambda,\ell \rbrace}}{\omega} \right)\tageq\label{eqn:boundXi}
\end{align*}
From Theorem~\ref{thm:convdelta} we obtain:
\begin{align*}
& 			&\rho^{\phantom{2}}   	&\leq \sqrt{2} \cdot \sqrt{M\big(y^\star(0),y^\star(T)\big)-\Cobj + \chi\,} \cdot \sqrt{\omega} & &\\
&\Rightarrow&\rho^2 	&\leq 2 \cdot \Big(M\big(y^\star(0),y^\star(T)\big)-\Cobj\Big) \cdot \omega + 2 \cdot \chi \cdot \omega & &\text{take the square and multiply out}\\
&\Rightarrow&\rho^2 	&\in \cO\Bigg( \omega + \left( h^{\eta \cdot \lambda} + \frac{h^{\min\lbrace 2 \cdot \eta \cdot \lambda,\ell \rbrace}}{\omega} \right) \cdot \omega \Bigg)& &\text{insert the bound \eqref{eqn:boundXi}}\\
&\Rightarrow &\rho^2 &\in\cO\left( \omega + h^{\min\lbrace 2\cdot\eta\cdot\lambda,\ell \rbrace} \right)&& \text{bound by lowest order}\tageq\label{eqn:proof:main}
\end{align*}
Taking the square-root of~\eqref{eqn:proof:main} yields\footnote{because $\sqrt{\theta+\kappa}\leq \sqrt{\theta}+\sqrt{\kappa}\qquad \forall \theta,\kappa \in \R_{>0}$.}:
\begin{align*}
\rho \in \cO\left( \sqrt{\omega} + h^{\min\lbrace \eta\cdot\lambda,\ell/2 \rbrace} \right)
\end{align*}
Thus the bounds on $\delta,\rho$ hold.

\subsubsection{Convergence Order of $\gamma$}
Finally, with the help of Theorem~\ref{thm:convgamma}, we show the bound on $\gamma$. Each component $y_{h,[\upsilon]}$ for $\upsilon=1,\dots,n_y$ and each component $u_{h,[\upsilon]}$ for $\upsilon=1,\dots,n_u$ satisfies \eqref{eqn:convgamma:1} due to the bound constraints in \eqref{eqn:POCPh}. Further, all components $y_{\texttt{L},[\upsilon]},y_{\texttt{R},[\upsilon]}$ and $u_{\texttt{L},[\upsilon]},u_{\texttt{R},[\upsilon]}$ satisfy \eqref{eqn:convgamma:2} due to \AssumpI. Theorem~\ref{thm:convgamma} thus asserts the bounds \eqref{eqn:boxthmpropL} and \eqref{eqn:boxthmpropR} for each $y_{h,[\upsilon]}$ and each $u_{h,[\upsilon]}$ on each interval $I_i$.

\section{Proof of Theorem~\ref{thm:convgamma}}\label{sec:thm:convgamma}
The proof makes use of the following corollary.
\begin{cor}\label{cor:j4}
	Let $n \in \N$. Then
	\begin{align*}
	\sum_{j=0}^n j^4 = \frac{1}{30} \cdot  n \cdot (1 + n) \cdot (2 \cdot n + 1) \cdot (3 \cdot  n^2+ 3 \cdot n-1) \leq n^5\,.
	\end{align*}
\end{cor}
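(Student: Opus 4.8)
The plan is to split the claim into its two assertions: the closed-form identity $\sum_{j=0}^n j^4 = \tfrac{1}{30} n(n+1)(2n+1)(3n^2+3n-1)$, and the subsequent bound by $n^5$. Each is elementary, so the proof is a matter of organizing a short induction and a polynomial estimate.

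First I would establish the identity by induction on $n$. The base case ($n=0$, where both sides vanish) is immediate. For the inductive step, assuming the formula at $n$, I would add $(n+1)^4$ and check that
\[
\tfrac{1}{30} n(n+1)(2n+1)(3n^2+3n-1) + (n+1)^4 = \tfrac{1}{30}(n+1)(n+2)(2n+3)\big(3(n+1)^2+3(n+1)-1\big).
\]
Both sides are degree-$5$ polynomials in $n$, so this reduces to expanding and comparing coefficients; no cleverness is required.

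For the bound, rather than estimating factor by factor, I would first expand the product into its Faulhaber form. Multiplying out $n(n+1) = n^2+n$ and $(2n+1)(3n^2+3n-1) = 6n^3+9n^2+n-1$ and combining gives $n(n+1)(2n+1)(3n^2+3n-1) = 6n^5 + 15n^4 + 10n^3 - n$, hence
\[
\tfrac{1}{30} n(n+1)(2n+1)(3n^2+3n-1) = \tfrac{1}{5}n^5 + \tfrac{1}{2}n^4 + \tfrac{1}{3}n^3 - \tfrac{1}{30}n.
\]
The inequality $\leq n^5$ is then equivalent, after subtracting, clearing the denominator $30$, and factoring out $n \geq 0$, to the claim $24n^4 - 15n^3 - 10n^2 + 1 \geq 0$. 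I would verify this by checking $n=0$ and $n=1$ directly (where the bracket equals $1$ and $0$), and for $n\geq 2$ bounding $24n^4 - 15n^3 = n^3(24n-15) \geq 9n^3 \geq 10n^2$, the last step using $9n \geq 10$; this leaves the bracket $\geq 1 > 0$.

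The main point to get right — the nearest thing to an obstacle — is that the bound is genuinely tight: the naive factorwise estimates $n+1 \leq 2n$, $2n+1 \leq 3n$, $3n^2+3n-1 \leq 6n^2$ only yield $36n^5$, which exceeds $30n^5$. This is why I would pass through the exact expansion and reduce to the explicit quartic inequality, where equality at $n=1$ shows there is no slack to spare. The rest is routine algebra.
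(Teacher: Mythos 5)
Your proof is correct on both counts. The paper itself does not prove this corollary at all: its ``proof'' is a one-line citation to Bernoulli's \emph{Summae Potestatum} for the closed-form identity, and the inequality $\leq n^5$ is left entirely to the reader. You instead supply a complete, self-contained argument: a routine induction for the Faulhaber identity, and then the exact expansion $\tfrac{1}{30}\,n(n+1)(2n+1)(3n^2+3n-1)=\tfrac{1}{5}n^5+\tfrac{1}{2}n^4+\tfrac{1}{3}n^3-\tfrac{1}{30}n$, reducing the bound to $24n^4-15n^3-10n^2+1\geq 0$. I checked your algebra: the expansion is right, the quartic vanishes at $n=1$ (consistent with $\sum_{j=0}^{1}j^4=1=1^5$, so the bound is tight there), and your estimate $n^3(24n-15)\geq 9n^3\geq 10n^2$ for $n\geq 2$ closes the argument. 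Your observation that the naive factorwise bounds only give $36n^5/30 > n^5$ is exactly the right diagnosis of why one must pass through the exact expansion; this is a genuine improvement in rigor over the paper, which asserts the inequality without justification. The only cost of your route is length, which is negligible here; what it buys is that the corollary no longer rests on an unverified claim.
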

\noindent
\underline{Proof:} in \cite[Summae Potestatum]{Bernoulli1713Ars}. \qed

\subsubsection{Strategy of the Proof}
The sampling conditions~\eqref{eqn:convgamma:1} guarantee satisfaction of the constraints only at the sampling points $t \in \Tcl{i,m}$, whereas we want to bound the maximum violation of $\phi$ in between any two neighboring sampling points. This is achieved by bounding the curvature of a transformed version of $\phi$.

We only prove the left bound~\eqref{eqn:boxthmpropL} because the right bound~\eqref{eqn:boxthmpropR} follows by analogy. Wlog., we replace $I_i$ with $\Iref$ and $\Tcl{i,m}$ with $\Tclref{m}$. Figure~\ref{fig:proofboxerror}~(a) depicts a possible scenario for $\phiL,\phi,\phiR$. The dashed black curve is obtained by taking $\Cbox + \phiL(t)$. The sampling points are marked with crosses and partly with circles as well.

\subsubsection{Structure of the Proof}
In Figure~\ref{fig:proofboxerror}~(b), we consider a rescaled version of $\phiL,\phi,\phiR$ such that the lower bound (blue curve) is a constant of~$-1$. We then perform a cosine transformation from Figure~\ref{fig:proofboxerror}~(b) into a function $g$ in Figure~\ref{fig:proofboxerror}~(c). Both transformations are given in Section~\ref{sec:proof:boxerror:Trafo}. 

We can now bound the lowest value of $g$ from a worst-case interpolation. This interpolation is formed by using a bound on the second derivative of $g$, introduced in Section~\ref{sec:proof:boxerror:Curvature}. The bound for the second derivative is provided by the Lemma~\ref{thm:Parseval}, introduced just before Section~\ref{sec:proof:boxerror:Curvature}.

The worst-case interpolation is constructed in Section~\ref{sec:proof:boxerror:Estimate}. The worst-case interpolation is illustrated as the violet parabola $w$ in the lower left of Figure~\ref{fig:proofboxerror}~(c). The proof is completed by back-transforming the worst-case violation bound on $g$ into a worst-case violation bound on $\phi$.

\subsection{Transformation into a Finite Cosine Series}\label{sec:proof:boxerror:Trafo}
We uniquely identify $\phiL,\phi,\phiR$ with functions $\psiL,\psi,\psiR$ such that $\psi_L=-1$ and \mbox{$\psiR\leq 1$}:
\begin{subequations}
	\label{eqn:phiToPsi}%
	\begin{align}
	\phiL(t) &= \phiL(t) + \frac{\Cbox}{2} + \frac{\Cbox}{2} \cdot \psiL(t)\,, 	\\
	\phi (t) &= \phiL(t) + \frac{\Cbox}{2} + \frac{\Cbox}{2} \cdot \psi (t)\,, 	\\
	\phiR(t) &= \phiL(t) + \frac{\Cbox}{2} + \frac{\Cbox}{2} \cdot \psiR(t)\,.
	\end{align}
\end{subequations}
The functions $\psiL,\psi,\psiR$ are plotted in Figure~\ref{fig:proofboxerror}~(b).

Using a change of variable for $t \in \Iref$ into $\xi \in [0,\pi]$ via the relation\footnote{the negative sign is to be consistent with the literature convention in \cite{ChebLob} for Chebyshev-Gauss-Lobatto points $\tau_k$ from~\eqref{eqn:cheblob}. This is so $\tau_k$ are increasing as $k$ increases.}~\mbox{$\xi=-\arccos(t)$}, we define the function $g(\xi):=\psi(t)$. Because $\psi \in \cP_p(\Iref)$, we may express $\psi$ as a series of Chebyshev polynomials of the first kind\footnote{The $j^\text{th}$ Chebyshev polynomial on $[-1,1]$ can be expressed as $\cos\big(-j\cdot\arccos(t)\big)$}, thus:
\begin{align}
\psi(t)&= \sum_{j=0}^p \beta_j \cdot \cos\big(-j\cdot\arccos(t)\big)\\
\Leftrightarrow\quad g(\xi) &= \sum_{j=0}^p \beta_j \cdot \cos(j \cdot \xi)\,, \label{eqn:cosine_g}
\end{align}
with coefficients $\beta_0,\dots,\beta_p \in \R$.

Figure~\ref{fig:proofboxerror} shows an example of $\phi,\psi,g$ for $p=3$, $\Cbox=2.8$, and~\mbox{$m=6 \in \N \cdot p$}. Notice that the Chebyshev-Gauss-Lobatto points $\tau_k$ from~\eqref{eqn:cheblob} become equidistant as the variable $t$ changes into \mbox{$\xi=-\arccos(t)$}. The points $\xi_k$ are equidistant due to the $\arccos$-transformation of the points $\tau_{k}$ from~\eqref{eqn:cheblob}. This can be observed in Figure~\ref{fig:proofboxerror}~(c), where all crosses and circles are equidistant. Recall that the circles in Figure~\ref{fig:proofboxerror} indicate each $(\frac{m}{p})^{\text{th}}$ sampling point. There are $p+1$ encircled points. We can check that the transformed points $\xi_k=-\arccos(\tau_k)$ from~\eqref{eqn:cheblob}  satisfy
\begin{align}
\xi_k \equiv \frac{k}{p}\cdot \pi\qquad \text{for } k=0,\dots,p\,. \label{eqn:def:xi}
\end{align}
As apparent from~\eqref{eqn:def:xi}, the points $\xi_k$ only depend on $p$ and not on $m$. Figure~\ref{fig:proofboxerror}~(d) illustrates the sampling points $\tau_k$ (with symbol $\times$) over each $(m/p)^\text{th}$ sampling point (with symbol $\circ$).

\begin{lem}\label{thm:Parseval}
	Assign the vectors $\bg=\big(g(\xi_0),\dots,g(\xi_k),\dots,g(\xi_p)\big)$, $\bbeta=(\beta_0,\dots,\beta_j,\dots,\beta_p) \in \R^{p+1}$ for the quantities in \eqref{eqn:cosine_g}. The following relation holds in general:
	\begin{align*}
	\|\bbeta\|_2 \leq \frac{2}{\sqrt{p+1}} \cdot \|\bg\|_2\,.
	\end{align*}
\end{lem}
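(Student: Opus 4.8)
The plan is to recognize the map from Chebyshev coefficients to nodal samples as a discrete cosine transform and to prove the bound by showing that the associated Gram matrix dominates $\frac{p+1}{4}$ times the identity in the positive-semidefinite order. From \eqref{eqn:cosine_g} together with \eqref{eqn:def:xi}, the samples are $g(\xi_k)=\sum_{j=0}^p \beta_j\cos(jk\pi/p)$, so $\bg=\bC\bbeta$ with $\bC\in\R^{(p+1)\times(p+1)}$, $\bC_{kj}:=\cos(jk\pi/p)$. Since $\|\bg\|_2^2=\bbeta\t(\bC\t\bC)\bbeta$, it suffices to establish the single quadratic-form inequality $\bbeta\t(\bC\t\bC)\bbeta\ge \tfrac{p+1}{4}\|\bbeta\|_2^2$: rearranging and taking square roots of $\tfrac{p+1}{4}\|\bbeta\|_2^2\le\|\bg\|_2^2$ then yields $\|\bbeta\|_2\le\tfrac{2}{\sqrt{p+1}}\|\bg\|_2$ at once.

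First I would compute the entries of $\bG:=\bC\t\bC$, whose columns $\bv_j=(\cos(jk\pi/p))_{k=0}^p$ are the sampled Chebyshev modes. Using $\cos a\cos b=\tfrac12\big(\cos(a-b)+\cos(a+b)\big)$, every entry $\bG_{jj'}=\langle\bv_j,\bv_{j'}\rangle$ reduces to sums $S_m:=\sum_{k=0}^p\cos(mk\pi/p)$. The key elementary computation is the closed form for $S_m$, obtained by summing the geometric series (equivalently, the Dirichlet-kernel identity): $S_m=\tfrac12\big(1+(-1)^m\big)$ whenever $m\not\equiv 0 \pmod{2p}$, and $S_m=p+1$ when $m\equiv 0\pmod{2p}$. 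Because for $j,j'\in\{0,\dots,p\}$ the relevant indices $m=j\pm j'$ satisfy $|m|\le 2p$, only $m\in\{0,2p\}$ hit the exceptional case. This gives the diagonal $\bG_{00}=\bG_{pp}=p+1$ and $\bG_{jj}=\tfrac{p+2}{2}$ for $0<j<p$, together with the off-diagonal pattern $\bG_{jj'}=1$ when $j+j'$ is even and $\bG_{jj'}=0$ when $j+j'$ is odd.

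Next I would exploit this structure through a positive-semidefinite splitting that avoids any explicit eigenvalue estimate. Let $\mathbf{1}_{\mathrm e},\mathbf{1}_{\mathrm o}\in\R^{p+1}$ denote the indicator vectors of the even- and odd-indexed positions. The same-parity off-diagonal pattern means $\bG=\big(\mathrm{diag}(d_j)-\bI\big)+\mathbf{1}_{\mathrm e}\mathbf{1}_{\mathrm e}\t+\mathbf{1}_{\mathrm o}\mathbf{1}_{\mathrm o}\t$, where $d_j=\bG_{jj}$. The two rank-one terms are positive semidefinite, contributing $(\mathbf{1}_{\mathrm e}\t\bbeta)^2+(\mathbf{1}_{\mathrm o}\t\bbeta)^2\ge 0$; the diagonal part satisfies $d_j-1\ge \tfrac{p}{2}$ for every $j$ (with equality $\tfrac p2$ on the interior indices and $d_j-1=p$ on the two boundary indices). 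Hence $\bbeta\t\bG\bbeta\ge\tfrac{p}{2}\|\bbeta\|_2^2\ge\tfrac{p+1}{4}\|\bbeta\|_2^2$, the last step holding for all $p\ge 1$, which closes the proof.

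The main obstacle is the bookkeeping in the Gram-matrix computation: deriving the summation formula for $S_m$ correctly and, in particular, tracking which combinations $m=j\pm j'$ land on the exceptional multiples of $2p$, since these are precisely what single out the two boundary modes $\bv_0,\bv_p$ (of squared norm $p+1$) from the interior modes (of squared norm $\tfrac{p+2}{2}$). Once the entries are pinned down, the parity-based semidefinite splitting is the clean idea that delivers the bound from one quadratic-form inequality without recourse to the spectrum of $\bG$.
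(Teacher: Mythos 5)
Your proof is correct, but it takes a genuinely different route from the paper. The paper writes $\bF\cdot\bbeta=\bg$ with $F_{k,j}=\cos(j\cdot\xi_k)$ and invokes the \emph{cited} fact that $\sqrt{2/p}\cdot\bD\cdot\bF\cdot\bD$ is orthogonal for $\bD=\opdiag(\tfrac{1}{\sqrt 2},1,\dots,1,\tfrac{1}{\sqrt 2})$ (the DCT-I), then peels off the diagonal scalings by submultiplicativity and $\|\bD\|_2=1$ to get $\|\bbeta\|_2\leq\sqrt{2/p}\cdot\|\bg\|_2$. You instead work with the \emph{unweighted} Gram matrix $\bG=\bC\t\bC$, which is not diagonal, and your entry computation checks out: $\bG_{jj'}=\tfrac12(S_{j-j'}+S_{j+j'})$ with $S_m=\tfrac12(1+(-1)^m)$ off the multiples of $2p$ and $S_m=p+1$ on them, giving diagonal $p+1$ at $j\in\{0,p\}$, $\tfrac{p+2}{2}$ in the interior, and the same-parity $0/1$ off-diagonal pattern. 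The splitting $\bG=\opdiag(d_j-1)+\mathbf{1}_{\mathrm e}\mathbf{1}_{\mathrm e}\t+\mathbf{1}_{\mathrm o}\mathbf{1}_{\mathrm o}\t$ is exact, the rank-one terms are positive semidefinite, and $d_j-1\geq p/2$ yields $\|\bg\|_2^2\geq\tfrac{p}{2}\|\bbeta\|_2^2$ --- the same intermediate constant $\sqrt{2/p}$ as the paper, relaxed identically to $2/\sqrt{p+1}$ via $p/2\geq(p+1)/4$ for $p\geq1$. What your argument buys is self-containment: it needs only the geometric-series evaluation of $S_m$ and no appeal to the orthogonality of the scaled cosine matrix, at the price of the Gram-matrix bookkeeping; the paper's version is shorter but outsources the key structural fact to the DCT-I literature. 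Both are valid, and your lower bound even exhibits the discarded slack $(\mathbf{1}_{\mathrm e}\t\bbeta)^2+(\mathbf{1}_{\mathrm o}\t\bbeta)^2$ explicitly.
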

\noindent
\underline{Proof:} in Section~\ref{sec:thm:Parseval}. \qed

\subsection{Bounding the Curvature}\label{sec:proof:boxerror:Curvature}
We can bound $\|\bg\|_\infty = 1$ because \mbox{$-1=\gL(\xi_k)\leq g(\xi_k)\leq\gR(\xi_k)\leq 1$}, cf.~the encircled blue and black points in Figure~\ref{fig:proofboxerror}~(c). Hence $\|\bg\|_2 \leq \sqrt{p+1}$. Using Lemma~\ref{thm:Parseval}, this means $\|\bbeta\|_2 \leq 2$.

We now consider second derivatives of $g$ after $\xi$: We can bound $g'':=\frac{\mathrm{d}^2 g}{\mathrm{d}\xi^2}$ from~\eqref{eqn:cosine_g} as follows:
\begin{align*}
\operatornamewithlimits{max}_{\xi \in [0,\pi]} |g''(\xi)| \leq \sum_{j=0}^p |\beta_j| \cdot \underbrace{\max_{\xi \in [0,\pi]}\Big|\big(\cos(j \cdot \xi)\big)''\Big|}_{=j^2} = \tbbeta\t \cdot \bj =:\Psi
\end{align*}
with $\bj:=\big(0,1,2^2,\dots,j^2,\dots,p^2\big) \in \R^{p+1}$ and $\tbbeta=\big(|\beta_0|,\,|\beta_1|,\,\dots,|\beta_p|\,\big) \in \R^{p+1}$. Use~\mbox{$\|\bj\|_2= \sqrt{\sum_{j=0}^p j^4} \leq p^{2.5}$} from Corollary~\ref{cor:j4}. Using Cauchy-Schwarz for $|\bj\t\cdot\tbbeta|$ yields~\mbox{$\Psi \leq \|\tbbeta\|_2 \cdot \|\bj\|_2= \|\bbeta\|_2 \cdot \|\bj\|_2 \leq 2 \cdot p^{2.5}$}.

\subsection{Construction of a Worst-Case Estimate}\label{sec:proof:boxerror:Estimate}
Thus far, we obtained the bound $\Psi$ for $|g''|$ from using only the encircled black and blue sampling points in Figure~\ref{fig:proofboxerror}~(c). These points are invariant to $m$, hence why the bound $\Psi$ holds $\forall m \in \N \cdot p$. Now, we use the blue crossed sampling points. They are spaced with distance $\pi/m$.

Construct the violet parabola $w$ with $w(\xi_k)=w(\xi_{k+1})=-1$ and $w''=\Psi$, i.e., the parabola that interpolates two neighboring blue crossed sampling points and has curvature $\Psi$, as depicted in the lower left of Figure~\ref{fig:proofboxerror}~(c). Parabolas like $w$ can be used to bound the value of $g$ everywhere in-between the sampling points. This is depicted with the dashed violet graph in Figure~\ref{fig:proofboxerror}~(c). The dashed graph consists of successive replicas of $w$.

For $k=0$, polynomial interpolation yields the parabola of the solid violet graph as
\begin{align}
w(\xi) = \frac{\Psi}{2} \cdot \xi^2 - \frac{\Psi \cdot \pi}{2 \cdot m} \cdot \xi -1\,.
\end{align}
We can check that $w(0)=-1$, $w(\frac{\pi}{m})=-1$, and $w''=\Psi$. Logically, the minimum of $w$ is at $\xi^\star=\frac{\pi}{2\cdot m}$, thus $w(\xi^\star)=-1-\frac{\pi^2 \cdot \Psi}{8 \cdot m^2}$. Hence, $g-\gL$ is bounded below by $-\frac{\pi^2 \cdot p^{2.5}}{4 \cdot m^2}$. Back-transformation from $g-\gL$ into $\phi-\phiL$ via \eqref{eqn:phiToPsi} introduces the factor $\Cbox/2$, completing the proof.

\begin{figure}
	\centering
	\includegraphics[width=1\linewidth]{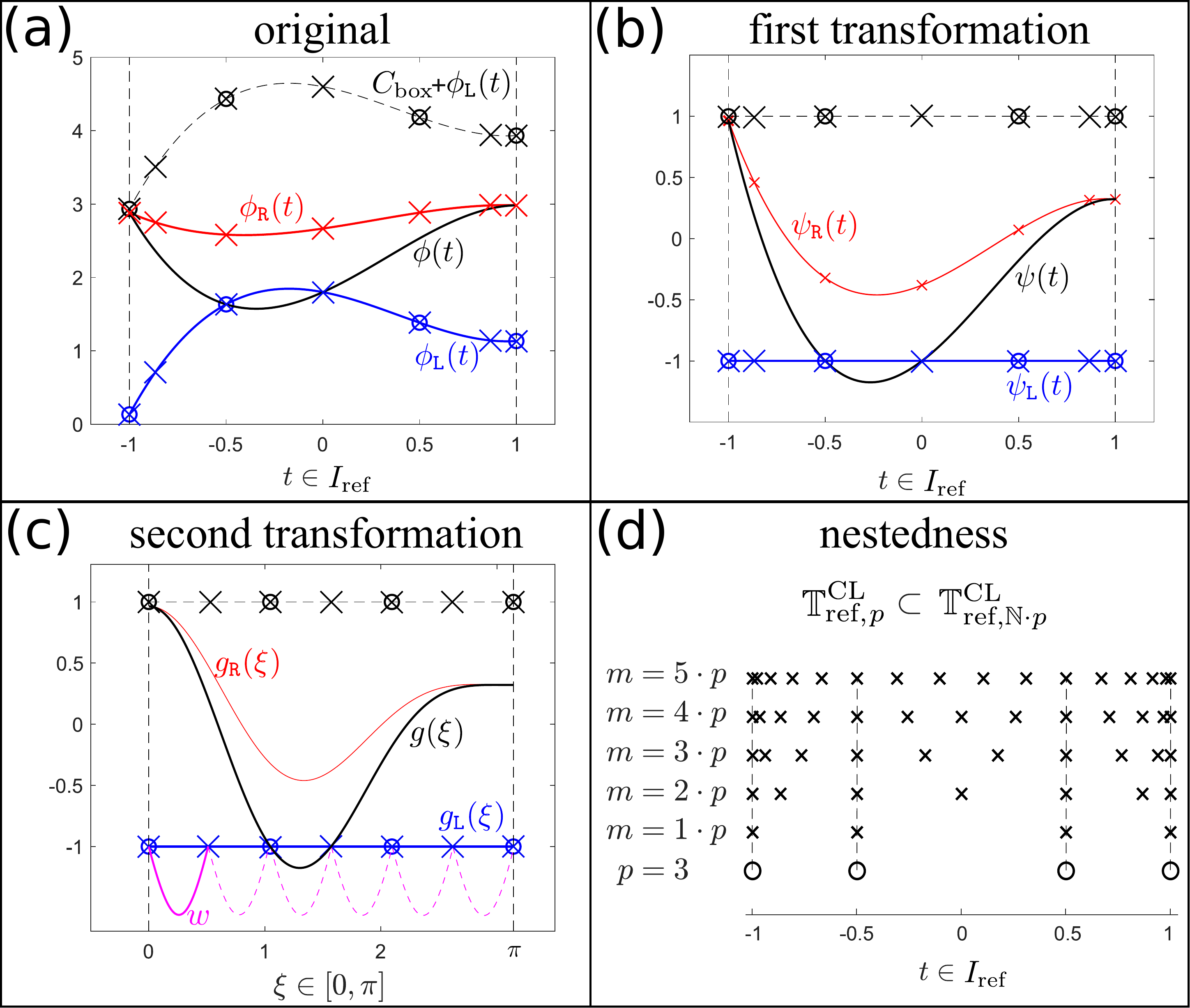}
	\caption{(a)--(c): Construction of polynomials and trigonometric polynomials on reference intervals $\Iref$ and $[0,\pi]$. (d): Nestedness of the spaces $\Tclref{\N\cdot p}$.}
	\label{fig:proofboxerror}
\end{figure}

\section{Proof of Theorem~\ref{thm:convdelta}}\label{sec:thm:convdelta}
From \AssumpII follows $M\big(y^\star_h(0),y^\star_h(T)\big)\geq \Cobj$. Since $\omega>0$, from~\eqref{eqn:def:chi} follows
\begin{align*}
\underbrace{M\big(y^\star_h(0),y^\star_h(T)\big)}_{\geq \Cobj} + \underbrace{\frac{r^2(y^\star_h,u^\star_h)}{2 \cdot \omega}}_{\geq 0} \leq M\big(y^\star(0),y^\star(T)\big) + \chi\,.\tageq\label{eqn:proof:delta:1}
\end{align*}

\subsubsection{First Proposition}
In~\eqref{eqn:proof:delta:1}, dropping the second term in the left-hand side results in:
\begin{align*}
M\big(y^\star_h(0),y^\star_h(T)\big) \leq M\big(y^\star(0),y^\star(T)\big) + \chi\,.
\end{align*}
This yields satisfaction of~\eqref{eqn:meas:delta} with $\delta=\chi$.

\subsubsection{Second Proposition}
In~\eqref{eqn:proof:delta:1}, bounding the first term below and subtracting $\Cobj$ from both sides results in:
\begin{align*}
\frac{r^2(y^\star_h,u^\star_h)}{2 \cdot \omega} \leq  \underbrace{M\big(y^\star(0),y^\star(T)\big)}_{\geq \Cbox}- \Cobj + \chi
\end{align*}
Multiplying both sides with $2\cdot\omega$ yields:
\begin{align*}
r^2(y^\star_h,u^\star_h) \leq 2 \cdot \omega \cdot \Big( M\big(y^\star(0),y^\star(T)\big)-\Cobj + \chi \Big)
\end{align*}
This shows satisfaction of~\eqref{eqn:meas:rho} for~\mbox{$\rho=\sqrt{2} \cdot \sqrt{\omega} \cdot \sqrt{M\big(y^\star(0),y^\star(T)\big)-\Cobj+\chi\,}$}.

\section{Proof of Theorem~\ref{thm:eps}}\label{sec:thm:eps}
For the scope of this proof, we make use of the following two functionals:
\begin{align}
J(y,u)&:=M\big(y(0),y(T)\big) + \frac{1}{2 \cdot \omega} \cdot r^2(y,u)\,, \label{eqn:def:J}\\
J_{h}(y,u)&:=M\big(y(0),y(T)\big) + \frac{1}{2 \cdot \omega} \cdot \left( Q_{h,q}[y,u] + \left\|b\big(\,y(0),y(T)\,\big)\right\|_2^2 \right)\,.\label{eqn:def:Jh}
\end{align}
The second functional is a quadrature-approximation of the first. The following lemma asserts an error bound on the quadrature.
\begin{lem}\label{lem:errJ}
	Let $(y,u)\in\cX_{h,p}$ and $(\tilde{y},\tilde{u}) \in \cX_{h,p}$. If
	\begin{align*}
	J_h(y,u)\leq J_h(\tilde{y},\tilde{u})\tageq\label{eqn:lem:errJ:0}
	\end{align*}
	then
	\begin{align*}
	J(y,u) \leq J(\tilde{y},\tilde{u}) + \frac{\Cquad \cdot h^\ell}{\omega}\,.
	\end{align*}
\end{lem}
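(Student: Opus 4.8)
The plan is to prove this by a short two-step comparison between the exact functional $J$ and its quadrature approximation $J_h$, using the piecewise-polynomials quadrature consistency bound \eqref{eqn:quadcond}. The essential observation is that $J$ and $J_h$ differ \emph{only} in how the integral $\int_0^T \|f(\dot{y}(t),y(t),u(t),t)\|_2^2\,\mathrm{d}t$ is treated: recalling that $r^2(y,u)=\int_0^T \|f\|_2^2\,\mathrm{d}t + \|b(y(0),y(T))\|_2^2$ from \eqref{eqn:IntRes}, the boundary penalty $\|b(y(0),y(T))\|_2^2$ and the objective $M(y(0),y(T))$ appear identically in both \eqref{eqn:def:J} and \eqref{eqn:def:Jh} and hence cancel in the difference. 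First I would therefore record the pointwise identity
\begin{align*}
J(y,u)-J_h(y,u) = \frac{1}{2 \cdot \omega}\left( \int_0^T \left\|f\big(\dot{y}(t),y(t),u(t),t\big)\right\|_2^2\,\mathrm{d}t - Q_{h,q}(y,u)\right)\,,
\end{align*}
valid for every $(y,u)\in\cX_{h,p}$.

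Next I would apply the quadrature consistency estimate \eqref{eqn:quadcond}, which bounds the bracketed quantity in absolute value by $\Cquad \cdot h^\ell$ uniformly over all elements of $\cX_{h,p}$. Since both $(y,u)$ and $(\tilde{y},\tilde{u})$ lie in $\cX_{h,p}$ by hypothesis, this yields the two-sided bound
\begin{align*}
\big|J(y,u)-J_h(y,u)\big| \leq \frac{\Cquad \cdot h^\ell}{2 \cdot \omega}\,,\qquad \big|J(\tilde{y},\tilde{u})-J_h(\tilde{y},\tilde{u})\big| \leq \frac{\Cquad \cdot h^\ell}{2 \cdot \omega}\,.
\end{align*}

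Finally I would chain these with the assumed inequality \eqref{eqn:lem:errJ:0}, $J_h(y,u)\leq J_h(\tilde{y},\tilde{u})$, in the natural order:
\begin{align*}
J(y,u) \leq J_h(y,u) + \frac{\Cquad \cdot h^\ell}{2 \cdot \omega} \leq J_h(\tilde{y},\tilde{u}) + \frac{\Cquad \cdot h^\ell}{2 \cdot \omega} \leq J(\tilde{y},\tilde{u}) + \frac{\Cquad \cdot h^\ell}{\omega}\,,
\end{align*}
which is exactly the claimed bound. There is no genuine obstacle here; this is a routine triangle-inequality argument. The only points requiring care are the bookkeeping ones: recognizing that the $M$ and $\|b\|_2^2$ contributions cancel so that \eqref{eqn:quadcond} applies cleanly to what remains, and tracking the factor $\tfrac{1}{2\cdot\omega}$ so that the \emph{two} separate invocations of the quadrature bound each contribute $\tfrac{\Cquad \cdot h^\ell}{2\cdot\omega}$ and sum to the stated $\tfrac{\Cquad \cdot h^\ell}{\omega}$ rather than half of it.
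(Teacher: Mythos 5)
Your proposal is correct and follows essentially the same route as the paper's own proof: both derive the two-sided bounds $|J-J_h|\leq \frac{\Cquad\cdot h^\ell}{2\cdot\omega}$ for each of $(y,u)$ and $(\tilde{y},\tilde{u})$ from the quadrature consistency condition~\eqref{eqn:quadcond}, and then chain them with the hypothesis~\eqref{eqn:lem:errJ:0} so that the two half-contributions sum to $\frac{\Cquad\cdot h^\ell}{\omega}$. Your explicit remark that the $M$ and $\|b\|_2^2$ terms cancel in $J-J_h$ is a small bookkeeping detail the paper leaves implicit, but the argument is identical.
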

\noindent
\underline{Proof:} in Section~\ref{sec:proof:lem:errJ}. \qed\\[6pt]

We also make use of the following lemma, which uses $\hyh,\huh$ from~\eqref{eqn:suitableMinimizer} and the constant $\lambda$ from \AssumpIII.\\[-15pt]
\begin{lem}\label{thm:forwardstab}
	\textcolor{orange}{\dunderline[-1pt]{2pt}{\textcolor{black}{If \AssumpIII holds and $h$ is sufficiently small}}} then there exists a finite constant $C_r \in \R_{>0}$ such that:
	\begin{align}
	r^2(\hyh,\huh) &\leq C_r \cdot h^{2 \cdot \lambda\cdot\eta}\,.\label{eqn:forwardstab2}
	\end{align}
\end{lem}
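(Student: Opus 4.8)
The plan is to exploit the feasibility of the exact minimizer: since $(y^\star,u^\star)$ solves \eqref{eqn:OCP} it satisfies the boundary conditions $b\big(y^\star(0),y^\star(T)\big)=\bO$ and the merged dynamics $f\big(\dot{y}^\star(t),y^\star(t),u^\star(t),t\big)=\bO$ for almost every $t$, so the residual functional $r$ from \eqref{eqn:IntRes} vanishes identically at $(y^\star,u^\star)$. This lets me rewrite $r^2(\hyh,\huh)$ as a pure \emph{difference} of residuals between $(\hyh,\huh)$ and $(y^\star,u^\star)$, each factor of which is controlled by the Fundamental Lemma~\ref{lem:fundamental} or by the approximation bound \eqref{eqn:DefHatyu}. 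Throughout I would assume \AssumpIII and $h$ sufficiently small in the sense of \eqref{eqn:hSuffSmall}, which is exactly the hypothesis Lemma~\ref{lem:fundamental} demands; this is the origin of the orange-underlined requirement.

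First I would expand $r^2(\hyh,\huh)$ via \eqref{eqn:IntRes} into the integral term $\int_0^T \|f\big(\dhyh(t),\hyh(t),\huh(t),t\big)\|_2^2\,\mathrm{d}t$ and the boundary term $\|b\big(\hyh(0),\hyh(T)\big)\|_2^2$. The boundary term is immediately bounded by $C_b\,h^{2\lambda\eta}$ using \eqref{eqn:A3:b:bound:h}. The integral term carries the one genuine manipulation. Using the merged notation \eqref{eqn:def:f} and feasibility, the first block of $f\big(\dhyh,\hyh,\huh,t\big)$ equals $\big[f_1(\hyh,\huh,t)-f_1(y^\star,u^\star,t)\big]-\big[\dhyh-\dot{y}^\star\big]$ and the second block equals $f_2(\hyh,\huh,t)-f_2(y^\star,u^\star,t)$. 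Applying $\|a-b\|_2^2\le 2\|a\|_2^2+2\|b\|_2^2$ to peel off the derivative discrepancy $\dhyh-\dot{y}^\star$ from the $f_1,f_2$ differences, the pointwise squared norm is bounded by $2C_f\,h^{2\lambda\eta}+2\|\dhyh(t)-\dot{y}^\star(t)\|_2^2$, where the first summand comes from \eqref{eqn:A3:f:bound:h}.

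Integrating over $[0,T]$ turns the derivative piece into $2\|\dot{y}^\star-\dhyh\|_{L^2}^2$, which the approximation bound \eqref{eqn:DefHatyu} together with the definition of the $\cX$-norm in \eqref{eqn:normX} controls by $2C_\eta^2\,h^{2\eta}$, since $\|\dot{y}^\star-\dhyh\|_{L^2}$ is one of the two additive parts of $\|(y^\star,u^\star)-(\hyh,\huh)\|_\cX$. Collecting the three contributions $C_b\,h^{2\lambda\eta}$, $2C_f T\,h^{2\lambda\eta}$, and $2C_\eta^2\,h^{2\eta}$, and using $0<\lambda\le1$ so that $2\lambda\eta\le 2\eta$ and hence $h^{2\eta}\le h^{2\lambda\eta}$ for $h\le1$, all three collapse into a single $h^{2\lambda\eta}$ term. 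This yields \eqref{eqn:forwardstab2} with $C_r:=C_b+2C_f\,T+2C_\eta^2$.

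The main obstacle I anticipate is the bookkeeping of the derivative term: the Fundamental Lemma bounds only the $f_1,f_2$ evaluations, which do not involve $\dot{y}$, whereas the residual $f$ in \eqref{eqn:def:f} does. One must therefore separate $\dhyh-\dot{y}^\star$ cleanly and recognize that its bound is supplied precisely by the $L^2$-component of the $\cX$-norm in \eqref{eqn:normX}, rather than by \AssumpIII. Once this separation is made, everything else is routine bounding, provided one keeps in mind that the exact residual is identically zero by feasibility.
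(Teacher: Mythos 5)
Your proposal is correct and follows essentially the same route as the paper: the paper also subtracts the identically-zero exact residual, splits off the derivative discrepancy $\dhyh-\dot{y}^\star$ from the $f_1,f_2$ differences (via its Corollary~\ref{thm:Cauch}, with constant $3$ where you use $2$), bounds the latter with the Fundamental Lemma and the former with the $L^2$-component of \eqref{eqn:DefHatyu}, and handles the boundary term via \eqref{eqn:A3:b:bound:h}. The only cosmetic difference is how $h^{2\eta}$ is absorbed into $h^{2\lambda\eta}$ — you assume $h\le 1$, while the paper splits the power and uses \eqref{eqn:hSuffSmall} — which is immaterial under the ``$h$ sufficiently small'' hypothesis.
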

\noindent
\underline{Proof:} in Section~\ref{sec:thm:forwardstab}. \qed\\[3pt]

We now present the actual proof of Theorem~\ref{thm:eps}.\\[-25pt]
\subsubsection{Structure of the Proof}
The proof consists of two parts: i) In Section~\ref{sec:thm:eps:1} we use Lemma~\ref{lem:fundamental} and Lemma~\ref{thm:forwardstab} to develop a bound for $J(\hyh,\huh)$. ii) In Section~\ref{sec:thm:eps:2} we use Lemma~\ref{lem:errJ} to bound $J(y^\star_h,u^\star_h)$.

\subsection{Bound for $J(\hyh,\huh)$}\label{sec:thm:eps:1}
Summation of \eqref{eqn:A3:M:bound:h} and \eqref{eqn:forwardstab2} yields:
\medmuskip=-0.1mu
\thickmuskip=-0.1mu
\begin{align*}
&&\pig| M\big(\hyh(0),\hyh(T)\big) - M\big(y^\star(0),y^\star(T)\big) \pig| + \frac{r^2(\hyh,\huh)}{2 \cdot \omega} &\leq C_M \cdot h^{\lambda \cdot \eta} + \frac{C_r \cdot h^{2 \cdot \lambda \cdot \eta}}{2 \cdot \omega}\\
\Rightarrow&& M\big(\hyh(0),\hyh(T)\big) + \frac{r^2(\hyh,\huh)}{2 \cdot \omega} &\leq M\big(y^\star(0),y^\star(T)\big) + C_M \cdot h^{\lambda \cdot \eta} + \frac{C_r \cdot h^{2 \cdot \lambda \cdot \eta}}{2 \cdot \omega}\\[10pt]
\Leftrightarrow&& \textcolor{blue}{J(\hyh,\huh)}& \leq M\big(y^\star(0),y^\star(T)\big) + C_M \cdot h^{\lambda \cdot \eta} + \frac{C_r \cdot h^{2 \cdot \lambda \cdot \eta}}{2 \cdot \omega}\tageq\label{eqn:bound:Jhyh}
\end{align*}
\medmuskip=3mu
\thickmuskip=3mu

\subsection{Bound for $J(y^\star_h,u_h^\star)$}\label{sec:thm:eps:2}
The bound \eqref{eqn:suitableMinimizer} can be written compactly by using $J_h$ from \eqref{eqn:def:Jh}:
\begin{align*}
J_h(y^\star_h,u^\star_h) \leq J_h(\hyh,\huh)\,.
\end{align*}
Application of Lemma~\ref{lem:errJ} thereon yields:
\begin{align*}
J(y^\star_h,u^\star_h) \leq \textcolor{blue}{J(\hyh,\huh)} + \frac{C_\ell \cdot h^\ell}{\omega}\,.\tageq\label{eqn:bound:Jhstar}
\end{align*}
Inserting \eqref{eqn:bound:Jhyh} into \eqref{eqn:bound:Jhstar} for $\textcolor{blue}{J(\hyh,\huh)}$ yields:
\begin{align*}
&&J(y^\star_h,u^\star_h) &\leq M\big(y^\star(0),y^\star(T)\big) + C_M \cdot h^{\lambda \cdot \eta} + \frac{C_r \cdot h^{2 \cdot \lambda \cdot \eta}}{2 \cdot \omega} + \frac{C_\ell \cdot h^\ell}{\omega}\\
\Leftrightarrow&& M\big(y^\star_h(0),y^\star_h(T)\big) +\frac{r^2(y^\star_h,u^\star_h)}{2 \cdot \omega} &\leq M\big(y^\star(0),y^\star(T)\big) + C_M \cdot h^{\lambda \cdot \eta} + \frac{C_r \cdot h^{2 \cdot \lambda \cdot \eta}}{2 \cdot \omega} + \frac{C_\ell \cdot h^\ell}{\omega}\,.
\end{align*}
This shows proposition~\eqref{eqn:thm:eps:prop} for $C_\chi = C_M + C_r + C_\ell$.

\section{Proof of Lemma~\ref{thm:Parseval}}\label{sec:thm:Parseval}
We may write~\eqref{eqn:cosine_g} as
\begin{align}
\bF \cdot \bbeta = \bg\,, \label{eqn:gFc}
\end{align}
where $\bF \in \R^{(p+1)\times (p+1)}$ is the symmetric matrix with entries $F_{k,j} = \cos(j \cdot \xi_k)$ for $j,k=0,\dots,p$, where $\xi_k$ from~\eqref{eqn:def:xi}. Also, define $\bD = \opdiag(\frac{1}{\sqrt{2}},1,1,\dots,1,1,\frac{1}{\sqrt{2}}) \in \R^{(p+1)\times (p+1)}$. As known from discrete cosine transform of first kind \cite{dct}, the matrix $\bG := \sqrt{\frac{2}{p}} \cdot \bD \cdot \bF \cdot \bD$ is orthogonal. Multiplying $\textcolor{blue}{\sqrt{\frac{2}{p}} \cdot \bD}$ from the left onto \eqref{eqn:gFc} and inserting an \textcolor{red}{identity matrix} yields:
\begin{align*}
\underbrace{\textcolor{blue}{\sqrt{\frac{2}{p}} {}\cdot{}  \bD {}\cdot{}} \bF \cdot \textcolor{red}{\bD}}_{\bG} \textcolor{red}{{}\cdot{} \bD\inv {}\cdot{}} \bbeta =	\textcolor{blue}{\sqrt{\frac{2}{p}} {}\cdot{} \bD {}\cdot{}} \bg
\end{align*}
Using orthogonality of $\bG$, the following relation is implied for the norms:
\begin{align*}
\|\bD\inv {}\cdot{} \bbeta\|_2 = \left\|\sqrt{\frac{2}{p}} {}\cdot{} \bD {}\cdot{} \bg\right\|_2\tageq\label{eqn:Orthonorm}
\end{align*}
Use submultiplicativity, i.e.:
\begin{subequations}
	\begin{align*}
	&&\left\|\sqrt{\frac{2}{p}} {}\cdot{} \bD {}\cdot{} \bg\right\|_2 &\leq \sqrt{\frac{2}{p}} \cdot \|\bD\|_2 \cdot \|\bg\|_2\,,\tageq\label{eqn:submul:1}\\[10pt]
	&&\|\bbeta\|_2 = \|\bD\cdot\bD\inv\cdot\bbeta\|_2&\leq \|\bD\|_2 \cdot \|\bD\inv\cdot\bbeta\|_2\\
	\Leftrightarrow&& \frac{1}{\|\bD\|_2}\cdot{\|\bbeta\|_2} &\leq \|\bD\inv\cdot\bbeta\|_2\,.\tageq\label{eqn:submul:2}
	\end{align*}
\end{subequations}
Using \eqref{eqn:submul:2} for the left-hand side of \eqref{eqn:Orthonorm} and \eqref{eqn:submul:1} for the right-hand side of \eqref{eqn:Orthonorm}, we obtain:
\begin{align*}
\frac{1}{\|\bD\|_2} {}\cdot{} \|\bbeta\|_2 \leq \sqrt{\frac{2}{p}} {}\cdot{} \|\bD\|_2 {}\cdot{} \|\bg\|_2 
\end{align*}
The proposition follows from $\|\bD\|_2=1$ and bounding $\sqrt{2/p} \leq \frac{2}{\sqrt{p+1}} \quad \forall\ p \in \N$. A variant of Lemma~\ref{thm:Parseval} is known as Parseval's identity. This variant is subject to different scalings, notations, and not for the cosine but for the Fourier transformation.

\section{Proof of Lemma~\ref{lem:errJ}}\label{sec:proof:lem:errJ}
From~\eqref{eqn:quadcond} follows:
\begin{subequations}
	\begin{align}
	|J_h(y,u) - J(y,u) | &\leq \frac{\Cquad \cdot h^\ell}{2 \cdot \omega}\label{eqn:lem:errJ:1}\\
	|J_h(\tilde{y},\tilde{u}) - J(\tilde{y},\tilde{u}) | &\leq \frac{\Cquad \cdot h^\ell}{2 \cdot \omega}\label{eqn:lem:errJ:2}\,.
	\end{align}
\end{subequations}
Below: in \eqref{eqn:lem:errJ:3} we use \eqref{eqn:lem:errJ:1} to bound $J(y,u)$ from above; in \eqref{eqn:lem:errJ:4} we restate \eqref{eqn:lem:errJ:0}; and in \eqref{eqn:lem:errJ:5} we use \eqref{eqn:lem:errJ:2} to bound $J_h(\tilde{y},\tilde{u})$ from above:
\newcommand{\Jyt}{J(\tilde{y},\tilde{u})\xspace}
\newcommand{\Jhyt}{J_h(\tilde{y},\tilde{u})\xspace}
\newcommand{\Jy}{J({y},{u})\xspace}
\newcommand{\Jhy}{J_h({y},{u})\xspace}
\newcommand{\Coquad}{\frac{\Cquad\cdot h^\ell}{2 \cdot \omega}}
\begin{subequations}
	\begin{align*}
	&      			&&\Jy 	&& \leq  &&\Jhy && 		&& 		 &&    &&      &&        &&{}+\quad{}{}\Coquad 	\tageq\label{eqn:lem:errJ:3}\\
	&\ \land 		&& 		&& 		 &&\Jhy && \leq && \Jhyt &&    &&	   &&        &&        	\tageq\label{eqn:lem:errJ:4}\\
	&\ \land 		&&      &&       &&     &&      && \Jhyt &&\leq&& \Jyt &&+\quad\Coquad&&	\tageq\label{eqn:lem:errJ:5} \\[10pt] \hline\hline\\[-1.5ex]
	&\Rightarrow 	&&\Jy   &&       && 	&&      &&       &&\leq&& \Jyt &&+\quad 2\quad \cdot &&\phantom{{}+\quad{}}{}\Coquad\qquad
	\tageq\label{eqn:lem:errJ:6}
	\end{align*}
\end{subequations}
The proposition~\eqref{eqn:lem:errJ:6} is attained by applying \eqref{eqn:lem:errJ:3}, \eqref{eqn:lem:errJ:4}, \eqref{eqn:lem:errJ:5} in sequence.

\section{Proof of Lemma~\ref{thm:forwardstab}}\label{sec:thm:forwardstab}
Our proof makes use of the following lemma. The function~$f$ therein is defined in~\eqref{eqn:def:f}.\\[-15pt]
\begin{lem}\label{thm:w2bound}
	\textcolor{orange}{\dunderline[-1pt]{2pt}{\textcolor{black}{If \AssumpIII holds and $h$ is sufficiently small}}}
	then there exists a constat $C_{T,\lambda,\epsilon}\in \R_{>0}$ such that:
	\begin{align*}
	\int_0^T \left\|f\big(\dhyh(t),\hyh(t),\huh(t),t\big)\right\|_2^2\,\mathrm{d}t \leq C_{T,\lambda,\epsilon} \cdot h^{2 \cdot \lambda \cdot \eta} \tageq\label{eqn:lem:w2bound}
	\end{align*}
\end{lem}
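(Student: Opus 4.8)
The plan is to exploit the feasibility of the exact minimizer $(y^\star,u^\star)$ as an anchor. Since $(y^\star,u^\star)$ solves \eqref{eqn:OCP} exactly, it satisfies $f_1\big(y^\star(t),u^\star(t),t\big)=\dot{y}^\star(t)$ and $f_2\big(y^\star(t),u^\star(t),t\big)=\bO$ for almost every $t$, so that $f\big(\dot{y}^\star(t),y^\star(t),u^\star(t),t\big)=\bO$ by the definition~\eqref{eqn:def:f}. First I would subtract this vanishing quantity from the integrand, writing $\|f(\dhyh,\hyh,\huh,t)\|_2^2$ as the squared norm of $f(\dhyh,\hyh,\huh,t)-f(\dot{y}^\star,y^\star,u^\star,t)$, and then unpack the block structure of $f$.

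Next, using the definition~\eqref{eqn:def:f}, the first block of this difference is $\big(f_1(\hyh,\huh,t)-f_1(y^\star,u^\star,t)\big)+\big(\dot{y}^\star(t)-\dhyh(t)\big)$ and the second block is $f_2(\hyh,\huh,t)-f_2(y^\star,u^\star,t)$. I would apply the elementary bound $\|a+b\|_2^2\leq 2\|a\|_2^2+2\|b\|_2^2$ to the first block so as to separate the state-derivative discrepancy $\dot{y}^\star-\dhyh$ from the constraint-function discrepancies. The two constraint-function discrepancy terms are bounded pointwise, at rate $h^{2\lambda\eta}$, directly by the Fundamental Lemma~\ref{lem:fundamental}, specifically~\eqref{eqn:A3:f:bound:h} (each individual block is dominated by the joint norm appearing there). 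This leaves only $\|\dot{y}^\star(t)-\dhyh(t)\|_2^2$ to control after integration.

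Integrating over $[0,T]$, the two Fundamental-Lemma contributions yield a term of order $T\cdot h^{2\lambda\eta}$, while the derivative term integrates to $\|\dot{y}^\star-\dhyh\|_{L^2}^2$. This last quantity is bounded using the approximation property~\eqref{eqn:DefHatyu}: since $\|\dot{y}\|_{L^2}$ is a summand of the norm~\eqref{eqn:normX}, it follows that $\|\dot{y}^\star-\dhyh\|_{L^2}\leq\|(y^\star,u^\star)-(\hyh,\huh)\|_\cX\leq C_\eta\cdot h^\eta$, hence $\|\dot{y}^\star-\dhyh\|_{L^2}^2\leq C_\eta^2\cdot h^{2\eta}$. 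Collecting terms gives a bound of the form $\mathrm{const}\cdot h^{2\lambda\eta}+\mathrm{const}\cdot h^{2\eta}$, from which the constant $C_{T,\lambda,\epsilon}$ is read off.

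The main obstacle, though a mild one, is the mismatch of convergence rates: the constraint-function terms decay only at the H\"older rate $h^{2\lambda\eta}$, whereas the derivative-approximation term decays at the faster rate $h^{2\eta}$. These are reconciled by invoking $\lambda\leq 1$ from~\AssumpIII, so that $h^{2\eta}\leq h^{2\lambda\eta}$ once $h\leq h_0\leq 1$, making $h^{2\lambda\eta}$ the dominant order. A secondary point of care is that the pointwise H\"older bounds of Lemma~\ref{lem:fundamental} become available only once $h$ is taken sufficiently small, so that the locality conditions of~\AssumpIII hold for $\hyh,\huh$; this is precisely the orange-underlined hypothesis, which therefore propagates into the statement of the present lemma.
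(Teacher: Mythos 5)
Your proposal is correct and follows essentially the same route as the paper's own proof: anchor on $f(\dot{y}^\star,y^\star,u^\star,t)=\bO$, split the first block into the derivative discrepancy plus the $f_1$-discrepancy, bound the constraint-function blocks pointwise via \eqref{eqn:A3:f:bound:h}, integrate, and control $\|\dot{y}^\star-\dhyh\|_{L^2}^2$ through \eqref{eqn:DefHatyu}. The only cosmetic differences are the constant in the norm-splitting inequality (the paper uses a factor-$3$ corollary for the stacked vector) and the reconciliation of the rates $h^{2\eta}$ versus $h^{2\lambda\eta}$, which the paper handles by writing $h^{2\eta}=h^{2(1-\lambda)\eta}\cdot h^{2\lambda\eta}$ and absorbing the first factor into the constant via \eqref{eqn:hSuffSmall}, while you invoke $h\leq 1$ — both are valid under the sufficiently-small-$h$ hypothesis.
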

\noindent
\underline{Proof:} in Section~\ref{sec:thm:w2bound}. \qed

We now prove Lemma~\ref{thm:forwardstab}:
\begin{align*}
r^2(y,u) 
&= \int_0^T \pig\|f\big(\dot{y}(t),y(t),u(t),t\big)\pig\|_2^2\,\mathrm{d}t + \pig\|b\big(y(0),y(T)\big)\pig\|_2^2 &&\text{from definition~\eqref{eqn:IntRes}}\\
&\leq C_{T,\lambda,\epsilon} \cdot h^{2 \cdot \lambda \cdot \eta} + C_b \cdot h^{2 \cdot \lambda \cdot \eta} &&\text{use \eqref{eqn:lem:w2bound} and \eqref{eqn:A3:b:bound:h}}
\end{align*}
This shows the proposition~\eqref{eqn:forwardstab2} for $C_r = C_{T,\lambda,\epsilon} + C_b$.

\section{Proof of Lemma~\ref{thm:w2bound}}\label{sec:thm:w2bound}
We use the following intermediate result.
\begin{cor}\label{thm:Cauch}
	Let ${v_1},{v_2} \in \R^{m}$ and $v_3 \in \R^{n}$ for some $m,n \in \N$. Then:
	\begin{align*}
	\left\|\begin{bmatrix} \textcolor{red}{v_1}+{}{}\textcolor{violet}{v_2}\\ \phantom{v_1+{}}{}\textcolor{blue}{v_3} \end{bmatrix}\right\|_2^2 \leq 3 \cdot \|v_1\|_2^2 + 3 \cdot \left\|\begin{bmatrix}
	v_2\\
	v_3
	\end{bmatrix}\right\|_2^2
	\end{align*}
\end{cor}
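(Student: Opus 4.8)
The plan is to reduce this block-vector inequality to the elementary Euclidean fact that $\|a+b\|_2^2 \le 2\,\|a\|_2^2 + 2\,\|b\|_2^2$, and then weaken the constant $2$ to the stated $3$. First I would split the stacked vector on the left-hand side so as to isolate $v_1$ from the pair $(v_2,v_3)$:
\begin{align*}
\begin{bmatrix} v_1+v_2 \\ v_3 \end{bmatrix} = \begin{bmatrix} v_1 \\ \bO \end{bmatrix} + \begin{bmatrix} v_2 \\ v_3 \end{bmatrix}\,.
\end{align*}

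Next I would invoke the two-term bound $\|a+b\|_2^2 \le 2\,\|a\|_2^2 + 2\,\|b\|_2^2$, which holds for any two vectors $a,b$ in a common Euclidean space: expanding $\|a+b\|_2^2 = \|a\|_2^2 + 2\,\langle a,b\rangle + \|b\|_2^2$ and bounding the cross term by $2\,\langle a,b\rangle \le 2\,\|a\|_2\,\|b\|_2 \le \|a\|_2^2 + \|b\|_2^2$ (Cauchy--Schwarz followed by Young's inequality, whence the corollary's name) gives the claim. Applying this with $a=[v_1;\bO]$ and $b=[v_2;v_3]$, and using $\|[v_1;\bO]\|_2^2 = \|v_1\|_2^2$, yields
\begin{align*}
\left\|\begin{bmatrix} v_1+v_2 \\ v_3 \end{bmatrix}\right\|_2^2 \le 2\,\|v_1\|_2^2 + 2\,\left\|\begin{bmatrix} v_2 \\ v_3 \end{bmatrix}\right\|_2^2\,.
\end{align*}
Finally, since both terms on the right are nonnegative, replacing the constant $2$ by $3$ is immediate and delivers the stated inequality.

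The main obstacle here is essentially nonexistent: the result is a routine consequence of Cauchy--Schwarz, and the constant $3$ is not even tight (the sharper constant $2$ already suffices for the term multiplying $\|v_1\|_2^2$). The only point requiring care is the block-norm bookkeeping, namely that $\|[v_2;v_3]\|_2^2 = \|v_2\|_2^2 + \|v_3\|_2^2$, so that the intermediate estimate aligns exactly with the right-hand side of the statement rather than with a decoupled sum of three squared norms. The loosened constant $3$ is presumably chosen so that the corollary reads uniformly as a clean three-term splitting bound, matching how it is later applied (via $f = f_1 - \dot{y}$ stacked with $f_2$) in the proof of Lemma~\ref{thm:w2bound}.
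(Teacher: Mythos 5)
Your proof is correct and rests on the same elementary mechanism as the paper's: expanding the square and controlling the cross term $v_1\t \cdot v_2$ via Cauchy--Schwarz and Young's inequality (the paper does this inline, while you package it as the standard bound $\|a+b\|_2^2 \leq 2\|a\|_2^2 + 2\|b\|_2^2$ applied to the block decomposition $[v_1;\bO]+[v_2;v_3]$). Your route even yields the sharper constant $2$ before deliberately relaxing to $3$, whereas the paper's more generous bounding of the cross term lands at $3$ directly; the difference is cosmetic.
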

\noindent
\underline{Proof:}
\begin{align*}
&\left\|\begin{bmatrix} v_1+v_2\\ \phantom{v_1+{}}{}w \end{bmatrix}\right\|_2^2 	&&\\
= 	& \begin{bmatrix} v_1+v_2\\ \phantom{v_1+{}}{}w \end{bmatrix}\t \cdot \begin{bmatrix} v_1+v_2\\ \phantom{v_1+{}}{}w \end{bmatrix} &&\text{write out definition}\\
= 	& \|v_1\|_2^2 + \|v_2\|_2^2 + \|v_3\|_2^2 + v_1\t \cdot v_2 + v_2\t \cdot v_1&&\text{multiply out}\\
\leq& \|v_1\|_2^2 + \|v_2\|_2^2 + \|v_3\|_2^2 + 2 \cdot |v_1\t \cdot v_2|&&\text{bound cross term by abs}\\
\leq& \|v_1\|_2^2 + \|v_2\|_2^2 + \|v_3\|_2^2 + 2 \cdot \|v_1\|_2 \cdot \|v_2\|_2&&\text{apply Cauchy-Schwarz}\\
\leq& \|v_1\|_2^2 + \|v_2\|_2^2 + \|v_3\|_2^2 + 2 \cdot \big( \|v_1\|_2^2 + \|v_2\|_2^2 \big)\\
\leq& 3 \cdot \|v_1\|_2^2 + 3 \cdot \|v_2\|_2^2 + 3 \cdot \|v_3\|_2^2 	&&\text{bound generously} 	\tag*{$\square$}
\end{align*}

Now that we have finished the proof of the intermediate result, we come to the actual proof of Lemma~\ref{thm:w2bound}. It works by first bounding the integrand and then integrating over the bound.

\subsubsection{Bounding the Integrand}\label{sec:proof:thm:w2bound:AlgebraicBound}
Recall $f$ from~\eqref{eqn:def:f}. The exact solution satisfies the constraints exactly, hence
\begin{align}
f\big(\dot{y}^\star(t),y^\star(t),u^\star(t),t\big)=\bO\quad \tforall\ t \in [0,T]\,.\label{eqn:proof:w2bound:f0}
\end{align}
Thus, it follows $\tforall\, t \in [0,T]$:
\begin{align*}
& \left\|f\big(\dhyh(t),\hyh(t),\huh(t),t\big)\right\|_2^2\\[15pt]
= 	& \left\|f\big(\dhyh(t),\hyh(t),\huh(t),t\big)-f\big(\dot{y}^\star(t),y^\star(t),u^\star(t),t\big)\right\|_2^2 &&\text{subtract \eqref{eqn:proof:w2bound:f0}}\\[15pt]
= 	&\Bigg\| 
\begin{bmatrix}
f_1\big(\,\hyh(t),\huh(t),t\,\big){}{}-{}\dhyh(t)\\[4pt]
f_2\big(\,\hyh(t),\huh(t),t\,\big){}\phantom{{}-{}\dhyh(t)}\\
\end{bmatrix}
-
\begin{bmatrix}
f_1\big(\,y^\star(t),u^\star(t),t\,\big){}{}-{}\dot{y}^\star(t)\\[4pt]
f_2\big(\,y^\star(t),u^\star(t),t\,\big){}\phantom{{}-{}\dot{y}^\star(t)}\\
\end{bmatrix} 
\Bigg\|_2^2 &&\text{write out}\\[15pt]
=	&\Bigg\| 
\begin{bmatrix}
{}{}\textcolor{red}{\Big(\dot{y}^\star(t) - \dhyh(t) \Big)}+{}{}\textcolor{violet}{\Big(f_1\big(\,\hyh(t),\huh(t),t\,\big) - f_1\big(\,y^\star(t),u^\star(t),t\,\big) \Big)}\\[5pt]
\phantom{\Big( \dot{y}^\star(t) - \dhyh(t) \Big) + {}}{}\textcolor{blue}{\Big(f_2\big(\,\hyh(t),\huh(t),t\,\big) - f_2\big(\,y^\star(t),u^\star(t),t\,\big) \Big)}\\
\end{bmatrix}
\Bigg\|_2^2 &&\text{rearrange}\\[15pt]
\leq 	&3 \cdot \|\dot{y}^\star(t)-\dhyh(t)\|_2^2 + 3 \cdot \left\|\begin{bmatrix}
f_1\big(\hyh(t),\huh(t),t\big) - f_1\big(y^\star(t),u^\star(t),t\big)\\[4pt]
f_2\big(\hyh(t),\huh(t),t\big) - f_2\big(y^\star(t),u^\star(t),t\big)
\end{bmatrix}\right\|_2^2 && \text{use Corollary~\ref{thm:Cauch}}\\[15pt]
\leq & 3 \cdot \|\dot{y}^\star(t)-\dhyh(t)\|_2^2 + 3 \cdot C_f \cdot h^{2 \cdot \lambda \cdot \eta}&& \text{use Lemma~\ref{lem:fundamental}}  \tageq\label{eqn:proof:thm:w2bound:AlgebraicBound2}
\end{align*}

\subsubsection{Integrating over the Bound}
Taking the integral of \eqref{eqn:proof:thm:w2bound:AlgebraicBound2} for $t$ over $[0,T]$ yields:
\medmuskip=0.4mu
\thickmuskip=0.2mu
\begin{align*}
&\int_0^T \pig\|f\big(\dhyh(t),\hyh(t),\huh(t),t\big)\pig\|_2^2 \,\mathrm{d}t\\
&\leq 3 \cdot \int_0^T \|\dot{y}^\star(t)-\dhyh(t)\|_2^2\,\mathrm{d}t + 3 \cdot C_f \cdot \int_0^T h^{2 \cdot \lambda \cdot \eta}\,\mathrm{d}t&&\\
&= 3 \cdot \|\dot{y}^\star-\dhyh\|_{L^2}^2 + 3 \cdot C_f \cdot T \cdot h^{2 \cdot \lambda \cdot \eta} &&\text{use~\eqref{eqn:LpNorm}}\\
&\leq 3 \cdot \|(y^\star,u^\star)-(\hyh,\huh)\|_\cX^2 + 3 \cdot C_f \cdot T \cdot h^{2 \cdot \lambda \cdot \eta}\,. &&\text{use \eqref{eqn:normX}}\\
&\leq 3 \cdot C_\eta^2 \cdot h^{2 \cdot \eta} + 3 \cdot C_f \cdot T \cdot h^{2 \cdot \lambda \cdot \eta}\,. &&\text{use \eqref{eqn:DefHatyu}}\\
&\leq 3 \cdot C_\eta^2 \cdot h^{2 \cdot (1-\lambda) \cdot \eta} \cdot h^{2 \cdot \lambda \cdot \eta} + 3 \cdot C_f \cdot T \cdot h^{2 \cdot \lambda \cdot \eta}\,. &&\text{split power}\\
&\leq 3 \cdot C_\eta^2 \cdot \epsilon \cdot \bigg(\frac{\epsilon}{\sqrt{2\cdot n_y+n_u}}\bigg)^{2 \cdot (1-\lambda)} \cdot h^{2 \cdot \lambda \cdot \eta} + 3 \cdot C_f \cdot T \cdot h^{2 \cdot \lambda \cdot \eta}\,. &&\text{use \eqref{eqn:hSuffSmall}}
\end{align*}
\thinmuskip=3mu
\medmuskip=3mu
\thickmuskip=3mu
This shows proposition~\eqref{eqn:lem:w2bound} for $C_{T,\lambda,\epsilon}= 3 \cdot C_\eta^2 \cdot \Big(\frac{\epsilon}{\sqrt{2\cdot n_y+n_u}}\Big)^{2 \cdot (1-\lambda)} + 3 \cdot C_f \cdot T$.

\section{Proof of Corollary~\ref{cor:interpolationerror}}\label{sec:proof:cor:interpolationerror}

\subsubsection{Structure}
We first show pointwise interpolation errors for $y^\star,\dot{y}^\star,u^\star$. We then show \eqref{eqn:infxh} by integrating over these pointwise error-bounds.

\subsection{Pointwise Errors}
Pick an arbitrary $t \in [0,T]$. Wlog., assume $t \in I_i$. Choose $j \in \lbrace 1,\dots,n_y\rbrace$ arbitrarily.

On $I_i$, interpolate $y_{[j]}^\star$ piecewise linear:
\begin{align}
y_{[j],h}(t) := y_{[j]}^\star(t_i) + \frac{y_{[j]}^\star(t_{i+1})-y_{[j]}^\star(t_i)}{t_{i+1}-t_i} \cdot (t-t_i) \label{eqn:interpolation_y}
\end{align}
Since the mean-value theorem does not work when $y_{[j]}^\star$ has edges\footnote{Example: $y_{[j]}^\star(t)=|t|$, then $y_{[j]}^\star(1)-y_{[j]}^\star(-1)\neq 2 \cdot \dot{y}_{[j]}^\star(\xi)$ for any $\xi \in (-1,1)$.}, we use the following trick with the convex hull:
\begin{align}
\exists\ \check{Y}_{[j]} \in \operatornamewithlimits{conv}_{t \in I_i}\lbrace \dot{y}_{[j]}^\star(t)\rbrace\ :
&& y_{[j]}^\star(t_{i+1}) &= y_{[j]}^\star(t_i) + \check{Y}_{[j]} \cdot (t_{i+1}-t_i) \label{eqn:mvt1}\\
\exists\ \hat{Y}_{[j]} \in \operatornamewithlimits{conv}_{t \in I_i}\lbrace \dot{y}_{[j]}^\star(t)\rbrace\ :
&& y_{[j]}^\star(t) &= y_{[j]}^\star(t_i) + \hat{Y}_{[j]} \cdot (t-t_i) \label{eqn:mvt2}
\end{align}
Hence:
\begin{align*}
|y_{[j]}^\star(t) \textcolor{red}{-y_{[j],h}(t)}|=& \left|y_{[j]}^\star(t) \textcolor{red}{- y_{[j]}^\star(t_i) - \frac{y_{[j]}^\star(t_{i+1})-y_{[j]}^\star(t_i)}{t_{i+1}-t_i} \cdot (t-t_i)}\right| &&\text{insert \eqref{eqn:interpolation_y} (red)}\\
=& \left|y_{[j]}^\star(t) -y_{[j]}^\star(t_i) - \check{Y}_{[j]} \cdot (t-t_i) \right\|_2 && \text{use \eqref{eqn:mvt1}}\\
=& \Big|\underbrace{y_{[j]}^\star(t) -y_{[j]}^\star(t_i) \textcolor{blue}{- \hat{Y}_{[j]}} \cdot (t-t_i)}_{\equiv 0 \text{, due to \eqref{eqn:mvt2}}} + \big(\textcolor{blue}{\hat{Y}_{[j]}} - \check{Y}_{[j]}\big) \cdot (t-t_i) \Big| && \text{add zero (blue)}\\
=& \Big|\big(\hat{Y}_{[j]}-\check{Y}_{[j]}\big) \cdot (t-t_i)\Big| = |t-t_i| \cdot \left|\hat{Y}_{[j]}-\check{Y}_{[j]}\right| 	&&\text{drop zero term}\\
\leq& |t-t_i| \cdot \operatornamewithlimits{max}_{\hat{t},\check{t} \in I_i}|\dot{y}_{[j],h}^\star(\hat{t})-\dot{y}_{[j],h}^\star(\check{t})| && \text{bound diameter}\\
\leq & |t-t_i| \cdot L \cdot |\hat{t}-\check{t}|^\eta  	&&\text{use \eqref{eqn:cor:interpolationerror:y}}\\
\leq &L \cdot h \cdot h^\eta \leq L \cdot T \cdot h^\eta &&\text{use $\check{t},\hat{t},t \in I_i$}
\end{align*}
Likewise, the derivative can be bounded:
\begin{align*}
|\dot{y}_{[j],h}(t)-\dot{y}_{[j]}^\star(t)|=&\left|\frac{y_{[j]}^\star(t_{i+1})-y_{[j]}^\star(t_i)}{t_{i+1}-t_i} - \dot{y}_{[j]}^\star(t)\right| &&\text{differentiate \eqref{eqn:interpolation_y}}\\
=&\left|\check{Y}_{[j]}-\dot{y}_{[j]}^\star(t)\right| \leq \operatornamewithlimits{max}_{\hat{t},\check{t} \in I_i}|\dot{y}_{[j],h}^\star(\hat{t})-\dot{y}^\star_{[j],h}(\check{t})| &&\text{bound distance}\\
\leq & L \cdot |\hat{t}-\check{t}|^\eta \leq L \cdot h^\eta 	&&\text{use \eqref{eqn:cor:interpolationerror:y}}
\end{align*}

Now, choose $j \in \lbrace 1,\dots,n_u\rbrace$ arbitrarily. On $I_i$, interpolate $u_{[j]}^\star$ piecewise constant:
\begin{align}
u_{[j],h}(t) := u^\star_{[j]}(t_i) \label{eqn:interpolation_u}
\end{align}
Hence:
\begin{align*}
|u_{[j]}^\star(t) {-u_{[j],h}(t)}|=&\left|u_{[j]}^\star(t)-u_{[j],h}^\star(t_i)\right|\leq L\cdot |t-t_i|^\eta \leq L \cdot h^\eta\,.
\end{align*}

Finally, we can use $\|y^\star(t)-y_{h}(t)\|_2 \leq \sqrt{n_y} \cdot \max_{1\leq j\leq n_y}|y^\star_{[j]}(t)-y_{[j],h}(t)|$ and likewise for $u^\star-u_h$. In conclusion, the following bounds hold $\forall t \in [0,T]$:
\begin{align*}
\|\dot{y}^\star(t) -\dot{y}_h(t)\|_2 	& \leq \sqrt{n_y} \cdot (1+T) \cdot L \cdot h^\eta\,,\\
\|y^\star(t) -y_h(t)\|_2  				& \leq \sqrt{n_y} \cdot (1+T) \cdot L \cdot h^\eta\,,\\
\|u^\star(t) -u_h(t)\|_2 				& \leq \sqrt{n_u} \cdot (1+T) \cdot L \cdot h^\eta\,.
\end{align*}

\subsection{Integral Errors}
We write out the norms in \eqref{eqn:infxh} and insert the above bounds:
\begin{align*}
&\|(y^\star,u^\star)-(y_h,u_h)\|_\cX + \|(y^\star,u^\star)-(y_h,u_h)\|_{L^\infty}\\
= 	&\sqrt{\int_0^T \left(\|\dot{y}^\star(t) -\dot{y}_h(t)\|_2^2 + \|{y}^\star(t) -y_h(t)\|_2^2 + \|\dot{u}^\star(t) -u_h(t)\|_2^2\right)\,\mathrm{d}t} + \operatornamewithlimits{ess\,sup}_{t \in [0,T]} \left\lbrace \left\|\begin{bmatrix}
y^\star(t)-y_h(t)\\
u^\star(t)-u_h(t)
\end{bmatrix}\right\|_\infty \right\rbrace\\
\leq & \sqrt{\int_0^T 3 \cdot (1+T) \cdot L \cdot h^\eta\,\mathrm{d}t} + \operatornamewithlimits{sup}_{t \in [0,T]}\left\lbrace \|y^\star(t) -y_h(t)\|_2 \right\rbrace + \operatornamewithlimits{sup}_{t \in [0,T]}\left\lbrace \|u^\star(t) -u_h(t)\|_2 \right\rbrace\\
\leq& 3 \cdot (1+T) \cdot T \cdot L \cdot h^\eta + 2 \cdot (1+T) \cdot L \cdot h^\eta
\end{align*}
This yields the proposition \eqref{eqn:infxh} with $C_\eta = (3 \cdot T + 2) \cdot (1+T) \cdot L$.

\chapter{Conclusions on QPM}\label{sec:Conclusions}
We have seen that QPM and DCM are very similar in their computational structure and expense, but very different in terms of robustness: QPM converges in practically relevant instances where DCM does not converge. The theory presented here explains why QPM converges for a large set of possible circumstances.

In contrast to prevailing opinion, QPM does in fact converge regardless of singular arcs or high-index differential-algebraic constraints. This has been confirmed here with both a proof and with numerical experiments. In contrast, DCM does sometimes not converge for singular arc problems, as is known in theory and as has been confirmed here by our numerical experiments.

When faced with having to choose a method for solving optimal control problems, and if you cannot foresee whether any of these problems may have singular arcs or any of the other features where DCM fails, then QPM is a suitable first resort.

\part{Conclusions and Recommendations}
\chapter{Conclusions}\label{sec:ConclusionsTotal}
We have presented three numerical methods: For the mathematical problem class of dynamic optimization we presented a direct quadratic penalty transcription method (QPM) and a direct penalty-barrier transcription scheme. These transcription schemes result in NLPs with large quadratic penalty terms. We presented MALM as a tailored solution algorithm for treating these NLPs more efficiently and reliably. Conclusions on each method have been given in the respective chapters.

\section{Possible Impacts}
In the following we will forecast impacts that the presented research could have on developments in mathematical methods for the numerical solution of optimization problems and of optimal control problems.

\paragraph{Replacement of Collocation Methods}
Penalty-based direct transcription methods are structurally very similar to collocation methods; in part because they generalize them. Yet, they can offer significant improvements to the robustness (i.e., reliability of convergence) and the rate of convergence over collocation methods. Both classes of methods are of the same computational cost, judged from identical sparsity patterns in their reduced Newton equations and similar iteration counts in the NLP solvers. The NLPs of QPM and DCM are constructed in a very similar way, as evident from comparision of Section~\ref{sec:NLP_of_DCM} and Section~\ref{sec:NLP_of_QPM}. QPM's overhead, discussed along Figure~\ref{fig:costdiagram}, results from the fact that QPM uses some additional quadrature points in comparison to the number of collocation points that DCM uses.

\paragraph{Paradigm Shift in NLP Solver Templates}
Current software for the solution of NLP in form~\eqref{eqn:NLP} assumes that the equality-constraints describe a subdomain of feasible points in the search space $\cX$. For example, SNOPT and IPOPT assume that the number of equality constraints do not exceed the dimension of $\cX$. In concrete terms, when $\bx \in \R^{\nx}$ then $c(\bx) \in \R^{\nc}$ where $\nc<\nx$ is assumed by SNOPT. In the light of QPM and in the light of NLP instances that originate from problems where the search space and constraint space is infinite, those concepts seem unjustified.

We believe that there should be three classes of NLP solver templates:
\begin{enumerate}
	\item Small dense: These problems may indeed assume that $\nc<\nx$. Solvers can use active sets, null-space projection, or -- in case the constraints are linear -- orthogonalize the rows of the constraint Jacobian.
	\item Large sparse: These problems may still assume that $\nc<\nx$. Due to practicality, solvers must make compromises when attempting to leverage the techniques from small dense problems.
	\item From discretization: These problems are finite-dimensional discretizations of infinite-dimensional optimization problems. For discretized problems, constraints and search spaces are to be treated as approximate projections from infinite-dimensional spaces; hence concepts such as exactly feasible points and all the geometric ideas of small dense problems must be abandoned, such as, e.g.,: orthogonalizability of constraint Jacobian rows, second-order corrections, exact Lagrange multipliers, dual boundedness, active sets, reduced Hessians, not to forget ``presolve'', and everything relating to non-regularized KKT conditions. Another aspect that is important for problems from discretization is an interface for the supply of quadrature weights to the NLP solver. In particular, we wish to supply barrier weights for the integral log-barrier that is inherited in primal-dual IPM, as this can enable mesh-independent rates of convergence.
\end{enumerate}
For the last class there virtually exists no solid open-source software project. In \cite{neuenhofen2017stable} we had already developed a numerically stable interior-point method that does not consider equations $\|\bA \cdot \bx - \bb\|_2$ as equality constraints as a description of a geometric subdomain but instead as a measure that shall be bounded by another value that is implicitly given from the problem instance. This is more generic and in alignment with what we need for discretized infinite-dimensional optimization problems.

A last aspect, that will also be critical for the practical utility of NLP solvers for the third category, is a suitable standard interface for continuous-level information. Particularly for direct transcription applications for the optimal control of PDE, it is necessary to define a suitable preconditioner interface. This interface should forsee NLP iterations on various levels of the discretization (nonlinear multi-grid method) and different strategies of preconditioning (null-space preconditioners, constraint-space preconditioners, domain decomposition, etc.). Each different strategy requires a different interweaving between the preconditioner and the NLP solver iteration.

\section{Recommendations for Future Research}
We suggests the following possible directions.

\paragraph{Mesh Refinement}
We have not presented mesh refinement strategies for QPM. This should work in an identical fashion to mesh refinement for DCM. Notably, in QPM we do not only have the mesh that we can adapt but we can also choose $\omega$ of different magnitude on different segments of the mesh. For instance, when solving on $[0,T]=[0,2]$ and finding larger errors over the sub-domain $[1,2]$ then it makes sense to choose $\omega(t=1.5)<\omega(t=0.5)$.

\paragraph{NLP Software for Problems from Discretization}
In the present landscape, there is a jungle of proprietary software for LP, QP, convex QP, and NLP. We are not aware of solvers for problems from discretization, according to our standards. Such a method would offer a bias-parameter like $\omega$ and interfaces for matrices $\bS$, such that $\bx\t\cdot\bS\cdot\bx=\|x\|_\cX^2$), $\bW$ of constraint quadrature weights, and $\bG$, such that $-\be\t\cdot\bG\cdot\log(\bx) = -\int_0^T \log x(t)\,\mathrm{d}t$. This results in different formulas for the central path, the primal-dual equations, the convergence complexity, solution accuracy, numerical stability, and practical performance.

\paragraph{GPU Parallelism}
We have not yet implemented optimal control solvers on GPU. Because the computational step (N.2) in our presented NLP solver is massively parallel, there is huge potential for GPU-accelerated versions of QPM. The challenge is with suitable GPU hardware. Since the matrices involved are sparse, the compute intensity is relatively low. This implies that GPU-internal RAM and PCIe bus form a computational bottleneck.


\part{Appendices}
\addcontentsline{toc}{part}{Appendices}%
\appendix

	\section{Lebesgue Equivalence for Polynomials}\label{sec:Appendix_LebesgueIdentity}
Let $T:=(a,b) \in \cT_h$ and $p \in \N_0$. We show that
$$ 	\|\beta\cdot u\|_{L^\infty(T)} \leq \frac{p+1}{\sqrt{|T|}} \cdot \|\beta \cdot u\|_{L^2(T)} \quad \forall u \in \cP_p(T),\forall \beta \in \R. $$

Choose $u \in \cP_p(T)$ arbitrary. Since $\|\beta\cdot u \|_{L^k(T)} = |\beta| \cdot \|u\|_{L^k(T)}$ holds for both $k \in \lbrace 2,\infty\rbrace$, and for all $\beta\in\R$, w.l.o.g.\ let $\|u\|_{L^\infty(T)}=1$.
Since $\operatorname{sgn}(\beta)$ is arbitrary, w.l.o.g.\ let $u(\hat{t})=1$ for some $\hat{t} \in \overline{T}$.
Define $T_L:=[a,\hat{t}]$, $T_R:=[\hat{t},b]$, $\hat{\cP}_p:=\cP_p(T_L)\cap\cP_p(T_R)\cap\cC^0(T)$,
and
$ 	\hat{u} := \operatornamewithlimits{arg\,min}_{v \in \hat{\cP}_p}\big\lbrace \|v\|_{L^2(T)}\ \big\vert\ v(\hat{t})=1 \big\rbrace. 	$
Since $\cP_p \subset \hat{\cP}_p$, it holds $\|u\|_{L^2(T)}\geq \|\hat{u}\|_{L^2(T)}$.
Figure~\ref{fig:polyplothur} illustrates $u,\hat{u}$ for $p=8$.

\begin{figure}[tb]
	\centering
	\includegraphics[width=0.6\columnwidth]{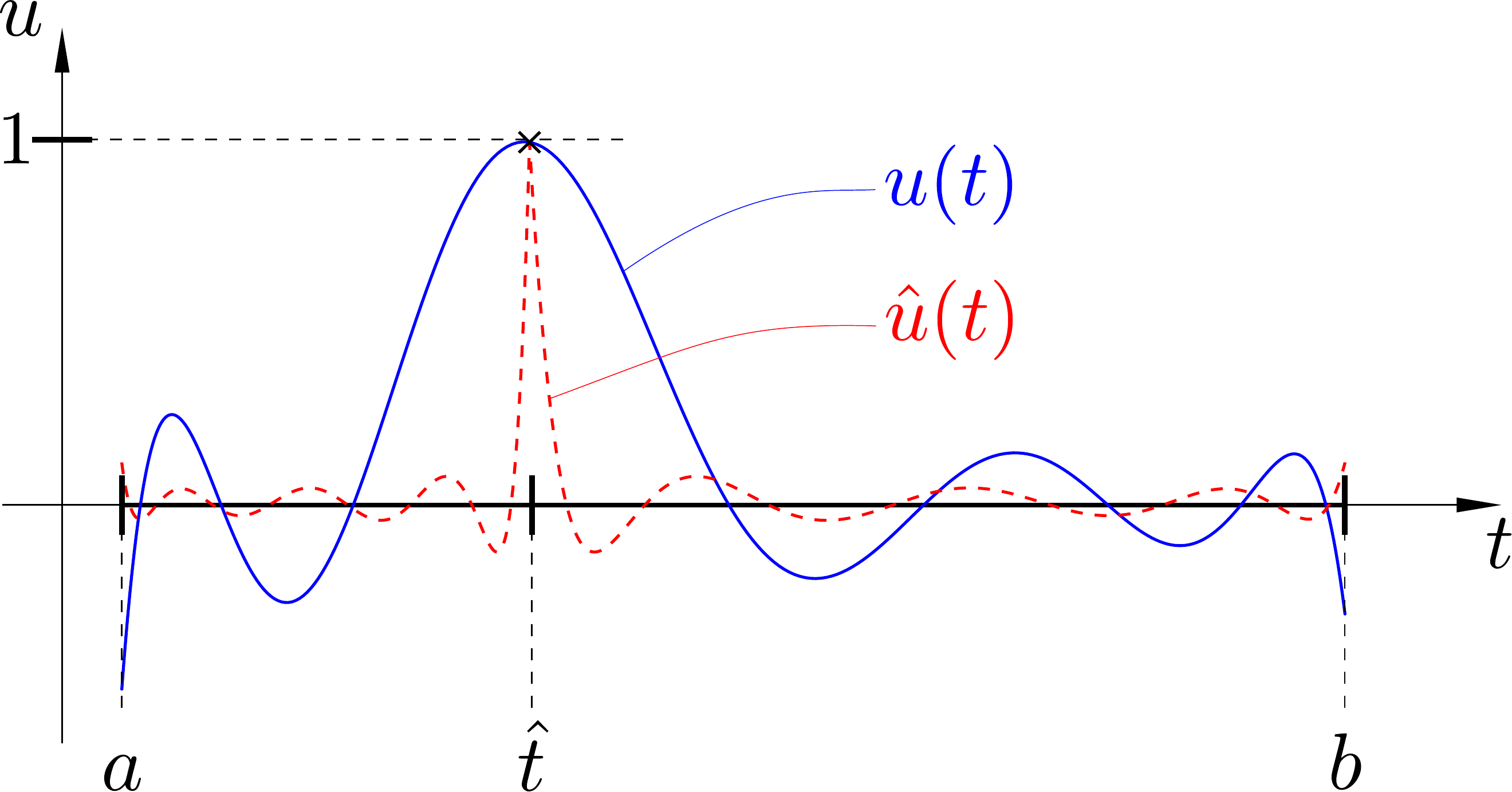}
	\caption{Polynomial $u$ and piecewise polynomial $\hat{u}$ over $T$.}
	\label{fig:polyplothur}
\end{figure}

\newcommand{\hur}{{\hat{u}_\text{ref}}\xspace}
\newcommand{\Tr}{{T_\text{ref}}\xspace}
Use $\|\hat{u}\|^2_{L^2(T)}=\int_a^b \hat{u}(t)^2\mathrm{d}t=(b-a)/2\cdot\int_{-1}^1 \hur(t)^2\mathrm{d}t=\frac{|T|}{2} \cdot \|\hur\|^2_{L^2(\Tr)}$, where $\hur$ is $\hat{u}$ linearly transformed from $T$ onto $\Tr:=(-1,1)$.
Since $\|\hat{u}\|_{L^2(T)}$ is invariant under changes of $\hat{t}$ because $\hat{u}(\hat{t}+(b-\hat{t})\cdot\xi)=\hat{u}(\hat{t}+(\hat{t}-a)\cdot\xi)$ $\forall \xi \in [0,1]$, w.l.o.g.\ we can assume for $\hat{u}$ that $\hat{t}=b$ and hence $\hur(1)=1$.
Since minimizing the $L^2(\Tr)$-norm, $\hur$ solves
\begin{equation}
\label{eqn:CQP_hur}
\operatornamewithlimits{min}_{v \in \cP_p(\Tr)}\ 1/2 \cdot \int_\Tr v(t)^2\,\mathrm{d}t\text{ subject to } v(1)=1.
\end{equation}

We represent $\hur=\sum_{j=0}^p \alpha_j \cdot \phi_j$, where $\phi_j$ is the $j^\text{th}$ Legendre polynomial. These satisfy \cite{refLegendre}: $\phi_j(1)=1\quad\forall j\in \N_0,\quad \int_\Tr \phi_j(t)\cdot\phi_k(t)\mathrm{d}t=\delta_{j,k} \cdot \gamma_j\quad\forall j,k\in\N_0,$
where $\gamma_j:=2/(2\cdot j +1)$ and $\delta_{j,k}$ the Kronecker delta. We write $\bx=(\alpha_0,\alpha_1,\dots,\alpha_p)\t\in\R^{p+1}$, ${D}=\opdiag(\gamma_0,\gamma_1,\dots,\gamma_p)\in\R^{(p+1)\times (p+1)}$ and $\be\in\R^{p+1}$.
Then \eqref{eqn:CQP_hur} can be written in~$\bx$:
$$ 	\min_{\bx\in\R^{p+1}}\ \psi(\bx):=1/2\cdot\bx\t\cdot{D}\cdot\bx \text{ subject to }\be\t\cdot\bx=1. 	$$
From the optimality conditions \cite[p.~451]{Nocedal}
$
\left[\begin{array}{c|c}
{D} & \be\\
\hline
\phantom{{}^{{}^A}}\be\t\phantom{{}^{{}^A}} & 0
\end{array}\right]
\cdot
\left(\begin{array}{c}
\bx\\
\hline
-\lambda
\end{array}\right) =
\left(\begin{array}{c}
\bO\\
\hline
1
\end{array}\right)$ follows $\bx={D}\inv\cdot\be\cdot\lambda$ and thus $\be\t\cdot{D}^{-1}\cdot\be\cdot\lambda=1$.
Using $\be\t\cdot{D}^{-1}\cdot\be=\sum_{j=0}^p 1/\gamma_j=\frac{(p+1)^2}{2}$ yields $\lambda=2/(p+1)^2$ and
$\psi(\bx)=\frac{1}{2} \cdot ({D}\inv\cdot\be\cdot\lambda)\t\cdot{D}\cdot({D}\inv\cdot\be\cdot\lambda)=\frac{\lambda}{2}=\frac{1}{(p+1)^2}$.
Hence, $\frac{1}{2}\cdot\|\hur\|^2_{L^2(\Tr)}=1/(p+1)^2$.
Hence, $\frac{1}{2}\cdot\|\hat{u}\|^2_{L^2(T)}=\frac{|T|}{2}\cdot 1/(p+1)^2$.
Hence, $\|u\|_{L^2(T)}\geq\|\hat{u}\|_{L^2(T)}=\sqrt{|T|}/(p+1)\cdot \underbrace{\|u\|_{L^\infty(T)}}_{=1}$,
or, \mbox{$\|\beta \cdot u\|_{L^2(T)}\geq \sqrt{|T|}/(p+1)\cdot \|\underbrace{\beta \cdot u}_{\tilde{u}}\|_{L^\infty(T)}$}.
In conclusion:
\begin{align}
\|\tilde{u}\|_{L^\infty(T)}\leq \frac{p+1}{\sqrt{|T|}}\cdot\|\tilde{u}\|_{L^2(T)}\ \, \forall \tilde{u} \in \cP_p(T)\, \forall T \in \cT_h. \label{eqn:PropAppendix2}
\end{align}

\section{Order of Approximation for Non-smooth and Continuous Non-differentiable Functions}\label{app:3}

In the following we illustrate that the assumption $\ell>0$ in \eqref{eqn:InfBound} is rather mild. To this end, we consider two pathological functions for $g:=x^\star_{\omega,\tau}$. In our setting, $n_y=0,\,n_z=1$, and we  interpolate a given pathological function $g$ with $x_h \in \cX_{h,p}$ over $\Omega=(-1,1)$. We use $p=0$.

\paragraph{A function with infinitely many discontinuities}
The first example is a non-smooth function that has infinitely many discontinuities. Similar functions can arise as optimal control solutions; cf.\ Fuller's problem \cite{Fuller}.

Consider the limit $g_\infty$ of the following series:
\newcommand{\sign}{\operatorname{sign}\xspace}
\begin{align*}
g_0(t) := -1\,,\qquad	g_{k+1}(t) := \left\lbrace \begin{matrix}
g_{k}(t) & \text{if }t \leq 1-2^{-k}\\
-g_k(t) & \text{otherwise}
\end{matrix} \right.& &k=0,1,2,\dots.
\end{align*}
$g_\infty$ switches between $-1$ and $1$ whenever~$t$ halves its distance to $1$. Figure~\ref{fig:NestedStep} shows $g_k$ for $k=4,\,5$\,.

\begin{figure}[tb]
	\centering
	\includegraphics[width=0.6\columnwidth]{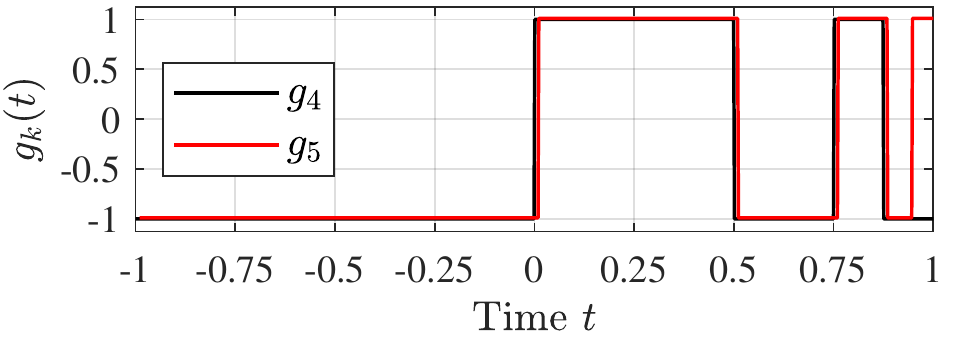}
	\caption{Nested step-function $g_k$ for $k=4,\,5$\,.}
	\label{fig:NestedStep}
\end{figure}

Using mesh-size $h=2^{-k}$ for some $k \in \N$,  define $u(t):=g_k(t)\in \cX_{h,p}$. Hence,
$	\operatornamewithlimits{inf}_{x_h \in \cX_{h,p}}
\|g_\infty-x_h\|_\cX 
\leq \|g_\infty-u\|_{L^2(\Omega)} $.
It follows that
$$ 	|u(t) - g_\infty(t)| \leq \left\lbrace \begin{matrix}
0 & \text{if }t \leq 1-2^{-k}\\
2 & \text{otherwise}
\end{matrix} \right. $$
Hence, $ \|g_\infty-u\|_{L^2(\Omega)}\leq \|g_\infty-u\|_{L^1(\Omega)} \leq 2/2^k = \cO(h^1).$
Therefore, all $\ell \in (0,0.5]$ satisfy~\eqref{eqn:InfBound}.

\paragraph{A continuous but nowhere differentiable function}
Consider the following Weierstrass function, which is continuous but non-differentiable:
$$ 	g(t) := \frac{1}{2} \cdot \sum_{k=0}^\infty a^{k} \cdot \cos(7^k \cdot \pi \cdot t)$$
for $0<a\leq 0.5$. This function with range $\subset[-1,1]$ satisfies the H\"older property
$ |g(t)-g(s)| \leq C \cdot |t-s|^\alpha $
with some $C \in \R_+$ for $\alpha = -\log(a)/\log(7)$\, \cite{Zygmund}.
For $a\leq 0.375$ we have $\alpha\geq 0.504$\,.

According to this property, a piecewise constant interpolation $u \in \cX_{h,p}$ of $g$ satisfies
$ |g(t)-u(t)| \leq |g(t)-g(s)| \leq C \cdot |t-s|^\alpha\leq |h|^\alpha $. 
In conclusion, $\operatornamewithlimits{inf}_{x_h \in \cX_{h,p}}\left\lbrace \|g-x_h\|_\cX \right\rbrace \leq \|g-u\|_{L^2(\Omega)} \leq \|g-u\|_{L^1(\Omega)} = \cO(h^\alpha)$. Therefore, all $\ell \in (0,\alpha-0.5]$ satisfy~\eqref{eqn:InfBound}.

\section{Proof of Lemma~\ref{lem:BoundLipschitz_Fr}}\label{sec:Appendix_ProofLemma1}

The boundedness follows from (A.2).

Lipschitz continuity of  $r$ is not as straightforward. We will make use of the following trace theorem \cite{TraceDing}: For an open interval $I \subseteq \Omega$ it holds that $\|u\|_{L^2(\partial I)} \leq K \cdot \|u\|_{H^1(I)}$	with a constant $K$ independent of $u$. Assume $|u|$ attains its essential supremum on $\overline{\Omega}$ at $t=t^\star$. Choosing $I=(t^\star,t_E)\subset\Omega$, then
$  \|u\|_{L^\infty(\Omega)} = |u(t^\star)| \leq \|u\|_{L^2(\partial I)}.  $
Using this together with the above bound and
$  \|u\|_{H^1(I)}\leq\|u\|_{H^1(\Omega)}  $
results in
\begin{align}
\|u\|_{L^\infty(\Omega)} \leq K \cdot \|u\|_{H^1(\Omega)}.  \label{eqn:TraceResult}
\end{align}

Below, for a generic Lipschitz continuous function $g:\R^k\rightarrow \R^{n_g}$ with Lipschitz-constant $L_g$ and $\|\cdot\|_1$-bound $|g|_\text{max}$, we use the relation
\begin{align*}
\begin{split}
&\left|\|g(\xi_2)\|_2^2\,-\,\|g(\xi_1)\|_2^2\right|
=\big|\|g(\xi_2)\|_2+\|g(\xi_1)\|_2\big| \cdot \underbrace{\big|\|g(\xi_2)\|_2-\|g(\xi_1)\|_2\big|}_{\leq\|g(\xi_2)-g(\xi_1)\|_2}\\
&\leq n_g \cdot \big|\|g(\xi_2)\|_1+\|g(\xi_1)\|_1\big| \cdot \|g(\xi_2)-g(\xi_1)\|_1
\leq n_g \cdot 2 \cdot |g|_{\text{max}} \cdot L_g \cdot \|\xi_2 - \xi_1\|_1\,,
\end{split}\tageq\label{eqn:LipQuadPenaltyFunc}
\end{align*}
where we used $|\alpha^2-\beta^2|=|\alpha+\beta|\cdot|\alpha-\beta|$ in the first line and the triangular inequality in the second line.
Using the above bound, we can show Lipschitz continuity of~$r$:
\begin{align*}
&|r(x_2)-r(x_1)|\leq \int_\Omega \Big| \left\|c\left(\dot{y}_2(t),{y}_2(t),z_2(t),t\right)\right\|_2^2 - \left\|c\left(\dot{y}_1(t),{y}_1(t),z_1(t),t\right)\right\|_2^2\Big| \mathrm{d}t\\
&\quad + \Big|\left\|b\left(y_2(t_1),\ldots,y_2(t_M)\right)\right\|_2^2
-\left\|b\left(y_1(t_1),\ldots,y_1(t_M)\right)\right\|_2^2\Big| \\
&\leq \int_\Omega 2 \cdot \nc \cdot |c|_\text{max} \cdot L_c \cdot \left\|\begin{pmatrix}
\dot{y}_2(t)-\dot{y}_1(t)\\
y_2(t)-y_1(t)\\
z_2(t)-z_1(t)
\end{pmatrix}\right\|_1 \mathrm{d}t\\
&\quad + 2 \cdot \nb \cdot |b|_\text{max} \cdot L_b \cdot \underbrace{\left\|\begin{pmatrix}
	y_2(t_1)-y_1(t_1)\\
	\vdots\\
	y_2(t_M)-y_1(t_M)
	\end{pmatrix}\right\|_1}_{\leq M \cdot \|y_2-y_1\|_{L^\infty(\Omega)}} \\
&\leq 2  \cdot \nc \cdot |c|_\text{max} \cdot L_c \cdot \left(\|\dot{y}_2-\dot{y}_1\|_{L^1(\Omega)}+\|y_2-y_1\|_{L^1(\Omega)}+\|z_2-z_1\|_{L^1(\Omega)}\right)\\
&\quad + 2  \cdot \nb \cdot |b|_\text{max} \cdot L_b \cdot M \cdot K \cdot \|y_2-y_1\|_{H^1(\Omega)},
\end{align*}
where \eqref{eqn:TraceResult} has been used 
to bound $\|y_2-y_1\|_{L^\infty(\Omega)}$.

If $y_2=y_1$ then we see the result shows Lipschitz continuity of $r$ with respect to $\|z\|_{L^1(\Omega)}$. Using
$$  \|u\|_{L^1(\Omega)} \leq \sqrt{|\Omega|} \cdot \|u\|_{L^2(\Omega)}\quad \forall u\in L^1(\Omega)$$
according to \cite[Thm.~2.8, eqn.~8]{Adams}, and the definition of $\|\cdot\|_\cX$, we arrive at
\begin{align*}
&\|\dot{y}_2-\dot{y}_1\|_{L^1(\Omega)} +
\|y_2-y_1\|_{L^1(\Omega)} +
\|z_2-z_1\|_{L^1(\Omega)} \\
\leq& \sqrt{|\Omega|}\cdot\left(\|\dot{y}_2-\dot{y}_1\|_{L^2(\Omega)} +
\|y_2-y_1\|_{L^2(\Omega)} +
\|z_2-z_1\|_{L^2(\Omega)}\right)
\leq 3 \cdot \sqrt{|\Omega|}\cdot\|x_2-x_1\|_\cX,
\end{align*}
which shows Lipschitz continuity of $r$ with respect to~$\|x\|_\cX$.

Lipschitz continuity of $F$ follows from Lipschitz continuity of $f$:
\begin{align*}
|F(x_2)-F(x_1)|
&\leq \int_\Omega |f(\dot{y}_2(t),y_2(t),z_2(t))-f(\dot{y}_1(t),y_1(t),z_1(t))|\,\mathrm{d}t\\
&\leq \int_\Omega L_f \cdot \left\|\begin{pmatrix}
\dot{y}_2(t)-\dot{y}_1(t)\\
{y}_2(t)-{y}_1(t)\\
{z}_2(t)-{z}_1(t)
\end{pmatrix}\right\|_{1}\mathrm{d}t
\leq L_f \cdot \left\|\begin{pmatrix}
\dot{y}_2-\dot{y}_1\\
{y}_2-{y}_1\\
{z}_2-{z}_1
\end{pmatrix}\right\|_{L^1(\Omega)}
\end{align*}

\section{Properties of the $\log$-Barrier Function}\label{sec:Appendix_BarrierFunctioProperties}
Let $0<\zeta\ll 1$ be a fixed small arbitrary number.
\begin{lem}[Order of the $\log$ Term]\label{lem:tauLw}It holds:
	$ 	\left|\tau \cdot \log\left({\tau}/{L_\omega}\right)\right| = \cO\left(\tau^{1-\zeta}\right).	$
\end{lem}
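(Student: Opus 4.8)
The plan is to treat this as an elementary asymptotic estimate in the parameter $\tau \to 0^+$, with $L_\omega$ regarded as a fixed positive constant (the implied $\cO$-constant is permitted to depend on it). First I would apply the logarithm product rule to split the quantity as $\tau \cdot \log(\tau/L_\omega) = \tau \cdot \log\tau - \tau \cdot \log L_\omega$, so that by the triangle inequality $|\tau \cdot \log(\tau/L_\omega)| \leq \tau \cdot |\log\tau| + \tau \cdot |\log L_\omega|$. Each summand is then bounded separately against $\tau^{1-\zeta}$.

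The second summand is the easy one. Since $\zeta \in (0,1)$ and $\tau \leq \omega < 1$, we have $\tau = \tau^{\zeta} \cdot \tau^{1-\zeta} \leq \tau^{1-\zeta}$, so that $\tau \cdot |\log L_\omega| \leq |\log L_\omega| \cdot \tau^{1-\zeta} \in \cO(\tau^{1-\zeta})$ with constant $|\log L_\omega|$.

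The first summand carries the whole content of the lemma. Here I would write $\tau \cdot |\log\tau| = \tau^{1-\zeta} \cdot \bigl(\tau^{\zeta} \cdot |\log\tau|\bigr)$ and argue that the bracketed factor stays bounded as $\tau \to 0^+$. This rests on the standard fact that any positive power dominates the logarithm, i.e. $\lim_{\tau \to 0^+} \tau^{\zeta} \cdot |\log\tau| = 0$; I would justify this via the substitution $\tau = e^{-s}$ (so the factor becomes $s \cdot e^{-\zeta s} \to 0$ as $s \to \infty$) or, equivalently, a single application of L'H\^opital's rule to the quotient $\log(1/\tau)/\tau^{-\zeta}$. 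Consequently there exist a threshold $\tau_0 \in (0,1]$ and a finite constant $C_1$ with $\tau^{\zeta} \cdot |\log\tau| \leq C_1$ for all $\tau \in (0,\tau_0]$, whence $\tau \cdot |\log\tau| \leq C_1 \cdot \tau^{1-\zeta}$ on that range.

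Combining the two bounds yields $|\tau \cdot \log(\tau/L_\omega)| \leq (C_1 + |\log L_\omega|) \cdot \tau^{1-\zeta}$ for all sufficiently small $\tau$, which is precisely the claimed $\cO(\tau^{1-\zeta})$ estimate. I do not anticipate a genuine obstacle; the only point requiring care is the sub-logarithmic limit $\tau^{\zeta}|\log\tau| \to 0$, which is also where the freedom to take $\zeta > 0$ arbitrarily small is essential: the estimate would fail at $\zeta = 0$, since $\tau \cdot |\log\tau| / \tau = |\log\tau|$ is unbounded as $\tau \to 0^+$.
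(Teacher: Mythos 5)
Your handling of the core term $\tau\cdot|\log\tau|$ is fine and matches the paper's (both reduce to the standard limit $\tau^{\zeta}|\log\tau|\to 0$, proved by L'H\^opital or the substitution $\tau=e^{-s}$). The gap is in your opening premise: you treat $L_\omega$ as ``a fixed positive constant (the implied $\cO$-constant is permitted to depend on it).'' It is not fixed, and the implied constant may not depend on it. By \eqref{eqn:LipLw}, $L_\omega=\max\{L_F+\tfrac{L_r}{2\omega},\,L_f+\tfrac{L_c}{2\omega}\|c\|_1\}$, so $L_\omega\to\infty$ as $\omega\to 0$; and the lemma is invoked (via Lemma~\ref{lem:Gamma} inside Proposition~\ref{prop:PenaltyBarrierSolution} and Theorems~\ref{thm:ConvOCP}--\ref{thm:order}) precisely in the regime where $\tau$ and $\omega$ tend to zero \emph{together} with the mesh size $h$ ($\tau=\cO(h^{\nu})$, $\omega=\cO(h^{\eta})$). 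An order-of-convergence statement whose hidden constant contains $|\log L_\omega|\sim|\log\omega|$ is therefore not the statement being claimed: your bound $\tau\cdot|\log L_\omega|\le|\log L_\omega|\cdot\tau^{1-\zeta}$ has a ``constant'' that diverges as $h\to 0$.

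The repair is exactly the extra work the paper's proof does and yours omits. Using $L_F,L_r\ge 2$ and the elementary inequality $\log(\alpha+\beta)\le\log\alpha+\log\beta$ for $\alpha,\beta\ge 2$, one gets $|\log L_\omega|\le 1+|\log L_F|+|\log(L_r/2)|+|\log\omega|$; the first three terms are genuinely $\cO(1)$, and the dangerous term is absorbed by $|\log\omega|\le|\log\tau|$ (valid since $0<\tau\le\omega<1$), so that $\tau\cdot|\log L_\omega|\le\cO(\tau)+\tau\cdot|\log\tau|$, which your own estimate then bounds by $\cO(\tau^{1-\zeta})$ with a constant independent of $\omega$ and $\tau$. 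With that substitution your argument closes; without it, the lemma as you have proved it is too weak to support the uses made of it downstream.
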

\begin{proof}
	We use $L_\omega$ from \eqref{eqn:LipLw}, where $L_F\geq 2,\ L_r\geq 2$ and $0<\tau\leq\omega\leq 1$.  We get
	\begin{align*}
	\left|\tau\cdot\log\left({\tau}/{L_\omega}\right)\right|
	&= \tau \cdot \left(|\log(\tau)-\log(L_\omega)|\right) \leq \tau \cdot \left(|\log(\tau)|+|\log(L_\omega)|\right)\\
	&= \tau \cdot \left(\left|\log\left(L_F+\frac{L_r}{2\cdot\omega}\right)\right|+|\log(\tau)|\right)\\
	&\leq \tau \cdot \left( 1+|\log(L_F)|+\left|\log\left(\frac{L_r}{2\cdot\omega}\right)\right|+|\log(\tau)| \right)\\
	&\leq \tau \cdot\Big( \underbrace{1+|\log(L_F)| +|\log(L_r/2)|}_{= \cO(1)}+\underbrace{|\log(\omega)|}_{\leq|\log(\tau)|}+|\log(\tau)|\Big)\\
	&= \cO(\tau) + \cO(\tau \cdot |\log(\tau)|).
	\end{align*}
	In the third line above, we used the fact that for $\alpha,\beta\geq 2$,
	follows $\log(\alpha+\beta)\leq\log(\alpha)+\log(\beta).$
	The result follows from $\tau \cdot |\log(\tau)| = \cO(\tau^{1-\zeta})$, as we show using L'H\^opital's rule:
	\begin{small}
		\begin{align*}
		\lim\limits_{\tau \rightarrow 0}\frac{\tau \cdot \log(\tau)}{\tau^{1-\zeta}}
		=\lim\limits_{\tau \rightarrow 0} \frac{\log(\tau)}{\tau^{-\zeta}}
		{\stackrel{\mathclap{\normalfont\mbox{L'H}}}{=}} \lim\limits_{\tau \rightarrow 0} \frac{\frac{\mathrm{d}}{\mathrm{d}\tau} \Big(\log(\tau)\Big)}{\frac{\mathrm{d}}{\mathrm{d}\tau}\Big(\tau^-\zeta\Big)} = \lim\limits_{\tau \rightarrow 0}\frac{\frac{1}{\tau}}{-\zeta \cdot \tau^{-\zeta-1}}
		= \lim\limits_{\tau \rightarrow 0} \frac{\tau^\zeta}{-\zeta}
		=0
		\end{align*}
	\end{small}
\end{proof}

\begin{lem}[Bound for $\Gamma$]\label{lem:Gamma}
	If $x \in \cX$ with $\|z\|_{L^\infty(\Omega)} = \cO(1)$, then
	\begin{align*}
	\left|\tau \cdot \Gamma\left(\bar{x}\right)\right| &= \cO\left(\tau^{1-\zeta}\right)\,,&
	\left|\tau \cdot \Gamma\left(\check{x}\right)\right| &= \cO\left(\tau^{1-\zeta}\right)\,.
	\end{align*}
\end{lem}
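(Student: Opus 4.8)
The plan is to reduce both claims to pointwise bounds on the barrier integrand and then invoke Lemma~\ref{lem:tauLw}. Recall that $\Gamma(x) = -\sum_{j=1}^{n_z}\int_\Omega \log\big(z_{[j]}(t)\big)\,\mathrm{d}t$ and that the interior push guarantees $\bar{z}_{[j]}(t)\geq \tau/L_\omega$ for almost every $t\in\Omega$ and every $j$. Combined with the hypothesis $\|z\|_{L^\infty(\Omega)}=\cO(1)$, this pins each pushed component into a two-sided range: writing $C_z := \|z\|_{L^\infty(\Omega)}$, we have $\tau/L_\omega \leq \bar{z}_{[j]}(t) \leq \max\{C_z,\tau/L_\omega\}$, and for $h$ (hence $\tau$) sufficiently small the upper endpoint is simply $C_z = \cO(1)$.

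First I would bound the integrand. Since $\log$ is monotone, on this range $\log\big(\bar{z}_{[j]}(t)\big)$ lies between $\log(\tau/L_\omega)$ and $\log(C_z)$, so $\big|\log\big(\bar{z}_{[j]}(t)\big)\big| \leq |\log(\tau/L_\omega)| + |\log(C_z)|$ almost everywhere. Integrating over $\Omega$ and summing over the $n_z$ components yields
\[
\big|\Gamma(\bar{x})\big| \leq n_z\cdot|\Omega|\cdot\big(|\log(\tau/L_\omega)| + |\log(C_z)|\big).
\]

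Then I would multiply by $\tau$ and split into two terms. For the first term, Lemma~\ref{lem:tauLw} gives $\tau\cdot|\log(\tau/L_\omega)| = \cO(\tau^{1-\zeta})$ directly. For the second term, since $C_z = \cO(1)$ and $\tau\leq\tau^{1-\zeta}$ on $(0,1)$ (because $0<\zeta<1$), we get $\tau\cdot|\log(C_z)| = \cO(\tau) = \cO(\tau^{1-\zeta})$. This proves the first claim. The bound for $\check{x}$ follows by the identical argument, replacing $\tau/L_\omega$ with $\tau/(2\cdot L_\omega)$; the only difference is the additive constant $\log 2$ in $|\log(\tau/(2\cdot L_\omega))| \leq |\log(\tau/L_\omega)| + \log 2$, whose contribution after multiplication by $\tau$ is a further $\cO(\tau)$ term absorbed into $\cO(\tau^{1-\zeta})$.

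The argument has no real obstacle; the one point requiring care is that $L_\omega$ itself grows like $1/\omega$ as $\omega\to 0$ (cf.~\eqref{eqn:LipLw}), so $\tau/L_\omega$ is genuinely smaller than $\tau$ and one cannot bound $|\log(\tau/L_\omega)|$ crudely by $|\log\tau|$. This dependence is exactly what Lemma~\ref{lem:tauLw} is designed to handle, so the entire estimate hinges on applying that lemma rather than on any new computation.
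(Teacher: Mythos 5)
Your proof is correct and follows essentially the same route as the paper: both reduce the claim to a pointwise bound on $|\log(\bar{z}_{[j]}(t))|$ sandwiched between the interior-push lower bound $\tau/L_\omega$ and the $L^\infty$ upper bound on $z$, and both then delegate the delicate term $\tau\cdot|\log(\tau/L_\omega)|$ to Lemma~\ref{lem:tauLw}. The only cosmetic difference is that you bound the integrand by the sum $|\log(\tau/L_\omega)|+|\log(C_z)|$ uniformly, whereas the paper splits into two cases according to whether the essential supremum of $|\log(\bar{z}_{[j]})|$ is attained where $\bar{z}_{[j]}<1$ or $\bar{z}_{[j]}\geq 1$; the two decompositions are equivalent.
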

\begin{proof}
	Since the definitions are similar, we only show the proof for $\bar{x}$:
	\begin{align*}
	|\tau \cdot \Gamma(\bar{x})| &\leq\left|\tau \cdot \sum_{j=1}^{n_z}\int_\Omega\,\log\left(\bar{z}_{[j]}(t)\right)\mathrm{d}t \right| \leq n_z \cdot |\Omega| \cdot \operatornamewithlimits{max}_{1\leq j\leq n_z} \|\tau \cdot \log(\bar{z}_{[j]})\|_{L^{\infty}(\Omega)}\\
	&\leq n_z \cdot |\Omega| \cdot \Big(\underbrace{\cO\left(\tau^{1-\zeta}\right)}_{\text{bound for }\bar{z}_{[j]}< 1} + \underbrace{\cO(\tau)}_{\text{bound for }\bar{z}_{[j]}\geq 1}\Big)= \cO\left(\tau^{1-\zeta}\right).\tageq\label{eqn:aux:PBP_OptGap}
	\end{align*}
	In the third line, we  distinguished two cases, namely
	$ 	\left|\log\left(\bar{z}_{[j]}(t)\right)\right| 	$
	attains its essential supremum at a $t\in \overline{\Omega}$ where either $\bar{z}_{[j]}(t)<1$ (case 1) or where $\bar{z}_{[j]}(t)\geq 1$ (case 2). In the first case, we can use  Lemma~\ref{lem:StrictInteriorness}~\&~\ref{lem:tauLw}. In the second case, we simply bound the logarithm using
	$	\|\bar{z}_{[j]}\|_{L^\infty(\Omega)}\leq\|z\|_{L^\infty(\Omega)}= \cO(1)$ to arrive at the term $\cO(\tau)$ in the brackets. The final line follows from the fact that $|\log(\tau)+\log(\omega)|\geq 1$.
\end{proof}

\section*{List of Symbols}\label{sec:Symbols} 
\begin{longtable}{p{0.1\textwidth}p{0.9\textwidth}}
	\phantom{-}					& \textbf{Optimal Control Functions}\\
	$t$ 						& time \\
	$y$ 						& function of $t$; states\\
	$u$ 						& function of $t$; controls\\
	$\dot{y}$					& time-derivative of $y$\\
	$y_{[\upsilon]}$			& the $\upsilon^\text{th}$ vectorial component of $y$\\
	$u_{[\upsilon]}$			& the $\upsilon^\text{th}$ vectorial component of $u$\\
	$n_y$ 						& number of states\\
	$n_u$ 						& number of controls\\[10pt]
	\phantom{-}					& \textbf{Optimal Control Problem}\\
	$T$ 						& final time \\
	$M$ 						& objective\\
	$b$	 						& function of $y(0)$ and $y(T)$; boundary conditions\\
	$f_1$ 						& function of $y(t)$, $u(t)$, and $t$; differential equations right-hand side\\
	$f_2$ 						& function of $y(t)$, $u(t)$, and $t$; algebraic equations\\
	$f$ 						& function of $\dot{y}(t)$, $y(t)$, $u(t)$, and $t$; short-hand for differential and algebraic equations\\
	$\yL,\uL$ 					& function of $t$; left bounds on $y,u$ \\
	$\yR,\uR$ 					& function of $t$; right bounds on $y,u$ \\[10pt]
	\phantom{-}					& \textbf{Mesh}\\
	$i$ 						& mesh point index\\
	$N$ 						& number of mesh intervals\\
	$t_i$ 						& mesh points, from $1$ to $N+1$\\
	$I_i$ 						& mesh interval $[t_i,t_{i+1}]$\\
	$h$ 						& mesh size\\
	$y_i,u_i$					& Explicit Euler approximations to $y(t_i),u(t_i)$\\[10pt]
	\phantom{-}					& \textbf{Local Minimizers}\\
	$y^\star,u^\star$ 			& an exact local minimizer of the optimal control problem\\
	$y^\star_h,u^\star_h$		& a numerical local minimizer of the optimal control problem\\
	$\hat{y}_h,\hat{u}_h$ 		& an approximation of $y^\star,u^\star$ on the mesh\\
	$\bx^\star$ 				& local minimizer of an optimization problem\\[10pt]
	\phantom{-}					& \textbf{Solution Measures}\\
	$\delta$ 					& optimality gap\\
	$r$							& constraint violation measure; functional of $y,u$\\
	$\rho$						& equality feasibility residual\\
	$\gamma$ 					& inequality feasibility residual\\[10pt]
	\phantom{-}					& \textbf{Piecewise Polynomials}\\
	$p$ 						& piecewise polynomials degree\\
	$y_h$ 						& continuous piecewise polynomials function of degree $p$\\
	$u_h$ 						& discontinuous piecewise polynomials function of degree $p-1$\\
	$\cX_{h,p}$					& space of all $y_h$ of degree $p$ and $u_h$ of degree $p-1$\\[10pt]
	\phantom{-}					& \textbf{Collocation}\\
	$\cT_p$						& a set of $p$ collocation points in $\Iref$\\
	$\cT_{h,p}$ 	 			& a set of $p$ collocation points on each mesh interval $I_i$\\
	$\Tclref{m}$				& set of all Chebyshev-Gauss-Lobatto points $\tau_{j}$ of degree $m$\\
	$\Tcl{i,m}$ 				& set of all Chebyshev-Gauss-Lobatto points $t_{i,j}$ of degree $m$\\[10pt]
	\phantom{-}					& \textbf{Reference Conventions}\\
	$n$ 						& natural number\\
	$\theta,\kappa$				& real numbers\\
	$v,w$						& real vectors\\
	$\Iref$ 					& reference interval $[-1,1]$\\
	$\tau$ 						& a number on $\Iref$\\
	$\xi$ 						& the $\arccos$-transformation of $\tau$ into $[0,\pi]$\\
	$z$						 	& vectorial functions in $t$ space\\
	$\phi,\zeta$				& scalar functions of $t$\\
	$\psi$						& scalar functions of $\tau$\\
	$g$							& scalar function in $\xi$ space\\
	$g'$ 						& derivative with respect to $\xi$\\
	$\texttt{L},\texttt{R}$		& foot-indices for left and right bounds on quantities\\
	$\bF$	 					& Fourier-type matrices\\
	$\bG$	 					& unitary matrices\\
	$\bD$ 						& diagonal matrices\\[10pt]
	\phantom{-}					& \textbf{NLP}\\
	$\bx$ 						& local minimizer\\
	$n_\bx$						& dimension of minimizer\\
	$n_\bc$						& number of equality constraints\\
	$\bf$ 						& objective function, mapping $\bx \in \R^{n_\bx}$ into $\R$\\
	$\bc$ 						& equality constraints function, mapping $\bx \in \R^{n_\bx}$ into $\R^{n_\bc}$\\
	$\be$ 						& vector of ones\\
	$\bO$ 						& vector of zeros\\
	$\bI$ 						& identity matrix\\
	$\bA$ 						& matrix of affine inequality constraints\\
	$\bbL,\bbR$ 				& left/right inequality constraints vectors\\
	$\by$ 						& Lagrange multipliers vector for equality constraints\\
	$\bzL,\bzR$					& Lagrange multipliers vector for left/right inequality constraints\\
	$n_\bb$ 					& dimension of $\bbL,\bbR$\\[10pt]
	\phantom{-}					& \textbf{Constrained Optimization Algorithm}\\
	$\bH$ 						& Hessian of the Lagrangian\\
	$\bJ$ 						& Jacobian of the equality constraints\\
	$\bS$ 						& reduced Newton Matrix\\
	$\Delta\bx$					& Newton step for $\bx$\\
	$\Delta\by$					& Newton step for $\by$\\
	$\Delta\bzL$				& Newton step for $\bzL$\\
	$\Delta\bzR$				& Newton step for $\bzR$\\
	$\bw$ 						& stacked vector of $\bx,\by,\bzL,\bzR$\\
	$\Phi_\omega$				& merit function, mapping $\bw$ into $\R$\\
	$\omega$					& penalty parameter of the interior-point method\\
	$\mu$ 						& barrier parameter of the interior-point method\\
	$s_\text{max}$ 				& maximum step size for the line-search\\
	$s$ 						& line-search step size\\
	$\Delta\bw$					& stacked vector of $\Delta\bx,\Delta\by,\Delta\bzL,\Delta\bzR$\\[10pt]
	\phantom{-}					& \textbf{Quadrature}\\
	$\cQ_{h,q}$					& set of $q$ quadrature points $t$ with their respective quadrature weights $\alpha$ per mesh interval $I_i$\\
	$q$ 						& quadrature degree\\
	$Q_{h,q}[f;y,u]$			& short-hand writing for the quadrature formula $\sum_{(t,\alpha) \in \cQ_{h,q}} \alpha \cdot \|f(\dot{y}(t),y(t),u(t),t)\|_2^2$\\[10pt]
	\phantom{-}					& \textbf{Orders}\\
	$\ell$						& piecewise polynomials quadrature order\\
	$\eta$ 	 					& mesh interpolation order\\
	$\lambda$ 					& H\"older exponent\\[10pt]
	\phantom{-}					& \textbf{Factors}\\
	$\Cquad$ 					& piecewise polynomials quadrature factor\\
	$C_\eta$ 					& interpolation factor\\
	$\Clam$ 					& H\"older factor\\[10pt]
	\phantom{-}					& \textbf{Bounds}\\
	$\Cbox$ 					& in~\AssumpI; maximum diameter between $\yL,\uL$ and $\yR,\uR$\\
	$\Cobj$ 					& in~\AssumpII; lower bound on $M$\\
	$\epsilon$ 					& in~\AssumpIII; neighborhood margin of H\"older continuity\\
	$\tmpyO,\,\tmpyT,\,\tmpyt$	& in~\AssumpIII; template for vectors in $R^{n_y}$\\
	$\tmput$ 					& in~\AssumpIII; template for vector in $R^{n_u}$\\[10pt]
	\phantom{-}					& \textbf{Convergence Analysis}\\[2pt]
	$C_\chi$ 					& equals $\max\left\lbrace\,L_M \cdot C_\eta^\lambda\ , \ 0.5 \cdot C_r \cdot C_\eta^{2\cdot\lambda}\ ,\ \Cquad\,\right\rbrace$\\[5pt]
	$C_M$ 						& in Fundamental Lemma~\ref{lem:fundamental}; equals $\Clam \cdot \big(\sqrt{2\cdot n_y} \cdot C_\eta \big)^\lambda$\\
	$C_b$ 						& in Fundamental Lemma~\ref{lem:fundamental}; equals $\Clam^2 \cdot \big(\sqrt{2\cdot n_y} \cdot C_\eta \big)^{2 \cdot \lambda}$\\
	$C_f$ 						& in Fundamental Lemma~\ref{lem:fundamental}; equals $\Clam^2 \cdot \big(\sqrt{n_y+n_u} \cdot C_\eta \big)^{2 \cdot \lambda}$\\
	$C_{r}$ 					& equals $3 \cdot C_{\epsilon,\lambda} + 4 \cdot \Clam^2 \cdot C_{T,\lambda} + \Clam^2$\\
	$C_{T,\lambda,\epsilon}$	& equals $3 \cdot C_\eta^2 \cdot \Big(\frac{\epsilon}{\sqrt{2\cdot n_y+n_u}}\Big)^{2 \cdot (1-\lambda)} + 3 \cdot C_f \cdot T$ \\[10pt]
	\phantom{-}					& \textbf{Spaces}\\
	$\cP_p(I)$					& all functions that are polynomials of degree $\leq p$ over $I$\\
	$\cC^{k,\lambda}$			& H\"older space\\
	$L^d$			            & Lebesgue space\\
	$W^{k,d}$			        & Sobolev space\\
	$\cX$			            & Candidate space\\
	$\cB$			            & Space of feasible candidates\\
	$\cP_{p}(I)$	            & Space of functions that are polynomials on an interval\\
	$\cX_{h,p}$		            & Piecewise polynomials space\\
	$\cB_{h,p}$					& Space of feasible candidates to \eqref{eqn:POCPh}\\
	\phantom{-}					& \textbf{Norms}\\
	$\|\cdot\|_2$ 				& Euclidean norm\\
	$\|\cdot\|_\infty$			& maximum norm\\
	$\|\cdot\|_{L^d}$		    & Lebesgue $L^d$ norm\\
	$\|\cdot\|_{W^{k,d}}$ 	    & Sobolev $W^{k,d}$ norm\\
	$\|\cdot\|_{\cX}$			& Sobolev $\cX$ norm\\[10pt]
	\phantom{-}					& \textbf{Operators}\\
	$\bA\t$			 			& transpose of $\bA$\\
	$\opdiag(\bx)$	 			& diagonal matrix with entries of $\bx$
\end{longtable}

\end{onehalfspacing}


\backmatter

\bibliography{bibliography}

\newpage
\thispagestyle{empty}
\mbox{}

\end{document}